\pgfplotsset{compat=1.18}
\newtheorem{theorem}{Theorem}[section]
\newtheorem{proposition}[theorem]{Proposition}
\newtheorem{lemma}[theorem]{Lemma}
\newtheorem{claim}[theorem]{Claim}
\newtheorem*{claim*}{Claim}
\newtheorem{corollary}[theorem]{Corollary}
\newtheorem{Main Conjecture}[theorem]{Main Conjecture}
\newtheorem{problem}[theorem]{Problem}
\newtheorem{myalgorithm}[theorem]{Algorithm}
\theoremstyle{definition}
\newtheorem{definition}[theorem]{Definition}
\theoremstyle{remark}
\newtheorem{example}[theorem]{Example}
\newtheorem{remark}[theorem]{Remark}
\theoremstyle{plain}
\newtheorem{question}{Question}
\newcommand\into{\operatorname*{\hookrightarrow}}
\newcommand\complexes{{\mathbb C}}
\newcommand\integers{{\mathbb Z}}
\newcommand{\antidiag}{{\prec_{\mathrm{antidiag}}}} 
\newcommand{\diag}{\prec_{\mathrm{diag}}}
\newcommand{\gap}{\hspace{1in} \\ \vspace{-.2in}}
\newcommand{\excise}[1]{}
\newcommand{\C}{\mathbb{C}}
\newcommand{\inv}{^{-1}}
\newcommand{\init}{\mathrm{init}}
\newcommand{\monos}{{\sf Mat}}
\newcommand{\ytabb}[1]{
    \ytableausetup{boxsize=1.2em,aligntableaux=center}
    \mbox{\small\begin{ytableau} #1 \end{ytableau}}
}
\newcommand{\ytabs}[1]{
    \ytableausetup{smalltableaux,aligntableaux=center}
    \begin{ytableau} #1 \end{ytableau}
    \ytableausetup{nosmalltableaux}
}
\newcommand{\ydiag}[1]{
    \ytableausetup{boxsize=.6em,aligntableaux=center}
    \ydiagram{#1}
}
\newcommand{\ydiags}[1]{
    \ytableausetup{boxsize=0.4em,aligntableaux=center}
    \ydiagram{#1}
}
\begin{document}
\pagestyle{plain}
\title{Gr\"obner Crystal Structures}
\author{Abigail Price}
\author{Ada Stelzer}
\author{Alexander Yong}
\address{Dept.~of Mathematics, U.~Illinois at Urbana-Champaign, Urbana, IL 61801, USA}
\email{price29@illinois.edu, astelzer@illinois.edu, ayong@illinois.edu}
\date{October 8, 2025}

\begin{abstract}
We develop a theory of \emph{bicrystalline ideals}, synthesizing Gr\"obner basis techniques and Kashiwara's crystal theory. 
This provides a unified algebraic, combinatorial, and computational approach that applies to ideals of interest, old and new.
The theory concerns ideals in the coordinate ring of matrices, stable under the action of some Levi group, whose quotients admit standard bases equipped with a crystal structure. We construct an effective algorithm to decide if an ideal is bicrystalline. When the answer is affirmative, we provide a uniform, generalized \emph{Littlewood--Richardson rule} for computing the multiplicity of irreducible representations either for the quotient or the ideal itself. 
\end{abstract}

\maketitle

\section{Introduction}\label{sec:intro}

We are motivated by I.~M.~Gelfand's perspective viewing representation theory as the study of symmetries on function spaces. A group $G$ acting on a space $\mathfrak{X}$ induces a $G$-action on various spaces of functions on $\mathfrak{X}$. The appropriate choice of function space depends on the structures with which $G$ and $\mathfrak{X}$ are endowed. For example, if $G$ is a topological group, one studies its action on a measure space $\mathfrak{X}$, leading to the theory of strongly continuous unitary representations on the Hilbert space $L^2(\mathfrak{X})$. If $G$ is an algebraic group, one instead considers actions on (affine) algebraic varieties or schemes $\mathfrak{X}$, giving rise to the theory of polynomial representations on the coordinate ring $\complexes[\mathfrak{X}]$ of polynomial functions on $\mathfrak{X}$. 

The central thesis of our work is that, in many cases, $\mathbb{C}[\mathfrak{X}]$ admits a \emph{Gr\"obner crystal structure} (GCS), a standard basis equipped with a Kashiwara crystal graph structure \cite{Kashiwara, Kashiwara2} which yields a generalized Littlewood--Richardson rule for computing irreducible multiplicities of $\mathbb{C}[\mathfrak{X}]$ as a $G$-repre\-sentation. We complete development of a GCS framework, initiated in our companion paper \cite{AAA} and rooted in the interaction of Gr\"obner theory with crystal combinatorics, that makes this idea precise, algorithmically decidable, and broadly applicable.
 
 In addition to the explicit role of \cite{Kashiwara, Kashiwara2} in our construction, in the form studied by M.~van Leeuwen \cite{bicrystal1} and V.~I.~Danilov--G.~A.~Koshevoi \cite{bicrystal2}, we mention other major influences. Crystal operators
 are also a combinatorial shadow of the canonical bases of G.~Lusztig in \cite{Lusztig, Lusztig2}. P.~Littelmann's work \cite{Littelmann94} on tensor-product and Levi-branching multiplicities relates \cite{Lusztig, Lusztig2, Kashiwara} and the \emph{Standard Monomial Theory} (SMT) of V.~Lakshmibai--C.~Musili--C.~S.~Seshadri \cite{SMT79}, originally developed for flag and Schubert varieties. The roots of SMT trace back to W.~V.~D.~Hodge's study of Pl\"ucker embeddings of Grassmannians \cite{Hodge}, which also presaged the development of Gr\"obner theory by B.~Buchberger \cite{Buchberger}.

Suppose $G$ is a Levi subgroup of $GL_m \times GL_n$ acting via row and column operations on a subvariety (or subscheme) $\mathfrak{X} \subseteq {\sf Mat}_{m,n}$, the space of $m \times n$ complex matrices. In~\cite{AAA}, we 
defined the \emph{bicrystalline} notion for \emph{varieties} and studied an instance of this notion arising from Schubert varieties \cite{Fulton:duke}. The subcase where ${\mathfrak{X}}={\sf Mat}_{m,n}$ is already interesting, being equivalent to 
\emph{Schur--Weyl duality} between irreducible representations of general linear and symmetric groups. This paper develops the 
theory in the proper generality of  \emph{ideals}. Missing from~\cite{AAA} 
was an algorithm to decide whether a given ideal is bicrystalline. We now provide an effective algorithm, and when the bicrystalline property holds, we offer a new, uniform combinatorial rule for computing irreducible multiplicities in
$\mathbb{C}[\mathfrak{X}]$. 

Beyond the bicrystalline GCS framework, we wish to demonstrate that Gr\"obner crystal structures provide a flexible method to organize and reveal multiplicity data outside of classical representation theory,
opening novel directions at the nexus of algebraic geometry, representation theory, and combinatorics. 

In the representation theory of general linear groups, the problems of giving combinatorial rules for irreducible multiplicities of tensor products and Levi-branchings of representations are both solved by the \emph{Littlewood--Richardson rule}; see, e.g., \cite{Fulton, ECII}. This was vastly extended in a root-system uniform manner to complex semisimple Lie algebras (and their associated complex Lie groups) in the aforementioned works of \cite{Littelmann94, Kashiwara}, and with a different, polytopal solution in work of A.~Berenstein--A.~Zelevinsky \cite{BZ}. 

Rather than generalizing to well-behaved (e.g., connected, complex, reductive) Lie groups and their Lie algebras as in \cite{Kashiwara, Littelmann94, BZ}, we pursue an extension to bicrystalline ideals inside ${\mathbb C}[{\sf Mat}_{m,n}]$. 
View the Littlewood--Richardson rule as the solution to the branching problem for ${\mathbb C}[{\sf Mat}_{m,n}]$ under the action of $(GL_k\times GL_{m-k})\times GL_n$ (see \cite{Howe} and Example~\ref{exa:Levi-branch}). By varying the choice of the Levi group $G$, we put the classical information of the Hilbert function and these Levi-multiplicities  on a single spectrum. 

\subsection{Motivating examples}\label{subsec:motivating} 

\begin{example}[Symmetric algebra]\label{exa:symmetric}
Identify
${\mathrm{Sym}}({\mathbb C}^m)$ with the coordinate ring of ${\mathfrak X}={\mathbb C}^m$
(the space of $m\times 1$ matrices). There is a 
$GL_m\times {\mathbb C}^\times$ action on ${\mathfrak X}$, and hence on ${\mathrm{Sym}}({\mathbb C}^m)$. 

An identity for the character of ${\mathrm{Sym}}({\mathbb C}^m)$ is
\begin{equation}
\label{eqn:onesidedcauchy}
\prod_{k=1}^{m} \frac{1}{1-x_iy}=\sum_{d=0}^{\infty} h_d(x_1,\ldots,x_m)y^d,
\end{equation}
where $h_d(x_1,\ldots,x_m)$ is the \emph{homogenenous symmetric polynomial} of degree $d$.
\end{example}

\begin{example}[Determinantal varieties]\label{exa:det}
Let $V$ be a $k$-dimensional vector space. The group $GL(V)$ acts on the space $V^{\oplus n}\oplus
(V^*)^{\oplus m}$ of $n$ vectors and $m$ covectors . The ring of invariants ${\mathbb C}[V^{\oplus n}\oplus
(V^*)^{\oplus m}]^{GL(V)}$ is finitely generated as a ${\mathbb C}$-algebra by \emph{contractions} $z_{ij}$, defined
by setting $z_{ij}(\ldots, \vec e_j,\ldots; \ldots, \vec f_i, \ldots)=\vec f_i(\vec e_j)$ ($1\leq i\leq m, 1\leq j\leq n$) and extending linearly. There is a ring isomorphism
\[{\mathbb C}[V^{\oplus n}\oplus
(V^*)^{\oplus m}]^{GL(V)}\cong {\mathbb C}[{z}_{ij}]/I_{k+1},\]
where $I_{k+1}$ is the ideal of $(k+1)\times (k+1)$ minors of the generic matrix $[z_{ij}]_{1\leq i\leq m,1\leq j\leq n}$. The \emph{determinantal variety}
$\mathfrak{X}_{k}$ of matrices with rank at most $k$, cut out by $I_{k+1}$, has an action of $GL_m\times GL_n$ by row and column operations.
These are  the only (reduced) varieties in the space ${\sf Mat}_{m,n}$ of $m\times n$ matrices 
with this action. The character of $\mathfrak{X}_{k}$ is given by the expression (cf.~Example~\ref{exa:detidealredux})
\begin{equation}
\label{eqn:detcauchy}
\sum_{\lambda: \ell(\lambda)< k+1} s_{\lambda}(x_1,\ldots,x_m)s_{\lambda}(y_1,\ldots,y_n),
\end{equation}
where the sum is over integer partitions $\lambda$ with at most $k$ parts. Here, 
e.g., $s_{\lambda}(x_1,\ldots,x_m)$ is the \emph{Schur polynomial}, the character of an irreducible $GL_m$ representation.
\end{example}

\begin{example}[Veronese embeddings]\label{exa:veronese}
The  second 
\emph{Veronese embedding}
${\mathbb P}^2\to {\mathbb P}^5$ is 
 \[[z_0:z_1:z_2]\mapsto [z_0^2: z_0z_1:z_0z_2:z_1^2: z_1z_2: z_2^2]=[w_0:w_1:w_2:w_3:w_4:w_5].\]
$GL_3$ acts linearly on the original variables $z_0,z_1,z_2$, inducing an action
on $w_0,w_1,\ldots,w_5$. 
The image ${\mathfrak X}$ is cut out by the 
$2\times 2$ minors of the symmetric matrix
$M=\left[\begin{smallmatrix} w_0 & w_1 & w_2 \\ 
w_1 & w_3 & w_4 \\ 
w_2 & w_4 & w_5\end{smallmatrix}\right]$. Take the action of $g\in GL_3$ to be by $g^{-1}M{(g^{-1})}^t$. 

The character of ${\mathbb C}[{\mathfrak X}]$ is
\begin{equation}
\label{eqn:Veronesechar}
1+h_{2}(x_1,x_2,x_3)+h_4(x_1,x_2,x_3)+h_6(x_1,x_2,x_3)+\cdots.
\end{equation}
\end{example}

\begin{example}[Matrix matroid varieties]\label{exa:matroid}
A \emph{realizable matroid} is an ordered configuration of vectors 
$\vec v_1,\ldots,\vec v_k\in {\mathbb C}^n$,
viewed as columns of an $n\times k$ matrix $C$. Following \cite[Example~2.2]{FNR}, let
$C=\left[\begin{smallmatrix}
0 & 0 & 0 & 1 & 1 & 0 \\
1 & 1 & 1 & 1 & 1 & 0
\end{smallmatrix}\right]$.
The \emph{matrix matroid ideal} $I_C$ of \cite{FNR} defines the closure of the 
$GL_2\times T$ orbit of this matrix, where $GL_2$ acts on the rows and $T=({\mathbb C}^\times)^6$ rescales the columns.

A.~Berget--A.~Fink \cite{Berget.Fink} express the character of $\complexes[{\sf Mat}_{2, 6}]/I_C$ in the quotient form
\[\frac{
1- s_{\ydiags{1,0}} y_6  
- s_{\ydiags{1, 1}} y_1 y_2 +\cdots 
+ s_{\ydiags{3, 3}} y_2 y_3 y_4 y_5 y_6^2  
- s_{\ydiags{4, 3}} y_1 y_2 y_3 y_4 y_5 y_6^2  
+ s_{\ydiags{4, 2}}   y_1 y_2 y_3 y_4 y_5 y_6}{(1-x_1y_1)\cdots (1-x_1y_6)(1-x_2y_1)\cdots (1-x_2y_6)},\]
where each $s_{\mu}:=s_{\mu}(x_1,x_2)$.

However, earlier work did not give a rule for the positive expansion:
\begin{multline}\nonumber
\!\!\!\!\!\!\!=1 + s_{\ydiags{1, 0}}y_1 + s_{\ydiags{1, 0}}y_2 + s_{\ydiags{1, 0}}y_3 + s_{\ydiags{1, 0}}y_4 + s_{\ydiags{1, 0}}y_5
+ s_{\ydiags{2, 0}}y_1^2 + s_{\ydiags{2, 0}}y_1y_2 + s_{\ydiags{2, 0}}y_1y_3 + s_{\ydiags{1, 1}}y_1y_4+\cdots
\end{multline}
We give a rule for matrix matroid ideals satisfying the bicrystalline hypothesis, valid for any Levi that acts.  See the rule Theorem~\ref{thm:simplifiedGrobnerdet} and its application in Example~\ref{ex:matriodLR} 
for more details. Another instance is Example~\ref{exa:graphicalmatroid}, which comes from
a \emph{graphical matroid}.
\end{example}

\begin{example}[Double Bruhat ideals]\label{exa:doublebruhat}
\emph{Double Bruhat cells} \cite{BFZ, FZ} play a role in \emph{total positivity} and are among the original motivating examples for the theory of \emph{cluster algebras} \cite{FZ:cluster}. They are 
defined as  
$B_{-}uB\cap BvB_{-}\subset GL_n$, 
where $B, B_{-}$ are, respectively, invertible upper and lower triangular matrices in $GL_n$,
and $u,v$ are permutation matrices.\footnote{Originally, they are defined as $BuB\cap B_{-}vB_{-}$, but our choice of convention is better for our exposition.} A.~Knutson considered their closure inside ${\sf Mat}_{n,n}$.\footnote{Private communication to the third author, circa 2005.}

For $u=v=2143$, the corresponding double Bruhat ideal is
\[I=\langle z_{11}, \text{northwest $3\times 3$ minor, southeast $3\times 3$ minor}, z_{44}\rangle \!\subseteq\!
{\mathbb C}[{\sf Mat}_{4,4}].\]
This is a $(GL_1\times GL_2\times GL_1)\times (GL_1\times GL_2\times GL_1)$-stable ideal.  The character of $\complexes[{\sf Mat}_{4, 4}]/I$ begins
\[1+x_4s_{\ydiags{1}}(y_2,y_3)+x_4y_1+s_{\ydiags{1}}(x_1,x_2)y_4+s_{\ydiags{1}}(x_2,x_3)s_{\ydiags{1}}(y_2,y_3)+s_{\ydiags{1}}(x_2,x_3)y_1+x_1y_4+x_1s_{\ydiags{1}}(y_2,y_3)+\cdots\]
By Theorem~\ref{thm:knutsonbicrystal}, the character of any double Bruhat ideal is computed using Theorem~\ref{thm:LRrule}.
\end{example}

\begin{example}[Buchsbaum--Eisenbud variety of complexes \cite{Buchsbaum}]\label{exa:quiver}
Let 
\[\mathfrak{X}=\{(A,B)\in {\sf Mat}_{2,2}\times{\sf Mat}_{2, 2} :  AB={\bf 0}\}.\] 
Then $GL_2\times GL_2\times GL_2$ acts on $\mathfrak{X}$ (which is not irreducible) by 
\[(g_1,g_2,g_3)\cdot(A,B)=(g_1 A g_2^{-1}, g_{2}B g_3^{-1}).\] 
The character of $\complexes[\mathfrak{X}]$ begins
\[1+s_{\ydiags{1}}\otimes s_{\ydiags{1}}\otimes s_{\emptyset} 
+s_{\emptyset}\otimes s_{\emptyset, -\ydiags{1}}\otimes s_{\ydiags{1}} 
+s_{\ydiags{1,1}}\otimes s_{\ydiags{1,1}}\otimes s_{\emptyset}+
s_{\ydiags{2}}\otimes s_{\ydiags{2}}\otimes s_{\emptyset} +
s_{\ydiags{1}}\otimes s_{\ydiags{1},-\ydiags{1}}\otimes s_{\ydiags{1}}+\cdots,
\]
where $s_{\lambda}\otimes s_{\mu}\otimes s_{\nu}:=s_{\lambda}(x_1,x_2)s_{\mu}(y_1,y_2)s_{\nu}(z_1,z_2)$. Also,
 $s_{\emptyset, -\ydiags{1}}$ and $s_{\ydiags{1},-\ydiags{1}}$ are rational Schur polynomials for the partitions $(\emptyset,-\ydiags{1})=(-1)$ and
 $(\ydiags{1},-\ydiags{1})=(1,-1)$ respectively (we refer to 
\cite{Stembridge:rational}). For a related study of ${\mathbb C}[{\mathfrak X}]$ see De Concini--Strickland's \cite{Strickland}. Discussion
in Section~\ref{sec:concluding} remarks on more recent work regarding these varieties.
\end{example}

There are also naturally occurring \emph{non}-reduced examples of bicrystalline ideals:

\begin{example}[Thick determinantal ideals]\label{exa:powers}
Let $I\subseteq {\mathbb C}[{\sf Mat}_{3,3}]$ be the ideal of $2\times 2$ minors of a generic $3\times 3$ matrix. 
The \emph{ordinary power} $I^2$ is generated by all products of elements of~$I$. The \emph{symbolic power}
$I^{(2)}$ is generated by all polynomials in ${\mathbb C}[{\sf Mat}_{3,3}]$ that vanish to order at least $2$ on the (prime) ideal $I$.\footnote{More precisely: for a prime ideal $I$ in a polynomial ring over an algebraically closed field, $I^{(n)} = \bigcap_{\mathfrak{m}}\mathfrak{m}^n$, where the intersection is over all maximal ideals $\mathfrak{m}$ containing $I$. See, e.g., \cite[Theorem 3.14]{Eisenbud}.}
 Neither $I^2$ nor $I^{(2)}$ is radical. Each of $I,I^2,I^{(2)}$ carries a $GL_3\times GL_3$ action. 

The character for ${\mathbb C}[{\sf Mat}_{3,3}]/I$ is computed using \eqref{eqn:detcauchy}. Now,
letting 
\[s_{\lambda}\otimes s_{\lambda}:=s_{\lambda}(x_1,x_2,x_3)s_{\lambda}(y_1,y_2,y_3),\] 
the characters for the quotients by the two powers of $I$ only differ in one term, as marked:
\[I^2: \ 1+s_{\ydiags{1}}\otimes s_{\ydiags{1}}+s_{\ydiags{1,1}}\otimes s_{\ydiags{1,1}}
+\fbox{$s_{\ydiags{1,1,1}}\otimes s_{\ydiags{1,1,1}}$}
+s_{\ydiags{2}}\otimes s_{\ydiags{2}}+
s_{\ydiags{2,1}}\otimes s_{\ydiags{2,1}}+s_{\ydiags{3}}\otimes s_{\ydiags{3}}+
s_{\ydiags{3,1}}\otimes s_{\ydiags{3,1}}+\cdots\]
\[I^{(2)}: \ 1+s_{\ydiags{1}}\otimes s_{\ydiags{1}}+s_{\ydiags{1,1}}\otimes s_{\ydiags{1,1}}+s_{\ydiags{2}}\otimes s_{\ydiags{2}}+
s_{\ydiags{2,1}}\otimes s_{\ydiags{2,1}}+s_{\ydiags{3}}\otimes s_{\ydiags{3}}+
s_{\ydiags{3,1}}\otimes s_{\ydiags{3,1}}
+\cdots\]
We describe a general rule for these expansions that, in particular, explains said difference; see 
Example~\ref{exa:detpower2}. This is done using results about \emph{bitableaux} \cite{DRS} from invariant theory. 
See \cite[Section 3.9.1]{Eisenbud} for further discussion of this example from the commutative algebra perspective.
\end{example}

\subsection{The GCS thesis}
M.~Kashiwara~\cite{Kashiwara, Kashiwara2}  introduced the notion of \emph{crystal graphs} to the
study of complex semisimple Lie algebras and their representations.\footnote{See G.~Lusztig's \cite{Lusztig, Lusztig2} which introduced, in a geometric manner, the same underlying \emph{crystal bases}, there called \emph{canonical bases}. We will not use these bases \emph{per se} in this paper.}

\begin{example}[tensor power of the standard representation]\label{exa:wordcrystal} The standard representation of $GL_n$ is $V_{\ydiags{1}}={\mathbb C}^n$ with the matrix multiplication action. Every irreducible representation of degree $k$ appears in the tensor power $V_{\ydiags{1}}^{\otimes k}$. The decomposition of $V_{\ydiags{1}}^{\otimes k}$ is modeled by a crystal graph $\mathcal{W}_k$ on $k$-letter \emph{words} from the alphabet $[n]:=\{1,2,\ldots,n\}$. It is defined via \emph{raising operators} $e_i$
and \emph{lowering operators} $f_i$ on words, which output another word or $\varnothing$. The operators $e_i$ and $f_i$ are defined using the \emph{bracket sequence} ${\sf bracket}_i(w)$, obtained by recording a {\tt )} symbol for each $i$ in $w$ and a {\tt (} symbol for each $i+1$ in $w$.

For example, if 
$w=213142$ and $i=1$, the map ${\sf bracket}_1$ sends
\[\underline{2} \ \underline{1} \ 3 \ \underline{1} \ 4 \ \underline{2} \mapsto \text{{\tt ())(}}. \]
The crystal operators then alter the brackets (and thereby the word $w$) as follows.
\begin{itemize}
\item $f_i=$ lowering: turns the rightmost unmatched {\tt )} to {\tt (}.
\medskip
\item $e_i=$ raising: turns the leftmost unmatched {\tt (} to {\tt )}.
\end{itemize}
\[f_1(213\fbox{1}42)=213\fbox{2}42, \ e_1(21314{\fbox{2}})=21314{\fbox{1}}\]
If there is no unmatched {\tt )} or {\tt (}, then $f_i(w)=\varnothing$ or $e_i(w)=\varnothing$. 

The lowering operators (or the raising operators) define a directed graph on words. Each connected component
has a unique source vertex, a \emph{highest weight word} for which every raising operator returns~$\varnothing$.
The generating series for a connected component is an irreducible $GL_n$-character, i.e., a Schur polynomial. Thus crystals group the monomials of a character into Schur polynomials to give expressions akin to those of our examples.
\end{example}

\begin{figure}
    \begin{center}
    \begin{tikzpicture}[scale = 0.40, align = left]
        \draw[red, thick][->] (1, 6) -- (4, 9);
        \draw[red, thick][->] (14, 9) -- (17, 6);
        \draw[blue, thick][->] (6, 10) -- (8, 10);
        \draw[blue, thick][->] (10, 10) -- (12, 10);
        \draw[blue, thick][->] (1, 4) -- (4, 1);
        \draw[blue, thick][->] (14, 1) -- (17, 4);
        \draw[red, thick][->] (6, 0) -- (8, 0);
        \draw[red, thick][->] (10, 0) -- (12, 0);
        \draw (0, 5) node{ 211 };
        \draw (5, 10) node{212};
        \draw (9, 10) node{213};
        \draw (13, 10) node{313};
        \draw (5, 0) node{311};
        \draw (9, 0) node{ 312 };
        \draw (13, 0) node{322};
        \draw (18, 5) node{323};
        \draw (2,8) node{$f_1$};
        \draw (2,2) node{$f_2$};
        \draw (7,11) node{$f_2$};
        \draw (11,11) node{$f_2$};
        \draw (16,8) node{$f_1$};
        \draw (7,-1) node{$f_1$};
        \draw (11,-1) node{$f_1$};
        \draw (16,2) node{$f_2$};
    \end{tikzpicture}
    \end{center}
    \caption{ \label{fig:July14aaa} The crystal graph with highest weight word $211$.}
    \end{figure}
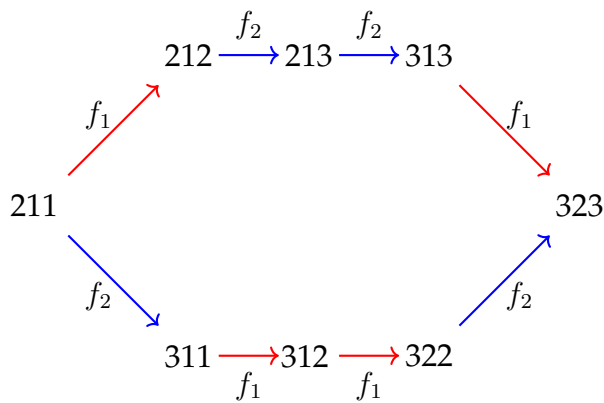

In this paper, we consider characters of coordinate rings, which are better known in commutative algebra as the (multigraded) \emph{Hilbert series} of embedded projective varieties.

\begin{example}[Standard graded case]\label{exa:stdgraded}
Suppose 
$I\subseteq R:={\mathbb C}[z_1,\ldots,z_k]$
is an ideal and $I$ is \emph{homogeneous}, that is, 
it is generated by polynomials in which each term is of the same total degree. (All our examples
from Section~\ref{subsec:motivating} have this property.) Then
\[R/I=\bigoplus_{d\geq 0} (R/I)_d\] is a graded vector space over ${\mathbb C}$, where the graded component $(R/I)_d$ consists of (classes of) those polynomials equivalent to some degree-$d$ homogeneous polynomial modulo $I$. For each term order $\prec$ on $R$, Gr\"obner theory provides a graded \emph{standard basis} for $R/I$ as a vector space. 
If ${\mathrm{init}}_{\prec} I$ is the initial ideal
of $I$ generated by leading terms of elements in $I$, the standard basis consists of all monomials in $R$ \emph{not} in 
${\mathrm{init}}_{\prec} I$.

$I$ is homogeneous if and only if the associated affine scheme $\mathfrak{X}$ is stable under the dilation action of ${\mathbb C}^\times=GL_1$. The character of the coordinate ring
$\complexes[\mathfrak{X}] = R/I$ is then $\sum_{d\geq 0} \dim_{\mathbb C} (R/I)_d t^d$, which is the (standard graded) Hilbert series of $\mathfrak{X}$.
This example generalizes: a larger torus action on $R/I$ corresponds to a multigrading on $R$. See Section~\ref{subsection:bipre}. The character of $R/I$ is then its multigraded Hilbert series; see 
Section~\ref{sec:prelim} and specifically Example~\ref{exa:char=Hilb}.
\end{example}

Suppose a linear algebraic group $G$ acts on $R/I$. We consider a natural question:
\begin{quotation}
What is a combinatorial counting rule for the multiplicities of the $G$-irreducible representations in $R/I$? 
\end{quotation}

Our guiding principle is that one achieves such rules 
by imposing a \emph{crystal structure} on the monomials of $R$ which descends to a crystal structure on a standard basis $B$ of $R/I$; we call this a \emph{Gr\"obner crystal structure} (GCS) on the triple $(R,I,\prec)$ (see Definition~\ref{def:GCS}). Now, under typical hypotheses, one can artificially impose a crystal structure on the standard basis, so in this sense, a GCS for $I$ always exists.\footnote{For example, if each graded component $(R/I)_d$ is a finite-dimensional polynomial representation of $GL_n$, then the standard monomials spanning $(R/I)_d$ are in \emph{some} multiset bijection with semistandard Young tableaux that preserves the finer grading from the maximal torus $T\subseteq GL_n$. Now use  the crystal structure on the tableaux (see Definition~\ref{def:tabbicops}) to induce a crystal structure on the monomials.}  However, we study a particular instantiation of the GCS thesis, which captures Examples
~\ref{exa:symmetric},~\ref{exa:det},~\ref{exa:matroid},~\ref{exa:doublebruhat}, ~\ref{exa:powers}, and their natural generalizations in a \emph{uniform} manner. In this setting, we use the bicrystal operators of
van Leeuwen and Danilov--Koshevoi \cite{bicrystal1, bicrystal2}, which are certain pullbacks of Kashiwara's operators along the Robinson--Schensted--Knuth (RSK) correspondence (Proposition~\ref{prop:pullback}). We believe that suitable modification of this construction, where, e.g., one uses a variation on RSK, will handle many other such cases, including Example~\ref{exa:veronese}
and~\ref{exa:quiver}. See further discussion in Sections~\ref{sec:non-comm} and~\ref{sec:concluding}.

\subsection{Summary of results; organization}
In Section~\ref{sec:bicrystalline}, after presenting basics from combinatorial commutative algebra, we introduce our main concept, \emph{bicrystalline ideals}. In our earlier paper \cite{AAA}, we showed that the defining ideals of (unions of) matrix Schubert varieties are bicrystalline and suggested that the bicrystalline property should be more general. However, in \emph{ibid.}~we presented neither additional examples nor non-examples -- we now rectify the situation. We reformulate the bicrystalline notion using \emph{test sets} (Definition~\ref{def:test}). Our first result
shows that construction of a test set gives a finite check for the bicrystalline property 
(Theorem~\ref{thm:firstmain}). We also prove the existence of minimal test sets (Theorem~\ref{thm:minimality}).

Section~\ref{sec:alg} presents an algorithm (Theorem~\ref{thm:main}) to decide if an ideal is bicrystalline by constructing test sets (Theorem~\ref{thm:testsetalg}). The algorithm, and its proof of correctness, offer a general technique to prove a given family of ideals is bicrystalline.

In Section~\ref{sec:prelim} we provide background in representation theory and tableau combinatorics.
This prepares for Section~\ref{sec:LR}, where we give a combinatorial rule (Theorem~\ref{thm:LRrule})
for determining the multiplicities in the irreducible decomposition of a bicrystalline ideal or the associated coordinate ring. This reformulates and extends the rule given in \cite{AAA} by replacing use of the \emph{Filtered RSK algorithm} with a modified ballot condition on pairs of semistandard tableaux $(P,Q)$ associated to a standard monomial under the classical RSK correspondence.
We illustrate the rule for examples of matrix matroid ideals  \cite{FNR},
powers of determinantal ideals (see, e.g., \cite{Trung, BC, BC03}),  and matrix Hessenberg ideals~\cite{Goldin.Precup}.

Together, Theorem~\ref{thm:firstmain} and Theorem~\ref{thm:LRrule} provide a proverbial ``one-two punch'', giving rules for the desired representation multiplicities in many instances. Sections~\ref{subsec:bidi},~\ref{sec:Knutsonideals}, and~\ref{sec:in-RSK} demonstrate our method's applicability to certain large families of ideals.

In Section~\ref{subsec:bidi}, by following the decision algorithm of Section~\ref{sec:alg} we construct explicit test sets to prove that  ideals we call \emph{Gr\"obner-determinantal ideals} are indeed bicrystalline (Theorem~\ref{thm:generalizeddetideals}). We proceed to give a simplified version of Theorem~\ref{thm:LRrule} for these ideals when one side of the action is by a full general linear group (Theorem~\ref{thm:simplifiedGrobnerdet}). This is applicable to the matrix matroid ideal of Example~\ref{exa:matroid}; see Example~\ref{ex:matriodLR}.

In Section~\ref{sec:Knutsonideals}, we use the results of Section~\ref{subsec:bidi} and theorems of A. Knutson \cite{Knutson} to
show (Theorem~\ref{thm:knutsonbicrystal}) that a large family of 
\emph{Knutson determinantal ideals} are Gr\"obner-determinantal ideals and therefore bicrystalline. This family includes classical determinantal ideals, Schubert determinantal ideals, and matrix double Bruhat ideals. Thus, Theorem~\ref{thm:knutsonbicrystal} generalizes the setting of Example~\ref{exa:det} and the main application from \cite{AAA}. Appendix~\ref{theappendix} is an elementary proof of a result of Knutson that we need for these conclusions. It describes the leading terms of the ``basic minors'' of the \emph{Kazhdan--Lusztig ideals} defined in \cite{WY:governing}. 

In Section~\ref{sec:in-RSK} we study the context of Example~\ref{exa:powers}, ideals spanned by the bitableaux 
of~\cite{DRS}. As highlighted by work of W.~Bruns--A.~Conca
\cite{BC}, a subclass of these ideals, called \emph{in-KRS}, are of particular significance. We prove that all such ideals that carry a $GL_n$-action, including (symbolic) powers of determinantal ideals, are bicrystalline.

Section~\ref{sec:non-comm} gives two vignettes regarding the GCS thesis in non-commutative settings.

Section~\ref{sec:concluding} offers concluding remarks, including perspectives for future work.

\section{Bicrystalline ideals}\label{sec:bicrystalline}
\subsection{Notation and preliminaries} \label{subsection:bipre}
Let ${\sf Mat}_{m,n}$ be the space of $m\times n$ matrices with entries in ${\mathbb C}$. Identify ${\mathbb C}[{\sf Mat}_{m,n}]$ with a polynomial ring in the $m\times n$ matrix of variables 
$Z = [z_{ij}]$. View $\complexes[{\sf Mat}_{m, n}]$ momentarily as a vector space, forgetting the multiplication operation. Then for any ideal $I\subseteq\complexes[{\sf Mat}_{m, n}]$ there is an isomorphism of vector spaces:
\begin{equation}\label{eqn:basicdecomp}
    \complexes[{\sf Mat}_{m, n}]\cong I\oplus \complexes[{\sf Mat}_{m, n}]/I.
\end{equation}
The decomposition \eqref{eqn:basicdecomp} is modeled combinatorially by monomials and the theory of Gr\"obner bases. Throughout this paper, we will identify monomials 
\[{\bf m}= z^M := \prod_{i, j}z_{ij}^{M_{ij}}\in \complexes[{\sf Mat}_{m, n}]\] 
by their exponent matrices $M = [M_{ij}]$ in the space ${\sf Mat}_{m,n}(\integers_{\geq 0})$ of matrices with entries in $\integers_{\geq 0} = \{0,1,2,\dots\}$. As we work with polynomials over a field exclusively, we assume without loss of generality that all monomials have scalar coefficient $1$.

Fix a choice of \emph{term order} $\prec$ on the monomials in $\complexes[{\sf Mat}_{m, n}]$. Our reference for Gr\"obner basis theory is \cite{CLO}. In most of our examples, $\prec$ is an \emph{antidiagonal term order}
or a \emph{diagonal term order}, meaning that it is some term order which picks the antidiagonal (respectively, diagonal)
term  of any minor of $Z$ as the lead term. There are many (anti)diagonal term orders. Much of our analysis and many of our examples are valid for
all such (anti)diagonal term orders. In those cases, we denote any of them by 
$\antidiag$ and $\diag$, respectively.

 The \emph{initial term} ${\mathrm{init}}_{\prec} f$ of $f\in \complexes[{\sf Mat}_{m, n}]$ is its largest monomial with respect to $\prec$. The \emph{initial ideal} and set of \emph{standard monomials} for $I$ are, respectively, 
\[{\mathrm{init}}_{\prec} I = \langle {\mathrm{init}}_{\prec} \ f: f\in I\rangle, \text{ \ and \ } 
{\mathrm{Std}}_{\prec}I=\{z^M: z^{M}\not\in {\mathrm{init}}_{\prec} \ I\}.\]
The key fact is that $\mathrm{Std}_\prec I$ is a vector space basis for $\complexes[{\sf Mat}_{m, n}]/I$ (see, e.g., \cite[pg. 158]{Miller.Sturmfels}). Taking exponent matrices of the standard and non-standard monomials for $I$, we obtain the following combinatorial model for \eqref{eqn:basicdecomp}:
\begin{align}
    \complexes[{\sf Mat}_{m, n}]&\cong(\init_\prec I)\oplus\mathrm{span}_\complexes(\mathrm{Std}_\prec I)\label{eqn:degendecomp},\\
    {\sf Mat}_{m, n}(\integers_{\geq0}) &= \{M:z^M\in\init_\prec I\}\sqcup\{M:z^M\in\mathrm{Std}_\prec I\}.\label{eqn:combdecomp}
\end{align}

Since we often use exponent matrices rather than monomials, as in \eqref{eqn:combdecomp}, we also define
\[\monos_\prec I := \{M\in{\sf Mat}_{m, n}(\integers_{\geq 0}): z^M\in\init_\prec I\}.\]
It follows immediately from the definitions that for all $M\in{\sf Mat}_{m, n}(\integers_{\geq 0})$, $\prec$, and $I$:
\begin{equation}\label{eqn:stdmonos}
M\notin\monos_\prec I \iff z^M\in\mathrm{Std}_\prec I.
\end{equation}
A set ${\mathcal G}=\{g_1,\ldots,g_r\}$ of elements of $I$
form a \emph{Gr\"obner basis} for $I$ with respect to $\prec$ if 
\[{\mathrm{init}}_{\prec} I=\langle {\mathrm{init}}_{\prec}(g_i)\, : \, 1\leq i\leq r\rangle.\]
Every ideal $I$ has a Gr\"obner basis, and Gr\"obner bases can be computed algorithmically from any generating set for $I$.

Now, the product of general linear groups 
\[{\bf GL}:=GL_m\times GL_n\]
acts on ${\sf Mat}_{m,n}$ via the right action
\begin{equation}\label{eqn:matrixaction}
    M\cdot (g, h)= g^{-1}M(h^{-1})^t,
\end{equation}
where $(g, h)\in\mathbf{GL}$, $M\in{\sf Mat}_{m, n}$, and ${}^t$ denotes matrix transpose. This right action induces a left $\mathbf{GL}$-action on $\complexes[{\sf Mat}_{m, n}]$:
\footnote{Our odd-looking choice of $\mathbf{GL}$-action on ${\sf Mat}_{m, n}$ is made so that all elements of $\complexes[{\sf Mat}_{m, n}]$ have positive multidegree under the torus multigrading given by this induced left action.
See also Section~\ref{subsec:reptheoryprelims}.}
\[(g, h)\cdot f(Z) := f(Z\cdot(g, h)\inv) = f(gZh^t)\quad \forall f\in\complexes[{\sf Mat}_{m, n}].\]

By restriction, $\complexes[{\sf Mat}_{m, n}]$ also carries an action of 
\[{\bf L}:=L(m)\times L(n)\] 
where $L(m)$ and $L(n)$ are 
\emph{Levi subgroups} of $GL_m$ and $GL_n$ respectively. That is, $L(m)$ is a direct sum of invertible block diagonal matrices. At one extreme, if $L(m)$ has one block of size $m$ then $L(m)=GL_m$. At the other end, if all blocks in $L(m)$ have size one then $L(m)=T_m$ is the maximal torus of invertible diagonal $m\times m$ matrices.
A \emph{Levi datum} consists of two sets of integers
\begin{equation}
\label{eqn:seqdef}
{\bf I}=\{0=i_0<i_1<\ldots< i_r=m\} \text{\ and \ } {\bf J}=\{0=j_0<j_1<\ldots <j_s=n\}.
\end{equation}
For each ${\bf I}$ one has a Levi subgroup $L_{\bf I}\leq GL_m$, where 
\[L_{\bf I}:=GL_{i_1-i_0}\times GL_{i_2-i_1}\times
\cdots \times GL_{i_r-i_{r-1}},\]
e.g.,  $\left[
\begin{smallmatrix}
* & * & 0 \\
* & * & 0 \\
0 &  0 & *
\end{smallmatrix}
\right]$ in the case of $GL_2\times GL_1 \leq GL_3$ (which corresponds to the set $\mathbf{I} = \{0, 2, 3\}$).
Similarly one defines $L_{\bf J}\leq GL_n$.

Any Levi group ${\bf L}$ contains the maximal torus $T_m\times T_n$ consisting of pairs of invertible diagonal matrices of size $m$ and $n$ respectively. 
The action of this torus induces a grading on ${\mathbb C}[{\sf Mat}_{m, n}]$ that assigns to the variable $z_{ij}$ the \emph{multidegree} $\vec \gamma_i+\vec \gamma_{m+j}\in {\mathbb Z}_{\geq 0}^{m+n}$, where $\vec \gamma_i$ is the standard basis vector. 
This is a multigrading in the sense of \cite[Definition~8.1]{Miller.Sturmfels}, which defines the concept as a homomorphism ${\mathrm{deg}}_A$ from the semigroup of exponent vectors to an abelian semigroup $A$.
Here, 
\[{\mathrm{deg}}_{\mathbb{Z}^{m+n}_{\geq 0}}\colon{\mathbb Z}^{mn}_{\geq 0}\to {\mathbb Z}^{m+n}_{\geq 0}\]
 is given by
\[\vec{\beta}_{ij}\mapsto \vec{\gamma}_i+\vec{\gamma}_{m+j}.\]
An abelian semigroup homomorphism  $\phi\colon A\to B$, 
induces a coarser grading 
\[{\mathrm{deg}}_B( - ):=\phi({\mathrm{deg}}_A(-)).\] 
The inclusion of a subtorus ${\bf T}\hookrightarrow T_m\times T_n$ induces such a coarsening homomorphism $\phi$. For example, consider the embedding of the $1$-dimensional subtorus
\[{\bf T}= \left\{\left(
\begin{bmatrix}t & 0 & \dots & 0\\
0 & t & \dots & 0\\
\vdots & \vdots & \ddots & \vdots\\
0 & 0 & \dots & t\end{bmatrix}
,\mathrm{Id}_n\right):t\in\complexes^\times\right\}\hookrightarrow T_m\times T_n.\]
Then $B = {\mathbb{Z}}_{\geq 0}$ and the induced coarsening $\phi:\integers^{m+n}_{\geq 0}\to\integers_{\geq 0}$ yields the standard grading on $\complexes[{\sf Mat}_{m, n}]$ where each variable has degree $1$.
For the ${\mathbb Z}^{m+n}_{\geq 0}$-multigrading, or any coarsening, the vector space ${\mathbb C}[{\sf Mat}_{m, n}]$ decomposes into a direct sum of graded components. 

Suppose now that $I\subseteq\complexes[{\sf Mat}_{m, n}]$ is a $\mathbf{L}$-stable ideal. 
Then both $I$ and $\complexes[{\sf Mat}_{m, n}]/I$ are $\mathbf{L}$-representations, and the vector space decomposition \eqref{eqn:basicdecomp} still holds as an isomorphism of $\mathbf{L}$-representations. 
The vector spaces $\init_\prec I$ and $\mathrm{span}_\complexes(\mathrm{Std}_\prec I)$ are not $\mathbf{L}$-representations in general, but they are always $T_m\times T_n$-representations, and the decomposition in \eqref{eqn:degendecomp} holds as an isomorphism of $T_m\times T_n$ representations. 
The crystal operators introduced in the next section will ultimately allow us to recover the $\mathbf{L}$-representation structure of \eqref{eqn:basicdecomp} from the $T_m\times T_n$-representation structure and combinatorics of \eqref{eqn:degendecomp} and \eqref{eqn:combdecomp}.  

\subsection{Main definitions}\label{subsec:maindef}
We recall the four 
\emph{bicrystal operators} 
\begin{equation}\label{eqn:thefour}
f_{i}^{\sf row}, e_i^{\sf row}, f_j^{\sf col}, e_j^{\sf col}: {\sf Mat}_{m,n}(\integers_{\geq 0})\to {\sf Mat}_{m,n}(\integers_{\geq 0})\cup \{\varnothing\}
\end{equation}
of M.~van Leeuwen and V.~I.~Danilov--G.~A.~Koshevoi \cite{bicrystal1, bicrystal2}. We start with $f_i^{\sf row}$.
Given $M\in {\sf Mat}_{m,n}(\integers_{\geq 0})$, its \emph{row word}, denoted ${\sf row}(M)$, is obtained by reading the nonzero entries of $M$ down columns, from left to right, and recording $M_{rc}$ copies of the row index $r$ for each entry $(r,c)$. 
Fix $1\leq i\leq m-1$. Compute the bracket sequence ${\sf bracket}_i({\sf row}(M))$ as in Example~\ref{exa:wordcrystal}, by replacing each $i$ with {\tt )} and each $i+1$
with  {\tt (}. Look for the rightmost unpaired {\tt )}; if this does not exist, output~$\varnothing$.\footnote{We follow the usual meaning of paired and unpaired brackets by working ``inside-out''. 
Identify any adjacent {\tt ()}, remove them; these are declared to be paired.
Continue this process until no such adjacent pairs remain. Any brackets that remain are declared unpaired.}
Otherwise this {\tt)} is associated to some nonzero entry $(r,c)$ in $M$. Now
$f_i^{\sf row}(M)$ is the matrix obtained by subtracting $1$ from $M_{r,c}$ and adding $1$ to $M_{r+1,c}$. Similarly, $e^{\sf row}_i(M)$
is defined by looking at the leftmost unpaired {\tt (} , associated to some $M_{rc}>0$, and doing the replacements
\[M_{rc}\mapsto M_{rc}-1, M_{r-1,c}\mapsto M_{r-1,c}+1.\] 
Finally, 
\[f_j^{\sf col}(M):=(f_j^{\sf row}(M^t))^t \text{\ and  
$e_j^{\sf col}(M):=(e_j^{\sf row}(M^t))^t$}.\]

\begin{example}\label{ex:bicrystalops}
    Let $M = \left[\begin{smallmatrix}
        1 & 1 & 2\\
        2 & 3 & 1
    \end{smallmatrix}\right]$. $M$ has row word $1221222112$. We compute $f_1^{\sf row}(M)$ by first computing ${\sf bracket}_1({\sf row}(M))$, replacing each $1$ in ${\sf row}(M)$ with a {\tt )} and each $2$ with a {\tt (}. This bracket sequence is {\tt \textcolor{red}{)}(()((())(}; the rightmost unmatched {\tt )} is highlighted in red. Since the 
    {\tt \textcolor{red}{)}} comes from $M_{11}$, $f_1^{\sf row}(M) =  \left[\begin{smallmatrix}
        0 & 1 & 2\\
        3 & 3 & 1
    \end{smallmatrix}\right]$.
\end{example}

\begin{remark}\label{remark:moving} In Section~\ref{bigproofabc}, it will be convenient to refer to  $f_{i}^{\sf row}(M)$ as \emph{moving from $(i,j)$} if the effect is to subtract $1$ from
$M_{i,j}$ and add $1$ to $M_{i+1,j}$. For instance, in Example~\ref{ex:bicrystalops}, $f_1^{\sf row}(M)$ moves
from $(1,1)$. We say $e_{i}^{\sf row}(M)$ is \emph{moving to $(i,j)$} if the operator
acts by subtracting $1$ from
$M_{i+1,j}$ and adding $1$ to $M_{i,j}$. We use analogous language for $f_{j}^{\sf col}$ and $e_{j}^{\sf col}$. 
\end{remark}

\begin{definition}\label{def:admissibleops}
  For a Levi datum $({\bf I}, {\bf J})$, the set of \emph{admissible bicrystal operators} is
  \[\{e^{\sf row}_i,f^{\sf row}_i,e_j^{\sf col},f_j^{\sf col} : i\not\in {\bf I}, j\not\in {\bf J}\}.\]
\end{definition}

\begin{definition}
A set ${\mathcal S}\subseteq {\sf Mat}_{m,n}({\mathbb Z}_{\geq 0})$ of matrices is
$({\mathbf I},{\mathbf J})$-\emph{bicrystal closed}  if, for any admissible crystal operator $\varphi$ and any $M\in {\mathcal S}$, 
$\varphi(M)\in {\mathcal S}\cup \{\varnothing\}$.
\end{definition}

\begin{definition}[{\cite[Definition 1.9]{AAA}}]\label{def:bicrystalline}
A $L_{\bf I}\times L_{\bf J}$-stable ideal $I\subseteq \complexes[{\sf Mat}_{m, n}]$ is $({\bf I},{\bf J},\prec)$-\emph{bicrystalline} if there exists a term order $\prec$ such that the set 
\[(\monos_\prec I)^c=\{M\in{\sf Mat}_{m, n}(\integers_{\geq 0}):z^M\in\mathrm{Std}_{\prec}I\}\] 
of exponent matrices of standard monomials is $(\mathbf{I}, \mathbf{J})$-bicrystal closed.\end{definition}

A collection of admissible bicrystal operators defines a graph structure on ${\sf Mat}_{m, n}(\integers_{\geq 0})$, similar to the graph on words shown in Figure~\ref{fig:July14aaa}. A subset $\mathcal{S}\subseteq {\sf Mat}_{m, n}$ is $(\mathbf{I},\mathbf{J})$-bicrystal closed if and only if every connected component of this graph is contained entirely in $\mathcal{S}$ or its complement. In particular, a Levi-stable ideal $I\subseteq{\sf Mat}_{m, n}$ is bicrystalline if and only if the set-theoretic decomposition \eqref{eqn:combdecomp} makes sense as a decomposition of crystal graphs. Lemma~\ref{lemma:SandSc} and Proposition~\ref{prop:reformulate} below formalize these notions.

\begin{lemma}\label{lemma:SandSc}
${\mathcal S}\subseteq {\sf Mat}_{m,n}({\mathbb Z}_{\geq 0})$
 is $(\mathbf{I}, \mathbf{J})$-bicrystal closed if and only if its complement ${\mathcal S}^c$ is
 $(\mathbf{I}, \mathbf{J})$-bicrystal closed.
\end{lemma}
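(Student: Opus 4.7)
The plan is to exploit the fact that the four admissible bicrystal operators come in mutually inverse pairs: $(f_i^{\sf row}, e_i^{\sf row})$ and $(f_j^{\sf col}, e_j^{\sf col})$. Concretely, if $\varphi \in \{f_i^{\sf row}, e_i^{\sf row}, f_j^{\sf col}, e_j^{\sf col}\}$ and $\varphi(M) = N \neq \varnothing$, then its partner $\varphi^*$ (swapping $e \leftrightarrow f$ with the same index and same row/col label) satisfies $\varphi^*(N) = M$.

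First, I would verify this inversion property for $(f_i^{\sf row}, e_i^{\sf row})$. Applying $f_i^{\sf row}$ converts the rightmost unmatched \texttt{)} of ${\sf bracket}_i({\sf row}(M))$ into a \texttt{(}, which corresponds to moving from some $(r,c)$ to $(r{+}1,c)$. After this move, that same position in the bracket sequence becomes the leftmost unmatched \texttt{(} of the new word (since the letters to its left are unchanged and the previously matched brackets to its right remain matched), so $e_i^{\sf row}$ reverses the move. The argument for the column operators follows by transposing. This is the standard Kashiwara fact that $e_i$ and $f_i$ are mutually inverse partial bijections on the underlying word crystal, transported along the row/column reading.

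Next, I would observe that the set of admissible operators from Definition~\ref{def:admissibleops} is closed under the $\varphi \mapsto \varphi^*$ map, since admissibility depends only on the index $i$ (or $j$) lying outside $\mathbf{I}$ (respectively $\mathbf{J}$), not on whether the operator raises or lowers.

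With these two observations, the lemma follows by contrapositive. Suppose $\mathcal{S}^c$ fails to be bicrystal closed: then there is an admissible $\varphi$ and an $M \in \mathcal{S}^c$ with $N := \varphi(M) \in \mathcal{S}$ (in particular $N \neq \varnothing$). Then $\varphi^*$ is admissible, $N \in \mathcal{S}$, and $\varphi^*(N) = M \notin \mathcal{S}$, contradicting bicrystal closedness of $\mathcal{S}$. Thus $\mathcal{S}$ bicrystal closed implies $\mathcal{S}^c$ bicrystal closed; the reverse implication is the same argument with $\mathcal{S}$ and $\mathcal{S}^c$ interchanged, using $(\mathcal{S}^c)^c = \mathcal{S}$. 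The only genuine content is the inversion property of the crystal operators, which is classical; everything else is set-theoretic bookkeeping.
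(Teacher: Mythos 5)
Your proposal is correct and follows essentially the same approach as the paper: the key point in both is that the admissible operators pair into mutual partial inverses ($f_i^{\sf row} \leftrightarrow e_i^{\sf row}$, $f_j^{\sf col} \leftrightarrow e_j^{\sf col}$), with admissibility preserved under the pairing, so an escape from $\mathcal{S}^c$ into $\mathcal{S}$ would be reversed into an escape from $\mathcal{S}$ into $\mathcal{S}^c$. The paper simply takes the inversion property as given by definition, whereas you spell out the verification via the bracket sequence; otherwise the arguments coincide.
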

\begin{proof}
This is immediate because the $f_i$ and $e_i$ operators are essentially inverses of one another by definition. More precisely,
\[f_{i}^{\sf row}(M)\neq \varnothing \implies e_i^{\sf row}(f_i^{\sf row}(M))=M\]
and
\[e_{i}^{\sf row}(M)\neq \varnothing \implies f_i^{\sf row}(e_i^{\sf row}(M))=M,\]
with the same statements holding when ``${\sf row}$'' is replaced by ``${\sf col}$''.
\end{proof}

\begin{proposition}\label{prop:reformulate}
A $L_{\bf I}\times L_{\bf J}$-stable ideal $I$ is $({\mathbf I},{\mathbf J}, \prec)$-bicrystalline if and only if $\monos_{\prec} I$ is $({\mathbf I},{\mathbf J})$-bicrystal closed.
\end{proposition}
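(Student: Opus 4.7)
The plan is to observe that this proposition is essentially a direct consequence of Definition~\ref{def:bicrystalline} combined with Lemma~\ref{lemma:SandSc}. Definition~\ref{def:bicrystalline} states that a $L_{\bf I}\times L_{\bf J}$-stable ideal $I$ is $(\mathbf{I},\mathbf{J},\prec)$-bicrystalline if and only if the complement $(\monos_\prec I)^c$ is $(\mathbf{I},\mathbf{J})$-bicrystal closed, using the identification \eqref{eqn:stdmonos} that $(\monos_\prec I)^c$ is exactly the set of exponent matrices of standard monomials. So the only task is to relate the bicrystal-closedness of $(\monos_\prec I)^c$ to that of $\monos_\prec I$ itself.

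That relation is precisely what Lemma~\ref{lemma:SandSc} provides: a set $\mathcal{S}\subseteq{\sf Mat}_{m,n}(\mathbb{Z}_{\geq 0})$ is $(\mathbf{I},\mathbf{J})$-bicrystal closed if and only if $\mathcal{S}^c$ is. Applying this with $\mathcal{S}=\monos_\prec I$ (so $\mathcal{S}^c=(\monos_\prec I)^c$) immediately gives the equivalence between bicrystal-closedness of the standard monomial exponents and bicrystal-closedness of the initial ideal exponents.

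Therefore the proof I would write is a one-line chain of equivalences: $I$ is $(\mathbf{I},\mathbf{J},\prec)$-bicrystalline iff (by Definition~\ref{def:bicrystalline}) $(\monos_\prec I)^c$ is $(\mathbf{I},\mathbf{J})$-bicrystal closed iff (by Lemma~\ref{lemma:SandSc}) $\monos_\prec I$ is $(\mathbf{I},\mathbf{J})$-bicrystal closed. There is no substantive obstacle here; the proposition is a convenient reformulation that allows subsequent arguments (e.g., test sets and the algorithm of Section~\ref{sec:alg}) to work directly with initial ideals rather than their complements. The only thing to double-check is that the use of \eqref{eqn:stdmonos} to identify $(\monos_\prec I)^c$ with exponent matrices of standard monomials matches the set appearing in Definition~\ref{def:bicrystalline}, which it does verbatim.
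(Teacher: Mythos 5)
Your proof is correct and matches the paper's argument exactly: the paper also derives the proposition as an immediate consequence of Lemma~\ref{lemma:SandSc} together with the identification in \eqref{eqn:stdmonos}.
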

\begin{proof}
Immediate from combining Lemma~\ref{lemma:SandSc} with \eqref{eqn:stdmonos}. \end{proof}

We have shown that when $I$ is bicrystalline, both $\monos_\prec I$ and its complement inside ${\sf Mat}_{m, n}(\integers_{\geq0})$ admit an explicit crystal structure. In Section~\ref{sec:prelim} we recall how a crystal-theoretic decomposition of \eqref{eqn:combdecomp} reflects the representation-theoretic decomposition of \eqref{eqn:basicdecomp}, leading to our explicit combinatorial rules for the irreducible multiplicities of $\complexes[{\sf Mat}_{m, n}]/I$ and $I$ in Theorem~\ref{thm:LRrule}. Until then, we focus on determining whether or not a given Levi-stable ideal is bicrystalline.

\begin{example}[A non-bicrystalline ideal]\label{exa:not-bicrystalline}
Let 
\[I=\langle z_{11}z_{23}-z_{13}z_{21}\rangle \subset {\mathbb C}[{\sf Mat}_{2,3}].\] 
This ideal has a 
$GL_2\times T_3$ action. Now, 
\[z^{\left[\begin{smallmatrix} 0 & 1 & 1 \\ 1 & 0 & 0\end{smallmatrix}\right]}\in {\mathrm{init}}_{\antidiag}I=\langle z_{13}z_{21}\rangle\]  but 
\[f_1^{{\sf row}}\left(\left[\begin{matrix} 0 & 1 & 1 \\ 1 & 0 & 0\end{matrix}\right]\right)=
\left[\begin{matrix} 0 & 1 & 0 \\ 1 & 0 & 1\end{matrix}\right]\not\in
{\mathrm{init}}_{\antidiag}I,\]
and hence $I$ is not $(\{0,2\}, \{0,1,2,3\}, \antidiag)$-bicrystalline by Proposition~\ref{prop:reformulate}.
The other initial ideal of $I$ is 
\[{\mathrm{init}}_{\diag}(I)=\langle z_{11}z_{23}\rangle\] 
and $z^{\left[\begin{smallmatrix} 1 & 0 & 0 \\ 0 & 0 & 1\end{smallmatrix}\right]}$ witnesses that
it is not $(\{0,2\}, \{0,1,2,3\}, \diag)$-bicrystalline either.
\end{example}

\begin{example}[Another non-bicrystalline ideal]\label{ex:nonbicrystallineMRV}
    Let $$I = \left\langle z_{11}, z_{41},\begin{vmatrix}
        z_{11} & z_{12} & z_{13}\\
        z_{21} & z_{22} & z_{23}\\
        z_{31} & z_{32} & z_{33}
    \end{vmatrix},\begin{vmatrix}
        z_{21} & z_{22} & z_{23}\\
        z_{31} & z_{32} & z_{33}\\
        z_{41} & z_{42} & z_{43} 
    \end{vmatrix}\right\rangle\subset \mathbb{C}[{\sf Mat}_{4,4}].$$ This ideal carries a $(GL_1\times GL_2\times GL_1)\times (GL_1\times GL_2\times GL_1)$ action. The variety defined by this ideal is an example of a \textit{matrix Richardson variety} (see Example~\ref{exa:MRI} and the accompanying footnote). Using Algorithm~\ref{thebigalg}, one calculates that $I$ is not bicrystalline under any term order. Moreover, by contrast with the ideal of Example~\ref{exa:not-bicrystalline}, there is no pair of permutations $\sigma,\tau\in {\mathfrak S}_4$ such that the ideal $$I_{\sigma,\tau} = \left\langle z_{\sigma(1)\tau(1)},z_{\sigma(4)\tau(1)},\begin{vmatrix}
        z_{\sigma(1)\tau(1)} & z_{\sigma(1)\tau(2)} & z_{\sigma(1)\tau(3)}\\
        z_{\sigma(2)\tau(1)} & z_{\sigma(2)\tau(2)} & z_{\sigma(2)\tau(3)}\\
        z_{\sigma(3)\tau(1)} & z_{\sigma(3)\tau(2)} & z_{\sigma(3)\tau(3)}
    \end{vmatrix},\begin{vmatrix}
        z_{\sigma(2)\tau(1)} & z_{\sigma(2)\tau(2)} & z_{\sigma(2)\tau(3)}\\
        z_{\sigma(3)\tau(1)} & z_{\sigma(3)\tau(2)} & z_{\sigma(3)\tau(3)}\\
        z_{\sigma(4)\tau(1)} & z_{\sigma(4)\tau(2)} & z_{\sigma(4)\tau(3)}
    \end{vmatrix}\right\rangle$$ is bicrystalline for any non-torus Levi group $L_{\bf I}\times L_{\bf J}$ acting on $I_{\sigma,\tau}$.
\end{example}

In most examples of this paper, ideals are bicrystalline under $\antidiag$. Here is a naturally arising example where that is not the case:

\begin{example}[Bicrystalline only for $\diag$]\label{ex:diagbicrystal}  Consider the following ideal, which cuts out a ``fat point'' in ${\sf Mat}_{2, 2}$:
\begin{align*}
    I &=  \left\langle z_{11}^2, z_{11}z_{12}, z_{12}^2, z_{11}z_{21}, z_{21}^2, z_{21}z_{22}, z_{12}z_{22}, z_{22}^2, z_{11}z_{22} + z_{12}z_{21}\right\rangle\subseteq
    {\mathbb C}[{\sf Mat}_{2,2}].
\end{align*} 
In the notation of Definition~\ref{def:StevenSamideals}, $I$ is the $GL_2\times GL_2$-stable ideal $I_{\ydiags{2}}$. The generators of $I$ form a Gr{\"o}bner basis under any term order. Now
\[\left[\begin{matrix}
    1 & 1\\
    0 & 0
\end{matrix}\right]\in \monos_{\antidiag}(I) \text{ \ but \ } f_1^{\sf row}\left(\left[\begin{matrix}
    1 & 1\\
    0 & 0
\end{matrix}\right]\right) = \left[\begin{matrix}
    1 & 0\\
    0 & 1
\end{matrix}\right]\not\in \monos_{\antidiag}I,\]
thus witnessing that $I$ is not $(\{0,2\},\{0,2\}, \antidiag)$-bicrystalline. 

For fat point ideals, ${\mathrm{Std}}_{\prec}I$ is a finite set. One sees that $I$ is $(\{0,2\},\{0,2\},\diag)$-bicrystalline
from Definition~\ref{def:bicrystalline} by checking that the set of
six monomials that comprise ${\mathrm{Std}}_{\diag} I$ is $(\{0,2\},\{0,2\})$-bicrystal closed. 
\end{example}

\begin{example}[Degenerate case]
If $L_{\bf I}\times L_{\bf J}=T_m\times T_n$, i.e., $({\bf I},{\bf J})=(\{0,1,\dots,m\},\{0,1,\dots,n\})$ then, since there are no admissible operators, every $T_m\times T_n$-stable ideal is bicrystalline with respect to any $\prec$.
\end{example}

Deciding if $I$ is bicrystalline amounts, \emph{a priori}, to checking an infinite set of conditions. To address this decision
problem, we introduce \emph{test sets}. Theorem~\ref{thm:firstmain} 
shows that they provide a finite certificate for (non)bicrystallinity. In Section~\ref{sec:alg}, we describe an algorithm to construct test sets for arbitrary ideals, resolving the decision problem.

\begin{definition}[Test sets]\label{def:test}
Let $\varphi$ be an admissible bicrystal operator for $({\bf I},{\bf J})$. A finite set 
\[\mathcal{M}(I,\prec,\varphi)\subseteq \monos_\prec I\] 
of nonnegative integer matrices is a \emph{test set} for $(I,\prec,\varphi)$ if for every $M\in\monos_\prec I$ such that $\varphi(M)\neq \varnothing$,
there exists $N\in \mathcal{M}(I,\prec,\varphi)$ such that $\varphi(N)\neq\varnothing$, $z^N$ divides $z^M$, and $z^{\varphi(N)}$ divides $z^{\varphi(M)}$. 
\end{definition}

\begin{example}[Powers of the irrelevant ideal]\label{exa:irrelevantid}
The only $GL_m\times GL_n$ invariant monomial ideals $I\subseteq {\mathbb C}[{\sf Mat}_{m,n}]$ are 
$I={\mathfrak m}^d$ where 
\[{\mathfrak m}=\langle z_{ij}:1\leq i\leq m, \, 1\leq j\leq n\rangle\] 
is the irrelevant ideal.\footnote{Since for any generic matrix pair $(g, h)\in GL_m\times GL_n$ and degree-$d$ monomial $z^M$, $(g, h)\cdot z^M$ contains all monomials of degree $d$. Then
the proof of the $n=1$ case in \cite[Corollary~2.2]{Miller.Sturmfels} generalizes verbatim.} 
$I$ is minimally generated by the collection of degree $d$ monomials and, trivially, these generators form a Gr\"obner basis with respect to any term order $\prec$. 
Clearly, their exponent matrices form a test set ${\mathcal M}(I,\prec,\varphi)$ with respect to any admissible operator
\[\varphi\in \{f_i^{\sf row},e_i^{\sf row},f_j^{\sf col},e_j^{\sf col}:i\in [m-1],j\in [n-1]\}.\]
\end{example}

In general, the leading terms of a Gr\"obner basis do not form a test set:

\begin{example}[Space of singular matrices]\label{exa:singularmatrices}
 Let $I\subseteq {\mathbb C}[{\sf Mat}_{n,n}]$ be the principal ideal generated by the determinant $\det$ of the
 generic $n\times n$ matrix~$Z$. For each term ${\bf m} = z^M$ of $\det$, pick $\prec_{\mathbf{m}}$ so that 
 $\pm{\bf m}={\mathrm{init}}_{\prec_{\mathbf{m}}}(\det)$. If $\prec_{\mathbf{m}}$ is not $\antidiag$, there exist two consecutive rows $i,i+1$ such that the $1$'s of $M$ in these rows are placed northwest to southeast. It is then easy to check that 
$f_{i}^{\sf row}(M)\not\in\monos_{\prec_{\mathbf{m}}} I$. Thus $I$ is not bicrystalline for these $\prec_{\mathbf{m}}$ by Proposition~\ref{prop:reformulate}.

For $\antidiag$, ${\bf m}=z_{n1}z_{n-1,2}\cdots z_{1n}$. In this case, $\varphi(M) = \varnothing$ for any admissible $\varphi$. Thus $\{M\}$ does not form a test set for any $\varphi$, since, e.g., the monomial $z^N = z_{n1}z_{n-1,2}\cdots z_{2,n-1}z_{1n}^2$ lies in $\init_\prec I$ and $f_1^{\sf row}(N)\neq \varnothing$. Nevertheless, $I$ \emph{is} bicrystalline for $\antidiag$, and there is a finite test set to establish this (see Theorem~\ref{thm:generalizeddetideals}). 

To be fully concrete, let $n=2$. Then  
\[I=\left\langle\left|\begin{matrix} z_{11} & z_{12}\\ z_{21} & z_{22}\end{matrix}\right|\right\rangle\subseteq {\mathbb C}[{\sf Mat}_{2,2}].\] 
Explicit computation produces the following test set:
\begin{equation}\label{eqn:mintestset}
{\mathcal M}(I,\antidiag,f_1^{\sf row} )=\left\{ \begin{bmatrix}1 & 1\\ 1 & 0\end{bmatrix}, \begin{bmatrix}0 & 2\\ 1 & 0\end{bmatrix}\right\}.
\end{equation}
\end{example}

\begin{theorem}\label{thm:firstmain}
Fix a collection of test sets $\{{\mathcal M}(I,\prec,\varphi):\varphi \text{ \ is $({\bf I},{\bf J})$-admissible}\}$ for a $L_\mathbf{I}\times L_\mathbf{J}$-stable ideal $I$.
Then $I$ is $({\mathbf I}, {\mathbf J}, \prec)$-bicrystalline if and only if for every
$({\bf I},{\bf J})$-admissible $\varphi$,
\begin{equation}
\label{eqn:May25abc}
\varphi(N)\in \monos_{\prec}I\cup\{\varnothing\}, \text{ \ for every $N\in \mathcal{M}(I,\prec,\varphi)$.}
\end{equation}
\end{theorem}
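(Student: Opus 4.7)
The plan is to prove the two directions separately, with the reverse direction being the one that actually uses the test-set hypothesis.

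For the forward direction ($\Rightarrow$), I would invoke Proposition~\ref{prop:reformulate}: if $I$ is $(\mathbf{I},\mathbf{J},\prec)$-bicrystalline, then $\monos_\prec I$ is $(\mathbf{I},\mathbf{J})$-bicrystal closed. Since every test set satisfies $\mathcal{M}(I,\prec,\varphi)\subseteq \monos_\prec I$ by definition, applying $\varphi$ to any $N\in \mathcal{M}(I,\prec,\varphi)$ lands in $\monos_\prec I\cup\{\varnothing\}$, which is exactly \eqref{eqn:May25abc}.

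For the reverse direction ($\Leftarrow$), by Proposition~\ref{prop:reformulate} it suffices to show that $\monos_\prec I$ is $(\mathbf{I},\mathbf{J})$-bicrystal closed, i.e., for every admissible $\varphi$ and every $M\in\monos_\prec I$, we have $\varphi(M)\in\monos_\prec I\cup\{\varnothing\}$. Fix such $M$ and $\varphi$, and assume $\varphi(M)\neq\varnothing$. By the defining property of the test set (Definition~\ref{def:test}), there exists $N\in\mathcal{M}(I,\prec,\varphi)$ with $\varphi(N)\neq\varnothing$ and with the two divisibilities $z^N\mid z^M$ and $z^{\varphi(N)}\mid z^{\varphi(M)}$.

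The decisive step is then the observation that $\init_\prec I$ is a \emph{monomial} ideal, hence closed under taking multiples of its elements. By hypothesis \eqref{eqn:May25abc}, $\varphi(N)\in\monos_\prec I$, i.e., $z^{\varphi(N)}\in\init_\prec I$. Combined with $z^{\varphi(N)}\mid z^{\varphi(M)}$, this forces $z^{\varphi(M)}\in\init_\prec I$, so $\varphi(M)\in\monos_\prec I$ as required. (The divisibility $z^N\mid z^M$ is not even needed in the argument as stated; it is part of the test-set specification so that such $N$ always exists, and will be essential in Section~\ref{sec:alg} for the algorithmic construction.)

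I do not expect a real obstacle in the proof itself: once the definitions are aligned, the whole argument reduces to the one-line fact that initial ideals are monomial ideals and therefore closed under divisibility. The substantive content of the theorem lies not in the equivalence but in the finiteness of test sets, i.e., in knowing that an effective finite certificate exists at all; that is deferred to Theorem~\ref{thm:testsetalg}.
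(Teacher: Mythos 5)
Your proof is correct and follows the same route as the paper's: the forward direction quotes Proposition~\ref{prop:reformulate} and the containment $\mathcal{M}(I,\prec,\varphi)\subseteq\monos_\prec I$, and the reverse direction uses Definition~\ref{def:test} to produce $N$ and then passes from $\varphi(N)\in\monos_\prec I$ to $\varphi(M)\in\monos_\prec I$ via divisibility. You simply spell out the one implicit fact — that $\init_\prec I$ is a monomial ideal and hence closed under taking monomial multiples — which the paper leaves tacit; your parenthetical that $z^N\mid z^M$ is not logically used in this argument (only in ensuring $N$ exists and in the algorithm of Section~\ref{sec:alg}) is an accurate observation.
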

\begin{proof} ($\Rightarrow$) If $I$ is $(\mathbf{I}, \mathbf{J}, \prec)$-bicrystalline, then by Proposition~\ref{prop:reformulate} the set $\monos_\prec I$ is $(\mathbf{I}, \mathbf{J})$-bicrystal closed. 
The condition \eqref{eqn:May25abc} follows immediately, since each test set $\mathcal{M}(I, \prec, \varphi)$ is a subset of $\monos_\prec I$ by definition.

\noindent
($\Leftarrow$) Conversely, suppose \eqref{eqn:May25abc} holds.  By Proposition~\ref{prop:reformulate}, it suffices to show that $\monos_{\prec}I$ is $({\bf I}, {\bf J})$-bicrystal closed. Let $M\in \monos_{\prec}I$ be given, and fix an admissible operator $\varphi$. If $\varphi(M)=\varnothing$ there is nothing to check, so we may assume
$\varphi(M)\neq \varnothing$. By Definition~\ref{def:test}, there exists 
\[N\in \mathcal{M}(I,\prec,\varphi) \subseteq \monos_{\prec}I\] 
such that $z^N|z^M$, $\varnothing\neq \varphi(N)$, and  $z^{\varphi(N)}|z^{\varphi(M)}$. Since we are assuming
$\varphi(N)\in \monos_{\prec}I$, we see that $\varphi(M)\in \monos_{\prec}I$ as desired. 
\end{proof}

\begin{theorem}[The minimal test set]\label{thm:minimality} Let ${\varphi}$ be an $({\bf I},{\bf J})$-admissible operator.
\begin{itemize}
\item[(I)] $\mathcal{M}(I,\prec,\varphi)$ is minimal (with respect to containment) if and only if 
for all $N\in \mathcal{M}(I,\prec,\varphi)$, $\varphi(N) \neq \varnothing$ and there does not exist any $N'\neq N$ in $\mathcal{M}(I,\prec,\varphi)$ such that $z^{N'} | z^N$ and $z^{\varphi(N')} | z^{\varphi(N)}$. 
\item[(II)] The collection of all test sets for a given $(I,\prec,\varphi)$, partially ordered by containment, contains a unique minimal element $\mathcal{M}_{\mathrm{min}}(I,\prec,\varphi)$.
\end{itemize}
\end{theorem}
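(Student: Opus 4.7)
The plan for part (I) is to verify both directions of the biconditional by analyzing whether removing a single element from $\mathcal{M}$ preserves the test-set property. For the sufficiency direction, I would show that if the stated condition holds, then removing any $N_0 \in \mathcal{M}$ breaks the test-set property at $M := N_0$ itself: we have $\varphi(N_0) \neq \varnothing$ by hypothesis, yet by hypothesis no $N' \in \mathcal{M} \setminus \{N_0\}$ satisfies both $z^{N'} \mid z^{N_0}$ and $z^{\varphi(N')} \mid z^{\varphi(N_0)}$. For necessity, I would argue the contrapositive by considering how the condition can fail: if some $N_0 \in \mathcal{M}$ has $\varphi(N_0) = \varnothing$, then $\mathcal{M} \setminus \{N_0\}$ remains a test set, since the witnesses required by Definition~\ref{def:test} must have nonzero image under $\varphi$ and so never include $N_0$; otherwise distinct $N, N' \in \mathcal{M}$ exist with $z^{N'}\mid z^N$ and $z^{\varphi(N')}\mid z^{\varphi(N)}$, in which case transitivity of divisibility lets $N'$ replace $N$ as a witness for everything $N$ used to witness, so $\mathcal{M}\setminus \{N\}$ is still a test set.

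For part (II), I plan to exhibit a canonical candidate and then prove it equals every minimal test set. Define
\[\mathcal{M}_{\mathrm{min}}(I,\prec,\varphi) := \{N \in \monos_\prec I : \varphi(N)\neq \varnothing \text{ and } (N,\varphi(N)) \text{ is minimal in } S\},\]
where $S := \{(N, \varphi(N)) : N \in \monos_\prec I,\ \varphi(N) \neq \varnothing\} \subseteq \mathbb{Z}_{\geq 0}^{mn}\times \mathbb{Z}_{\geq 0}^{mn}$ is partially ordered by coordinatewise divisibility. Dickson's lemma applied to $S \subseteq \mathbb{Z}_{\geq 0}^{2mn}$ gives the finiteness of $\mathcal{M}_{\mathrm{min}}$. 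The divisibility order on $\mathbb{Z}_{\geq 0}^{2mn}$ is well-founded, so every element of $S$ dominates some minimal element of $S$; applied to $(M,\varphi(M))$ for any $M \in \monos_\prec I$ with $\varphi(M)\neq \varnothing$, this produces a witness $N \in \mathcal{M}_{\mathrm{min}}$, confirming that $\mathcal{M}_{\mathrm{min}}$ is a test set. By construction it satisfies the criterion from part (I), so it is minimal.

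For uniqueness, I would establish two inclusions against an arbitrary minimal test set $\mathcal{M}$. For $\mathcal{M}_{\mathrm{min}} \subseteq \mathcal{M}$: the test-set property of $\mathcal{M}$ applied to $M := N \in \mathcal{M}_{\mathrm{min}}$ supplies a witness $N' \in \mathcal{M}$, and the minimality of $(N,\varphi(N))$ in $S$ forces $N' = N$. For $\mathcal{M} \subseteq \mathcal{M}_{\mathrm{min}}$: if some $N \in \mathcal{M}$ were not in $\mathcal{M}_{\mathrm{min}}$, the well-foundedness argument would produce $N_* \in \mathcal{M}_{\mathrm{min}} \subseteq \mathcal{M}$ with $(N_*,\varphi(N_*)) < (N,\varphi(N))$ strictly, contradicting the characterization of minimality from part (I).

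The main obstacle is the care required in part (II) to set up Dickson's lemma in the correct ambient space: one must work in $\mathbb{Z}_{\geq 0}^{2mn}$ tracking pairs $(N,\varphi(N))$ rather than $\mathbb{Z}_{\geq 0}^{mn}$ tracking only $N$, since a single $N$ can serve as a witness only when both the source and target divisibilities hold. Once that framework is in place, the remaining combinatorial arguments are routine.
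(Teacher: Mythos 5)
Your proof of part (I) is essentially the same as the paper's: both directions are handled by checking whether removing a single element from $\mathcal{M}$ preserves or destroys the test-set property, with transitivity of divisibility doing the work for the contrapositive.

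For part (II), you take a genuinely different route. The paper proves existence by forward-referencing Algorithm~\ref{thebigalg} and Theorem~\ref{thm:testsetalg} (stated in Section~\ref{sec:alg}), then establishes uniqueness by assuming two distinct minimal test sets and extracting a contradiction by shuttling witnesses back and forth between them. You instead give an explicit, algorithm-free description: $\mathcal{M}_{\mathrm{min}}$ consists of those $N$ for which $(N,\varphi(N))$ is minimal under componentwise divisibility in $\mathbb{Z}_{\geq 0}^{2mn}$, with finiteness from Dickson's lemma and the test-set property from well-foundedness. Uniqueness then becomes a two-containment check against an arbitrary minimal test set, using part~(I). Both arguments are correct. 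Your version has the advantage of being self-contained (no forward reference to the algorithm) and of supplying an intrinsic characterization of the unique minimal test set that the paper never makes explicit; the paper's version keeps everything tied to the constructive/algorithmic machinery it develops next. One point you correctly anticipated and handled: the partial order must live on pairs $(N,\varphi(N))$ in $\mathbb{Z}_{\geq 0}^{2mn}$, not just on $N$ in $\mathbb{Z}_{\geq 0}^{mn}$, since divisibility of $z^N$ alone does not propagate to divisibility of $z^{\varphi(N)}$. (Note also that you implicitly use part~(I) to deduce $\varphi(N)\neq\varnothing$ for all $N$ in a minimal test set when proving $\mathcal{M}\subseteq\mathcal{M}_{\mathrm{min}}$; that is fine, but worth making explicit.)
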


\begin{proof}
(I): Let $\mathcal{M}(I,\prec,\varphi)$ be a minimal test set. If there exists an $N\in\mathcal{M}(I,\prec,\varphi)$ such that $\varphi(N) = \varnothing$, then $\mathcal{M}(I,\prec,\varphi)\smallsetminus\{N\}$ is, by definition, also a test set, a contradiction. Moreover, if there exists 
\[N,N'\in\mathcal{M}(I,\prec,\varphi) \text{\ such that $N'\neq N$, $z^{N'} | z^{N}$, and $z^{\varphi(N')}|z^{\varphi(N)}$},\] then for any 
\[M\in \monos_\prec I \text{ with $z^N | z^M$ and $z^{\varphi(N)}|z^{\varphi(M)}$},\] 
we have that $z^{N'} | z^M$ and $z^{\varphi(N')} | z^{\varphi(M)}$. So, $\mathcal{M}(I,\prec,\varphi)\smallsetminus \{N\}$ is a test set, a contradiction. Thus, $\mathcal{M}(I,\prec,\varphi)$ must have the desired property.   

Conversely, assume $\mathcal{M}(I,\prec,\varphi)$ has the stated property. Let 
\[\mathcal{M}'(I,\prec,\varphi) = \mathcal{M}(I,\prec,\varphi)\smallsetminus \{N\}\] for some $N\in \mathcal{M}(I,\prec,\varphi)$. There exists no 
\[N'\in \mathcal{M}(I,\prec,\varphi) \text{\ such that $z^{N'} | z^N$ and $z^{\varphi(N')}|z^{\varphi(N)}$}.\] 
Hence $\mathcal{M}'(I,\prec,\varphi)$ is not a test set. Thus, since $N$ was arbitrary, $\mathcal{M}(I,\prec,\varphi)$ is minimal.

(II):  Algorithm~\ref{thebigalg} and Theorem~\ref{thm:testsetalg}, stated and proved in Section~\ref{sec:alg}, establish that
at least one test set exists. If there is only one test set, we are done. Otherwise, to obtain a contradiction, suppose there are two different minimal test sets $\mathcal{M}(I,\prec,\varphi),\mathcal{M}'(I,\prec,\varphi)$. 
Then there exists 
$N\in \mathcal{M}(I,\prec,\varphi) \smallsetminus \mathcal{M}'(I,\prec,\varphi)$.
Since $\mathcal{M}'(I,\prec,\varphi)$ is a test set, there exists some 
    \begin{equation}
    \label{eqn:June22bcd}
    N'\in \mathcal{M}'(I,\prec,\varphi) \text{ with $z^{N'} | z^N$ and $z^{\varphi(N')} | z^{\varphi(N)}$}.
    \end{equation} 
    Since $N\not\in \mathcal{M}'(I,\prec,\varphi)$, 
    \begin{equation}\label{eqn:June22abc}
    N\neq N'.
    \end{equation} 

Likewise, there exists some
\begin{equation}\label{eqn:June22bcd2}
N''\in \mathcal{M}(I,\prec,\varphi) \text{ with $N'' | N'$ and $z^{\varphi(N'')} | z^{\varphi(N')}$}.
\end{equation} 
By \eqref{eqn:June22bcd} and \eqref{eqn:June22bcd2} combined, 
\begin{equation}
\label{eqn:June22bcd3}
z^{N''}|z^N  \text{ \ and $z^{\varphi(N'')}|z^{\varphi(N)}$.}
\end{equation}
Since we assumed $\mathcal{M}(I,\prec,\varphi)$ is minimal, 
by \eqref{eqn:June22bcd3} and (I),  we know $N'' = N$. Thus by \eqref{eqn:June22bcd2} we have $z^N | z^{N'}$ and,
by \eqref{eqn:June22bcd}, $z^{N'} | z^N$. Together, we see that $N' = N$, 
contradicting~\eqref{eqn:June22abc}.
\end{proof}

\begin{example}
Continuing Example~\ref{exa:singularmatrices}, let $\varphi=f^{\sf row}_1$. 
Any 
\[M=\begin{bmatrix} a & b\\ c &d\end{bmatrix} \in \monos_{\antidiag} I\] has $bc\neq 0$.
If we assume $\varphi(M)\neq \varnothing$ then either $a>0$ or ($a=0$ and $b>c$). In the first case, the associated test set element needed is $N_1=\left[\begin{smallmatrix} 1 & 1\\ 1 &0\end{smallmatrix}\right]$ and in the second case it is
$N_2=\left[\begin{smallmatrix} 0 & 2\\ 1 &0\end{smallmatrix}\right]$. Hence
\[{\mathcal M}_{{\mathrm{min}}}(I,\antidiag,f^{\sf row}_1)=\{N_1,N_2\},\] 
the test set from \eqref{eqn:mintestset}.
\end{example}

Under certain conditions, the union of two test sets is a test set. We record this fact for later use in Section~\ref{sec:in-RSK}.

\begin{proposition}\label{prop:gbuniontestsets}
    Let $J,K\subseteq\mathbb{C}[{\sf Mat}_{m,n}]$ be ideals. Fix a term order $\prec$ and admissible bicrystal operator $\varphi$, let $\mathcal{M}(J,\prec,\varphi), \mathcal{M}(K,\prec,\varphi)$ be test sets, and let $I := J+K$. If
    \[\init_\prec I = \init_\prec J+\init_\prec K,\] 
     then $\mathcal{M}(J,\prec,\varphi)\cup \mathcal{M}(K,\prec,\varphi)$ is a test set. Thus $I$ is $({\bf I}, {\bf J}, \prec)$-bicrystalline if $J$ and $K$ are.
\end{proposition}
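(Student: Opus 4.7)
The plan is to prove the two assertions in sequence, with the crucial intermediate observation being that the hypothesis on initial ideals translates into a clean set-theoretic identity on monomial exponents.

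First, I would record the key consequence of the hypothesis. Since $\init_\prec J$ and $\init_\prec K$ are monomial ideals, their sum $\init_\prec J + \init_\prec K$ is again a monomial ideal, and a monomial $z^M$ lies in it precisely when $z^M \in \init_\prec J$ or $z^M \in \init_\prec K$. Combined with the hypothesis $\init_\prec I = \init_\prec J + \init_\prec K$, this gives
\[
\monos_\prec I \;=\; \monos_\prec J \,\cup\, \monos_\prec K.
\]
In particular both $\mathcal{M}(J,\prec,\varphi)$ and $\mathcal{M}(K,\prec,\varphi)$ sit inside $\monos_\prec I$, so their union is a finite subset of $\monos_\prec I$ as required by Definition~\ref{def:test}.

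Next I would verify the test set property itself. Let $M\in\monos_\prec I$ with $\varphi(M)\neq\varnothing$. By the displayed identity above, without loss of generality $M\in\monos_\prec J$. Since $\mathcal{M}(J,\prec,\varphi)$ is a test set for $(J,\prec,\varphi)$, there exists $N\in\mathcal{M}(J,\prec,\varphi)$ with $\varphi(N)\neq\varnothing$, $z^N\mid z^M$, and $z^{\varphi(N)}\mid z^{\varphi(M)}$. This $N$ lies in $\mathcal{M}(J,\prec,\varphi)\cup\mathcal{M}(K,\prec,\varphi)$, which is therefore a test set for $(I,\prec,\varphi)$.

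Finally, for the bicrystalline conclusion, suppose $J$ and $K$ are both $(\mathbf{I},\mathbf{J},\prec)$-bicrystalline. Note that $I=J+K$ is again $L_\mathbf{I}\times L_\mathbf{J}$-stable. Fix any admissible $\varphi$ and any $N$ in the union test set. If $N\in\mathcal{M}(J,\prec,\varphi)$, then by Theorem~\ref{thm:firstmain} applied to $J$ we have $\varphi(N)\in\monos_\prec J\cup\{\varnothing\}\subseteq\monos_\prec I\cup\{\varnothing\}$, and symmetrically if $N\in\mathcal{M}(K,\prec,\varphi)$. Hence the hypothesis \eqref{eqn:May25abc} of Theorem~\ref{thm:firstmain} is satisfied for $I$ with respect to the union test set, and we conclude that $I$ is $(\mathbf{I},\mathbf{J},\prec)$-bicrystalline.

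There is no real obstacle to this argument; the only subtle point is the monomial-ideal reasoning that yields $\monos_\prec I=\monos_\prec J\cup\monos_\prec K$, without which the union of test sets need not even be contained in $\monos_\prec I$. The hypothesis on initial ideals is exactly what makes this reasoning go through.
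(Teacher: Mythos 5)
Your proof is correct and takes essentially the same approach as the paper's, which also reduces to the observation that $M\in\monos_\prec I$ implies $M\in\monos_\prec J$ or $M\in\monos_\prec K$ and then invokes the test set property for whichever of $J$, $K$ applies. The only difference is that you spell out the monomial-ideal identity $\monos_\prec I=\monos_\prec J\cup\monos_\prec K$ and the appeal to Theorem~\ref{thm:firstmain} more explicitly, which the paper leaves implicit; this is a welcome clarification rather than a different argument.
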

\begin{proof}
    Let $M\in \monos_\prec I$. Then either $M\in \monos_\prec J$ or $M\in \monos_\prec K$. So, 
    assuming $\varphi(M)\neq \varnothing$,
    either there exists some $N\in\mathcal{M}(J,\prec,\varphi)$ with $\varphi(N)\neq\varnothing$, $z^N | z^M$, and $z^{\varphi(N)} | z^{\varphi(M)}$, or there exists some $N'\in\mathcal{M}(K,\prec,\varphi)$ with $\varphi(N')\neq\varnothing$, $z^{N'} | z^{M}$, and $z^{\varphi(N')} | z^{\varphi(M)}$. 
 Thus $\mathcal{M}(J,\prec,\varphi)\cup \mathcal{M}(K,\prec,\varphi)$ is a test set. 
 By Theorem~\ref{thm:firstmain} and the fact that $I = J+K$ is $L_{\bf I}\times L_{\bf J}$-stable,
 $I$ is $({\bf I}, {\bf J}, \prec)$-bicrystalline
  if $J$ and $K$ are.
\end{proof}

\section{The bicrystalline property is decidable}\label{sec:alg}

This section provides an effective algorithm to construct a test set for an arbitrary ideal $I$ and term order $\prec$. It allows us to decide if $I$ is bicrystalline for any given term order. 

\begin{theorem}[Bicrystalline algorithm]\label{thm:main}
Given generators $\mathcal{G} = \{g_1,\dots, g_r\}$ for an ideal $I \subseteq \mathbb{C}[{\sf Mat}_{m,n}]$ and Levi datum $(\mathbf{I}, \mathbf{J})$,
there exists a finite algorithm to decide whether there exists a term order $\prec$ such that 
$I$ is $(\mathbf{I}, \mathbf{J}, \prec)$-bicrystalline, or to decide whether $I$ is $(\mathbf{I}, \mathbf{J}, \prec)$-bicrystalline for a given term order $\prec$.
\end{theorem}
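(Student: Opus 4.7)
The plan is to reduce the bicrystallinity decision to a finite computation using the test set machinery of Definition~\ref{def:test} and Theorem~\ref{thm:firstmain}.

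First I would handle the fixed-$\prec$ version. Given generators $\mathcal{G}$ of $I$ and a term order $\prec$, run Buchberger's algorithm to compute a Gr\"obner basis, which produces an explicit finite set $\{z^{M_1},\ldots,z^{M_r}\}$ generating $\init_\prec I$. This makes membership in $\monos_\prec I$ algorithmically decidable: check whether some $M_j$ divides $M$ entrywise. Next, for each of the finitely many admissible operators $\varphi$ in \eqref{eqn:thefour}, invoke Algorithm~\ref{thebigalg} (whose correctness will be established as Theorem~\ref{thm:testsetalg}) to construct a finite test set $\mathcal{M}(I,\prec,\varphi)$. Then apply Theorem~\ref{thm:firstmain}: verify $\varphi(N)\in \monos_\prec I \cup \{\varnothing\}$ for every $N$ in every test set. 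Since the admissible operators are finite in number and each test set is finite, this is a finite sequence of divisibility checks that settles bicrystallinity for the given~$\prec$.

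For the existential version, I would appeal to the theory of the \emph{Gr\"obner fan} of $I$: as $\prec$ ranges over all term orders on $\complexes[{\sf Mat}_{m,n}]$, only finitely many distinct initial ideals $\init_\prec I$ arise, and representative term orders (e.g., weight orders refined by a fixed monomial order) for all maximal cones can be enumerated algorithmically via fan traversal. Since the hypotheses of Proposition~\ref{prop:reformulate} depend on $\prec$ only through $\monos_\prec I$, it suffices to run the fixed-$\prec$ procedure on one representative term order per cone and return affirmative if any of them yields a bicrystalline answer.

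The principal obstacle is the effective construction of test sets, i.e., Algorithm~\ref{thebigalg} and Theorem~\ref{thm:testsetalg}. Abstract finiteness of $\mathcal{M}_{\mathrm{min}}(I,\prec,\varphi)$ follows from Dickson's lemma applied to the set of pairs $\{(N,\varphi(N)):N\in\monos_\prec I,\ \varphi(N)\neq\varnothing\}\subseteq \integers_{\geq 0}^{2mn}$ under componentwise divisibility, together with Theorem~\ref{thm:minimality}; the serious content is an \emph{a priori} bound on the entries of minimal test set elements so the enumeration terminates. The guiding intuition is that an admissible $\varphi$, say $f_i^{\sf row}$, acts based only on the bracket subword read from rows $i,i+1$, so any nonzero entry of $N$ outside these rows is ``divisibility load'' needed only to place $z^N$ in $\init_\prec I$, and hence can be bounded by the degrees of the Gr\"obner generators $M_1,\ldots,M_r$; a similar accounting shows that entries in rows $i,i+1$ beyond what is required to produce a single unpaired bracket can be stripped without violating the minimality condition of Theorem~\ref{thm:minimality}(I). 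Combining these bounds yields a finite, explicitly computable search space from which $\mathcal{M}_{\mathrm{min}}(I,\prec,\varphi)$ is extracted by filtering for minimality.
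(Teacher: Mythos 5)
Your proof follows the paper's route: reduce to the finite test-set criterion of Theorem~\ref{thm:firstmain}, produce test sets via Algorithm~\ref{thebigalg}/Theorem~\ref{thm:testsetalg}, and handle the existential quantifier over term orders by a Gr\"obner-fan traversal (Theorem~\ref{lemma:gfanalg}). The organization of both the fixed-$\prec$ and existential cases matches the paper.

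There is, however, one genuine gap: your algorithm never decides whether $I$ is $L_{\mathbf{I}}\times L_{\mathbf{J}}$-stable. Definition~\ref{def:bicrystalline} defines ``$(\mathbf{I},\mathbf{J},\prec)$-bicrystalline'' only for Levi-stable ideals, and Proposition~\ref{prop:reformulate} and Theorem~\ref{thm:firstmain}, on which your argument rests, both carry the Levi-stability hypothesis. Bicrystal-closedness of $\monos_\prec I$ is a purely combinatorial condition on exponent matrices, and nothing in the logic of the test-set check forbids a non-Levi-stable ideal from satisfying it, in which case your procedure would wrongly report bicrystallinity. The paper's proof therefore begins with Algorithm~\ref{stablealg}, which tests $T_m\times T_n$-homogeneity of the reduced Gr\"obner basis and closure of $I$ under the action of the elementary matrices generating $L_{\mathbf{I}}\times L_{\mathbf{J}}$ (cf.\ Proposition~\ref{lemma:stabilityalg}). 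Your algorithm needs this step, or an equivalent, before the test-set check, and without it the proof is incomplete.

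A minor remark on your third paragraph: the sketch of how Algorithm~\ref{thebigalg} ought to work (Dickson's lemma, an a priori bound on entries of minimal test set elements, filtering for minimality) is not what the paper actually does. Algorithm~\ref{thebigalg} directly augments the lead-term exponent matrix of each reduced Gr\"obner generator by all nonnegative perturbations of degree at most $\Sigma_g+1$ supported on two adjacent rows or columns, producing a finite but generally non-minimal test set; computing $\mathcal{M}_{\mathrm{min}}$ is an optional post-processing step (Remark~\ref{remark:nonminimality}), not the mechanism by which finiteness is established. Since you already invoke Theorem~\ref{thm:testsetalg} as a black box in your first paragraph, this sketch is not load-bearing for the present theorem, but as written it would not suffice to establish Theorem~\ref{thm:testsetalg} itself.
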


Our  algorithm and its proof of correctness provide a method
to give non-computational proofs that given ideals or families of ideals are bicrystalline.

\subsection{The algorithms}

We use three subroutines to prove Theorem~\ref{thm:main}. The first two are standard, while the third  (Theorem~\ref{thm:testsetalg}) is our main contribution.\footnote{Code is available at \url{https://github.com/LiberMagnum/grobnercrystals}.}

A Gr\"obner basis ${\mathcal G} = \{g_1, \dots, g_k\}$ for an ideal $I \subseteq \mathbb{C}[{\sf Mat}_{m,n}]$ is called \emph{reduced} with respect to a term order $\prec$ if:
\begin{itemize}
    \item[(i)] Each $g_i$ is monic, i.e., the coefficient of ${\mathrm{init}}_{\prec}(g_i)$ is $1$;
    \item[(ii)] No term of $g_i$ lies in $\langle {\mathrm{init}}_{\prec}(g_j) : j \neq i \rangle$.
\end{itemize}
The reduced Gr\"obner basis for $I$ with respect to $\prec$ is unique, and we denote it by ${\mathcal G}_{\mathrm{red}}$. It is computable from any generating set of $I$ using Buchberger's algorithm \cite[Section 1.3]{CLO}.

\noindent
\begin{myalgorithm}[Levi-stability]\label{stablealg} 
\gap \normalfont

\noindent 
\emph{Input:} Generators $\mathcal{G} = \{g_1,\dots, g_k\}$ for an ideal $I \subseteq \mathbb{C}[{\sf Mat}_{m,n}]$, and Levi datum $(\mathbf{I}, \mathbf{J})$ as in~\eqref{eqn:seqdef}.

\smallskip
\noindent
\emph{Output:} {\tt true} if $I$ is $(L_{\mathbf{I}} \times L_{\mathbf{J}})$-stable, and {\tt false} otherwise.

\medskip
\begin{enumerate}
\item[0.] Let $E^r_{ij},E^c_{ij}$ be the $m \times m$ (respectively, $n\times n$) elementary matrices with $1$'s on the diagonal and in position $(i,j)$, and $0$'s elsewhere.

\item[1.] Compute the reduced Gr\"obner basis ${\mathcal G}_{\mathrm{red}}$ for $I$ with respect to any term order.

\item[2.] If any $g\in\mathcal{G}_{\mathrm{red}}$ is not homogeneous with respect to the $\mathbb{Z}^{m+n}_{\geq 0}$-multigrading (defined in Section~\ref{subsection:bipre}) induced by the action of $T_m \times T_n \leq (L_{\mathbf{I}} \times L_{\mathbf{J}})$, output {\tt false}.

\item[3.] For each $g_\ell \in \mathcal{G}$ and each $E^r_{ij}\in L_{\bf I}$, compute $(E^r_{ij},\mathrm{Id}_n) \cdot g_\ell \mod {\mathcal G}_{\mathrm{red}}$ (where $\mathrm{Id}_n$ is the $n\times n$ identity matrix) using the division algorithm. For each $g_\ell \in \mathcal{G}$ and each $E^c_{ij}\in L_{\bf J}$, compute $(\mathrm{Id}_m,E^c_{ij}) \cdot g_\ell \mod {\mathcal G}_{\mathrm{red}}$ using the division algorithm. If any result is nonzero, output {\tt false}.

\item[4.] Output {\tt true}.
\end{enumerate}
\end{myalgorithm}

\begin{proposition}\label{lemma:stabilityalg}
Algorithm~\ref{stablealg} correctly decides if an ideal $I\subseteq {\mathbb C}[{\sf Mat}_{m, n}]$
is stable under the action of $L_\mathbf{I}\times L_\mathbf{J}$.
\end{proposition}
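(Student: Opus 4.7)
The plan is to verify correctness by splitting $L_\mathbf{I}\times L_\mathbf{J}$ into its torus and root-subgroup pieces, matching Steps 2 and 3 of the algorithm. Since $L_\mathbf{I}=\prod_k GL_{i_k-i_{k-1}}$ sits block-diagonally in $GL_m$, standard facts about $GL_r$ in characteristic zero give that $L_\mathbf{I}$ is generated as an algebraic group by the maximal torus $T_m$ together with the root subgroups $U_{ij}=\{I+t\,e_{ij}:t\in\mathbb{C}\}$ for $(i,j)$ within its block structure, and symmetrically for $L_\mathbf{J}$. Moreover, because each $(g,h)\in\mathbf{GL}$ acts on $\complexes[{\sf Mat}_{m,n}]$ by a ring automorphism, the containment $(g,h)\cdot I\subseteq I$ holds if and only if $(g,h)\cdot g_\ell\in I$ for every generator $g_\ell\in\mathcal{G}$, using $(g,h)\cdot(p\,g_\ell)=((g,h)\cdot p)\cdot((g,h)\cdot g_\ell)$. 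Hence it suffices to check stability of $I$ under each element of a chosen generating set of $L_\mathbf{I}\times L_\mathbf{J}$, applied to each $g_\ell$.

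For Step 2, the $\mathbb{Z}^{m+n}_{\geq 0}$-multigrading from Section~\ref{subsection:bipre} is exactly the grading by characters of $T_m\times T_n$, so $I$ is $T_m\times T_n$-stable if and only if $I$ is multigraded. Since Buchberger's algorithm preserves multigradedness, this holds if and only if every element of the reduced Gr\"obner basis $\mathcal{G}_{\mathrm{red}}$ is multigraded, a finite check; if this fails, the algorithm correctly outputs {\tt false}. Assuming Step 2 succeeds, ideal membership $(E^r_{ij},\mathrm{Id}_n)\cdot g_\ell\in I$ in Step 3 is correctly decided by reducing modulo $\mathcal{G}_{\mathrm{red}}$ via the division algorithm, and similarly on the column side. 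So Step 3 correctly verifies stability of $I$ under each specific matrix $(I+e_{ij},\mathrm{Id}_n)$ and $(\mathrm{Id}_m,I+e_{ij})$ arising from admissible block positions.

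The remaining step is to promote stability under a single matrix $I+e_{ij}$ to stability under the whole root subgroup $U_{ij}$. For this I would use a conjugation argument: if $\tau=\mathrm{diag}(\tau_1,\ldots,\tau_m)\in T_m$ with $\tau_i/\tau_j=t$, then
\[
\tau(I+e_{ij})\tau^{-1}=I+(\tau_i/\tau_j)\,e_{ij}=I+t\,e_{ij}.
\]
Combined with torus-stability from Step 2, this shows that stability of $I$ under $I+e_{ij}$ implies stability of $I$ under $I+t\,e_{ij}$ for every $t\in\mathbb{C}$. Applying the symmetric argument on the $L_\mathbf{J}$ side, $I$ is stable under the full generating set and hence under $L_\mathbf{I}\times L_\mathbf{J}$. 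The main obstacle I anticipate is precisely this promotion step: without the conjugation observation one would have to test an entire continuum of scalar multiples of each root vector, but torus-stability collapses the check to a single representative per root direction, making the overall algorithm finite. The remaining ingredients---termination of Buchberger's algorithm, correctness of the division algorithm for ideal membership, and the standard correspondence between torus actions and multigradings---are routine.
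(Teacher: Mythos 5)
Your proof is correct and follows essentially the same route as the paper: reduce to the generators $g_\ell$ by using that $(g,h)$ acts by ring automorphisms, detect $T_m\times T_n$-stability via multigradedness of the reduced Gr\"obner basis, and test the elementary matrices $E^r_{ij}$, $E^c_{ij}$ by division modulo $\mathcal{G}_{\mathrm{red}}$. The one place you go beyond the paper is the promotion step: the paper closes by asserting that every element of $L_\mathbf{I}\times L_\mathbf{J}$ is a product of torus elements and elementary matrices, without explaining why testing only the single representative $I+e_{ij}$ (entry $1$) per root direction suffices, while you supply the torus-conjugation identity $\tau(I+e_{ij})\tau^{-1}=I+(\tau_i/\tau_j)\,e_{ij}$ which, together with Step 2's torus-stability, shows stability under the entire one-parameter subgroup $\{I+t\,e_{ij}\}$. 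This fills in a detail the paper leaves implicit and is exactly the right justification; it is the reason the finite check in Step 3 is sound.
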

\begin{proof}
For any $f\in I$,
    \[f = \sum_{\ell=1}^k f_\ell g_{\ell}, \text{\ \ 
    where $f_{\ell}\in \mathbb{C}[{\sf Mat}_{m,n}]$.}\] 
    Now, 
    \begin{align*}
    (E^r_{ij},\mathrm{Id}_n)\cdot f\in I, \ \forall f\in I & \iff (E^r_{ij},\mathrm{Id}_n)\cdot (f_{\ell}g_{\ell})\in I \\
    \ &\iff ((E^r_{ij},\mathrm{Id}_n)\cdot f_{\ell})((E^r_{ij},\mathrm{Id}_n)\cdot g_{\ell})\in I.
    \end{align*}
    The analogous statements hold for $(\mathrm{Id}_m,E^c_{ij})$. Hence $I$ is closed under the action of $(E^r_{ij},\mathrm{Id}_n)$ (respectively, $(\mathrm{Id}_m,E^c_{ij})$) if and only if $(E^r_{ij},\mathrm{Id}_n)\cdot g_{\ell}\in I$ (respectively, $(\mathrm{Id}_m,E^c_{ij})\cdot g_{\ell}\in I$) for all generators of $I$. 

    The ideal $I$ is stable under the action of
   $T_m\times T_n \leq L_\mathbf{I}\times L_\mathbf{J}$
     if and only if it has a generating set $\tilde{\mathcal{G}}$ whose elements are homogeneous with respect to the $\integers^{m+n}_{\geq 0}$-multigrading on $\complexes[{\sf Mat}_{m, n}]$. Applying Buchberger's algorithm to $\tilde{\mathcal{G}}$ shows that $I$ is stable under the torus action if and only if its reduced Gr\"obner basis is homogeneous with respect to this multigrading. We may therefore check the torus-stability of $I$ by computing its reduced Gr\"obner basis from $\mathcal{G}$ and checking whether or not these generators are homogeneous.

Since Steps 2 and 3 of Algorithm~\ref{stablealg} check that $I$ is closed under the action of any invertible diagonal matrix and any pair of elementary matrices in $L_{\bf I}\times L_{\bf J}$, correctness follows since every element of $L_{\bf I}\times L_{\bf J}$ is a pair $(A,B)$ where $A,B$ are products of elementary matrices and an element of $T_m$ or $T_n$, respectively.
\end{proof}

  Although there are infinitely many term orders on $\C[{\sf Mat}_{m, n}]$, it is well-known that any particular ideal $I$ has only finitely many distinct initial ideals. An algorithm to traverse these initial ideals (i.e., traverse the \emph{Gr\"obner fan}) is implemented in 
  A.~Jensen's {\sf Gfan} software \cite{gfan}, based on the algorithms in the papers \cite{FukudaGFans} and \cite{MoraGFan}. That is, one has:

\begin{theorem}[\cite{FukudaGFans, MoraGFan}]\label{lemma:gfanalg}
    There is an algorithm that takes a set of generators $\mathcal{G} = \{g_1,\dots, g_r\}$ for an ideal $I$ as input and outputs generators for each of the finitely many initial ideals for $I$.
\end{theorem}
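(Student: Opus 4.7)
The plan is to prove the theorem by constructing the \emph{Gr\"obner fan} of $I$ as a polyhedral fan in weight-vector space whose maximal cones biject with the distinct initial ideals, and then to describe an effective traversal of its dual graph. I would work with weight orders: any $w \in \mathbb{R}^{mn}_{\geq 0}$ refines, together with a fixed tie-breaking term order, to give an initial ideal $\init_w I$. Conversely, by a standard argument (see, e.g., Mora--Robbiano), every monomial initial ideal $\init_\prec I$ is realized by some weight vector $w$, so it suffices to enumerate the distinct $\init_w I$ as $w$ ranges over $\mathbb{R}^{mn}_{\geq 0}$.

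The first key step is to establish the polyhedral structure. For each reduced Gr\"obner basis $\mathcal{G}_w = \{g_1,\dots,g_s\}$ with respect to $w$, the set of all weight vectors $w'$ yielding the same initial ideal is cut out by the finitely many linear inequalities asserting that, for each $g_\ell = \sum_\alpha c_\alpha z^\alpha$, the exponent of the leading term beats the exponents of the other terms: $\langle w', \alpha_{\mathrm{lead}} - \alpha\rangle \geq 0$. Taking closures of these relatively open cones produces the maximal cones of the Gr\"obner fan, and the incidence structure makes this a genuine polyhedral fan covering $\mathbb{R}^{mn}_{\geq 0}$. Finiteness then follows from the fact that only finitely many monomial ideals arise as $\init_w I$: by Macaulay's theorem the Hilbert function is preserved under taking initial ideals, and any infinite chain would contradict the noetherian property of the polynomial ring.

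The second key step is the traversal itself. Starting from any weight vector $w_0$ in the interior of some maximal cone $C_0$ (obtained, for instance, via Buchberger's algorithm under some term order), I would enumerate the facets of $C_0$ — these are identifiable from the linear inequalities defining $\mathcal{G}_{w_0}$. For each facet $F$, one selects a weight $w'$ just across $F$ and computes $\init_{w'} I$ by performing a local Gr\"obner-walk step: lift $\init_F I$ (the partial initial ideal along the facet) back to $I$ using $\mathcal{G}_{w_0}$, then recompute a reduced Gr\"obner basis under the new weight. The resulting basis $\mathcal{G}_{w'}$ determines the adjacent maximal cone $C'$ and its initial ideal. A breadth-first search on this cone-adjacency graph terminates because the fan is finite, and the output is the collection of all distinct initial ideals encountered.

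The main obstacle is proving that this traversal truly reaches every maximal cone, i.e., that the dual graph of the Gr\"obner fan is \emph{connected}. This is the principal technical content of Fukuda--Jensen--Thomas and Collart--Kalkbrener--Mall: one shows that any two maximal cones can be joined by a path staying inside $\mathbb{R}^{mn}_{\geq 0}$ and crossing only codimension-one faces, by taking a generic line segment between representative weight vectors and analyzing where it pierces the fan's facets. A subtlety that I would need to address carefully is handling facets that lie on the boundary of $\mathbb{R}^{mn}_{\geq 0}$ (where weights degenerate), which is resolved by enlarging to a slightly perturbed fan or by working projectively modulo the lineality space of $I$. Termination then follows from finiteness, and correctness from the connectedness claim together with the fact that each flip produces a genuinely new reduced Gr\"obner basis whose leading terms generate the correct initial ideal.
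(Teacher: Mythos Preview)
The paper does not supply its own proof of this statement: Theorem~\ref{lemma:gfanalg} is stated as a citation to \cite{FukudaGFans, MoraGFan}, with the surrounding text simply noting that the algorithm is implemented in Jensen's {\sf Gfan} software. There is therefore no ``paper's proof'' to compare your proposal against.

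Your sketch is a faithful outline of the standard Gr\"obner fan argument from the cited references: you correctly identify the polyhedral structure coming from the reduced Gr\"obner basis inequalities, the finiteness of maximal cones, the facet-crossing (Gr\"obner walk) step for moving between adjacent cones, and connectedness of the dual graph as the main technical point. This is exactly the content of \cite{MoraGFan} (fan structure) and \cite{FukudaGFans} (reverse-search traversal), so your approach is the same as what the paper is implicitly invoking by citation. One minor quibble: your finiteness argument via ``any infinite chain would contradict the noetherian property'' is a bit loose---the cleaner argument is that the cones defined by the reduced Gr\"obner bases cover weight space and are full-dimensional, so finitely many suffice by compactness of the simplex of normalized weights---but this is a detail, not a gap.
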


We now present our main new algorithm:

\begin{myalgorithm}[Test set algorithm]\label{thebigalg}
\gap \normalfont

\noindent 
\emph{Input:} Generators $\mathcal{G} = \{g_1,\dots, g_r\}$ for an ideal $I \subseteq \mathbb{C}[{\sf Mat}_{m,n}]$. A term order $\prec$. An admissible operator $\varphi$ as in Definition~\ref{def:admissibleops}. 

\smallskip
\noindent
\emph{Output:} A finite set of matrices ${\mathcal M}(I,\prec,\varphi)\subseteq {\sf Mat}_{m,n}(\mathbb{Z}_{\geq 0})$.

\medskip

\begin{enumerate}
\item[0.] Compute the reduced Gr\"obner basis ${\mathcal G}_{\mathrm{red}}$ for $I$ with respect to $\prec$.

\item[1.] For each $g \in {\mathcal G}_{\mathrm{red}}$, let $M(g)\in {\sf Mat}_{m,n}(\integers_{\geq 0})$ denote the exponent matrix of its initial term (assumed to have coefficient $1$ without loss of generality).

\item[2.] Define $\Sigma_g$ as follows:
\begin{itemize}
    \item If $\varphi \in \{f_i^{\sf row}, e_i^{\sf row}\}$, let $\Sigma_g$ be the sum of the entries in rows $i$ and $i+1$ of $M(g)$.
    \item If $\varphi \in\{f_j^{\sf col}, e_j^{\sf col}\}$, let $\Sigma_g$ be the sum of the entries in columns $j$ and $j+1$ of $M(g)$.
\end{itemize}

\item[3.] For each $g\in\mathcal{G}_{\sf red}$, initialize ${\mathcal C}_g = \emptyset$. For each integer $0 \leq d \leq \Sigma_g+1$:
\begin{itemize}
    \item If $\varphi \in \{f_i^{\sf row}, e_i^{\sf row}\}$, compute all weak compositions of $d$ into $2n$ parts. For each such composition $c$, form a matrix $A$ by placing the first $n$ parts of $c$ in row $i$ of $A$, the remaining $n$ parts in row $i+1$ of $A$, and $0$'s elsewhere. Add $A$ to ${\mathcal C}_g$.
    \item If $\varphi \in\{f_j^{\sf col},e_j^{\sf col}\}$, compute all weak compositions of $d$ into $2m$ parts. For each such composition $c$, form a matrix $A$ by placing the first $m$ parts of $c$ in column $j$ of $A$, the remaining $m$ parts in column $j+1$, and $0$'s elsewhere. Add $A$ to ${\mathcal C}_g$.
\end{itemize}

\item[4.] Initialize ${\mathcal M}(I,\prec,\varphi) = \emptyset$. For each $g \in {\mathcal G}_{\mathrm{red}}$ and each $A \in {\mathcal C}_g$, set
\[
{\mathcal M}(I,\prec,\varphi) := {\mathcal M}(I,\prec,\varphi) \cup \{ M(g) + A \}.
\]

\item[5.] Output ${\mathcal M}(I,\prec,\varphi)$.
\end{enumerate}
\end{myalgorithm}

\begin{theorem}\label{thm:testsetalg}
The output of Algorithm~\ref{thebigalg} is a test set for $(I,\prec,\varphi)$.
\end{theorem}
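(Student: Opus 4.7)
The plan is: for each $M \in \monos_\prec I$ with $\varphi(M) \neq \varnothing$, exhibit some $N = M(g) + A$ in the algorithm's output satisfying the three test-set divisibility conditions of Definition~\ref{def:test}.

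First I would extract $g$: since $z^M \in \init_\prec I$ and $\mathcal{G}_{\mathrm{red}}$ is a Gr\"obner basis, some $g \in \mathcal{G}_{\mathrm{red}}$ has $M(g) \leq M$ entrywise. Set $B := M - M(g)$. Treating $\varphi = f_i^{\sf row}$ (the other three operators are symmetric via transposition and swapping raising and lowering), it suffices to construct $A$ supported on rows $i, i+1$ with $A \leq B$ and $|A| \leq \Sigma_g + 1$, such that $N := M(g) + A$ has $\varphi(N)$ moving from the same cell $(i, c^*)$ that $\varphi(M)$ moves from. Given such $A$, the test-set conditions follow quickly: $z^{M(g)} \mid z^N$ places $N \in \monos_\prec I$; $N \leq M$ gives $z^N \mid z^M$; and since $\varphi$ acts at both $M$ and $N$ by the same elementary move $-E_{i,c^*}+E_{i+1,c^*}$, $N \leq M$ also yields $z^{\varphi(N)} \mid z^{\varphi(M)}$; finally, $N$ lies in $\mathcal M(I,\prec,\varphi)$ by direct inspection of Algorithm~\ref{thebigalg}.

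To construct $A$, I would use bracket-sequence bookkeeping via the ``peak'' function $\pi_c^X := \sum_{c' \leq c} X_{i,c'} - \sum_{c' \leq c-1} X_{i+1,c'}$ and its running maximum $P_c^X := \max_{c' \leq c} \pi_{c'}^X$; one checks that $f_i^{\sf row}$ moves from $(i,c^*)$ precisely when $c^*$ is the largest index with $\pi_{c^*}^X > P_{c^*-1}^X$. Guided by this, I would define $A$ by (i) distributing row-$i$ entries at columns $\leq c^*$ summing to $S_{\min} := P_{c^*-1}^{M(g)} - \pi_{c^*}^{M(g)} + 1$, so as to raise $\pi_{c^*}^N$ strictly above $P_{c^*-1}^N$; and (ii) placing row-$(i{+}1)$ entries at columns $\geq c^*$ whose cumulative sums satisfy $\sum_{c^* \leq c' \leq c-1} A_{i+1,c'} \geq (\pi_c^{M(g)} - \pi_{c^*}^{M(g)})_+$ for each $c > c^*$, suppressing any new peak beyond column $c^*$. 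A telescoping count, using $P_{c^*-1}^{M(g)} \leq \sum_{c' \leq c^*-1} M(g)_{i,c'}$ and $(\pi_c^{M(g)} - \pi_{c^*}^{M(g)})_+ \leq \sum_{c^* < c' \leq c} M(g)_{i,c'}$, then yields $|A| \leq \Sigma_g + 1$, matching the range of $d$ in Algorithm~\ref{thebigalg}.

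The main obstacle will be verifying $A \leq B$ componentwise. For the row-$i$ total, I would need $\sum_{c \leq c^*} B_{i,c} \geq S_{\min}$, which unpacks to a comparison $\alpha^M_{c^*} \geq P^{M(g)}_{c^*-1} + \beta^{M(g)}_{c^*-1} + 1$ (using the cumulative sums $\alpha^X_c, \beta^X_c$ of $X$'s row-$i$ and row-$(i{+}1)$ entries). This follows from the peak inequality for $M$ itself (which encodes $c^*(M) = c^*$), combined with a monotonicity observation that $P^X_{c^*-1} + \beta^X_{c^*-1}$ is non-decreasing along the passage $M(g) \to M$. For the row-$(i{+}1)$ cumulative bounds, I plan to use the identity $\pi_c^{M(g)} - \pi_{c^*}^{M(g)} = (\pi_c^M - \pi_{c^*}^M) - \sum_{c^* < c' \leq c} B_{i,c'} + \sum_{c^* \leq c' \leq c-1} B_{i+1,c'}$ together with $\pi_c^M \leq \pi_{c^*}^M$ for $c > c^*$ (another consequence of $c^*(M) = c^*$) to obtain $(\pi_c^{M(g)} - \pi_{c^*}^{M(g)})_+ \leq \sum_{c^* \leq c' \leq c-1} B_{i+1,c'}$, which is exactly the needed feasibility. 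With both feasibility checks in hand, the constructed $N$ witnesses the test-set property for $M$.
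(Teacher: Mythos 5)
Your proposal is correct and gives a genuinely different proof from the paper's. The paper works with explicit ``bracket tableaux'': it marks which {\tt )} and {\tt (} of $M$'s bracket sequence come from $M(g)$ (using {\tt [}, {\tt ]}), promotes symbols matched across the two tableaux to {\tt <}, {\tt >}, deletes the leftovers, and shows the surviving diagram $D_N$ has its rightmost unmatched {\tt )} in the correct cell; it also splits into two cases depending on whether $M(g)_{i,j}=M_{i,j}$. You instead translate the bracket matching entirely into partial-sum arithmetic: characterizing the moving column $c^*$ as the last ``new peak'' of the height function $\pi_c$ and then building $A$ by solving two linear feasibility conditions (raise $\pi^N_{c^*}$ past $P^N_{c^*-1}$; suppress peaks for $c>c^*$ via a cumulative lower bound in row $i+1$). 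Your feasibility arguments are sound — the monotonicity of $P^X_{c^*-1}+\beta^X_{c^*-1}$ holds because the quantity equals $\max_{c\le c^*-1}\bigl(\alpha^X_c+\sum_{c\le c''\le c^*-1}X_{i+1,c''}\bigr)$, a maximum of nonnegative-coefficient sums of entries of $X$; and the row-$(i+1)$ feasibility follows exactly from your stated identity plus $\pi_c^M\le\pi_{c^*}^M$ for $c>c^*$. Two small points you should nail down when writing it in full: (1) the distribution of the row-$i$ mass of $A$ is not arbitrary — you need to fill greedily from $c^*$ leftward (putting mass at column $c^*$ first, then $c^*-1$, etc.) to guarantee $\pi^N_{c^*}>P^N_{c^*-1}$, and you should spell out why this particular choice keeps $P^N_{c^*-1}$ small; (2) when $S_{\min}\le 0$ you should set the row-$i$ part of $A$ to zero and observe that $\pi^{M(g)}_{c^*}>P^{M(g)}_{c^*-1}$ already forces $M(g)_{i,c^*}>0$, so $N_{i,c^*}\ge 1$ still holds. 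The paper's case split ($M(g)_{ij}=M_{ij}$ vs.\ $<$) does not match your dichotomy ($S_{\min}\le 0$ vs.\ $\ge 1$), but both proofs establish the same degree bound $|A|\le\Sigma_g+1$. Your approach is arguably easier to verify mechanically, while the paper's is easier to visualize; both encode exactly the same matching information.
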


\begin{example}
If $I=\langle 0\rangle$ is the zero ideal, then ${\mathcal G}={\mathcal G}_{\sf red}=\emptyset$. Thus, ${\mathcal M}(I,\prec,\varphi)=\emptyset$ for any $\varphi$.
\end{example}

\begin{example}\label{ex:bigalgtight}
    Let $m = 2$, $n = 3$, and $$I = \left\langle g_1 = z_{13}^2, \; g_2 = z_{13}z_{23}, \; g_3 = z_{23}^2\right\rangle.$$ We apply Algorithm~\ref{thebigalg} with input $\mathcal{G} = \{g_1,g_2,g_3\}$, $\antidiag$, and $\varphi = e_1^{\sf row}$. Here 
    $\mathcal{G}={\mathcal G}_{\mathrm{red}}$ is already the reduced Gr{\"o}bner basis for $I$. Now, $$M(g_1) = \begin{bmatrix}
        0 & 0 & 2\\
        0 & 0 & 0
    \end{bmatrix}, \; M(g_2) = \begin{bmatrix}
        0 & 0 & 1\\
        0 & 0 & 1
    \end{bmatrix}, \; M(g_3) = \begin{bmatrix}
        0 & 0 & 0\\
        0 & 0 & 2
    \end{bmatrix}$$ 
    and 
    \[\Sigma_{g_1} = \Sigma_{g_2} = \Sigma_{g_3} = 2.\] 
    The output is a set of $196$ monomials:
    $$\mathcal{M}(I,\prec,\varphi) = \{M : \deg(z^M)\leq 5 \text{ \ and \ }
    z^{M(g_i)} | z^M \text{ \ for some $i$}\}.$$ 
    Now, let 
    \[M = \left[\begin{matrix}
        0 & 0 & 2\\
        1 & 2 & 0
    \end{matrix}\right] =M(g_1) + \left[\begin{matrix}
        0 & 0 & 0\\
        1 & 2 & 0
    \end{matrix}\right]\in \monos_{\prec}I.\] 
    There does not exist 
    $N\in \monos_{\prec}I$ such that $\deg(z^N) < 5$, $z^N | z^M$, and $z^{\varphi(N)} | z^{\varphi(M)}$. Thus, by Definition~\ref{def:test} any test set for $(I,\prec,\varphi)$ 
    contains this $M$.
    That is, Algorithm~\ref{thebigalg} does not produce test sets in general if the degree bound $1+\Sigma_{g}$
    in Step~3 is lowered. \end{example}

\begin{remark}\label{remark:nonminimality} 
The test sets generated by Algorithm~\ref{thebigalg} are usually non-minimal. For instance in 
Example~\ref{ex:bigalgtight}, 
\[\# {\mathcal M}_{{\mathrm{min}}}(I,\prec,\varphi)=11.\] 
By Theorem~\ref{thm:minimality}, the unique minimal test set may be computed by constructing any (possibly non-minimal) test set $\mathcal{M}(I,\prec,\varphi)$ and removing all $M\in \mathcal{M}(I,\prec,\varphi)$ for which either $\varphi(M) = \varnothing$ or there exists a different
$M'\in \mathcal{M}(I,\prec,\varphi)$ with $z^{M'} | z^{M}$ and $z^{\varphi(M')}|z^{\varphi(M)}$. 
\end{remark}

Before proving Theorem~\ref{thm:testsetalg}, we show that it implies Theorem~\ref{thm:main}.

\noindent
\emph{Proof of Theorem~\ref{thm:main}:}
	First, apply Algorithm~\ref{stablealg}  to determine whether or not $I$ is $L_\mathbf{I}\times L_\mathbf{J}$-stable. If not, we output {\tt false}. If it is, and no term order is given, apply the algorithm of Theorem~\ref{lemma:gfanalg} 
	to compute the finite set of all initial ideals for $I$. For each initial ideal $J$ of $I$, choose a term order $\prec$ such that 
	$J = \init_\prec I$. 
	For each term order and each bicrystal operator $\varphi$ associated to 
	$L_\mathbf{I}\times L_\mathbf{J}$, apply Algorithm~\ref{thebigalg} to construct test sets $\mathcal{M}(I,\prec,\varphi)$. (If a term order is given, one bypasses the application of Theorem~\ref{lemma:gfanalg}.) By Theorem~\ref{thm:firstmain}, $I$ is $(\mathbf{I}, \mathbf{J}, \prec)$-bicrystalline 
	if and only if for each admissible operator $\varphi$ and $M\in \mathcal{M}(I,\prec,\varphi)$,  $\varphi(M)\in\monos_\prec I\cup\{\varnothing\}$. Since there are only finitely many test sets, and each test set is finite by definition, 
	this property can be checked in finite time.\qed

\subsection{Proof of Theorem~\ref{thm:testsetalg}} \label{bigproofabc} 
We verify that $\mathcal{M}(I,\prec,\varphi)$ satisfies Definition~\ref{def:test}. Clearly,  $\#\mathcal{M}(I,\prec,\varphi)<\infty$. By Step 4 of Algorithm~\ref{thebigalg}, 
$\mathcal{M}(I,\prec,\varphi)\subseteq \monos_{\prec}I$. 
Next, suppose $M\in \monos_{\prec} I$ satisfies $\varphi(M)\neq \varnothing$. We must show there exists $N\in \mathcal{M}(I,\prec,\varphi)$ such that:
\begin{itemize}
\item[(T1)] $z^N$ divides $z^M$, and 
\item[(T2)] $z^{\varphi(N)}$ divides $z^{\varphi(M)}$ (with $\varphi(N)\neq\varnothing$).
\end{itemize}

First, suppose $\varphi=f_i^{\sf row}$ and $\varphi(M)$ moves from $(i,j)$ to $(i+1,j)$, as defined in
Remark~\ref{remark:moving}. Fix $g\in\mathcal{G}$ so that $z^{M(g)}|z^{M}$ (this can be done since $z^M\in\init_\prec I$).

To construct rows $i,i+1$ of $N$, we use two-row \emph{bracket tableaux}, defined by placing $M_{i,\ell}$ many~{\tt )} into the box $(i,\ell)$ and $M_{i+1,\ell}$ many {\tt (} into the box $(i+1,\ell)$. For 
instance,
\[
\begin{matrix}
i\\
i+1
\end{matrix} \ 
\begin{tabular}{|c|c|c|c|c|c|c|}
\hline
$0$ & $3$ & $1$\\ \hline
$3$ & $0$ & $0$\\ \hline
\end{tabular}
\mapsto
\begin{tabular}{|c|c|c|c|c|c|c|}
\hline
\ & {\tt )))} & {\tt )}\phantom{{\tt ))}}\\ \hline
{\tt (((} & \phantom{{\tt (((}} & \phantom{{\tt (((}} \\ \hline
\end{tabular}.
\]
Suppose the
bracket tableaux of $M$ and $M(g)$, respectively, are:
\[D_M=
  \begin{tabular}{|c|c|c|c|c|c|c|}
    \hline
    \ & {\tt )))} & {\tt )} \\ \hline
    {\tt (((} & \ & \phantom{{\tt (((}} \\ \hline
  \end{tabular} \text{\ \ and \ } D_{M(g)}=
  \begin{tabular}{|c|c|c|c|c|c|c|}
    \hline
    \ & {\tt )}\phantom{{\tt ((}} & {\tt )} \\ \hline
    {\tt ((}\phantom{\tt (} & \ & \phantom{{\tt (((}} \\ \hline
  \end{tabular}.
\]

We pause to give intuition for the following construction. 
It would be nice to take $N=M(g)$ since then (T1) is automatic, 
but (T2) usually fails for this choice of $N$. 
On the other hand, one can set $N=M$ and both (T1) and (T2) will hold. However, the tension is to
have (T1) and (T2) hold simultaneously under a fixed constraint on the total number of {\tt (} and {\tt )} added to $D_{M(g)}$.

Since $z^{M(g)} | z^M$, we embed $D_{M(g)}$ into $D_M$ 
by marking, using square brackets {\tt{[}} and {\tt{]}}, those parentheses in $D_M$ that also appear in $D_{M(g)}$:
\[D_M\mapsto D_M^\prime=
  \begin{tabular}{|c|c|c|c|c|c|c|}
    \hline
    \ & {\tt ]))} & {\tt ]} \\ \hline
    {\tt ([[} & \ & \phantom{{\tt (((}} \\ \hline
  \end{tabular}
\]
Our \emph{placement rule} for this embedding is that in row $i+1$, the {\tt [} are placed rightmost in each box, and in row $i$ the {\tt ]} are placed leftmost in each box, as done above.

Determine the positions of matched {\tt (} and {\tt )} in $D_M$; we mean that the bracket sequence is obtained by reading $D_M$ down columns from left to right and matchings are determined as usual (see Section~\ref{subsec:maindef}).
If a matched pair of brackets in $D_M'$
are a 
\begin{itemize}
\item {\tt (} matching with a {\tt ]}, turn that {\tt (} into a {\tt <};
\item  {\tt [} matching with a {\tt )}, turn that {\tt )} into a {\tt >}. 
\end{itemize}
Denote the resulting diagram by $D^{M(g)}_M$. Furthermore, let 
$D_N^\prime$ be the tableau obtained by deleting all {\tt (} and {\tt )} from $D^{M(g)}_M$, and lastly form $D_N$ from $D_N^\prime$ by turning
\begin{itemize}
\item all {\tt [} , {\tt <} to {\tt (}, and 
\item all {\tt ]} , {\tt >} to {\tt )}.
\end{itemize}
Continuing our example:
\[D^{M(g)}_M=
  \begin{tabular}{|c|c|c|c|c|c|c|}
    \hline
    \ & {\tt ]>)} & {\tt ]} \\ \hline
    {\tt ([[} & \ & \phantom{{\tt (((}} \\ \hline
  \end{tabular}
\mapsto
D^\prime_N=
  \begin{tabular}{|c|c|c|c|c|c|c|}
    \hline
    \ & {\tt ]>}\phantom{{\tt )}} & {\tt ]} \\ \hline
    {\tt [[}\phantom{{\tt (}} & \ & \phantom{{\tt (((}} \\ \hline
  \end{tabular} \mapsto D_N=
  \begin{tabular}{|c|c|c|c|c|c|c|}
    \hline
    \ & {\tt ))}\phantom{{\tt )}} & {\tt )} \\ \hline
    {\tt ((}\phantom{{\tt (}} & \ & \phantom{{\tt (((}} \\ \hline
  \end{tabular}.
\] 

There are two cases: either $M(g)_{ij} = M_{ij}$ or $M(g)_{ij} < M_{ij}$.

\noindent
\textit{Case 1:} ($M(g)_{ij} = M_{ij}$) Define $N$ by
\[
N_{k\ell} = \begin{cases}
    \#\text{$\{${\tt (,)}$\}$ in box }(k,\ell)\text{ of }D_N, & k\in \{i,i+1\}\\
    M(g)_{k\ell}, & k\not\in\{i,i+1\}
\end{cases}.\] 
\noindent
\textit{Case 2:} ($M(g)_{ij} < M_{ij}$) Define $N'$ by
$$N'_{k\ell} = \begin{cases}
    N_{k\ell}, & (k,\ell)\neq (i,j)\\
    N_{k\ell}+1, & (k,\ell) = (i,j)
\end{cases}.$$ 

\begin{claim}\label{claim:May20xyz}
$N$ and $N^\prime$ appear in the output ${\mathcal M}(I,\prec,\varphi)$ of Algorithm~\ref{thebigalg}.
\end{claim}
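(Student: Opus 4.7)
The plan is to verify, directly from Step~3 of Algorithm~\ref{thebigalg}, that both $A := N - M(g)$ and $A' := N' - M(g)$ belong to $\mathcal{C}_g$, so that $N = M(g) + A$ and $N' = M(g) + A'$ are enumerated in Step~4. This reduces to three conditions: (i) $A$ (resp.\ $A'$) is entrywise nonnegative; (ii) its support lies in rows $i$ and $i+1$; and (iii) its total entry sum is at most $\Sigma_g + 1$.

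Condition~(ii) will be immediate from the Case~1 definition of $N$ and the Case~2 definition of $N'$, since only entries in rows $i, i+1$ are altered. For condition~(i), I would count brackets box-by-box in $D_N$ versus $D_{M(g)}$: by construction, every square bracket embedded from $D_{M(g)}$ into $D_M'$ survives the passages $D_M' \rightsquigarrow D^{M(g)}_M \rightsquigarrow D_N' \rightsquigarrow D_N$, because the rewriting rule converts only the round partner of each mixed (round, square) matched pair into an angle bracket. Consequently, for $k \in \{i, i+1\}$, the total bracket count in box $(k, \ell)$ of $D_N$ is at least the number of square brackets there, which equals $M(g)_{k\ell}$. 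This forces $N_{k\ell} \geq M(g)_{k\ell}$, and the same inequality for $N'$ since $N' \geq N$ entrywise.

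For the size bound~(iii), I plan to count the total brackets in $D_N$ restricted to rows $i, i+1$: this equals $\Sigma_g$ square brackets plus some angle brackets. Since each angle bracket arises from a round bracket matched with a square bracket in $D_M$, and each square bracket belongs to at most one matched pair, the angle brackets number at most $\Sigma_g$. Summing entries of $N$ in rows $i, i+1$ then yields at most $2\Sigma_g$, giving $\sum A \leq \Sigma_g < \Sigma_g + 1$; the single $+1$ perturbation defining $N'$ yields $\sum A' \leq \Sigma_g + 1$. The column-operator cases will follow from the row case via the transposition conventions in $f_j^{\sf col}$ and $e_j^{\sf col}$. The primary care required is the bracket bookkeeping above, specifically confirming that the rewriting never converts a square bracket into an angle bracket and that the placement rule (leftmost {\tt ]} in row $i$, rightmost {\tt [} in row $i+1$) produces a well-defined embedding compatible with the matching in $D_M$; beyond this, no substantive obstacle is anticipated.
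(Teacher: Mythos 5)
Your proof is correct and takes essentially the same approach as the paper: you establish nonnegativity of $A$ from the survival of square brackets through the rewriting passages, and bound $\deg(A)$ by the angle-bracket count, which is at most $\Sigma_g$ because each angle bracket is matched to a distinct square bracket and there are only $\Sigma_g$ of those. A small notational slip: in your step~(iii), the ``$\Sigma_g$ square brackets plus some angle brackets'' live in $D_N'$, not in $D_N$ (which contains only round brackets after the conversion), but since the conversion preserves bracket counts box-by-box, the argument is unaffected.
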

\noindent
\emph{Proof of Claim~\ref{claim:May20xyz}:} By construction, 
\[z^{M(g)} | z^{N} \text{ \ and $N = M(g) + A$,}\] 
where $A$ is some non-negative integer matrix that is $0$ outside of rows $i,i+1$. Moreover, since in $D_N^\prime$ each {\tt [}, {\tt ]} can be matched by at most one {\tt <}, {\tt >}, we have
\begin{align*} 
\deg(A) = \#\{\text{{\tt <}, {\tt >} in rows $i,i+1$ of $D_N^\prime$}\}
 \leq \Sigma_g  < \Sigma_g + 1.
\end{align*}

Also, $N' = N + A'$, where $$A' = \begin{cases}
    A_{k\ell}, & (k,\ell)\neq (i,j)\\
    A_{k\ell} + 1, & (k,\ell) = (i,j)
\end{cases}.$$ So, \[\deg(A') = \deg(A) + 1 \leq \Sigma_g + 1,\] 
as required.\qed

By construction, (T1) holds for $N$ and $N^\prime$, i.e., $z^N|z^M$ and $z^{N'}|z^M$. It remains to show:

\begin{claim}\label{claim:May20ccc} \emph{(T2)} holds for $N$ and $N^\prime$, i.e.,
$z^{\varphi(N)}|z^{\varphi(M)}$, $z^{\varphi(N')}|z^{\varphi(M)}$, and $\varphi(N),\varphi(N')\neq\varnothing$.
 \end{claim}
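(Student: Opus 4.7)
The plan is to show that $\varphi$ acts on $N$ (Case 1) or $N'$ (Case 2) by the same transition $(i,j)\to(i+1,j)$ as on $M$. Granted this, (T2) follows immediately: combined with $z^N \mid z^M$ (resp.\ $z^{N'}\mid z^M$) established in the proof of Claim~\ref{claim:May20xyz}, the entrywise $-1,+1$ adjustments at positions $(i,j)$ and $(i+1,j)$ preserve divisibility at every entry. The other admissible operators $e_i^{\sf row}, f_j^{\sf col}, e_j^{\sf col}$ can be handled by symmetric arguments (right-to-left bracket scanning for the raising operator, or interchanging the roles of rows and columns).

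The main tool will be a \emph{matching-preservation lemma}: after the marking step, every matched pair in $w_M := {\sf bracket}_i({\sf row}(M))$ falls into one of four types---plain-plain, a {\tt <} with an {\tt ]}, a {\tt [} with a {\tt >}, or a {\tt [} with an {\tt ]}. In particular, plain brackets pair only with plain brackets, so the plain set is closed under matching and its deletion preserves the matching on the surviving marked brackets. The matching of $w_N$ therefore equals the matching of $w_M$ restricted to marked brackets. In Case 1 this already suffices: the placement rule forces every {\tt )} in box $(i,j)$ of $D^{M(g)}_M$ to be an {\tt ]}, so the rightmost unmatched {\tt )} of $w_M$ (which sits in box $(i,j)$) is an {\tt ]}, descending to an unmatched {\tt )} of $w_N$ in the same box. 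It remains rightmost among unmatched {\tt )}'s of $w_N$, because any {\tt )} to its right in $w_N$ is a marked {\tt )} to its right in $w_M$, hence matched. So $\varphi(N)$ moves from $(i,j)$.

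Case 2 will be the main obstacle. I first plan to observe that the rightmost unmatched {\tt )} of $w_M$ must in fact be a plain {\tt )}, since any unmatched {\tt ]} in box $(i,j)$ would force the following {\tt )}'s in the block to be unmatched as well (once the stack becomes empty it stays so through the remainder of the block). Consequently, the rightmost unmatched {\tt )} sits at the very end of box $(i,j)$'s block, so the $w_M$-stack at that point is $0$. The crucial step is to transfer this stack-emptiness to $w_N$: a direct bookkeeping argument, enumerating the four matched-pair types in the prefix up to the end of box $(i,j)$ and using that every {\tt (} in this prefix is matched (since no unmatched {\tt (} appears in $w_M$ until after box $(i,j)$), shows that the $w_N$-stack at the end of box $(i,j)$'s block equals $0$ as well. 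With this, the extra {\tt )} inserted in box $(i,j)$ of $D_{N'}$ is unmatched in $w_{N'}$; and by matching preservation (which forces every marked {\tt )} after box $(i,j)$ in $w_N$ to be matched), no {\tt )} to the right of the inserted one in $w_{N'}$ is unmatched. Hence the new {\tt )} is the rightmost unmatched {\tt )} of $w_{N'}$, so $\varphi(N')$ moves from $(i,j)$, completing the argument.
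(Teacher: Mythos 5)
Your proposal is correct and follows essentially the same strategy as the paper's proof: in Case 1 you show the rightmost unmatched {\tt )} of $w_M$ survives the deletion step and remains rightmost unmatched in $w_N$, and in Case 2 you show the stack at the end of box $(i,j)$ is zero in $w_N$ so that the extra {\tt )} of $N'$ is the desired rightmost unmatched bracket. The one genuine organizational improvement you make over the paper's exposition is the explicit \emph{matching-preservation lemma}: the paper argues this ad hoc (``removing matched pairs of {\tt )} cannot destroy this rightmost unpaired property\ldots''), with a small omission (unmatched plain {\tt (}s are never explicitly addressed, though this is harmless since they lie to the right of the chosen bracket), whereas your four-type classification of matched pairs (plain--plain, {\tt <}--{\tt ]}, {\tt [}--{\tt >}, {\tt [}--{\tt ]}) and the resulting matching-closure of the plain set make the invariance under deletion transparent in both cases. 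Two minor imprecisions worth noting: in Case 1, ``any {\tt )} to its right in $w_N$ is a \emph{marked} {\tt )}'' is not quite right --- a {\tt >} is an unmarked {\tt )} that was converted --- but the conclusion (matched, hence not a candidate) still holds since a {\tt >} is matched by definition; and in Case 2, the parenthetical justification ``every {\tt (} in this prefix is matched since no unmatched {\tt (} appears until after box $(i,j)$'' gives a global-matching reason where what you really need (and have, via stack $=0$ at the end of the block) is that every {\tt (} in the prefix is matched \emph{within} the prefix. Neither affects the validity of the argument.
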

\noindent
\emph{Proof of Claim~\ref{claim:May20ccc}:} We show that both $f_i^{\sf row}(N)$ and $f_i^{\sf row}(N^\prime)$ 
move from $(i,j)$ to $(i+1,j)$. Given (T1), once we establish this assertion, (T2) is immediate.

We show (a) that there is a {\tt )} in box $(i,j)$ of $D_{N}$ (respectively $D_{N^\prime}$), and (b) that it corresponds to  the rightmost unmatched {\tt )} in ${\sf bracket}_i({\sf row}(N))$ (respectively ${\sf bracket}_i({\sf row}(N'))$).

\noindent
\emph{Case 1:} For (a), since \emph{Case 1} assumes $M(g)_{ij} = M_{ij}$ and $f_i^{\sf row}(M)$ moves from $(i, j)$, we see
\[M(g)_{ij}\geq 1.\] 
Thus, in ${D}_{M(g)}$ there is a {\tt )} in box $(i,j)$. This corresponds to some
{\tt ]} in $D_M^\prime$. Since no {\tt ]} are eliminated in the conversion from $D_M^\prime$ to 
$D_N^\prime$, that {\tt ]} appears in box $(i,j)$ of $D_N^\prime$. Hence, a {\tt )} appears in box $(i,j)$ of 
$D_N$, as desired.

For (b), we now show that the rightmost {\tt )} appearing in box $(i,j)$ of $D_N$ is the rightmost unmatched
{\tt )} in ${\sf bracket}_i({\sf row}(N))$. Since $M_{ij} = M(g)_{ij}$, this {\tt )} in $D_N$ came from a {\tt ]} in $D_N^\prime$ and corresponds to the rightmost {\tt )} in box $(i,j)$ of $D_M$. By assumption, it is this very {\tt )} in box $(i,j)$ of $D_M$ that is the rightmost unpaired {\tt )} in ${\sf bracket}_i({\sf row}(M))$. Therefore, this {\tt )} in its incarnation as {\tt ]} in $D_M^\prime$ and $D^{M(g)}_M$ remains the rightmost unpaired {\tt ]}. In the step  
\begin{equation}
\label{eqn:Sept5aaa}
D^{M(g)}_M\mapsto D_N^\prime
\end{equation}
where {\tt (}, {\tt )} are deleted, removing matched pairs of {\tt )} cannot destroy this rightmost unpaired property. Likewise, removing any unmatched {\tt )} cannot destroy this rightmost unpaired property, as any such {\tt )} must lie to the left of the rightmost bracket in box $(i,j)$. This completes the proof of Case 1.  

\noindent
\emph{Case 2:} Since $N'_{ij} = N_{ij} + 1$, there is at least one {\tt )} in box $(i,j)$ of $D_{N'}$ (which equals $D_N$ with an extra {\tt )} placed in box $(i,j)$), proving (a). 

For (b), we now show that the rightmost {\tt )} appearing in box $(i,j)$ of $D_{N'}$ is the rightmost unmatched
{\tt )} in ${\sf bracket}_i({\sf row}(N'))$. Every {\tt{(}} in $D_{N'}$ occupying a box $(i+1,j')$, $j' < j$, must be matched by some {\tt )} in $D_{N'}$ that existed in $D_N$. This is because, since there is an unmatched {\tt )} in box $(i,j)$ of $D_M$, every {\tt [} and {\tt <} in a box $(i+1,j')$, $j' < j$, of $D_M^{M(g)}$ must be matched by some {\tt ]} or {\tt >} in $D_M^{M(g)}$. Moreover, each {\tt ]}, {\tt >} matching these {\tt [}, {\tt <} must lie in a box $(i,j'')$, $j''\leq j$. Every such {\tt [} and {\tt <} remains matched in $D_N'$ by the arguments in \emph{Case 1} (the sentences about
\eqref{eqn:Sept5aaa}), so in $D_N$ every corresponding {\tt (} is matched. Adding a {\tt )} in box $(i,j)$ in our final conversion from $D_N$ to $D_{N'}$ cannot change any of these matchings, so there is at least one unmatched {\tt )} in box $(i,j)$ of $D_{N'}$. The same argument as in \emph{Case 1} shows that box $(i,j)$ indeed contains the rightmost unmatched {\tt )}, proving the claim.
\qed 

This concludes the proof of correctness when $\varphi = f_i^{\sf row}$. Similar proofs show correctness for the other bicrystal operators. For $\varphi = e_i^{\sf row}$, the construction of $N$ in \emph{Case 1} is identical to the construction for $f_{i}^{\sf row}$. In \emph{Case 2}, add $1$ to entry $(i+1,j)$ instead of $(i,j)$ to obtain $N'$. 
The remainder of the argument goes through by swapping ``right'' for ``left''.
The constructions for $f_j^{\sf col},e_j^{\sf col}$ are transpose to those for $f_i^{\sf row}, e_i^{\sf row}$.\qed

\section{Combinatorial Representation Theory Preliminaries}\label{sec:prelim} 

We now turn our attention towards Theorem~\ref{thm:LRrule}, where we reformulate the combinatorial rule given in~\cite{AAA} for computing the multiplicities in the irreducible decomposition of a quotient $\mathbb{C}[{\sf Mat}_{m,n}]/I$ as a Levi representation in terms of generalized Littlewood--Richardson tableaux. This section reviews some necessary material from combinatorial representation theory; more in-depth explanations may be found in~\cite[Sections 2-5]{AAA}.

\subsection{Combinatorial preliminaries}
We review some tableau combinatorics; we refer to~\cite[Section 3]{AAA}, and the references therein, for more details. 

Let $\lambda$ be an integer partition, identified with its Young diagram in English convention.
If $\lambda\subset \nu$ are two partitions, positioned so that their northwest corners agree, $\nu/\lambda$ is their \emph{skew shape} consisting of the boxes of $\nu$ with those of $\lambda$ removed.

\begin{definition}
A \emph{semistandard Young tableau} $T$ of \emph{shape} $\nu/\lambda$ is a filling of the boxes of $\nu/\lambda$ with positive integer entries, such that the entries both weakly increase along rows from left to right and strictly increase along columns from top to bottom.
\end{definition} 

\begin{definition}
    The \emph{length} of partition $\lambda$, denoted $\ell(\lambda)$, is the number of parts of $\lambda$.
\end{definition}

Let ${\mathrm{SSYT}}(\nu/\lambda)$ be the set of all semistandard Young tableaux of shape $\nu/\lambda$ 
and let ${\mathrm{SSYT}}(\nu/\lambda,n)$ be the subset consisting of those tableaux that use entries from $[n]$. 

\begin{definition}
    The \emph{row insertion} of $x$ into $T\in {\mathrm{SSYT}}(\lambda)$ is another semistandard tableau denoted $T\leftarrow x$. If no entry in the first row of $T$ exceeds $x$, form $T\leftarrow x$ by adding $x$ at the end of the first row of $T$. Otherwise, let $y$ be the leftmost entry in the first row of $T$ strictly greater than $x$. Replace this $y$ with $x$, then insert $y$ into the second row of $T$ in the same manner. The tableau produced when this process eventually terminates is $T\leftarrow x$.
\end{definition}

\begin{example}
    Let $T = \ytabs{
    1 & 2 & 3 & 3\\ 
    3 & 5}$ and let $x = 2$. 
    \begin{itemize}
    \item   Inserting $2$ into the first row of $T$ bumps out a $3$, yielding $T^{(1)} = \ytabs{1 & 2 & 2 & 3\\ 3 & 5}$. 
    \item Reinserting the displaced $3$ into the second row bumps out the $5$ to give\\ $T^{(2)} = \ytabs{1 & 2 & 2 & 3\\ 3 & 3 }$. 
    \item Reinserting this $5$ in the previously empty third row gives $(T\leftarrow x)=\ytabs{1 & 2 & 2 & 3\\ 3 & 3 \\ 5 }$.
    \end{itemize}
\end{example}

\begin{definition}\label{def:insertiontab}
    The \emph{insertion tableau} of a word $w = w_1w_2\dots w_k$ is the tableau 
    \[{\sf tab}(w) := (((\emptyset\leftarrow w_1)\leftarrow w_2)\leftarrow\dots\leftarrow w_k).\]
\end{definition}

\begin{definition}
    The \emph{RSK map} sends  $M\in {\sf Mat}_{m,n}({\mathbb Z}_{\geq 0})$ to
    \[{\sf RSK}(M) := ({\sf tab}({\sf row}(M)),{\sf tab}({\sf col}(M))),\]
    where ${\sf row}(M)$ and ${\sf col}(M)$ are as defined in Section~\ref{subsec:maindef}.
\end{definition}

\begin{theorem}[RSK Correspondence]\label{thm:RSK}
    The map ${\sf RSK}$ defines a bijection 
    \[{\sf Mat}_{m,n}({\mathbb Z}_{\geq 0})\longrightarrow\bigsqcup_{\lambda}
    {\mathrm{SSYT}}(\lambda,m) \times
    {\mathrm{SSYT}}(\lambda,n).\]
\end{theorem}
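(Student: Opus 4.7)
The plan is to reduce this classical statement to the standard biword formulation of RSK and then to identify the "column insertion tableau" ${\sf tab}({\sf col}(M))$ with the usual recording tableau via transpose symmetry. First I would associate to $M \in {\sf Mat}_{m,n}(\mathbb{Z}_{\geq 0})$ the biword (two-line array) whose columns are the pairs $(c,r)$ listed in lexicographic order (increasing in $c$, breaking ties by increasing $r$) with $(c,r)$ occurring $M_{rc}$ times. By construction the bottom row of this biword is exactly ${\sf row}(M)$, and the top row is a weakly increasing sequence of column indices. Row-insert the bottom-row entries successively to build $P := {\sf tab}({\sf row}(M))$, and let $Q'$ be the \emph{recording tableau}, i.e., the entry of $Q'$ in the new cell appended to $P$ at step $k$ is the $k$th top-row entry of the biword.

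The first lemma is that $Q'$ is a semistandard Young tableau of the same shape as $P$. Shape equality is automatic. Semistandardness follows from the classical \emph{row bumping lemma}: when two consecutive insertions involve the same top-row label, the second insertion bumps strictly to the right within each traversed row and so terminates in a strictly lower row, forcing columns of $Q'$ to be strictly increasing; when the top-row label strictly increases, the new cell lies weakly to the northeast of the previous one, forcing rows of $Q'$ to be weakly increasing. This yields the standard biword RSK: the map $M \mapsto (P,Q')$ is a bijection from matrices (equivalently, biwords) to pairs $({\mathrm{SSYT}}(\lambda,m), {\mathrm{SSYT}}(\lambda,n))$, with inverse given by reverse row insertion applied at the cell of $Q'$ with the largest label (rightmost among ties).

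The second step is to identify $Q' = {\sf tab}({\sf col}(M))$, which is the content of the theorem as stated here. This is the symmetry property ${\sf RSK}(M^t) = ({\sf RSK}(M))^{\mathrm{swap}}$, since ${\sf col}(M) = {\sf row}(M^t)$ under the conventions of Section~\ref{subsec:maindef}. I would prove it either by Sch\"utzenberger's growth-diagram construction --- where the local growth rules are manifestly symmetric under reflection across the diagonal, so swapping the axes exchanges the roles of the two tableaux --- or by an induction on $|M|$ that tracks how inserting one additional biword column into $M$ affects the insertion and recording tableaux simultaneously, comparing to the same operation performed on $M^t$.

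The main obstacle is this symmetry identification, because the theorem here bypasses any mention of a recording tableau and defines $Q$ directly as a column-word insertion tableau. Every other ingredient --- the bumping lemma, semistandardness of $Q'$, and reversibility of row insertion --- is classical and routine. Once the symmetry is established, both surjectivity and injectivity of ${\sf RSK}$ follow immediately from the invertibility of biword RSK, completing the proof.
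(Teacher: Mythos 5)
The paper does not actually prove this theorem: it cites Fulton \cite[Section~4.1]{Fulton} and Stanley \cite[Section~7.11]{ECII} for the standard biword RSK and remarks in a footnote that its convention swaps the $P$- and $Q$-tableaux, "this merely being a matter of transposing $M$." Your proposal supplies exactly the argument that the paper delegates to those references: you unfold the biword, invoke the row bumping lemma to get semistandardness of the recording tableau $Q'$, obtain bijectivity from reversibility of row insertion, and then close the gap by the Sch\"utzenberger symmetry theorem $\mathsf{RSK}(M^t) = (Q', P)$ to identify $Q'$ with $\mathsf{tab}(\mathsf{row}(M^t)) = \mathsf{tab}(\mathsf{col}(M))$. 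Each step is classical and is stated correctly; in particular you correctly identify that the only nontrivial content specific to the paper's formulation is the equality $Q' = \mathsf{tab}(\mathsf{col}(M))$, which is precisely the symmetry theorem, and you propose the standard growth-diagram proof (or an inductive local analysis) for it. So the proposal is a correct and complete fleshing-out of what the paper treats as a citation; there is no discrepancy to report beyond the fact that the paper itself contains no proof to compare against.
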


Our description of the {\sf RSK} correspondence is unorthodox in that it is not evident that ${\sf RSK}(M)=(P,Q)$ 
is a pair of tableaux of the same shape. For a standard description see, e.g., \cite[Section 4.1]{Fulton} or \cite[Section 7.11]{ECII}.\footnote{Our description also swaps the $P$- and $Q$-tableaux from the conventions in these sources, this merely being a matter of transposing $M$.}

\begin{definition}
    The (column) \textit{reading word} of a tableau $T$, ${\sf word}(T)$, is the list of entries of $T$ read along columns, bottom-to-top, left-to-right. Define ${\sf revword}(T)$ to be ${\sf word}(T)$ with the entries listed backwards.
\end{definition}

\begin{definition}\label{eqn:Knuthequivalence}
\emph{Knuth equivalence} on words is defined by the relations
\begin{equation}\label{eqn:Knuth1}
prq\equiv rpq \text{ \ \ if $p\leq q<r$}
\end{equation}
and 
\begin{equation}\label{eqn:Knuth2}
 qpr\equiv qrp \text{\ \ if $p<q\leq r$.}
 \end{equation}
 \end{definition}
 
The following is a fundamental fact about Knuth equivalence and {\sf RSK}:
\begin{theorem}[{\cite[Proposition 2.1.1 and Lemma 2.3.2]{Fulton}}]
\label{thm:June24bbb}
${\sf word}({\sf tab}(w))\equiv w$.  
\end{theorem}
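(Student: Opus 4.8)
The statement to prove is Theorem~\ref{thm:June24bbb}: ${\sf word}({\sf tab}(w)) \equiv w$, i.e., the column reading word of the insertion tableau of $w$ is Knuth-equivalent to $w$ itself.

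\textbf{The plan.} The natural approach is induction on the length of the word $w$. Write $w = w' x$ where $x$ is the last letter; by the inductive hypothesis, ${\sf word}({\sf tab}(w')) \equiv w'$, so ${\sf word}({\sf tab}(w')) \cdot x \equiv w' x = w$. Since ${\sf tab}(w) = {\sf tab}(w') \leftarrow x$, it suffices to prove the key lemma: for any semistandard tableau $T$ and any letter $x$, one has ${\sf word}(T \leftarrow x) \equiv {\sf word}(T) \cdot x$. In other words, appending $x$ to the reading word and then ``rectifying'' via row insertion does not change the Knuth class. This reduces the whole theorem to understanding a single row insertion step.

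\textbf{Key steps.} First I would reduce the single-insertion lemma further by tracking the insertion row by row. Row insertion of $x$ into $T$ bumps some entry $y$ from row $1$ (if a bump occurs), then inserts $y$ into row $2$, and so on. So it is enough to show: if $R$ is a single (weakly increasing) row and inserting $x$ into $R$ produces the new row $R'$ and bumps out $y$ (or bumps nothing, in which case $y$ is vacuous), then $R' \cdot y \equiv R \cdot x$ as words — and moreover that this local move is compatible with the column reading word of the whole tableau, so that the column-reading-word convention used here (read columns bottom-to-top, left-to-right) still lets the Knuth moves propagate. The single-row computation is the heart: if $R = r_1 \le r_2 \le \cdots \le r_k$ and $x \ge r_k$, then $R' \cdot y = R \cdot x$ literally (no bump). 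Otherwise $r_j$ is the leftmost entry $> x$, so $y = r_j$, $R' = r_1 \cdots r_{j-1}\, x\, r_{j+1} \cdots r_k$, and one must check
\[
r_1 \cdots r_{j-1}\, x\, r_{j+1} \cdots r_k \cdot r_j \;\equiv\; r_1 \cdots r_{j-1}\, r_j\, r_{j+1} \cdots r_k \cdot x,
\]
which follows by repeatedly applying the Knuth relations \eqref{eqn:Knuth1} and \eqref{eqn:Knuth2} to slide $r_j$ leftward past $r_{j+1}, \ldots, r_k$ and $x$ (using that $x < r_j \le r_{j+1} \le \cdots \le r_k$, so relation \eqref{eqn:Knuth2} in the form $q p r \equiv q r p$ applies at each step, with appropriate care at the boundary where $x$ sits). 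Then one chains these single-row statements through all rows of the insertion path and checks that reading the tableau by columns assembles these into a genuine chain of Knuth moves on ${\sf word}(T)\cdot x$. Since the result is quoted verbatim from \cite[Proposition 2.1.1 and Lemma 2.3.2]{Fulton}, I would in fact simply cite those; but if a self-contained argument is wanted, the above is the route.

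\textbf{Main obstacle.} The genuine subtlety is not the single-row Knuth computation (which is routine) but reconciling it with the column reading word convention adopted in this paper, which differs from Fulton's row reading convention. One has to verify that when a bumped entry $y$ moves from row $i$ to row $i+1$, the corresponding rearrangement of letters in the \emph{column} reading word is still realized by a sequence of elementary Knuth moves — the letters being permuted are not contiguous in the column word, so one needs a short argument (or an appeal to the well-known fact that ${\sf word}(T) \equiv {\sf revword}(T^{\mathrm{transpose}})$-type symmetries, or simply that both the row word and column word of a tableau lie in the same Knuth class) to bridge the gap. Concretely, the cleanest fix is to first prove that for any semistandard tableau $T$, its column reading word and its row reading word are Knuth-equivalent, and then invoke Fulton's statement for the row reading word; the theorem for the column word follows immediately. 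So I expect the write-up to be: (1) cite Fulton, or (2) reduce to the row-word version via the column-word/row-word equivalence and then give the single-insertion induction. The honest expectation is that this paper just cites \cite{Fulton}, and the ``proof'' is one line.
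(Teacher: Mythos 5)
Your proposal is correct and matches the paper exactly: the theorem is stated with the citation to Fulton and no proof is given, just as you predicted. You also correctly identified why \emph{both} of Fulton's Proposition 2.1.1 (row word of the insertion tableau is Knuth equivalent to $w$) and Lemma 2.3.2 (column word of a tableau is Knuth equivalent to its row word) are needed, since the paper's ${\sf word}$ is the column reading word rather than Fulton's row word.
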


\begin{example}
Let $w=23124$. Then 
\[{\sf tab}(w)=\ytabb{1 & 2 & 4\\ 2 & 3}, \text{ \ ${\sf word}({\sf tab}(w))=21324$,
and ${\sf revword}({\sf tab}(w))=42312$
.}\]
Indeed, we have $2\underline{132}4\equiv 23124$ by applying \eqref{eqn:Knuth1} to the underlined three letters,
in agreement with Theorem~\ref{thm:June24bbb}.
\end{example}

This, too, is one of the main results about Knuth equivalence and {\sf RSK}:

\begin{theorem}[{\cite[Theorem 2.1]{Fulton}}]
\label{thm:Knuthstraight}
In every Knuth equivalence class ${\mathcal K}$, there is a unique word that is ${\sf word}(T)$ for a straight shape tableau
$T$. Moreover, $T={\sf tab}(w)$ for any $w\in {\mathcal K}$. 
\end{theorem}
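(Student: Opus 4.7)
The plan is to establish existence directly from Theorem~\ref{thm:June24bbb} and to obtain uniqueness by combining two supporting facts: that ${\sf tab}$ is constant on Knuth equivalence classes, and that ${\sf tab}({\sf word}(T)) = T$ for any straight-shape tableau $T$.

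For existence, I would take any $w \in {\mathcal K}$ and set $T := {\sf tab}(w)$. By Definition~\ref{def:insertiontab}, $T$ has straight partition shape. Theorem~\ref{thm:June24bbb} immediately gives ${\sf word}(T) \equiv w$, so ${\sf word}(T) \in {\mathcal K}$. This exhibits at least one straight-shape tableau whose column reading word lies in ${\mathcal K}$.

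For uniqueness I need the two supporting facts:
\begin{itemize}
\item[(a)] If $w$ and $w'$ differ by a single Knuth move (\ref{eqn:Knuth1}) or (\ref{eqn:Knuth2}), then ${\sf tab}(w) = {\sf tab}(w')$.
\item[(b)] For any straight-shape tableau $T$, ${\sf tab}({\sf word}(T)) = T$.
\end{itemize}
Given these, if $T$ and $T'$ are straight-shape tableaux with ${\sf word}(T), {\sf word}(T') \in {\mathcal K}$, then ${\sf word}(T) \equiv {\sf word}(T')$, so iterating (a) along a sequence of Knuth moves joining them yields ${\sf tab}({\sf word}(T)) = {\sf tab}({\sf word}(T'))$, and (b) then forces $T = T'$. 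The ``moreover'' clause is immediate: for any $w \in {\mathcal K}$, (a) gives ${\sf tab}(w) = {\sf tab}({\sf word}(T))$, which by (b) equals $T$.

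Fact (b) can be handled by induction on the number of boxes of $T$: strip the rightmost box of the bottommost row in which it occurs (equivalently the final letter of a suitable reading), apply the inductive hypothesis to the smaller straight-shape tableau, and verify that inserting the removed letter replaces that box exactly. The real work lies in (a), which is the main obstacle. To prove it one considers a word decomposed as $w = u \cdot t \cdot v$, where $t$ is one of the Knuth triples $prq \leftrightarrow rpq$ (with $p \le q < r$) or $qpr \leftrightarrow qrp$ (with $p < q \le r$). Write $P = {\sf tab}(u)$; one must show that inserting either version of $t$ into $P$ yields the same partial tableau $P'$, after which ${\sf tab}(u \cdot t \cdot v) = {\sf tab}(u \cdot t' \cdot v)$ follows because subsequent insertion of $v$ is identical. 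The argument proceeds by tracking the bumping paths of the three letters $p, q, r$ through the rows of $P$ and checking, in each of a small number of cases determined by which letters bump which, that the resulting bumping paths coincide. This case analysis is standard but is the principal technical burden; everything else in the proof assembles it with Theorem~\ref{thm:June24bbb} in a formal manner.
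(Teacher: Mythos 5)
Your argument is correct, and it is the classical proof; the paper does not supply its own argument but cites Fulton's Theorem~2.1, which is established by exactly this route (Fulton's key lemma that row insertion is constant on Knuth classes is your fact~(a), and your fact~(b) is his induction that a straight-shape tableau is recovered by inserting its own reading word). One small remark: in your sketch of~(b), the box to strip should be the outer corner corresponding to the last letter of whichever reading word you fix (for the column word used in this paper, that is the box in row $1$, column $\lambda_1$, since the column word ends at the top of the rightmost column); once that is pinned down, inserting the removed letter appends a box in the correct place because the entry weakly dominates everything to its left in row~$1$.
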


\begin{definition}[Tableau crystal operators \cite{KashNak}]\label{def:tabbicops}
    Let $T\in \operatorname{SSYT}(\lambda,n)$ and recall the crystal structure on words from Example~\ref{exa:wordcrystal}. If $f_i({\sf word}(T))=\varnothing$, define $f_i(T):=\varnothing$. Otherwise, $f_i$ changes a single $i$ in ${\sf word}(T)$ to an $i+1$. Define $f_i(T)\in\operatorname{SSYT}(\lambda, n)$ to be the tableau obtained by changing the corresponding $i$ in $T$ to an $i+1$. Define $e_i(T)\in\operatorname{SSYT}(\lambda, n)\cup\{\varnothing\}$ analogously using $e_i({\sf word}(T))$.
\end{definition}

\begin{example}
Figure~\ref{fig:crystal} depicts the crystal graph for $\operatorname{SSYT}(\ydiags{2,1},3)$. The edges show the effect of the two lowering operators $f_1$,$f_2$ (the raising
operators $e_1$, $e_2$ go in the opposite direction).

\begin{figure}[ht]
           \begin{tikzpicture}[scale = 0.63, align = center]
          \draw[red, thick][->] (1, 6) -- (4, 9);
        \draw[red, thick][->] (14, 9) -- (17, 6);
        \draw[blue, thick][->] (6, 10) -- (8, 10);
        \draw[blue, thick][->] (10, 10) -- (12, 10);
        \draw[blue, thick][->] (1, 4) -- (4, 1);
        \draw[blue, thick][->] (14, 1) -- (17, 4);
        \draw[red, thick][->] (6, 0) -- (8, 0);
        \draw[red, thick][->] (10, 0) -- (12, 0);
        \draw (0, 5) node{\ytabb{1 & 1\\ 2}};
        \draw (5, 10) node{\ytabb{1 & 2\\ 2}};
        \draw (9, 10) node{\ytabb{1 & 3\\ 2}};
        \draw (13, 10) node{\ytabb{1 & 3\\ 3}};
        \draw (5, 0) node{\ytabb{1 & 1\\ 3}};
        \draw (9, 0) node{\ytabb{1 & 2\\ 3}};
        \draw (13, 0) node{\ytabb{2 & 2\\ 3}};
        \draw (18, 5) node{\ytabb{2 & 3\\ 3}};
        \draw (2,8) node{$f_1$};
        \draw (2,2) node{$f_2$};
        \draw (7,11) node{$f_2$};
        \draw (11,11) node{$f_2$};
        \draw (16,8) node{$f_1$};
        \draw (7,-1) node{$f_1$};
        \draw (11,-1) node{$f_1$};
        \draw (16,2) node{$f_2$};
        \end{tikzpicture}
        \ytableausetup{boxsize=.3em}
        \caption{The crystal graph for $\operatorname{SSYT}(\vcenter{\hbox{\ydiagram{2,1}}},3)$. }\label{fig:crystal}
\end{figure}
For instance, to compute $f_2\left(\,\ytabs{ 1 & 2 \\ 2}\,\right)$, look at 
${\sf word}\left(\,\ytabs{ 1 & 2 \\ 2}\,\right)=212$.
The bracket sequence from reading the $2$'s and $3$'s is {\tt ))}. The rightmost unpaired {\tt )} corresponds to the rightmost $2$. 
Hence that $2$ turns into a $3$, producing $f_2\left(\,\ytabs{ 1 & 2 \\ 2}\, \right)=\ytabs{ 1 & 3 \\ 2}$ as shown in Figure~\ref{fig:crystal}.
\end{example}

The next proposition relates Definition~\ref{def:tabbicops}  and the  bicrystal operators from Section~\ref{subsec:maindef} through ${\sf RSK}$. The proof is from the definitions, although we omit it here. It is implicit in \cite{bicrystal1, bicrystal2}; see also \cite[Proposition~4.31]{AAA} for an explicit argument.

\begin{proposition}\label{prop:pullback}
    Let $M\in{\sf Mat}_{m,n}(\mathbb{Z}_{\geq 0})$, ${\sf RSK}(M) = (P,Q)$. Then 
    \begin{align*}
        {\sf RSK}(f_i^{\sf row}(M)) = (f_i(P),Q), &\quad {\sf RSK}(f_j^{\sf col}(M)) = (P,f_j(Q)),\\
        {\sf RSK}(e_i^{\sf row}(M)) = (e_i(P),Q), &\quad {\sf RSK}(e_j^{\sf col}(M)) = (P,e_j(Q)).
    \end{align*}
\end{proposition}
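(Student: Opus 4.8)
The statement to prove is Proposition~\ref{prop:pullback}: that the bicrystal operators $f_i^{\sf row}, e_i^{\sf row}$ commute with {\sf RSK} so as to act only on the $P$-tableau via the tableau crystal operators of Definition~\ref{def:tabbicops}, and dually for $f_j^{\sf col}, e_j^{\sf col}$ and the $Q$-tableau. By the transpose symmetry $f_j^{\sf col}(M) = (f_j^{\sf row}(M^t))^t$ and the fact that ${\sf RSK}(M^t)$ swaps $P$ and $Q$, it suffices to treat $f_i^{\sf row}$ (and then $e_i^{\sf row}$, which is its partial inverse by Lemma~\ref{lemma:SandSc}, so that the statement for $e_i^{\sf row}$ follows formally from the statement for $f_i^{\sf row}$ applied to $e_i^{\sf row}(M)$). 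So the whole proposition reduces to the single identity ${\sf RSK}(f_i^{\sf row}(M)) = (f_i(P), Q)$.

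First I would recall that ${\sf RSK}(M) = ({\sf tab}({\sf row}(M)), {\sf tab}({\sf col}(M)))$, and observe that the $Q$-coordinate is visibly unchanged: the operator $f_i^{\sf row}$ modifies $M$ by subtracting $1$ from some entry $M_{r,c}$ and adding $1$ to $M_{r+1,c}$, both in the \emph{same} column $c$. Reading $M$ down columns left-to-right to form ${\sf col}(M)$ records column indices, so the multiset of column indices in each position of the reading is untouched; more carefully, ${\sf col}(M)$ is literally unchanged as a word, because the $r\mapsto r+1$ substitution only alters which row indices are recorded, not the column word. Hence ${\sf tab}({\sf col}(M))$ is unchanged, giving $Q \mapsto Q$ for free. (One must double-check the precise reading convention from Section~\ref{subsec:maindef} — but since the move stays within column $c$, the column word is fixed on the nose.)

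The substance is the $P$-coordinate: I need ${\sf tab}({\sf row}(f_i^{\sf row}(M))) = f_i({\sf tab}({\sf row}(M)))$. By Theorem~\ref{thm:Knuthstraight}, two words have the same insertion tableau iff they are Knuth equivalent, and $f_i$ on tableaux is defined (Definition~\ref{def:tabbicops}) precisely by transporting the word-level $f_i$ along this equivalence. So it is enough to show: (a) the word-crystal operator $f_i$ commutes with Knuth equivalence — i.e., $w \equiv w'$ implies $f_i(w) \equiv f_i(w')$ (this is the standard fact that $\mathcal{W}_k$ is a union of crystals each isomorphic to a tableau crystal, and is what justifies Definition~\ref{def:tabbicops} in the first place, presumably available via \cite{KashNak} or \cite[Section 3]{AAA}); and (b) passing from $M$ to $f_i^{\sf row}(M)$ changes ${\sf row}(M)$ exactly by the word-level operator $f_i$, i.e., ${\sf row}(f_i^{\sf row}(M)) = f_i({\sf row}(M))$. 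Given (a) and (b), we compute ${\sf tab}({\sf row}(f_i^{\sf row}(M))) = {\sf tab}(f_i({\sf row}(M)))$, and by Definition~\ref{def:tabbicops} together with Theorem~\ref{thm:June24bbb} this equals $f_i({\sf tab}({\sf row}(M)))$, as needed.

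Claim (b) is where the bookkeeping lives, and I expect it to be the main obstacle — though it is more careful-unwinding than deep. The point: $f_i^{\sf row}$ is defined (Section~\ref{subsec:maindef}) by forming ${\sf bracket}_i({\sf row}(M))$, finding the rightmost unmatched {\tt )}, locating the entry $(r,c)$ it came from, and doing $M_{r,c}\mapsto M_{r,c}-1$, $M_{r+1,c}\mapsto M_{r+1,c}+1$ — which is \emph{literally} the instruction "take the letter $i$ corresponding to the rightmost unmatched {\tt )} in ${\sf bracket}_i(w)$ and turn it into $i+1$", i.e., the word-crystal $f_i$, \emph{provided} one checks that the position in the reading word ${\sf row}(M)$ that gets altered and the position in ${\sf row}(f_i^{\sf row}(M))$ that now reads $i+1$ line up so that ${\sf row}(f_i^{\sf row}(M))$ is exactly $f_i({\sf row}(M))$. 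Here one uses that within a fixed column the row indices are read in increasing order, so decrementing one copy of $r$ in cell $(r,c)$ and incrementing one copy of $r+1$ in cell $(r+1,c)$ — which are adjacent cells in the same column — produces, in the column-$c$ block of the reading word, exactly the effect of replacing that one letter $i$ by $i+1$ in place (since the block was a weakly increasing run $\cdots r\, r \cdots (r{+}1)(r{+}1)\cdots$ and we move one $r$ across the boundary). I would spell this out with the column-block decomposition of ${\sf row}(M)$, verify the "rightmost unmatched {\tt )}" really identifies the correct letter, and thereby conclude (b); then (a) finishes it, and the transpose/inverse reductions dispatch the remaining three operators. As the excerpt notes, an explicit argument already appears in \cite[Proposition~4.31]{AAA}, so I would either cite that or reproduce this streamlined version.
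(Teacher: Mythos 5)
There is a genuine gap in your treatment of the $Q$-coordinate, and it is the substantive half of the proposition. You assert that ${\sf col}(M)$ is ``literally unchanged as a word'' by $f_i^{\sf row}$ because the move stays in one column of $M$. This rests on a misreading of the definition: ${\sf col}(M) := {\sf row}(M^t)$ is obtained by reading $M^t$ down its columns --- equivalently, reading $M$ across its \emph{rows} --- and recording column indices. Moving a unit from $(r,c)$ to $(r+1,c)$ therefore deletes a letter $c$ from the row-$r$ block of ${\sf col}(M)$ and inserts a $c$ into the row-$(r+1)$ block, so the column word \emph{does} change. Concretely, in Example~\ref{ex:bicrystalops} with $M = \left[\begin{smallmatrix} 1 & 1 & 2\\ 2 & 3 & 1 \end{smallmatrix}\right]$ and $f_1^{\sf row}(M) = \left[\begin{smallmatrix} 0 & 1 & 2\\ 3 & 3 & 1\end{smallmatrix}\right]$, one finds ${\sf col}(M) = 1233112223$ while ${\sf col}(f_1^{\sf row}(M)) = 2331112223$.

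What must be shown is that these two column words are \emph{Knuth equivalent}, so that their insertion tableaux agree; this is true but is not a tautology. By transposing, the claim ${\sf tab}({\sf col}(f_i^{\sf row}(M))) = {\sf tab}({\sf col}(M))$ is the same as ${\sf tab}({\sf row}(f_j^{\sf col}(N))) = {\sf tab}({\sf row}(N))$, i.e., that a column operator fixes $P$. This ``dual commutativity'' is exactly the nontrivial content of the proposition and is what \cite{bicrystal1, bicrystal2} actually establish; your proposal contains no argument that the one-letter shuffle produced by $f_i^{\sf row}$ --- whose location is constrained by the bracket rule on ${\sf row}(M)$, not on ${\sf col}(M)$ --- can be realized by Knuth moves on the column word. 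The $P$-half of your plan (that ${\sf row}(f_i^{\sf row}(M)) = f_i({\sf row}(M))$ exactly, together with the compatibility of word-level $f_i$ with Knuth equivalence) is correct, as are the formal reductions to the single case $f_i^{\sf row}$ via transpose and Lemma~\ref{lemma:SandSc}. To close the gap you would need either a direct Knuth-move argument for the column word exploiting those bracket-rule constraints, or else simply to cite \cite[Proposition~4.31]{AAA}, which is what the paper itself does.
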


\subsection{Representation theory}\label{subsec:reptheoryprelims}
We recall facts about representation theory of general linear groups, referring to \cite{FH, Fulton, Howe} and our earlier work \cite[Section~2]{AAA} for more details. 

Let $V$ be a (finite-dimensional) vector space, viewed as an affine space with the right $GL(V)$-action
\[\vec{v}\cdot g := g\inv\vec{v}.\]
Let $\mathfrak{X}$ be an affine subscheme of $V$ stable under the action of some linear algebraic subgroup $G\subseteq GL(V)$. Equivalently, $V$ is the spectrum of its coordinate ring $\complexes[V]\cong \mathrm{Sym}(V^*)$, and $\mathfrak{X} = \mathrm{Spec}(\complexes[V]/I)$ for some ideal $I\subseteq\complexes[V]$. The right $G$-action on $V$ translates into a left $G$-action on $\complexes[V]$:
\[g\cdot f(\vec v):=f(\vec{v}\cdot g\inv) = f(g\vec{v}), \text{ \ for $g\in G, \vec v\in V, f\in {\mathbb C}[V]$.}\]
The subscheme $\mathfrak{X}\subseteq V$ is stable under the right $G$-action if and only if the corresponding ideal $I\subseteq\complexes[V]$ is stable under the left $G$-action. In this case, the $G$ action on $\complexes[V]$ descends to an action on the coordinate ring $\complexes[\mathfrak{X}] := \complexes[V]/I$. Since $\mathfrak{X}$ is not assumed to be an affine sub\emph{variety} of $V$, the ideal $I$ need not be radical.

Now restrict to our main case of interest by letting $U$ and $W$ be vector spaces of dimensions $m$ and $n$ respectively, and setting $V := U\boxtimes W$. We take $G$ to be the linear algebraic subgroup 
\[GL(U)\times GL(W)\hookrightarrow GL(V),\] 
embedded by mapping the pair $(g, h)\in GL(U)\times GL(W)$ to the Kronecker product $g\otimes h$. Identifying $V$ with ${\sf Mat}_{m, n}$, the $G$-action on $V$ above is precisely the $\mathbf{GL}$-action on ${\sf Mat}_{m, n}$ from equation \eqref{eqn:matrixaction}.\footnote{Usually, one would identify ${\sf Mat}_{m, n}$ with $\mathrm{Hom}(W, U)\cong W^*\otimes U$ rather than $U\otimes W$. Our identification gives a more natural correspondence with the combinatorics of ${\sf RSK}$.} By restriction, we may also take $G$ to be any Levi subgroup $L_{\bf I}\times L_{\bf J}$ of $GL(U)\times GL(V)$. Each $L_{\bf I}$ is a direct product of general linear groups $GL_{k}$, as described in Section~\ref{subsection:bipre}.

The irreducible polynomial representations $V_{\lambda}(k)$ of $GL_k$ are indexed by integer partitions $\lambda$ with at most $k$ parts, i.e., $\ell(\lambda)\leq k$. For concreteness, we give a classical description of $V_{\lambda}(k)$ in terms of minors; see \cite[Exercise~15.57]{FH} which credits J.~Deruyts and earlier A.~Clebsch.
Let $Z$ be the generic $k\times k$ matrix. For each semistandard tableau $T$ of shape $\lambda$ we associate a homogenous polynomial $[T]\in {\mathbb C}[{\sf Mat}_{k, k}]$ as follows: the $\lambda_j'$ entries of the $j$-th column of $T$ will be the
row indices of a minor $\Delta_j$ whose column indices are $1,2,\ldots,\lambda_j'$. Let 
\[[T]=\Delta_1\cdot \Delta_2\cdots\Delta_{\lambda_1}.\]
Define the vector space
\begin{equation}
\label{eqn:Vkdef}
V_{\lambda}(k) := \operatorname{span}_{\mathbb{C}} \left\{ [T] \;\middle|\; T \text{ is semistandard of shape } \lambda \right\} \subseteq \mathbb{C}[{\sf Mat}_{k, k}].
\end{equation}
This is a representation of $GL_k$ via linear substitution $Z\mapsto g\cdot Z$ (coordinate-wise), called a
\emph{Schur module}.

\begin{example}
$GL_2$'s second fundamental representation $V_{\ydiags{2}}(2)$ is the ${\mathbb C}$-span of 
\[\left[\, \ytabb{1 & 1}\,\right] =z_{11}^2, 
  \left[\,\ytabb{1 & 2}\,\right] =z_{11}z_{21}, 
   \left[\,\ytabb{2 & 2}\,\right] =z_{21}^2\]
   inside ${\mathbb C}[{\sf Mat}_{2,2}]$. 

   The determinant representation $V_{\ydiags{1,1}}(2)$ is the ${\mathbb C}$-span of $\left[\,\ytabs{1 \\ 2}\,\right] =\left|\begin{smallmatrix} z_{11} & z_{12}\\ z_{21} & z_{22}\end{smallmatrix}\right|$.
\end{example}

Each $[T]$ spans a one-dimensional irreducible sub-representation of the maximal torus $T_k\leq GL_k$;
$[T]$ is a weight vector spanning a weight space with weight
$\prod_{i\in T} x_i$. The \emph{character} of $V_{\lambda}(k)$ is the generating series over all these weights, namely, the \emph{Schur polynomial}
\begin{equation}\label{eqn:June24xyz}
s_{\lambda}(x_1,\ldots,x_k):=\sum_{T} x^T, \, x^T:=\prod_{i\in T}x_i,
\end{equation}
where the sum is over semistandard tableau $T$ of shape $\lambda$.

The irreducible polynomial representations of $GL_m\times GL_{n}$
are of the form $V_{\lambda}(m)\boxtimes V_{\mu}(n)$ where the action is by 
\[(g,g')\cdot v\otimes w=(g\cdot v)\otimes (g'\cdot w).\]

\begin{example}
The irreducible $GL_2\times GL_2$ representation $V_{\ydiags{2}}(2)\boxtimes V_{\ydiags{1,1}}(2)$ has a concrete description as the span of 
\[\left[\,\ytabs{1 & 1}\,\right]\otimes \left[\,\ytabs{1\\ 2 }\,\right]=z_{11}^2\cdot
\left|\begin{smallmatrix} {\widetilde z}_{11} & {\widetilde z}_{12}\\ {\widetilde z}_{21} & {\widetilde z}_{22}\end{smallmatrix}\right|, \, 
  \left[\,\ytabs{1 & 2} \,\right]\otimes \left[\,\ytabs{1\\ 2}\,\right] =z_{11}z_{21}\cdot
\left|\begin{smallmatrix} {\widetilde z}_{11} & {\widetilde z}_{12}\\ {\widetilde z}_{21} & {\widetilde z}_{22}\end{smallmatrix}\right|,\,  
   \left[\,\ytabs{2 & 2}\,\right]\otimes \left[\,\ytabs{1\\ 2}\,\right] =z_{21}^2\cdot
\left|\begin{smallmatrix} {\widetilde z}_{11} & {\widetilde z}_{12}\\ {\widetilde z}_{21} & {\widetilde z}_{22}\end{smallmatrix}\right|,\]

inside 
\[{\mathbb C}[{\sf Mat}_{2,2}]\boxtimes {\mathbb C}[{\sf Mat}_{2,2}]\cong {\mathbb C}\left[\begin{smallmatrix}
z_{11} & z_{12} \\ z_{21} & z_{22}\end{smallmatrix}\right]\boxtimes {\mathbb C}\left[\begin{smallmatrix}
{\widetilde z}_{11} & {\widetilde z}_{12} \\ {\widetilde z}_{21} & {\widetilde z}_{22}\end{smallmatrix}\right]\cong
{\mathbb C}[z_{11},z_{12},z_{21},z_{22}, {\widetilde z}_{11},{\widetilde z}_{12},{\widetilde z}_{21},{\widetilde z}_{22}]
.\]
\end{example}
\begin{remark}
Basis vectors for 
\[V_\lambda(m)\boxtimes V_\mu(n)\subseteq\complexes[{\sf Mat}_{m, m}]\boxtimes\complexes[{\sf Mat}_{n, n}]\] 
are indexed by tableau-pairs of shape $(\lambda, \mu)$. In the case where $\lambda = \mu$, there is also a copy of $V_\lambda(m)\boxtimes V_\lambda(n)$ embedded inside $\complexes[{\sf Mat}_{m, n}]$. In Section~\ref{sec:in-RSK} we recall a ``bitableau'' vector space basis for $\complexes[{\sf Mat}_{m, n}]$. Although these basis vectors are also indexed by tableau-pairs and defined using a generalization of \eqref{eqn:Vkdef}, we emphasize that the bitableaux of shape $\lambda$ do \emph{not} form a vector space basis for 
\[V_\lambda(m)\boxtimes V_\lambda(n)\subseteq\complexes[{\sf Mat}_{m, n}].\] 
See \cite[Example 11.8.5]{BCRV}.
\end{remark}
$V_{\lambda}(m)\boxtimes V_{\mu}(n)$ has linear basis of weight vectors $[T]\otimes [U]$. Hence, its character is 
\[s_{\lambda}(x_1,\ldots,x_m)s_{\mu}(y_1,\ldots,y_n).\]

Similarly, the irreducible $L_{\bf I}\times L_{\bf J}$-representations  are denoted
\[V_{\underline\lambda|\underline \mu}=V_{\underline\lambda}\boxtimes V_{\underline \mu}\]
where $\underline\lambda:=(\lambda^{(1)},\lambda^{(2)},\ldots, \lambda^{(r)})$, each $\lambda^{(t)}$ is an integer partition 
with $\ell(\lambda^{(t)})\leq i_t-i_{t-1}$, and 
\[V_{\underline\lambda}=V_{\lambda^{(1)}}(i_1-i_0)\boxtimes V_{\lambda^{(2)}}(i_2-i_1)\boxtimes \cdots
\boxtimes V_{\lambda^{(r)}}(i_r-i_{r-1})\]
is an irreducible $L_{\bf I}$-representation. 
Similarly, one defines $\underline\mu$ and $V_{\underline\mu}$ with respect to ${\bf J}$. Since $L_\mathbf{I}\times L_\mathbf{J}$ is a reductive group, any finite-dimensional polynomial representation $V$ of it admits a decomposition of the form
\begin{equation}\label{eqn:irreddecomp}
V\cong_{L_\mathbf{I}\times L_\mathbf{J}}\bigoplus_{\underline\lambda|\underline\mu}\left(V_{\underline \lambda}\boxtimes V_{\underline\mu}\right)^{\oplus c^V_{\underline\lambda|\underline\mu}},
\end{equation}
for some nonnegative integers $c^V_{\underline\lambda|\underline\mu}$. 
By Schur's lemma, this decomposition is unique up to isotypic components $\left(V_{\underline \lambda}\boxtimes V_{\underline\mu}\right)^{\oplus c^V_{\underline\lambda|\underline\mu}}$. Hence,
we define the \emph{irreducible multiplicities} of $V$:
\begin{equation}
\label{eqn:irredmultdef}
c^V_{\underline\lambda|\underline\mu}:=\dim_{{\mathbb C}}{\mathrm{Hom}}_{L_{\bf I}\times L_{\bf J}}\left(V_{\underline \lambda}\boxtimes V_{\underline\mu}, V\right)\in {\mathbb Z}_{\geq 0}.
\end{equation} 
These irreducible multiplicities also appear in the unique expression for the character of $V$ as a sum of products of Schur polynomials:
\begin{equation}\label{eqn:chardef}
\chi_V = \sum_{\underline\lambda|\underline\mu}c^V_{\underline\lambda|\underline\mu}s_{\underline\lambda}(x_1,\dots, x_m)s_{\underline\mu}(y_1,\dots,y_n),
\end{equation}
where $s_{\underline\lambda}(x_1,\dots, x_m)$ is the weight generating function (i.e., the character) for $V_{\underline\lambda}$. 

When an ideal $I\subseteq\complexes[{\sf Mat}_{m, n}]$ is $L_\mathbf{I}\times L_\mathbf{J}$-stable, both $I$ and the coordinate ring $\complexes[\mathfrak{X}] = \complexes[{\sf Mat}_{m, n}]/I$ are polynomial $L_\mathbf{I}\times L_\mathbf{J}$-representations. 
Indeed, we have the representation-theoretic decomposition 
\[\complexes[{\sf Mat}_{m, n}]\cong_{L_\mathbf{I}\times L_\mathbf{J}} I\oplus \complexes[\mathfrak{X}]\] 
previously mentioned in \eqref{eqn:basicdecomp}. 
Although these representations are not technically finite-dimensional, they are a direct sum of finite-dimensional graded components and therefore still admit decompositions of the form \eqref{eqn:irreddecomp}. It is then natural to seek combinatorial rules for the multiplicities $c^I_{\underline\lambda|\underline\mu}$ or $c^{\complexes[\mathfrak{X}]}_{\underline\lambda|\underline\mu}$, which are equivalent to rules for the types of character formulas given in Section~\ref{subsec:motivating}.
Our rule,
Theorem~\ref{thm:LRrule}, provides a common generalization of two important settings:

\begin{example}[Characters and Hilbert series]\label{exa:char=Hilb}
If 
\[L_{\bf I}\times L_{\bf J}=T_m\times T_n\] 
is the maximal torus, each $V_{\underline\lambda|\underline\mu}$ is one-dimensional and each $\lambda^{(i)},\mu^{(j)}$ is a partition with at most one part, i.e., a nonnegative integer. Each $V_{\underline\lambda|\underline\mu}$ is spanned by a standard basis vector ${\bf m} = z^M\in {\mathrm{Std}}_{\prec} I$ such that the entries of $M$ in the $i$-th row sum to $\lambda^{(i)}$ and the entries in the $j$-th column sum to $\mu^{(j)}$. In representation-theoretic terms, this means that ${\bf m}$ is a weight vector with weight 
$\underline\lambda|\underline\mu$ spanning the weight space $V_{\underline\lambda|\underline\mu}$.
The character of $\complexes[\mathfrak{X}]$ is then the formal power series
\[\chi_{{\mathbb C}[{\mathfrak X}]}=\sum_{\underline\lambda|\underline\mu}c^{\complexes[\mathfrak{X}]}_{\underline\lambda|\underline\mu} x^{\underline\lambda}y^{\underline\mu},\]
where 
\[x^{\underline\lambda}=x_1^{\lambda^{(1)}}x_2^{\lambda^{(2)}}\cdots x_m^{\lambda^{(m)}} \text{\ and \  $y^{\underline\lambda}=y_1^{\lambda^{(1)}}y_2^{\lambda^{(2)}}\cdots y_n^{\lambda^{(n)}}$.}\]
Now, $\chi_{{\mathbb C}[{\mathfrak X}]}$ is the generating series for standard monomials with respect
to the multigrading induced by $T_m\times T_n$. That is, $\chi_{{\mathbb C}[{\mathfrak X}]}$ is, by definition, the \emph{multigraded Hilbert series} of ${\mathbb C}[{\mathfrak X}]$ (see \cite[Definition 8.14]{Miller.Sturmfels}), and the multiplicities \eqref{eqn:irredmultdef} are the values of its
(multigraded) Hilbert function.
\end{example}

\begin{example}[Littlewood--Richardson coefficients]\label{exa:Levi-branch}
Let ${\bf L}=(GL_k\times GL_{m-k})\times GL_n$ act on ${\mathfrak X}={\sf Mat}_{m,n}$ (corresponding to the zero ideal $I=\langle 0\rangle$). The ${\bf L}$-irreducible representations of $\complexes[{\sf Mat}_{m, n}]$ are of the form $V_{\lambda^{(1)}}(k)\boxtimes V_{\lambda^{(2)}}(m-k)\boxtimes V_{\mu}(n)$, and a standard branching formula \cite[Equation 5.7.2.1]{Howe} 
for decomposing 
a $GL_m$ representation into a $GL_k\times GL_{m-k}$ representation shows that 
\[c^{\complexes[{\sf Mat}_{m, n}]}_{\lambda^{(1)},\lambda^{(2)}|\mu}=c_{\lambda^{(1)},\lambda^{(2)}}^{\mu},\] the \emph{Littlewood--Richardson coefficient}.
It is in this sense that our rule for $c^{\complexes[\mathfrak{X}]}_{\underline\lambda|\underline\mu}$, namely Theorem~\ref{thm:LRrule}, is a generalized Littlewood--Richardson rule.
\end{example}

\subsection{Highest weight matrices and tableaux}\label{subsec:highestweight} Crystal graphs, such as those previously described on words, tableaux, and nonnegative integer matrices, 
have several properties that make them useful for computing irreducible representation multiplicities. In any $GL_k$-crystal graph (or in any $L_\mathbf{I}\times L_\mathbf{J}$-crystal graph), each vertex has a \emph{weight}, and each connected component contains a unique \emph{highest weight vertex} (a source vertex in the directed graph). The weight generating function for the vertices in a connected crystal graph with highest weight $\lambda$ is exactly the Schur polynomial $s_\lambda(x_1,\dots, x_k)$. Enumerating highest weight vertices in a crystal graph for some representation $V$ thus expresses the character $\chi_V$ in the form of (\ref{eqn:chardef}), yielding formulas for the irreducible multiplicities $c^V_{\underline\lambda|\underline\mu}$. For references and a more thorough exposition of the above facts, see \cite[Section 4]{AAA}.

In what follows, let $({\bf I},{\bf J})$ be a Levi datum and 
assume ${\sf RSK}(M)=(P,Q)$.

\begin{definition} A nonnegative integer matrix $M\in {\sf Mat}_{m,n}({\mathbb Z}_{\geq 0})$ is \emph{$({\bf I},{\bf J})$-highest weight} if, for every admissible raising operator $\varphi\in\{e_i^{\sf row},e_j^{\sf col}\}$, $\varphi(M) = \varnothing$. 
\end{definition}

\begin{definition}\label{def:latword}
   Fix integers $a\leq b$. A word 
    \[w = w_1\ldots w_k, \; w_\ell\in [a,b]\] 
    is  \textit{$[a,b]$-ballot} if for every $1\leq \ell\leq k$ and every $i\in [a,b-1]$, $i$ occurs in the initial segment $w_1\ldots w_{\ell}$ at least as many times as $i+1$ does. 
\end{definition}

Given a word $w$, let $w|_{[a,b]}$ be the subword that uses only the letters from $[a,b]$.

\begin{definition}\label{def:LRtab}
    Fix integers $a\leq b$. $T\in \operatorname{SSYT}(\nu/\lambda)$ is an $[a,b]$-\textit{Littlewood--Richardson (LR) tableau} if ${\sf revword}(T)|_{[a,b]}$ is $[a,b]$-ballot.
\end{definition}

\begin{definition}
    Let $T\in \operatorname{SSYT}(\nu/\lambda,\ell)$ and let 
    \[{\bf K} = \{0=k_0 < \ldots < k_t = \ell\}.\] 
    We say $T$ is ${\bf K}$-LR if for each $0 < \alpha \leq t$, $T$ is a $[k_{\alpha-1}+1,k_{\alpha}]$-LR tableau.
\end{definition}

\begin{definition}\label{def:supersemistandard}
    Fix a partition $\lambda$ and interval $[a, a']$ with $\ell(\lambda)\leq a'-a+1$. The \emph{supersemistandard tableau} of shape $\lambda$ on $[a, a']$ is the tableau $T_\lambda[a, a']\in\operatorname{SSYT}(\lambda)$ such that each row $i$ is filled entirely with the value $a+i-1$. The supersemistandard tableau on $[1, m]$ is simply denoted $T_\lambda$.
\end{definition}

\begin{remark}\label{rem:supersemistandard}
    For $\mathbf{I} = \{0, m\}$, $T_\lambda$ is the unique $\mathbf{I}$-LR tableau of shape $\lambda$ when it exists. More generally, for an arbitrary $\mathbf{I}$, any $\mathbf{I}$-LR tableau $P$ satisfies that $P|_{[1,i_1]} = T_\lambda[1,i_1]$.
\end{remark}

\begin{example}\label{exa:June24ggg}
Let $m=n=3$, ${\bf I}={\bf J}=\{0,2,3\}$, and 
\[M^{(1)}=\begin{bmatrix}
0 & 1 & 1 \\
1 & 0 & 0 \\
1 & 0 & 0
\end{bmatrix} \text{ \ and  \ }
M^{(2)}=\begin{bmatrix}
0 & 0 & 1 \\
1 & 1 & 0 \\
1 & 0 & 0
\end{bmatrix}.
\]
We have
\[{\sf RSK}(M^{(1)})=(P^{(1)},Q^{(1)})=\left(\, \ytabb{ 1 & 1\\ 2 & 3 }, 
\ytabb{ 1 & 1\\ 2 & 3 }\, \right)\]
and
\[{\sf RSK}(M^{(2)})=(P^{(2)},Q^{(2)})=\left(\,\ytabb{ 1 & 2\\ 2 \\ 3 }, 
\ytabb{ 1 & 1\\ 2 \\ 3 }\, \right).
\] 
$P^{(1)}$ and $Q^{(1)}$  are respectively 
${\bf I}$-LR and ${\bf J}$-LR.
Although $Q^{(2)}$ is ${\bf J}$-LR, $P^{(2)}$ is not ${\bf I}$-LR.
This means that $M^{(1)}$ is $({\bf I},{\bf J})$-highest weight (the admissible operators 
being $e_1^{\sf row}$ and $e_1^{\sf col}$). However, $M^{(2)}$ is not  $({\bf I},{\bf J})$-highest weight since
\[e_1^{\sf row}(M^{(2)})=\begin{bmatrix}
1 & 0 & 1 \\
0 & 1 & 0 \\
1 & 0 & 0
\end{bmatrix}\neq\varnothing. 
\]
These calculations illustrate Proposition~\ref{prop:LRtab} below.
\end{example}

\begin{proposition}\label{prop:LRtab}
Let ${\sf RSK}(M) = (P, Q)$. Then $M$ is $({\bf I},{\bf J})$-highest weight if and only if $P$ is ${\bf I}$-LR and $Q$ is ${\bf J}$-LR.
\end{proposition}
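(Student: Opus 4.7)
The proof will combine Proposition~\ref{prop:pullback} with a standard fact about crystal operators on tableaux. By Proposition~\ref{prop:pullback}, $e_i^{\sf row}(M) = \varnothing$ if and only if $e_i(P) = \varnothing$, and $e_j^{\sf col}(M) = \varnothing$ if and only if $e_j(Q) = \varnothing$. Since the admissible raising operators for $({\bf I}, {\bf J})$ are exactly $\{e_i^{\sf row} : i \in [1, m-1] \setminus {\bf I}\} \cup \{e_j^{\sf col} : j \in [1, n-1] \setminus {\bf J}\}$, the proposition reduces to the following tableau-level claim applied separately to $(P, {\bf I})$ and $(Q, {\bf J})$: for a tableau $T \in \operatorname{SSYT}(\lambda, k)$ and a subset ${\bf K} \subseteq \{0,1,\ldots,k\}$ containing $0$ and $k$, $T$ is ${\bf K}$-LR if and only if $e_i(T) = \varnothing$ for every $i \in [1, k-1] \setminus {\bf K}$.

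The key step is the single-index identity: $e_i(T) = \varnothing$ if and only if ${\sf revword}(T)|_{[i, i+1]}$ is $[i, i+1]$-ballot. I would establish this by working directly with the bracket matching from Example~\ref{exa:wordcrystal}, applied to the restricted word $w = {\sf word}(T)|_{[i, i+1]}$. Replacing each $i$ by {\tt )} and each $i+1$ by {\tt (}, set $S_\ell := \#\{{\tt (}\} - \#\{{\tt )}\}$ in the first $\ell$ positions of the bracket sequence. The inside-out reduction collapses the sequence to a word of the form ${\tt )}^a{\tt (}^b$ with $b = S_{|w|} - \min_{0 \leq \ell \leq |w|} S_\ell$; thus $e_i(T) = \varnothing$ (i.e.\ $b = 0$) precisely when $S_{|w|} \leq S_\ell$ for all $\ell$, equivalently when every suffix of $w$ has at least as many $i$'s as $(i+1)$'s. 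Reversing the word translates ``every suffix'' to ``every prefix'', yielding the $[i, i+1]$-ballot property for ${\sf revword}(T)|_{[i, i+1]}$.

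To finish, I would observe that the $[k_{\alpha-1}+1, k_\alpha]$-ballot condition on ${\sf revword}(T)|_{[k_{\alpha-1}+1, k_\alpha]}$ depends only on prefix counts of consecutive pairs $(i, i+1)$ with $i \in [k_{\alpha-1}+1, k_\alpha - 1]$, and deleting letters outside $\{i, i+1\}$ from the restricted word does not alter these counts in any prefix. Hence the block ballot condition is equivalent to the conjunction, over interior $i \in [k_{\alpha-1}+1, k_\alpha - 1]$, of the assertion that ${\sf revword}(T)|_{[i, i+1]}$ is $[i, i+1]$-ballot. As $\alpha$ ranges over blocks, these interior indices are exactly the elements of $[1, k-1] \setminus {\bf K}$, so by the single-index identity $T$ is ${\bf K}$-LR if and only if $e_i(T) = \varnothing$ for all such $i$. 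The main obstacle is the careful verification of the $e_i = \varnothing \Leftrightarrow \mathrm{ballot}$ dictionary: the inside-out bracket matching combined with the word reversal introduces directional conventions that must be tracked precisely, but once the bookkeeping is pinned down the remaining steps are direct unfolding of definitions.
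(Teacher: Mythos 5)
Your proposal is correct and follows essentially the same route as the paper: reduce to the tableau level via Proposition~\ref{prop:pullback}, then establish for each admissible $i$ that $e_i(T)=\varnothing$ is equivalent to the $[i,i+1]$-ballot condition on ${\sf revword}(T)$ by unwinding the bracket-matching definition. The paper phrases the key equivalence as a direct matching/counting argument and leaves the block-decomposition step (that ${\bf K}$-LR is the conjunction of the consecutive-pair conditions over $i\notin{\bf K}$) implicit, whereas you make the partial-sum bookkeeping with $S_\ell$ and the block reduction explicit, but the underlying idea is identical.
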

\begin{proof}
    $M$ is $({\bf I},{\bf J})$-highest weight if and only if for all admissible $i,j$, 
    \[e_i^{\sf row}(M) = \varnothing \text{\ and \ } e_j^{\sf col}(M) = \varnothing.\] 
   By Proposition~\ref{prop:pullback}, $M$ is $({\bf I},{\bf J})$-highest weight if and only if for all admissible $i,j$, 
    \[e_i(P) = \varnothing \text{\ and \ }  e_j(Q) = \varnothing.\]

    Fix any $i$ such that $e_i^{\sf row}$ is admissible. Let ${\sf revword}(P) = w_1\ldots w_k$ and let ${\sf word}(P) = w'_1\ldots w'_k$.

    If $e_i(P) = \varnothing$, then every $i+1$ in ${\sf word}(P)$ must be matched with some $i$ to its right. So, in ${\sf revword}(P)$, every $i+1$ is preceded by the $i$ matched with it in ${\sf word}(P)$. Since every $i$ can match at most one $i+1$, for any $\ell$, the number of $i$'s in $w_1\ldots w_\ell$ is at least the number of $(i+1)$'s. Thus, ${\sf revword}(P)$ is ballot for $[i,i+1]$. Since ${\sf revword}(P)$ is $[i,i+1]$-ballot for every admissible $i$, $P$ is ${\bf I}$-LR.

    Conversely, assume ${\sf revword}(P)$ is ballot for $[i,i+1]$. If $e_i(P)\neq\varnothing$, then ${\sf word}(P)$ contains some $i+1$ that is not matched with any $i$ to its right. So, every $i$ to the right of this $i+1$ is matched with some different $i+1$, implying that there is some $\ell$ for which $w'_\ell w'_{\ell+1}\ldots w'_k$ contains more $i+1$s than $i$s. This is impossible, since ${\sf revword}(P)$ was assumed to be ballot. So, $e_i(P)\neq\varnothing$ for each admissible $i$, and thus $P$ is highest weight.

    Identical arguments hold for $Q$, proving the claim.
\end{proof}

\section{The ballot rule for the irreducible multiplicities}\label{sec:LR}

\subsection{Statement of the rule; examples}\label{subsec:LRreformulation} 
We give a new combinatorial rule, Theorem~\ref{thm:LRrule}, for the multiplicities
$c^I_{\underline\lambda|\underline\mu}$ or $c^{R/I}_{\underline\lambda|\underline\mu}$ when $I\subseteq R = \mathbb{C}[{\sf Mat}_{m,n}]$ is $(\mathbf{I},\mathbf{J}, \prec)$-bicrystalline. This new rule reformulates and extends the rule given in~\cite[Main Theorem 1.11]{AAA}. Also, it is stated in a more explicit form, in
terms of generalized Littlewood--Richardson tableaux.

\begin{definition} Let $T$ be a semistandard tableau using the entries $[a,b]$ where $1\leq a\leq b$ are integers.
The $[a,b]$-\emph{content} of $T$ is an integer composition $\mu=(\mu_1,\mu_2,\ldots,\mu_{b-a+1})$ 
where $\mu_i$ equals the number of entries of $T$ equal to $a+i-1$, for $1\leq i\leq b-a+1$.
\end{definition}

\begin{definition}
   A tableau-pair $(P,Q)$ is \textit{$({\bf I},{\bf J})$-LR of content $(\underline{\lambda},\underline{\mu})$} if 
    \begin{itemize}
    \item $P$ is ${\bf I}$-LR and has $[i_{\alpha-1}+1,i_{\alpha}]$-content $\lambda^{(\alpha)}$ for 
    $1\leq \alpha \leq r$, and
    \item $Q$ is ${\bf J}$-LR and has $[j_{\beta-1}+1,j_{\beta}]$-content $\mu^{(\beta)}$ for
    for each $1\leq \beta \leq s$.
    \end{itemize}
    Let ${\mathcal{LR}}\left({\bf I}, {\bf J}, \underline\lambda, \underline\mu\right)$ be the set of these pairs of tableaux.
    Let ${\mathcal{LR}}\left({\bf I},\underline\lambda\right)$ and  
    ${\mathcal{LR}}\left({\bf J},\underline\mu\right)$ respectively denote the sets of $P$ and $Q$ tableaux of the above kinds.
\end{definition}

\begin{example}
Let $(P^{(1)},Q^{(1)})$ be as in Example~\ref{exa:June24ggg}. This pair is $(\{0,2,3\},\{0,2,3\})$-LR of content $\left(\left(\ydiags{2,1},\ydiags{1}\right), \left(\ydiags{2,1},\ydiags{1}\right)\right)$.
\end{example}

\begin{theorem}[The multiplicity rule]\label{thm:LRrule}
If $I\subseteq R = {\mathbb C}[{\sf Mat}_{m,n}]$ is an $({\bf I},{\bf J}, \prec)$-bicrystalline ideal, then
\begin{equation}\label{eqn:therule}
c^{R/I}_{\underline\lambda|\underline\mu}=\#\left\{M\not\in \monos_\prec I : {\sf RSK}(M)\in 
{\mathcal{LR}}\left({\bf I}, {\bf J}, \underline\lambda, \underline\mu\right)\right\},
\end{equation}
and
\begin{equation}\label{eqn:therule2}
c^{I}_{\underline\lambda|\underline\mu}=\#\left\{M\in\! \monos_\prec I : {\sf RSK}(M)\in 
{\mathcal{LR}}\left({\bf I}, {\bf J}, \underline\lambda, \underline\mu\right)\right\}.
\end{equation}
\end{theorem}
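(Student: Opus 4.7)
\medskip

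\noindent\textbf{Proof proposal.} The plan is to decompose the set of exponent matrices into connected components of the $(\mathbf{I},\mathbf{J})$-bicrystal graph on ${\sf Mat}_{m,n}(\mathbb{Z}_{\geq 0})$, show that each component contributes one $L_\mathbf{I}\times L_\mathbf{J}$-irreducible summand to the character, and finally count components using Proposition~\ref{prop:LRtab}.

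First I would note that because $I$ is $({\mathbf I},{\mathbf J},\prec)$-bicrystalline, Proposition~\ref{prop:reformulate} together with Lemma~\ref{lemma:SandSc} imply that both $\monos_\prec I$ and its complement are $({\mathbf I},{\mathbf J})$-bicrystal closed. The bicrystal graph therefore partitions into connected components, each contained entirely in one of the two sets. Under the RSK bijection together with Proposition~\ref{prop:pullback}, the row and column bicrystal operators pull back to admissible tableau crystal operators acting on the $P$- and $Q$-factors separately. Consequently each connected component of the bicrystal graph is the RSK-preimage of a product of an $L_\mathbf{I}$-crystal component on $P$-tableaux and an $L_\mathbf{J}$-crystal component on $Q$-tableaux. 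Each such product contains a unique highest weight element, which by Proposition~\ref{prop:LRtab} corresponds to a pair $(P,Q)\in\mathcal{LR}(\mathbf{I},\mathbf{J},\underline\lambda,\underline\mu)$ for a unique content $(\underline\lambda,\underline\mu)$.

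The main computation is then to verify that the weight generating function (with respect to the torus $T_m\times T_n$) of a single such component equals $s_{\underline\lambda}(x_1,\ldots,x_m)\,s_{\underline\mu}(y_1,\ldots,y_n)$. This follows from the classical fact that each connected $GL_k$-crystal component of semistandard tableaux has a Schur polynomial as its weight generating function, applied to each Levi block of $\mathbf{I}$ and $\mathbf{J}$ and multiplied out. Since the $T_m\times T_n$-character of $R/I$ equals its multigraded Hilbert series (Example~\ref{exa:char=Hilb}), summing the per-component formula over $(\monos_\prec I)^c$ yields
\begin{equation*}
\chi_{R/I}=\sum_{(\underline\lambda,\underline\mu)}\#\{M\notin\monos_\prec I : {\sf RSK}(M)\in\mathcal{LR}(\mathbf{I},\mathbf{J},\underline\lambda,\underline\mu)\}\cdot s_{\underline\lambda}(x)\,s_{\underline\mu}(y).
\end{equation*}
Linear independence of the products $s_{\underline\lambda}\,s_{\underline\mu}$ combined with \eqref{eqn:chardef} identifies these coefficients with $c^{R/I}_{\underline\lambda|\underline\mu}$, proving \eqref{eqn:therule}. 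The derivation of \eqref{eqn:therule2} is verbatim the same, with $\monos_\prec I$ replacing its complement throughout.

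The step I expect to be the main obstacle is the character computation for a single Levi-crystal component. Although it reduces to classical facts about $GL_k$-crystals, one must check that forming components with only the \emph{admissible} operators genuinely recovers the $L_\mathbf{I}\times L_\mathbf{J}$-representation structure, and not merely the $T_m\times T_n$-structure--i.e., that restricting the crystal to the admissible operators corresponds to tensoring the irreducible representations of the Levi blocks. A clean way to handle this is to bootstrap from \cite[Main Theorem~1.11]{AAA}, replacing the Filtered RSK characterization of $L_\mathbf{I}\times L_\mathbf{J}$-highest weight matrices with the ballot characterization supplied by Proposition~\ref{prop:LRtab}.
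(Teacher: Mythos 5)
Your proposal is correct, and your closing paragraph essentially lands on the strategy the paper actually uses. The paper does not rederive the crystal-to-representation dictionary from scratch: it shows that the set counted on the right side of \eqref{eqn:therule} coincides with the set counted by the Filtered RSK rule of \cite[Main Theorem~1.11]{AAA}, then cites that result, and obtains \eqref{eqn:therule2} from \eqref{eqn:therule} plus the Levi-equivariant splitting \eqref{eqn:basicdecomp}. Your first three paragraphs sketch a more self-contained crystal-theoretic argument (decompose the bicrystal graph into connected components, pull back through {\sf RSK} to product crystals, use that each component's weight generating function is the corresponding product of characters, and compare coefficients against \eqref{eqn:chardef}); this is valid modulo the classical facts you cite, and it is arguably more conceptual than the paper's translation-and-cite proof. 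The one place to be careful in your bootstrap remedy: Proposition~\ref{prop:LRtab} only matches the \emph{highest-weight} (ballot) part of the two formulations. The full translation from $\mathcal{LR}(\mathbf{I},\mathbf{J},\underline\lambda,\underline\mu)$ to the Filtered RSK condition also requires matching the \emph{content} data $\lambda^{(\alpha)}$, $\mu^{(\beta)}$ per Levi block, which the paper handles via the Knuth-equivalence chain in Lemma~\ref{lemma:June25r123}, Theorem~\ref{thm:June24bbb}, and Theorem~\ref{thm:Knuthstraight} --- specifically, that the tableau-restrictions $P|_{[i_{\alpha-1}+1,i_\alpha]}$ Knuth-reduce to the supersemistandard tableaux $T_{\lambda^{(\alpha)}}[i_{\alpha-1}+1,i_\alpha]$. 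You would need to supply that step (or an equivalent) to complete the bootstrap.
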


\begin{remark}
    We provide some intuition for Theorem~\ref{thm:LRrule}. Section~\ref{subsection:bipre} opens with the observation \eqref{eqn:basicdecomp} that for any ideal $I\subseteq R = \complexes[{\sf Mat}_{m, n}]$, $R\cong I\oplus R/I$ as vector spaces. Via Gr\"obner theory, this vector space decomposition is related to the set-theoretic decomposition \eqref{eqn:combdecomp}: 
    \[{\sf Mat}_{m, n}(\integers_{\geq 0}) = \monos_\prec I \sqcup (\monos_\prec I)^c.\] 
    Theorem~\ref{thm:LRrule} should be viewed as a Levi-equivariant upgrade of the connection between \eqref{eqn:basicdecomp} and \eqref{eqn:combdecomp}. When $I$ is Levi-stable, \eqref{eqn:basicdecomp} holds as an isomorphism of representations. When $I$ is bicrystalline, \eqref{eqn:combdecomp} holds as an isomorphism of crystal graphs, and Theorem~\ref{thm:LRrule} uses these crystal graphs to read off the irreducible representations appearing in \eqref{eqn:basicdecomp}.
\end{remark}

\begin{example}[Graphical matroids]\label{exa:graphicalmatroid}
Example~\ref{exa:matroid} discussed matrix matroid ideals~\cite{FNR}. 
A source of such ideals comes from graphical matroids. 
Given a finite simple graph $G=(V,E)$  with vertices $V=[m]$, the 
\emph{graphical matroid} associated to $G$ is the collection of vectors 
\[\{\vec e_i-\vec e_j: \{i,j\} \in E \text{\ and $i<j$}\}\subseteq {\mathbb R}^m.\]
 
Let $I$ be the ideal of the matrix graphical matroid variety $\mathfrak{X}_G$ associated to the graph $G$ below; the variety is the $GL_4\times T_5$ orbit of the associated matrix $M_G$ whose columns are labelled, left to right, $a,b,c,d,e$: 
\[G=\vcenter{\hbox{\begin{tikzpicture}[scale=2]
  \node[circle, draw, inner sep=1pt, minimum size=0.4cm] (1) at (0,1) {$1$};
  \node[circle, draw, inner sep=1pt, minimum size=0.4cm] (2) at (1,1) {$2$};
  \node[circle, draw, inner sep=1pt, minimum size=0.4cm] (4) at (1,0) {$4$};
  \node[circle, draw, inner sep=1pt, minimum size=0.4cm] (3) at (0,0) {$3$};

  \draw (1) -- (2) node[midway, above] {\small $b$};
  \draw (1) -- (3) node[midway, left] {\small $a$};
  \draw (2) -- (3) node[midway, sloped, above] {\small $c$};
  \draw (2) -- (4) node[midway, right] {\small $e$};
  \draw (3) -- (4) node[midway, below] {\small $d$};
\end{tikzpicture}}}, \ \ 
M_G=\begin{bmatrix}
1 & 1 & 0 & 0 & 0\\
0 & -1 & 1 & 0 & 1 \\
-1 & 0 & -1 & 1 & 0\\
0 & 0 & 0 & -1 & -1
\end{bmatrix}
\]
$I$ has a Gr\"obner basis under $\antidiag$ given by
eight cubics and a quartic, with
\begin{multline}\nonumber
{\mathrm{init}}_{\antidiag} I=\langle z_{13}z_{22}z_{31},\, z_{23}z_{32}z_{41}, \, z_{25}z_{34}z_{43}, \, z_{13}z_{32}z_{41}, \,
 z_{13}z_{22}z_{41},\, z_{15}z_{34}z_{43},\, z_{15}z_{24}z_{43},\,\\
  z_{15}z_{24}z_{33},\,
 z_{15}z_{24}z_{32}z_{41}\rangle.
 \end{multline}
 Algorithm~\ref{thebigalg} verifies that $I$ is $(\{0,4\},\{0,1,2,3,4,5\}, \antidiag)$-bicrystalline.

We compute $c^{\complexes[\mathfrak{X}_G]}_{\ydiags{2,2,2}|\ydiags{2},\ydiags{1},\ydiags{1},\ydiags{1},\ydiags{1}}$ using Theorem~\ref{thm:LRrule}. We find 
semistandard tableaux $Q$ of shape $\vcenter{\hbox{\ydiags{2,2,2}}}$ with content $(2, 1, 1, 1, 1)$ such that
\[{{\sf RSK}^{-1}(T_{\ydiags{2,2,2}},Q)} = M\] 
for some $z^M\in {\mathrm{Std}}_{\antidiag}I$ (because ${\bf I}=\{0,4\}$ and the only ${\bf I}$-LR tableau of shape $\ydiags{2,2,2}$ is $T_{\ydiags{2,2,2}}$). The choices for $Q$ are

\[\ytabb{1 & 1 \\ 2 & 3 \\ 4 & 5},\,
\ytabb{1 & 1 \\ 2 & 4 \\ 3 & 5} \ .
\]
One sees that $c^{\complexes[\mathfrak{X}_G]}_{\ydiags{2,2,2}|\ydiags{2},\ydiags{1},\ydiags{1},\ydiags{1},\ydiags{1}}=1$ since
\[{\sf RSK}^{-1}\left(\, \ytabb{ 1 &1 \\ 2 & 2 \\ 3 & 3} \, , \, 
\ytabb{1 & 1 \\ 2 & 3 \\ 4 & 5}\, \right)=
\begin{bmatrix}
0 & 0 & 0 & 1 & 1\\
0 & 1 & 1 & 0 & 0\\
2 & 0 & 0 & 0 & 0\\
0 & 0 & 0 & 0 & 0
\end{bmatrix}\notin \monos_{\antidiag}I,\]
whereas $c^{I_G}_{\ydiags{2,2,2}|\ydiags{2},\ydiags{1},\ydiags{1},\ydiags{1},\ydiags{1}}=1$ because
\[\ \ \ \ {\sf RSK}^{-1}\left(\, \ytabb{ 1 &1 \\ 2 & 2 \\ 3 &3 } \, , \, 
\ytabb{1 & 1 \\ 2 & 4 \\ 3 & 5}\, \right)=
\begin{bmatrix}
0 & 0 & 1 & 0 & 1\\
0 & 1 & 0 & 1 & 0\\
2 & 0 & 0 & 0 & 0\\
0 & 0 & 0 & 0 & 0
\end{bmatrix} \in \monos_{\antidiag}I.\]

In general, we ask: 
\begin{problem}\label{problem:matroidarebi?}
Which matrix matroid ideals are bicrystalline?
\end{problem}

When matroid varieties are bicrystalline, Theorem~\ref{thm:LRrule} provides a combinatorial rule for their multiplicities. These examples give a step toward such rules more generally.

\end{example}

\begin{example}[Square of a maximal minors ideal]
Let $I\subseteq R = {\mathbb C}[{\sf Mat}_{2,3}]$ be generated by the $2\times 2$ minors of a generic $2\times 3$ matrix
$\left[\begin{smallmatrix} z_{11} & z_{12} & z_{13} \\ z_{21} & z_{22} & z_{23}\end{smallmatrix}\right]$. Now, 
\[{\mathrm{init}}_{\antidiag}(I^2)=\langle z_{22}^2 z_{13}^2, \, z_{21}z_{22}z_{12}z_{13}, \, z_{21}^2z_{12}z_{13}, \,
z_{21}z_{22}z_{13}^2, \, z_{21}^2z_{13}^2, \, z_{21}^2z_{12}^2\rangle.\] 
One can use Algorithm~\ref{thebigalg} to verify that $I^2$ is $(\{0, 2\}, \{0, 3\}, \antidiag)$-bicrystalline. 
Corollary~\ref{cor:detpowerbicrystal} gives a proof in greater generality.\footnote{In general, ordinary and symbolic powers of an ideal differ (as in Example~\ref{exa:powers}), but here they are equal.} 
Since the Levi datum in this case corresponds to the entire group $\mathbf{GL}$, the matrices mapped into $\mathcal{LR}(\mathbf{I},\mathbf{J},\underline\lambda,\underline\mu)$ by ${\sf RSK}$ are particularly easy to describe. They are nonnegative integer matrices of the form $\left[\begin{smallmatrix}a & b & 0\\ b & 0 & 0\end{smallmatrix}\right]$, which are mapped by ${\sf RSK}$ to the highest-weight tableau-pair of shape $\lambda = (a+b, b)$. Such a matrix lies in $\monos_\antidiag I^2$ if and only if $b\geq 2$. Thus, by Theorem~\ref{thm:LRrule} the character of $R/I^2$ is
\[\sum_{\lambda:\ydiags{2,2}\not\subset \lambda} s_{\lambda}(x_1,x_2)s_{\lambda}(y_1,y_2,y_3).\]
\end{example}

\begin{example}[Nilpotent matrix Hessenberg variety]\label{exa:Hessenberg} We follow an example of R.~Goldin--M.~Precup 
\cite[Example~2.9]{Goldin.Precup}. The ideal $I\subseteq R = {\mathbb C}[{\sf Mat}_{4,2}]$ is defined by $3\times 3$ minors of $\left[\begin{smallmatrix}
0 & z_{11} & z_{12}\\
0 & z_{21} & z_{22}\\
0 & z_{31} & z_{32}\\
z_{11} & z_{41} & z_{42}
\end{smallmatrix}\right]$.\footnote{As explained in \cite{Goldin.Precup}, this corresponds to the nilpotent matrix Hessenberg variety associated to the
nilpotent matrix 
$\left[\begin{smallmatrix} 1 & 0 & 0 & 1 \\ 0 & 1 & 0 & 0 \\ 0 & 0 & 1 & 0 \\ 0& 0 & 0 &1\end{smallmatrix}\right]$ and 
Hessenberg function $h=(2,4,4,4)$. Our conventions differ from \cite{Goldin.Precup}; our matrix of variables is flipped vertically from theirs.} It cuts out a union of two matrix Schubert varieties 
and is stable under the action of $(GL_1\times GL_2\times GL_1)\times T_2$.

Under $\antidiag$, the defining generators form a Gr\"obner basis, with 
\[{\mathrm{init}}_{\antidiag} I=\langle z_{11}z_{12}z_{21}, z_{11}z_{12}z_{31}, z_{11}z_{22}z_{31} \rangle.\]
Algorithm~\ref{thebigalg} verifies that $I$ is bicrystalline for $\antidiag$ and the given Levi datum.

Now, $c^{R/I}_{\ydiags{1},\, \ydiags{2,1},\,\ydiags{2}\, |\, \ydiags{2}, \, \ydiags{4}}=2$; the two matrices counted by
Theorem~\ref{thm:LRrule} are:
\begin{align*}
\begin{bmatrix}
0 & 1 \\ 
0 & 2 \\
1 & 0 \\
1 & 1
\end{bmatrix} \stackrel{{\sf RSK}}{\mapsto} \left(\,\ytabb{1 & 2 & 2 & 4\\ 3 & 4},
\ytabb{1 & 1 & 2 & 2\\ 2 & 2}\,
\right), \\
\begin{bmatrix}
0 & 1 \\ 
1 & 1 \\
1 & 0 \\
0 & 2
\end{bmatrix} \stackrel{{\sf RSK}}{\mapsto} \left(\,\ytabb{1 & 2 & 4 & 4 \\ 2 & 3},
\ytabb{1 & 1 & 2 & 2 \\ 2 & 2}
\,\right).
\end{align*}

Using $\diag$ instead, the defining generators of $I$ still form a Gr\"obner basis, so
\[{\mathrm{init}}_{\diag}I =\langle z_{11}^2z_{22}, z_{11}^2z_{32}, z_{11}z_{21}z_{32} \rangle.\]
However, $I$ is not bicrystalline with respect to $\diag$ since, e.g.,
\[\begin{bmatrix}1 & 0\\ 0 & 0\\ 1 & 1\\ 0 & 0\end{bmatrix}\notin\monos_{\diag}I\ \ \text{but}\ \ e_2^{\sf row}\left(\begin{bmatrix}1 & 0\\ 0 & 0\\ 1 & 1\\ 0 & 0\end{bmatrix}\right)=
\begin{bmatrix}1 & 0\\ 1 & 0\\ 0 & 1\\ 0 & 0\end{bmatrix}\in \monos_{\diag}I.\] 
Indeed, attempting to apply Theorem~\ref{thm:LRrule} to $I$ under $\diag$ yields incorrect results. For instance, in the above computation, one would include the third matrix 
\begin{align*}
\begin{bmatrix}
1 & 0 \\ 
0 & 2 \\
1 & 0 \\
0 & 2
\end{bmatrix} \stackrel{{\sf RSK}}{\mapsto} \left(\,\ytabb{1 & 2 & 2 & 4 & 4 \\ 3},
\ytabb{1 & 1 & 2 & 2 & 2 \\ 2}
\,\right),
\end{align*}
concluding incorrectly that $c^{R/I}_{\ydiags{1},\, \ydiags{2,1},\,\ydiags{2}\, |\, \ydiags{2}, \, \ydiags{4}}>2$.
Therefore the bicrystalline hypothesis in Theorem~\ref{thm:LRrule} cannot be dispensed with.

What other matrix Hessenberg varieties are bicrystalline with respect to the largest Levi group that acts on them? Some of these varieties are unions of matrix Schubert varieties \cite[Proposition 7.2]{Goldin.Precup}, hence bicrystalline by Theorem~\ref{thm:knutsonbicrystal}, but the problem is open in general. 
For bicrystalline matrix Hessenberg varieties, is there a reformulation of Theorem~\ref{thm:LRrule} purely in terms of the ``Hessenberg data'' of the matrix operator and $h$? \end{example}

\begin{example}[Alternating sign matrix (ASM) varieties]\label{exa:ASM} Let 
\begin{equation}
\label{eqn:June24aaa}
I=\left\langle z_{11}, \left|\begin{matrix}
z_{11} & z_{12}\\
z_{21} & z_{22}
\end{matrix}\right|\right\rangle\subseteq {\mathbb C}[{\sf Mat}_{2,2}].
\end{equation}
This is an \emph{alternating sign matrix variety} as defined by
A.~Weigandt \cite{Weigandt}, corresponding to the matrix $\left[\begin{smallmatrix} 0 & 1 & 0 \\ 1 &-1 & 1\\ 0 & 1 & 0
\end{smallmatrix}\right]$ (after a change of coordinates). It is a reduced union of two matrix Schubert varieties. In general, all ASM varieties are unions of matrix Schubert varieties. Therefore, one can apply Theorem~\ref{thm:LRrule} to compute the desired irreducible representation multiplicities for all ASM ideals; we originally proved this in \cite[Theorem 1.14]{AAA}
but it also follows from Theorem~\ref{thm:knutsonbicrystal}. Is there a rule for the multiplicities
in terms of the data of the indexing ASM?
\end{example}

W.~Graham asked if, without a (proved) Gr\"obner basis for $I$, one can still ascertain information about the irreducible multiplicities of $\complexes[{\sf Mat}_{m, n}]/I$. We formulate the question as follows. Suppose $I\subseteq R = {\mathbb C}[{\sf Mat}_{m,n}]$, $({\bf I},{\bf J})$ is a Levi datum, and $\prec$ is a term order.
Assume $I$ is $L_{\bf I}\times L_{\bf J}$ stable and  
\[{\mathcal G}^{\mathrm{fake}}=\{g_1,\ldots,g_f\}\] 
is a collection of elements of $I$. Define
\[{\mathrm{fakeinit}}_{\prec} I:=\langle \init_{\prec}(g_i):1\leq i\leq f\rangle,\quad {\sf fakeMat}_\prec I :=\{M\in{\sf Mat}_{m,n}(\integers_{\geq 0}):z^M\in\mathrm{fakeinit}_\prec I\}.\]
Then ${\sf fakeMat}_\prec I\subseteq\monos_\prec I$. Suppose that ${\sf fakeMat}_\prec I$ is $(\mathbf{I}, \mathbf{J})$-bicrystal closed.

\begin{problem}
Prove or disprove:
\[c^{I}_{\underline\lambda|\underline\mu}\geq \#\left\{M\in{\sf fakeMat}_\prec I: {\sf RSK}(M)\in {\mathcal{LR}}({\bf I},{\bf J},\underline\lambda,\underline\mu)\right\}.\]
\end{problem}

\subsection{Proof of Theorem~\ref{thm:LRrule}} 
For a semistandard tableau $P$, let $P|_{[a,b]}$ be the (skew) subtableau consisting only of entries from $[a,b]$. To prove Theorem~\ref{thm:LRrule}, we need the following well-known lemma:
\begin{lemma}[{\cite[Lemma 3.2.3]{Fulton}}]\label{lemma:June25r123}
If $w$ and $w'$ are words on $[1, m]$ such that $w\equiv w'$, then for any subinterval $[a, a']\subseteq[1, m]$ we have
\[w|_{[a, a']}\equiv w'|_{[a, a']},\]
where $w|_{[a,b]}$ is the subword of $w$ using only entries in $[a,b]$ as in Section~\ref{subsec:highestweight}. 
In particular,
\begin{equation}
\label{eqn:June25aaa}
{\sf word}({\sf tab}(w)|_{[a,b]})={\sf word}({\sf tab}(w))|_{[a,b]}\equiv w|_{[a,b]}.
\end{equation}
\end{lemma}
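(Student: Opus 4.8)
The plan is to reduce to the case of a single elementary Knuth move and then run a short case analysis. Since Knuth equivalence is, by Definition~\ref{eqn:Knuthequivalence}, the reflexive--symmetric--transitive closure of the two elementary relations \eqref{eqn:Knuth1} and \eqref{eqn:Knuth2}, and since restriction to a subinterval is compatible with concatenation, i.e. $(\alpha\beta)|_{[a,a']}=\alpha|_{[a,a']}\,\beta|_{[a,a']}$, it suffices to treat $w=\alpha\,x\,\beta$ and $w'=\alpha\,x'\,\beta$ where the consecutive triple $(x,x')$ is one of $(prq,\,rpq)$ with $p\le q<r$ or $(qpr,\,qrp)$ with $p<q\le r$. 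Restricting, $w|_{[a,a']}=\alpha|_{[a,a']}\,x|_{[a,a']}\,\beta|_{[a,a']}$ and likewise for $w'$, so the whole matter reduces to comparing the restricted triples $x|_{[a,a']}$ and $x'|_{[a,a']}$.

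First I would dispose of the trivial subcases. If $x|_{[a,a']}=x'|_{[a,a']}$, then $w|_{[a,a']}=w'|_{[a,a']}$ and there is nothing to prove. Otherwise, note that $x$ and $x'$ share the multiset $\{p,q,r\}$ and restriction deletes the same letters from both, so a difference can only come from the order of the surviving letters. If all of $p,q,r$ survive, then $x|_{[a,a']}=x$, $x'|_{[a,a']}=x'$, and since the numerical inequalities among $p,q,r$ are unchanged, the pair is still one of the two elementary moves; hence $w|_{[a,a']}\equiv w'|_{[a,a']}$ via a single elementary Knuth move.

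The one subcase requiring an observation --- and this is the only real content of the argument --- is that the surviving letters cannot be exactly the two outer ones with the middle letter deleted. Indeed, keeping $\{p,r\}$ and dropping $q$ from $prq$ versus $rpq$ would force $a\le p\le q<r\le a'$, putting $q\in[a,a']$, a contradiction; symmetrically, keeping $\{p,r\}$ and dropping $q$ from $qpr$ versus $qrp$ forces $a\le p<q\le r\le a'$, again a contradiction. In every remaining possibility --- dropping an outer letter, or keeping at most one letter --- the two restricted triples literally coincide (e.g.\ $rq$ versus $rq$, or $pq$ versus $pq$, or the empty/singleton word). Thus $w|_{[a,a']}$ and $w'|_{[a,a']}$ are always either equal or related by a single elementary Knuth move, and transitivity yields $w|_{[a,a']}\equiv w'|_{[a,a']}$ for arbitrary Knuth-equivalent $w,w'$.

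For the displayed consequence \eqref{eqn:June25aaa}, I would argue that reading a tableau $T$ column-by-column, bottom-to-top, left-to-right and then deleting entries outside $[a,b]$ produces exactly ${\sf word}(T|_{[a,b]})$: within each column the cells with entries in $[a,b]$ form a contiguous block whose bottom-to-top order is unaffected by the restriction, and the left-to-right ordering of columns is likewise unaffected, so ${\sf word}({\sf tab}(w)|_{[a,b]})={\sf word}({\sf tab}(w))|_{[a,b]}$. Combining this with ${\sf word}({\sf tab}(w))\equiv w$ (Theorem~\ref{thm:June24bbb}) and the main assertion just established gives ${\sf word}({\sf tab}(w)|_{[a,b]})\equiv w|_{[a,b]}$. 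The only step that needs care is the impossibility observation in the third paragraph; everything else is bookkeeping.
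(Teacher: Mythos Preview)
Your argument is correct. The paper does not supply its own proof of this lemma---it simply cites \cite[Lemma~3.2.3]{Fulton} and moves on---so there is no in-paper proof to compare against. Your case analysis on a single elementary Knuth move, with the key observation that the ``fixed'' letter $q$ always lies numerically between the two swapped letters $p$ and $r$ (hence cannot be the only one deleted by restriction to an interval), is exactly the standard argument that Fulton gives. The derivation of \eqref{eqn:June25aaa} from the main assertion via Theorem~\ref{thm:June24bbb}, together with the column-reading observation that entries of ${\sf tab}(w)$ in $[a,b]$ form a contiguous block in each column, is also correct and routine.
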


\noindent
\emph{Proof of Theorem~\ref{thm:LRrule}:} Recall the supersemistandard tableaux from Definition~\ref{def:supersemistandard}. The following fact is well-known \cite[Lemma~5.2.1]{Fulton}. A skew semistandard 
tableau $T$ is Littlewood--Richardson of content $\mu$ if and only if 
${\sf tab}({\sf word}(T))=T_{\mu}$. 
It is immediate from this fact that 
$(P,Q)={\sf RSK}(M)\in {\mathcal{LR}}({\bf I},{\bf J},\underline\lambda,\underline\mu)$ if and only if 
\begin{equation}
\label{eqn:June25tt1}
{\sf tab}({\sf word}(P|_{[i_{\alpha-1}+1, i_{\alpha}]}))=T_{\lambda^{(\alpha)}}[i_{\alpha-1}+1,i_{\alpha}], \ 1\leq\alpha\leq r,
\end{equation}
and
\begin{equation}
\label{eqn:June25tt2}
{\sf tab}({\sf word}(Q|_{[j_{\beta-1}+1, j_{\beta}]}))=T_{\mu^{(\beta)}}[j_{\beta-1}+1,j_{\beta}], \ 1\leq\beta\leq s.
\end{equation}
By Lemma~\ref{lemma:June25r123} and the fact that ${\sf RSK}(M)=(P,Q)$,
\[{\sf word}(P|_{[i_{\alpha-1}+1, i_{\alpha}]})\equiv {\sf row}(M)|_{[i_{\alpha-1}+1,i_{\alpha}]}\]
 and
 \[{\sf word}(Q|_{[j_{\beta-1}+1, j_{\beta}]})\equiv {\sf col}(M)|_{[j_{\beta-1}+1,j_{\beta}]}.\]
Therefore, by
Theorem~\ref{thm:June24bbb} and Theorem~\ref{thm:Knuthstraight}, \eqref{eqn:June25tt1} and \eqref{eqn:June25tt2} are respectively  equivalent to 
\begin{equation}
\label{eqn:June25uu1}
{\sf row}(M)|_{[i_{\alpha-1}+1,i_{\alpha}]}\equiv {\sf word}(T_{\lambda^{(\alpha)}}[i_{\alpha-1}+1,i_{\alpha}]), \ 1\leq\alpha\leq r,
\end{equation}
and
\begin{equation}
\label{eqn:June25uu2}
{\sf col}(M)|_{[j_{\beta-1}+1,j_{\beta}]}\equiv {\sf word}(T_{\mu^{(\beta)}}[j_{\beta-1}+1,j_{\beta}]), \ 1\leq\beta\leq s.
\end{equation}
By Theorem~\ref{thm:Knuthstraight} again, 
\eqref{eqn:June25uu1} and \eqref{eqn:June25uu2} are equivalent to 
\begin{equation}
\label{eqn:June25vv1}
{\sf tab}({\sf row}(M)|_{[i_{\alpha-1}+1,i_{\alpha}]})=
T_{\lambda^{(\alpha)}}[i_{\alpha-1}+1,i_{\alpha}], \ 1\leq\alpha\leq r,
\end{equation}
and
\begin{equation}
\label{eqn:June25vv2}
{\sf tab}({\sf col}(M)|_{[j_{\beta-1}+1,j_{\beta}]})= T_{\mu^{(\beta)}}[j_{\beta-1}+1,j_{\beta}], \ 1\leq\beta\leq s.
\end{equation}
Lastly, by Proposition~\ref{prop:LRtab}, $M$ is $({\bf I},{\bf J})$-highest weight if and only if ${\sf RSK}(M)=(P,Q)$ is $({\bf I},{\bf J})$-LR. 

In conclusion, the set of $z^M\in\mathrm{Std}_{\prec}I$ enumerated on the right hand side of \eqref{eqn:therule} are
those such that $M$ is highest-weight and \eqref{eqn:June25vv1} and \eqref{eqn:June25vv2} hold. This is precisely the rule of \cite[Main Theorem 1.11]{AAA}, which states that $c^{R/I}_{\underline\lambda|\underline\mu}$ counts the number of standard monomials $z^M\in\mathrm{Std}_\prec I$ such that ${\sf filterRSK}_{{\bf I}|{\bf J}}(M)$ (as defined in \cite[Main Definition~1.5]{AAA}) is the highest weight tableau tuple $(T_{\underline\lambda}|T_{\underline\mu})$ 
(defined in \cite[Definition~1.10]{AAA}). This proves the rule \eqref{eqn:therule}. The rule \eqref{eqn:therule2} for $c^I_{\underline\lambda|\underline\mu}$ then follows from \eqref{eqn:therule} and the isomorphism \eqref{eqn:basicdecomp} of Levi-representations.
\qed

\begin{example}[Comparison with Example~1.15 of \cite{AAA}]\label{exa:comparison}
Let 
\[I=\left\langle z_{11}, \left|\begin{matrix} z_{11} & z_{12} & z_{13}\\
z_{21} & z_{22} & z_{23}\\
z_{31} & z_{32} & z_{33}
\end{matrix}\right|\right\rangle \subseteq R = {\mathbb C}[{\sf Mat}_{4,4}].\]
Here $({\bf I},{\bf J})=(\{0,1,4\},\{0,1,4\})$. Suppose $\underline\lambda=\underline\mu=(\ydiags{1}, \ydiags{2,1})$.
The two $({\bf I},{\bf J})$-LR tableau pairs of content $(\underline\lambda,\underline\mu)$ are:
\[\left(\, \ytabb{1 & 2 \\ 2 \\ 3 }, \ytabb{1 & 2 \\ 2 \\ 3}\, \right) \text{\ and \ }
\left(\, \ytabb{1 & 2 \\ 2 & 3 }, \ytabb{1 & 2 \\ 2 & 3}\, \right).
\]
By applying ${\sf RSK}^{-1}$, the corresponding
matrices are, respectively,
\[\begin{bmatrix}
0 & 0 & 1 & 0\\
0 & 2 & 0 & 0\\
1 & 0 & 0 & 0\\
0 & 0 & 0 & 0 
\end{bmatrix}\leftrightarrow z_{13}z_{22}^2z_{31} \text{ \ and  \ } 
\begin{bmatrix}
0 & 1 & 0 & 0\\
1 & 0 & 1 & 0\\
0 & 1 & 0 & 0\\
0 & 0 & 0 & 0 
\end{bmatrix}\leftrightarrow z_{12}z_{21}z_{23}z_{32}.
\]
As seen in \cite[Main Theorem 1.14]{AAA}, $I$ is $({\bf I}, {\bf J},\antidiag)$-bicrystalline (this also holds by Theorem~\ref{thm:knutsonbicrystal}). These two monomials are in ${\mathrm{Std}}_{\antidiag}I$. 
Hence, $c^{R/I}_{\underline\lambda|\underline\mu}=2$. Using \cite[Main Theorem 1.11]{AAA} instead, the two monomials above are identified as the only standard monomials $z^M\in\mathrm{Std}_\antidiag I$ such that 
\begin{equation}\label{eqn:June23aaa}
{\sf tab}({\sf row}(M)|_{[1,1]}),{\sf tab}({\sf col}(M)|_{[1,1]})
=\ytabb{1} \text{ \ and \ }
{\sf tab}({\sf row}(M)|_{[2,4]}),
{\sf tab}({\sf col}(M)|_{[2,4]})
=\ytabb{2 & 2\\ 3}.
\end{equation}

The list of tableaux appearing in \eqref{eqn:June25vv1} and \eqref{eqn:June25vv2} is insufficient to reconstruct a unique monomial $z^M\in\mathrm{Std}_\antidiag I$. 
Multiple standard monomials may give the same tuple of such tableaux, as seen in \eqref{eqn:June23aaa}. The rule of  Theorem~\ref{thm:LRrule} also improves on our rule from \cite{AAA} by removing this many-to-one issue: via the bijectivity of {\sf RSK}, each highest-weight standard monomial can be reconstructed from an $(\mathbf{I},\mathbf{J})$-LR tableau-pair.
\end{example}

\section{Gr\"obner-determinantal ideals}\label{subsec:bidi}
We define \emph{Gr\"obner-determinantal ideals} and apply our theory to them. 
Theorem~\ref{thm:generalizeddetideals} states all such ideals are bicrystalline (with respect to appropriate Levi groups). This is proved using Theorem~\ref{thm:firstmain}. In the following section, we show that this class of ideals includes Schubert determinantal ideals and more. 
We also give a simplification of Theorem~\ref{thm:LRrule} for Gr\"obner-determinantal ideals stable under particularly large Levi actions.

\subsection{Definition and bicrystallinity}
 As usual, let $Z=[z_{ij}]$ be an $m\times n$ matrix of variables. For simplicity, identify contiguous rectangular submatrices of $Z$ by their row and column indices; for example, the submatrix of $Z$ using rows $2$, $3$, $4$ and columns $1, 2$ will be denoted $[2, 4]\times [1, 2]$. Fix a set of submatrices
 \[{\mathcal U}=\{U_1,U_2,\ldots,U_k\}\] 
 where 
 \[U_i = [a_i, a'_i]\times[b_i, b'_i] \text{\ for $1\leq a_i \leq a'_i\leq m$ and $1\leq b_i\leq b'_i\leq n$.}\]
 For each $1\leq i\leq k$, let ${\mathcal G}_i$ be the set of all $d_i\times d_i$ minors of $U_i$, where
 ${\mathcal D}=\{d_1,\ldots,d_k\}$ is a set of positive integers. Let 
 \[{\mathcal G} = \bigcup_{i=1}^k {\mathcal G}_i,  \text{ \ and \ } I_{\mathcal G}=\langle {\mathcal G} \rangle\subseteq \complexes[{\sf Mat}_{m, n}]\] 
 be the ideal generated by $\mathcal{G}$. Without loss, we assume that our description of $I_\mathcal{G}$ is \emph{irredundant}, meaning that at least one $d_i\times d_i$ minor from each $U_i$ is not generated by the other minors in $\mathcal{G}$.
\begin{definition}\label{def:gendetideals}
   Call ideals $I_\mathcal{G}$ of the above form \emph{contiguous determinantal ideals}. Such an
   ideal $I_{\mathcal G}$ is furthermore \textit{Gr\"obner-determinantal} if ${\mathcal G}$ is a Gr{\" o}bner basis for $I$ under some choice of $\antidiag$.
\end{definition}

\begin{lemma}\label{lemma:grobnerdetidealstable}
    Let $I_{\mathcal G}$ be a Gr\"obner-determinantal ideal and let $(\mathbf{I}, \mathbf{J})$ be a Levi datum. If 
    \[a_i-1, a'_i\in \mathbf{I} \text{\  and \  $b_i-1, b'_i\in \mathbf{J}$ for all $i\in[k]$,}\]
     then $I_\mathcal{G}$ is $L_{\mathbf{I}}\times L_{\mathbf{J}}$-stable.
\end{lemma}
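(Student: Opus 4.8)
The plan is to reduce everything to a single statement: $(g,h)\cdot p\in I_{\mathcal G}$ for every $(g,h)\in L_{\mathbf I}\times L_{\mathbf J}$ and every generator $p\in{\mathcal G}$. Since $(g,h)$ acts on $\complexes[{\sf Mat}_{m,n}]$ by algebra automorphisms and $L_{\mathbf I}\times L_{\mathbf J}$ is a group, this at once yields $(g,h)\cdot I_{\mathcal G}=I_{\mathcal G}$. The first step is to unpack the hypothesis: $a_i-1,a_i'\in{\mathbf I}$ says exactly that $a_i-1=i_{t-1}$ and $a_i'=i_{t'}$ for some $t\le t'$, so the row window $[a_i,a_i']=\bigcup_{\ell=t}^{t'}[i_{\ell-1}+1,i_\ell]$ is a union of consecutive blocks of $L_{\mathbf I}$; symmetrically, $[b_i,b_i']$ is a union of consecutive blocks of $L_{\mathbf J}$.

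Next I would analyze how $(g,h)$ acts on the submatrix $Z_{U_i}$ (rows $[a_i,a_i']$, columns $[b_i,b_i']$), whose $d_i\times d_i$ minors are the elements of ${\mathcal G}_i$. Because $g\in L_{\mathbf I}$ is block-diagonal along the blocks $[i_{\ell-1}+1,i_\ell]$, and $[a_i,a_i']$ is a union of whole such blocks, $g_{rs}=0$ whenever exactly one of $r,s$ lies in $[a_i,a_i']$. Hence left multiplication by $g$ does not mix the rows of $Z$ indexed by $[a_i,a_i']$ with the others, giving $(gZ)_{[a_i,a_i'],\bullet}=g'\,Z_{[a_i,a_i'],\bullet}$, where $g'$ is the principal submatrix of $g$ on indices $[a_i,a_i']$ — itself block-diagonal with invertible $GL$-blocks, hence invertible. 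Running the same argument on the right with $h$ produces
\[
(gZh^t)_{U_i}=g'\,Z_{U_i}\,(h')^t,\qquad g',h'\ \text{invertible}.
\]

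The final step is to invoke the Cauchy--Binet formula twice: any $d_i\times d_i$ minor of $g'\,Z_{U_i}\,(h')^t$ is a $\complexes$-linear combination of $d_i\times d_i$ minors of $Z_{U_i}$, i.e.\ of elements of ${\mathcal G}_i$, and each of these lies in $I_{\mathcal G}$. Since $(g,h)\cdot p$ is exactly $p$ evaluated at $gZh^t$, this shows $(g,h)\cdot{\mathcal G}\subseteq I_{\mathcal G}$; extending multiplicatively through $(g,h)\cdot(f_\ell p_\ell)=((g,h)\cdot f_\ell)((g,h)\cdot p_\ell)$ gives $(g,h)\cdot I_{\mathcal G}\subseteq I_{\mathcal G}$, and applying this also to $(g^{-1},h^{-1})\in L_{\mathbf I}\times L_{\mathbf J}$ gives equality. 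I expect the only delicate point to be the block-diagonal restriction in the middle step, but this is bookkeeping rather than a genuine obstacle; I would also remark that the Gr\"obner-basis hypothesis in Definition~\ref{def:gendetideals} plays no role in this argument, so the conclusion in fact holds for every contiguous determinantal ideal meeting the stated index conditions.
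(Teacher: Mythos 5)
Your proof is correct and fills in the details that the paper dispatches with a single line ("Immediate from the construction of $I_\mathcal{G}$"). The chain — hypothesis on $(\mathbf{I},\mathbf{J})$ implies each row/column window is a union of Levi blocks, hence the restricted submatrix transforms as $Z_{U_i}\mapsto g'Z_{U_i}(h')^t$, hence by Cauchy--Binet each $d_i\times d_i$ minor is sent into the span of $\mathcal{G}_i$ — is exactly what the authors are asserting is immediate. Your closing remark is also correct and worth keeping: the Gr\"obner-basis hypothesis of Definition~\ref{def:gendetideals} plays no role here, so the conclusion holds for any contiguous determinantal ideal satisfying the index conditions; the paper only states the lemma for Gr\"obner-determinantal ideals because that is the class used downstream in Theorem~\ref{thm:generalizeddetideals}.
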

\begin{proof}
    Immediate from the construction of $I_\mathcal{G}$.
\end{proof}

We prove the following theorem by applying Algorithm~\ref{thebigalg} to construct explicit test sets for $I_\mathcal{G}$, then using Theorem~\ref{thm:firstmain} to verify the bicrystalline property. 

\begin{theorem}\label{thm:generalizeddetideals}
	If $I_{\mathcal G}$ is a $L_{\bf I}\times L_{\bf J}$-stable Gr\"obner-determinantal ideal, then $I_{\mathcal G}$ is $(\mathbf{I},\! \mathbf{J},\! \antidiag)$-bicrystalline. 
\end{theorem}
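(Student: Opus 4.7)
My plan is to bypass Algorithm~\ref{thebigalg} and verify the bicrystal closure of $\monos_{\antidiag}I_{\mathcal G}$ directly, exploiting the combinatorial structure of antidiagonal patterns in the submatrices $U_\ell$. Since $\mathcal{G}$ is a Gr\"obner basis under $\antidiag$ by hypothesis, the initial ideal is generated by the antidiagonal leading terms of the defining minors, giving the structural description: $M \in \monos_\antidiag I_\mathcal{G}$ if and only if for some $\ell$ there exist cells $(i_1, j_1), \ldots, (i_{d_\ell}, j_{d_\ell})$ inside $U_\ell$ with $i_1 < \cdots < i_{d_\ell}$, $j_1 > \cdots > j_{d_\ell}$, and $M_{i_s, j_s} \geq 1$. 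I will call such a configuration an \emph{antidiagonal pattern} of size $d_\ell$ for $U_\ell$. By Proposition~\ref{prop:reformulate} it then suffices to check closure of this set under each admissible bicrystal operator; I plan to prove it for $\varphi = f_i^{\sf row}$, with the cases $e_i^{\sf row}, f_j^{\sf col}, e_j^{\sf col}$ following by swapping ``rightmost/leftmost'', ``(/)'', or transposing rows with columns.

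Fix $M \in \monos_\antidiag I_\mathcal{G}$ with $\varphi(M) \neq \varnothing$, together with an antidiagonal pattern $C = \{(i_s, c_s)\}_{s=1}^{d_\ell}$ in $M|_{U_\ell}$. Since $\varphi$ is admissible ($i \notin \mathbf{I}$) and $a_\ell - 1, a'_\ell \in \mathbf{I}$ (Lemma~\ref{lemma:grobnerdetidealstable}), rows $i, i+1$ are either both outside or both inside $[a_\ell, a'_\ell]$. In the outside case, $M|_{U_\ell}$ is unchanged by $\varphi$ and $C$ persists. In the inside case, let $(i, j)$ denote the move position, determined by the rightmost unpaired ``)'' in ${\sf bracket}_i({\sf row}(M))$. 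I perform a case analysis on which cells of $C$ lie in rows $i, i+1$. The routine subcases are: $C$ contains no such cell (trivial); exactly one cell at a column other than $j$ (unchanged); a single cell $(i, c_s)$ with $j = c_s$ (replace by $(i+1, c_s)$, valid since row $i+1$ is otherwise unoccupied by $C$); or a single cell $(i+1, c)$ (the move only increments or leaves $M_{i+1, c}$).

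The principal obstacle will be the remaining subcase: $C$ contains both $(i, c_s)$ and $(i+1, c_{s+1})$ with $c_s > c_{s+1}$, and $j = c_s$. If $M_{i, c_s} \geq 2$ the cell survives the decrement and $C$ persists. When $M_{i, c_s} = 1$, however, $(i, c_s)$ is lost and no naive substitution simultaneously preserves the strict row-increase and column-decrease of $C$. Here I will invoke the bracket rule defining $\varphi$: the ``('' contributed by $(i+1, c_{s+1})$ sits at column $c_{s+1}$ of ${\sf bracket}_i({\sf row}(M))$, while the ``)'' at $(i, c_s)$ is the rightmost \emph{unpaired} ``)'', at column $c_s > c_{s+1}$. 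Since unpaired ``)''s always precede unpaired ``(''s in any bracket sequence, the ``('' from $(i+1, c_{s+1})$ cannot itself be unpaired and must be matched with some ``)'' at a position $(i, c_*)$; since $M_{i, c_s} = 1$ forces the column-$c_s$ ``)'' to be the sole (hence unpaired) one, the matching partner satisfies $c_* < c_s$, and the ordering of brackets within column $c_{s+1}$ forces $c_* > c_{s+1}$. Thus $c_* \in [b_\ell, b'_\ell]$ and $M_{i, c_*} \geq 1$ is untouched by $\varphi$, so substituting $(i, c_*)$ for $(i, c_s)$ in $C$ yields an antidiagonal pattern of size $d_\ell$ in $\varphi(M)|_{U_\ell}$.

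Analogous arguments, structured around the ``leftmost unpaired ('' rule for $e_i^{\sf row}$ and matrix transposition for $f_j^{\sf col}$ and $e_j^{\sf col}$, cover all admissible operators. This establishes that $\monos_\antidiag I_\mathcal{G}$ is $(\mathbf{I}, \mathbf{J})$-bicrystal closed, and thus, by Proposition~\ref{prop:reformulate}, that $I_\mathcal{G}$ is $(\mathbf{I}, \mathbf{J}, \antidiag)$-bicrystalline.
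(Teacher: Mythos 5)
Your argument is correct and reaches the conclusion through the same combinatorial engine as the paper's proof, but it is organized differently. The paper routes through the test-set machinery: it invokes Algorithm~\ref{thebigalg} and Theorem~\ref{thm:firstmain}, picks a test-set element $M$, writes $M = M(g)+A$, and verifies the closure condition on those elements. You skip the test-set scaffolding and directly verify, for \emph{every} $M\in\monos_\antidiag I_\mathcal{G}$, that $\varphi(M)\in\monos_\antidiag I_\mathcal{G}\cup\{\varnothing\}$, invoking only Proposition~\ref{prop:reformulate}. This is genuinely simpler structurally, since the paper's Case~1/Case~2 argument never actually uses the degree bound $\deg A\leq\Sigma_g+1$ and works verbatim for arbitrary $M$; the test-set framework is being showcased there rather than being load-bearing.

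Where the arguments really coincide is the central pairing lemma: your ``principal obstacle'' is the paper's Case~2 ($r_{k+1}=i+1$), and your ``single cell $(i,c_s)$ with $j=c_s$'' subcase is the paper's Case~1 ($r_{k+1}\neq i+1$, replace the row). Your bracket-sequence analysis is in fact slightly tighter: you derive that the matching column $c_*$ lies in the \emph{open} interval $(c_{s+1}, c_s)$, whereas the paper writes ``$c\in[c_{k+1},c_k]$''; both endpoints must be excluded for $g''$ to be a genuine $d\times d$ minor with distinct columns, and your derivation makes this explicit via the nesting property of matched brackets.

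One caution: you justify ``$a_\ell-1, a'_\ell\in\mathbf{I}$'' by citing Lemma~\ref{lemma:grobnerdetidealstable}, but that lemma is a one-directional \emph{sufficient} condition for Levi-stability; it does not assert that stability forces the Levi datum to align with the block boundaries of every $U_\ell$. The paper's Case~1 relies on the same unstated assumption (one needs $i+1\in[a,a']$ for $g'$ to be a minor of $U$), so this is a shared wrinkle rather than a defect unique to your write-up, but the citation as given is logically reversed. You should either argue the converse from irredundancy, or restrict to descriptions whose submatrices respect the Levi datum, as the paper implicitly does.
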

\begin{proof}
    For each admissible operator $\varphi$, Algorithm~\ref{thebigalg} produces a test set $\mathcal{M}(I_\mathcal{G}, \antidiag, \varphi)$. We focus on the case where $\varphi = f_i^{\sf row}$, as the four cases are almost identical. By construction, $\mathcal{M}(I_\mathcal{G}, \antidiag, f_i^{\sf row})$ consists of elements of the form $M(g)+A$, where $g\in\mathcal{G}$ and $A$ is a nonnegative integer matrix concentrated in rows $i$ and $i+1$ with 
    $|A|\leq \Sigma_g+1 = 3$.
     
    To show that $I_\mathcal{G}$ is $(\mathbf{I}, \mathbf{J}, \antidiag)$-bicrystalline via Theorem~\ref{thm:firstmain}, we must verify that each element $M\in\mathcal{M}(I_\mathcal{G}, \antidiag, f_i^{\sf row})$ satisfies 
    \[f_i^{\sf row}(M)\in\monos_{\antidiag} I_\mathcal{G}\cup\{\varnothing\}.\]
Let 
    \[M\in\mathcal{M}(I_\mathcal{G}, \antidiag, f_i^{\sf row})\] 
    be arbitrary such that $f_i^{\sf row}(M)\neq\varnothing$ and let $g\in\mathcal{G}$ be a minor such that $z^{M(g)}|z^M$. Say $g$ is a $d\times d$ minor of some contiguous submatrix $U = [a, a']\times[b, b']$ in the definition of $I_\mathcal{G}$. We denote the row and column sets of $g$ by 
    \[R_g = \{r_1<\dots<r_d\} \text{\ and $C_g = \{c_1>\dots>c_d\}$}\] 
    respectively, so 
    \[z^{M(g)} = \prod_{i=1}^d z_{r_i c_i}.\] 
    Say $f_i^{\sf row}(M)$ moves from $(i, j)$. Then $z^{f_i^{\sf row}(M)}$ is still divisible by $z^{M(g)}$ unless there exists some $k\in[d]$ such that $(i, j) = (r_k, c_k)$ and $M_{i, j} = 1$. When this occurs, we identify another $d\times d$ minor of $U$ whose antidiagonal term divides $z^{f_i^{\sf row}(M)}$. There are two cases.

    \noindent\emph{Case 1:} ($r_{k+1}\neq i+1$). Let $g'\in\mathcal{G}$ be the minor with 
    \[R_{g'} = (R_g\smallsetminus\{i\})\cup\{i+1\} \text{ and $C_{g'} = C_g$.}\] 
    Then $z^{f_i^{\sf row}(M)}$ is divisible by $z^{M(g')}$.

    \noindent\emph{Case 2:} ($r_{k+1} = i+1$). Since $M_{r_{k+1}, c_{k+1}} > 0$ and $f_i^{\sf row}(M)$ moves from $(i, c_k)$, there must exist a column index $c\in[c_{k+1}, c_k]$ such that  
    $M_{i,c}>0$ by the pairing procedure in ${\sf bracket}_i({\sf row}(M))$.
    Let $g''\in\mathcal{G}$ be the minor with 
    \[R_{g''} = R_g \text{ and $C_{g''} = (C_g\smallsetminus\{c_k\})\cup\{c\}$.}\] 
    Then $M^{f_i^{\sf row}(M)}$ is divisible by $z^{M(g'')}$  (See Figure~\ref{fig:testgrobdet}).

    Thus in all cases $f_i^{\sf row}(M)\in\monos_{\antidiag}I_\mathcal{G}\cup\{\varnothing\}$ and the proof is complete.
\end{proof}

\begin{figure}
        \centering
        \begin{subfigure}{0.20\textwidth}
        \centering
        $\begin{bmatrix}
            0 & 0 & 0 & 0 & 1\\
            0 & 0 & 0 & 0 & 0\\
            0 & 1 & 0 & 1 & 0\\
            1 & 0 & 0 & 0 & 0
        \end{bmatrix}$
        \caption*{$M$}
        \end{subfigure}
	\begin{subfigure}{0.20\textwidth}
        \centering
        $\begin{bmatrix}
            0 & 0 & 0 & 0 & 1\\
            0 & 0 & 0 & 0 & 0\\
            0 & 1 & 0 & 0 & 0\\
            1 & 0 & 0 & 1 & 0
        \end{bmatrix}$
        \caption*{$f_3^{\sf row}(M)$}
        \end{subfigure}
        \begin{subfigure}{0.20\textwidth}
        \centering
        $\begin{bmatrix}
            0 & 0 & 0 & 0 & 1\\
            0 & 0 & 0 & 0 & 0\\
            0 & 0 & 0 & 1 & 0\\
            1 & 0 & 0 & 0 & 0
        \end{bmatrix}$
        \caption*{$M(g)$}
        \end{subfigure}
        \begin{subfigure}{0.20\textwidth}
        \centering
        $\begin{bmatrix}
            0 & 0 & 0 & 0 & 1\\
            0 & 0 & 0 & 0 & 0\\
            0 & 1 & 0 & 0 & 0\\
            1 & 0 & 0 & 0 & 0
        \end{bmatrix}$
        \caption*{$M(g'')$}
        \end{subfigure}
    \caption{Case 2 in the proof of Theorem~\ref{thm:generalizeddetideals}, with $(i, j) = (3, 4)$.}\label{fig:testgrobdet}
\end{figure}

\begin{example}[Matrix Richardson ideal]\label{exa:MRI}
    Let $m = n = 4$ and set \[I = \left\langle z_{11}, \left|\begin{matrix}
        z_{11} & z_{12} & z_{13}\\
        z_{21} & z_{22} & z_{23}\\
        z_{31} & z_{32} & z_{33}
    \end{matrix}\right|, \left|\begin{matrix}
        z_{21} & z_{22} & z_{23} & z_{24}\\
        z_{31} & z_{32} & z_{33} & z_{34}\\
        z_{41} & z_{42} & z_{43} & z_{44}\\
        z_{51} & z_{52} & z_{53} & z_{54}
    \end{matrix}\right|\right\rangle.\]
 The generators form a Gr{\"o}bner basis under $\antidiag$, so $I$ is 
 Gr\"obner-determinantal. By Theorem~\ref{thm:generalizeddetideals},
  $I$ is $(\{0, 1, 3, 5\}, \{0, 1, 3, 4, 5\}, \antidiag)$-bicrystalline.\footnote{Let $\pi:GL_n\to B_{-}\backslash GL_n$ be the projection map to the complete flag variety and suppose $X_w=\overline{B_{-}\backslash B_{-}wB}$ is a Schubert variety. The \emph{matrix Richardson variety} is the closure of $\pi^{-1}(X_w)$ in ${\sf Mat}_{n,n}$. The ideal in question cuts out one such example. In general, a Gr\"obner basis or description of $\init_\antidiag I$ for matrix Richardson ideals is unknown; \cite{Knutson.Miller, Klein.Weigandt} answer cases of this problem.  They are not all Gr\"obner-determinantal ideals, see Example~\ref{ex:nonbicrystallineMRV}.
    Which matrix Richardson ideals are bicrystalline?}
\end{example}

\subsection{A uniform simplification of Theorem~\ref{thm:LRrule}}\label{subsec:LRruleapplications}
Recall the simple formula \eqref{eqn:detcauchy} for $c_{\lambda|\mu}^{R/I_k}$ when $I_k$ is a classical determinantal ideal
(see Example~\ref{exa:det}). Gr\"obner-determinantal ideals $I_{\mathcal G}$  generalize classical determinantal ideals: they
have initial ideals generated by antidiagonals of minors,
carry large group actions, and
are bicrystalline. When do the standard monomial conditions for $I_{\mathcal{G}}$ translate into simple tableaux conditions via ${\sf RSK}$, generalizing \eqref{eqn:detcauchy}? We give such a translation when $I_{\mathcal{G}}$ is stable under $L_{\bf I}\times GL_n$ or $GL_m\times L_{\bf J}$.

\begin{definition}\label{def:wordwidth}
    Let $w = w_1\dots w_k$ be a word on $[1, m]$. The \emph{$[a, a']$-width} of $w$, denoted ${\sf width}_{[a, a']}(w)$, is the length $d$ of a longest decreasing subsequence $w_{i_1} > w_{i_2} > \dots > w_{i_d}$ ($i_1 < 
    \dots < i_d$) in the restriction $w|_{[a, a']}$ of $w$ to the subinterval $[a, a']\subseteq[1, m]$.
\end{definition}

\begin{example}\label{exa:matrixwidth}
    Let $M = \left[\begin{smallmatrix}
        1 & 0 & 1\\
        0 & 1 & 1\\
        1 & 1 & 1
    \end{smallmatrix}\right]$, let $w = {\sf row}(M) = 1323123$, and let $[a,a'] = [2,3]$. The following underlined subsequences of ${\sf row}(M)$ are longest decreasing subsequence of $w|_{[2,3]}$:
    \[{\underline 3}{\underline 2}323, \; {\underline 3}23{\underline 2}3, \; 32{\underline 3}{\underline 2}3.\] Since these subsequences have length $2$, ${\sf width}_{[2,3]}(w) = 2$. 

    Notice that, for instance, the subsequence $32{\underline 3}{\underline 2}3$ comes from the following underlined antidiagonal entries of $M$:
    \[
    M = \begin{bmatrix}
        1 & 0 & 1\\
        0 & 1 & \underline{1}\\
        1 & \underline{1} & 1
    \end{bmatrix}.
    \] 
    By \emph{antidiagonal} we mean matrix positions $(i_1,j_1),\ldots,(i_d,j_d)$ satisfying $i_1>\dots>i_d$ and $j_1<\dots<j_d$. \end{example}
    
    In general, the following is clear:
    
    \begin{lemma}\label{lemma:matrixwidth}
    For $M\in{\sf Mat}_{m,n}(\mathbb{Z}_{\geq 0})$ and any $[a,a']\subseteq [1,m]$, ${\sf width}_{[a,a']}({\sf row}(M))$ is the length of the longest sequence of non-zero antidiagonal entries of $M$ with row indices in $[a, a']$.  Likewise, for any $[b,b']\subseteq [1,n]$, ${\sf width}_{[b,b']}({\sf col}(M))$ is the length of the longest sequence of non-zero antidiagonal entries of $M$ with column indices in $[b,b']$.
\end{lemma}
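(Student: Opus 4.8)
The plan is to set up a length-preserving dictionary between strictly decreasing subsequences of ${\sf row}(M)$ and sequences of nonzero ``antidiagonal'' entries of $M$, then read off the interval-restricted statement, and finally deduce the ${\sf col}$ case by transposition. The starting observation is structural: by definition ${\sf row}(M)$ is the concatenation, over columns $c=1,\dots,n$ in order, of the block $B_c$ consisting of $M_{1c}$ copies of $1$, followed by $M_{2c}$ copies of $2$, \dots, followed by $M_{mc}$ copies of $m$ (with zero exponents omitted). Each block $B_c$ is weakly increasing, so any strictly decreasing subsequence of ${\sf row}(M)$ contains at most one letter from each $B_c$; moreover, since the blocks appear in column order and positions increase left to right, such a subsequence selects its letters from a \emph{strictly increasing} sequence of columns.

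Given this, I would argue as follows. A strictly decreasing subsequence $w_{i_1}>\cdots>w_{i_d}$ of ${\sf row}(M)$ picks one letter from each of columns $c_1<\cdots<c_d$; writing $r_\ell := w_{i_\ell}$, the copy of $r_\ell$ lying in $B_{c_\ell}$ certifies $M_{r_\ell c_\ell}>0$, so $(r_1,c_1),\dots,(r_d,c_d)$ is a sequence of nonzero entries with $r_1>\cdots>r_d$ and $c_1<\cdots<c_d$, i.e.\ an antidiagonal sequence in the sense of Example~\ref{exa:matrixwidth}. Conversely, from such an antidiagonal sequence of nonzero entries one recovers a strictly decreasing subsequence of ${\sf row}(M)$ of the same length by choosing, for each $\ell$, one of the copies of $r_\ell$ inside $B_{c_\ell}$ (there is at least one since $M_{r_\ell c_\ell}>0$); the hypothesis $c_1<\cdots<c_d$ forces the chosen positions to occur in increasing order in ${\sf row}(M)$. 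Hence the set of lengths realized by strictly decreasing subsequences of ${\sf row}(M)$ equals the set of lengths realized by antidiagonal sequences of nonzero entries of $M$. Passing to $w|_{[a,a']}$ deletes exactly the letters outside $[a,a']$, i.e.\ exactly the entries of $M$ whose row index lies outside $[a,a']$, so the same dictionary restricts to strictly decreasing subsequences of ${\sf row}(M)|_{[a,a']}$ on one side and antidiagonal sequences of nonzero entries of $M$ with all row indices in $[a,a']$ on the other. Taking the maximum length on both sides proves the ${\sf row}$ statement.

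The ${\sf col}$ statement I would deduce from the ${\sf row}$ statement applied to $M^t$, using that ${\sf col}(M)={\sf row}(M^t)$ together with the length-preserving bijection $(i_1,j_1),\dots,(i_d,j_d)\mapsto (j_d,i_d),\dots,(j_1,i_1)$ between antidiagonal sequences of nonzero entries of $M$ with column indices in $[b,b']$ and antidiagonal sequences of nonzero entries of $M^t$ with row indices in $[b,b']$. The only genuinely delicate point in all of this is the one isolated in the first paragraph — that a strictly decreasing subsequence meets each column block at most once and therefore distributes over distinct, automatically increasing columns — and once that bookkeeping is nailed down the rest is mechanical, consistent with the lemma being labelled ``clear''.
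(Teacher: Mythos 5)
The paper gives no proof of this lemma, dismissing it with ``the following is clear,'' so there is no argument to compare against; your proposal supplies the missing details correctly. The central observation — that ${\sf row}(M)$ decomposes as a concatenation of weakly increasing column-blocks $B_1,\dots,B_n$, so that a strictly decreasing subsequence meets each block at most once and hence selects its letters from a strictly increasing sequence of columns — is exactly the right bookkeeping, and it yields the length-preserving dictionary between strictly decreasing subsequences of ${\sf row}(M)|_{[a,a']}$ and antidiagonal sequences of nonzero entries of $M$ with row indices in $[a,a']$. Both directions of the dictionary are sound (the forward direction reads off $(r_\ell,c_\ell)$ with $r_1>\cdots>r_d$, $c_1<\cdots<c_d$; the reverse picks one copy of $r_\ell$ from $B_{c_\ell}$, which exists because $M_{r_\ell c_\ell}>0$), and the reduction of the ${\sf col}$ case to the ${\sf row}$ case via ${\sf col}(M)={\sf row}(M^t)$, with the sequence reversal needed to match the antidiagonal convention $i_1>\cdots>i_d$, $j_1<\cdots<j_d$, is handled correctly.
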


The following lemma is a version of a classical result due to Schensted. It forms the foundation for translating standard monomial conditions to tableaux via ${\sf RSK}$.

\begin{lemma}[Schensted]\label{lemma:Schenstedlemma}
    Let $w,w'$ be two words on $[1,m]$ with $w\equiv w'$. Let $[a,a']\subseteq [1,m]$. Then ${\sf width}_{[a,a']}(w) = {\sf width}_{[a,a']}(w')$. 
\end{lemma}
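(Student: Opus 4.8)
The plan is to reduce Lemma~\ref{lemma:Schenstedlemma} to the classical theorem of Schensted that the length of the longest strictly decreasing subsequence of a word $w$ equals the length of the first column of ${\sf tab}(w)$, i.e., $\ell({\sf shape}({\sf tab}(w)))$. First I would observe that ${\sf width}_{[a,a']}(w)$ depends only on the subword $w|_{[a,a']}$, directly from Definition~\ref{def:wordwidth}. Then, by Lemma~\ref{lemma:June25r123} (the restriction property of Knuth equivalence), $w\equiv w'$ implies $w|_{[a,a']}\equiv w'|_{[a,a']}$. So it suffices to show: if two words $u,u'$ on the alphabet $[a,a']$ are Knuth equivalent, then they have the same longest-strictly-decreasing-subsequence length.

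The key step is then invoking Schensted's theorem in the form: for any word $u$, the length of the longest strictly decreasing subsequence of $u$ equals $\ell({\sf shape}({\sf tab}(u)))$, the number of rows of the insertion tableau (equivalently, the length of the first column). Two routes are available. One route: Schensted's theorem in this exact form is classical (see, e.g., \cite[Section~3.3]{Fulton} or \cite[Section~7.23]{ECII}); I would simply cite it, noting that it is usually stated for increasing subsequences and rows, and the decreasing/column version follows by the standard transpose/reversal symmetry of RSK. Combined with Theorem~\ref{thm:June24bbb} (that ${\sf word}({\sf tab}(u))\equiv u$) and Theorem~\ref{thm:Knuthstraight} (that ${\sf tab}(u)$ is the unique straight-shape tableau whose reading word is Knuth-equivalent to $u$), Knuth-equivalent words $u\equiv u'$ satisfy ${\sf tab}(u)={\sf tab}(u')$, hence have the same shape, hence the same first-column length, hence the same longest strictly decreasing subsequence length. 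This gives ${\sf width}_{[a,a']}(w)={\sf width}_{[a,a']}(w')$.

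Concretely, the chain of equalities I would write is
\[
{\sf width}_{[a,a']}(w)
= \ell\!\left({\sf shape}\!\left({\sf tab}(w|_{[a,a']})\right)\right)
= \ell\!\left({\sf shape}\!\left({\sf tab}(w'|_{[a,a']})\right)\right)
= {\sf width}_{[a,a']}(w'),
\]
where the outer equalities are Schensted's theorem applied to $w|_{[a,a']}$ and $w'|_{[a,a']}$, and the middle equality holds because $w|_{[a,a']}\equiv w'|_{[a,a']}$ (by Lemma~\ref{lemma:June25r123}) forces ${\sf tab}(w|_{[a,a']})={\sf tab}(w'|_{[a,a']})$ by Theorem~\ref{thm:Knuthstraight}.

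The main obstacle is essentially bookkeeping rather than mathematical depth: the paper's conventions (column reading words, the RSK swap noted in the footnote after Theorem~\ref{thm:RSK}, and the fact that ${\sf width}$ is defined via \emph{strictly} decreasing subsequences while ${\sf tab}$ is built from row \emph{insertion}) must be reconciled with whichever standard form of Schensted's theorem is cited, so that ``longest strictly decreasing subsequence'' correctly matches ``length of the first column of the insertion tableau'' and not ``length of the first row'' or a weakly-decreasing variant. I would handle this by stating Schensted's theorem precisely in the needed decreasing/first-column form and citing the transpose symmetry of RSK to justify it, so no delicate re-derivation is required. An alternative, fully self-contained route would prove the Knuth-invariance of the longest-strictly-decreasing length directly by checking it against the two elementary Knuth moves \eqref{eqn:Knuth1} and \eqref{eqn:Knuth2}; I would mention this as a remark but not carry it out, since the citation route is shorter.
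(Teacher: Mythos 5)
Your proposal is correct and follows essentially the same route as the paper: restrict to the subwords $w|_{[a,a']}$ and $w'|_{[a,a']}$ via Lemma~\ref{lemma:June25r123}, then appeal to Schensted's theorem to conclude that Knuth-equivalent words have the same longest strictly decreasing subsequence length. The paper cites \cite[Lemma 3.1.2, Exercise 3.1]{Fulton} directly for the last step, whereas you unpack it through ${\sf tab}$, Theorem~\ref{thm:June24bbb}, and Theorem~\ref{thm:Knuthstraight}; this is just a slightly more explicit presentation of the same argument, not a different approach.
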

\begin{proof}
    If $w$ and $w'$ are Knuth equivalent, then the restrictions $w|_{[a, a']}$ and $w'|_{[a, a']}$ are also Knuth equivalent by Lemma~\ref{lemma:June25r123}. The lemma statement is thus equivalent to the claim that the longest decreasing subsequences of Knuth equivalent words have the same length, which follows from \cite[Lemma 3.1.2, Exercise 3.1]{Fulton}.
\end{proof}

\begin{definition}\label{def:tabantidiag}
    Let $T\in\operatorname{SSYT}(\lambda, m)$ be a tableau, with restriction $T|_{[a, a']}$ to the subinterval $[a, a']\subseteq[1, m]$. 
    An \emph{$[a, a']$-antidiagonal} of length $d$ in $T$ is a sequence of $d$ boxes from distinct rows of $T|_{[a, a']}$, each box weakly east and strictly north of the previous, such that when the boxes are read from bottom to top, their fillings strictly decrease.
\end{definition}

\begin{example}
    Let 
    $$T = \ytabb{
        1 & 2 & 2 & *(CornflowerBlue)3 & 5 & 5 & 5\\
        2 & 4\\
        *(CornflowerBlue)4
    }, \; T' = \ytabb{
        *(CornflowerBlue) 1 & 2 & 2 & 3 & 5 & 5 & 5\\
        *(CornflowerBlue) 2 & 4\\
        *(CornflowerBlue) 4
    }$$ The highlighted elements of $T$ form a $[3,4]$-antidiagonal. Similarly, the highlighted elements of $T'$ form a $[1,4]$-antidiagonal. However, the red-highlighted elements of the tableau below do \emph{not} form an antidiagonal: $$\ytabb{
        1 & 2 & 2 & 3 & *(Lavender)4 & 5 & 5\\
        2 & 5\\
        *(Lavender) 3
    }.$$ 
\end{example}

\begin{definition}\label{def:tabwidth}
    Let $T\in\operatorname{SSYT}(\lambda, m)$ be a tableau, with restriction $T|_{[a, a']}$ to the subinterval $[a, a']\subseteq[1, m]$. The \emph{$[a,a']$-width} of $T$, denoted ${\sf width}_{[a,a']}(T)$, is the length of the longest $[a,a']$-antidiagonal of $T$.
\end{definition}

\begin{example}\label{exa:tabwidth}
    If $T\in \operatorname{SSYT}(\lambda,m)$, then any column of $T$ is a $[1,m]$-antidiagonal. So, ${\sf width}_{[1,m]}(T)$ is always the length of the longest column of $T$, i.e., $\ell(\lambda)$.
\end{example}

\begin{example}
     Let 
    $$T = \ytabb{
        1 & 2 & 2 & *(CornflowerBlue)3 & 5 & 5 & 5\\
        2 & 4\\
        *(CornflowerBlue)4
    }.$$ The $[3,4]$-width of $T$ is $2$, as witnessed by the antidiagonal highlighted in blue.
\end{example}

\begin{lemma}\label{lemma:tabwidth}
    Let $P\in {\operatorname{SSYT}}(\lambda,m)$ and $[a,a']\subseteq [1,m]$. Then 
    \[{\sf width}_{[a,a']}(P) = {\sf width}_{[a,a']}({\sf word}(P)).\]
\end{lemma}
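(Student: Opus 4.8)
\textbf{Proof proposal for Lemma~\ref{lemma:tabwidth}.}

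The plan is to reduce the statement about tableau antidiagonals to the already-established statement about words via Knuth equivalence, using Lemma~\ref{lemma:Schenstedlemma} and Theorem~\ref{thm:June24bbb}. The key point is that ${\sf word}(P)$ is Knuth equivalent to ${\sf word}({\sf tab}({\sf word}(P))) = {\sf word}(P)$ trivially, but more usefully: for $P$ a straight-shape tableau, ${\sf word}({\sf tab}({\sf word}(P))) = {\sf word}(P)$, and by Lemma~\ref{lemma:Schenstedlemma} the quantity ${\sf width}_{[a,a']}$ is a Knuth-invariant of words. So it suffices to show that ${\sf width}_{[a,a']}(P)$, defined via $[a,a']$-antidiagonals of boxes in $P$, equals the longest decreasing subsequence length of ${\sf word}(P)|_{[a,a']}$.

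First I would dispense with the inequality ${\sf width}_{[a,a']}(P) \leq {\sf width}_{[a,a']}({\sf word}(P))$. Given an $[a,a']$-antidiagonal of boxes $\beta_1, \dots, \beta_d$ in $P|_{[a,a']}$ — each strictly north and weakly east of the previous, with strictly decreasing fillings read bottom-to-top — I claim the corresponding entries, read in the order $\beta_d, \beta_{d-1}, \dots, \beta_1$ (i.e., top box first), appear as a decreasing subsequence of ${\sf word}(P)$. Recall ${\sf word}(P)$ reads columns bottom-to-top, left-to-right. Since $\beta_{k+1}$ is weakly east and strictly north of $\beta_k$: if $\beta_{k+1}$ is strictly east of $\beta_k$, then its whole column comes later in ${\sf word}(P)$, so $\beta_k$'s entry precedes $\beta_{k+1}$'s; if $\beta_{k+1}$ is in the same column as $\beta_k$ but strictly north, then within that column (read bottom-to-top) $\beta_k$ comes first. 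Either way the entry of $\beta_k$ precedes that of $\beta_{k+1}$ in ${\sf word}(P)$ — wait, I need the reverse order for a \emph{decreasing} subsequence; reading $\beta_1$'s entry before $\beta_{k+1}$'s, with $\beta_1 > \beta_2 > \dots > \beta_d$ as values (bottom-to-top strictly decreasing means the \emph{bottommost} box $\beta_1$ has the largest value). Hmm — I should fix the indexing convention carefully: with $\beta_1$ the bottom box and $\beta_d$ the top box, the values satisfy $\mathrm{val}(\beta_1) > \dots > \mathrm{val}(\beta_d)$, and by the positional argument above the word-positions satisfy $\mathrm{pos}(\beta_1) < \dots < \mathrm{pos}(\beta_d)$. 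So $(\mathrm{val}(\beta_1), \dots, \mathrm{val}(\beta_d))$ is a decreasing subsequence of ${\sf word}(P)|_{[a,a']}$ of length $d$, giving ${\sf width}_{[a,a']}(P) \leq {\sf width}_{[a,a']}({\sf word}(P))$.

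For the reverse inequality, I would take a longest decreasing subsequence of ${\sf word}(P)|_{[a,a']}$, say of length $d$, and produce an $[a,a']$-antidiagonal of the same length. This is the direction where I expect the main obstacle: a decreasing subsequence in the reading word need not come from boxes in a ``staircase'' pattern, because two consecutive terms of the subsequence could lie in the same column (impossible, since columns strictly increase top-to-bottom so a column read bottom-to-top is increasing, hence contributes at most one term to a decreasing subsequence) or — the real issue — could lie in columns that are out of order, or the boxes could fail the ``strictly north'' condition. I would argue: a decreasing subsequence picks at most one entry per column, so its terms lie in strictly increasing column indices $j_1 < \dots < j_d$ (as they appear in reading order, later columns come later); let the chosen boxes have row indices $r_1, \dots, r_d$ with values $v_1 > \dots > v_d$. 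Since $P$ is semistandard, within column $j_k$ the value $v_k$ forces $r_k \leq v_k$; I then need to show one can choose, for each $k$, a box in column $j_k$ that is strictly north of the box chosen in column $j_{k+1}$. This is essentially the classical fact that the longest decreasing subsequence of a word equals the number of rows of its insertion tableau (the RSK shape), combined with the tableau-antidiagonal interpretation; alternatively, one invokes \cite[Lemma 3.1.2]{Fulton} directly, exactly as Lemma~\ref{lemma:Schenstedlemma} does, applied to ${\sf word}(P|_{[a,a']})$ and to the column reading word of the rectangular straightening, to conclude both widths equal $\ell$ of the shape of ${\sf tab}({\sf word}(P)|_{[a,a']})$. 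Concretely: ${\sf width}_{[a,a']}({\sf word}(P)) = \ell(\text{shape of }{\sf tab}({\sf word}(P)|_{[a,a']}))$ by Schensted's theorem, and separately, by an extremal/greedy argument on the columns of $P|_{[a,a']}$ (choosing the antidiagonal column-by-column from right to left, always picking the lowest available box compatible with strict-north), ${\sf width}_{[a,a']}(P)$ equals the same quantity. I would present the greedy column argument as the cleanest route, deferring to \cite[Lemma 3.1.2, Exercise 3.1]{Fulton} for the word side, mirroring the proof of Lemma~\ref{lemma:Schenstedlemma}. Once both quantities are shown equal to $\ell(\text{shape of }{\sf tab}({\sf word}(P)|_{[a,a']}))$, the lemma follows.
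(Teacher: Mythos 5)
Your proposal handles the easy direction correctly, but it over-engineers the reverse direction, and in the process introduces a factual error that would need fixing.

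The specific error: you claim ``a decreasing subsequence picks at most one entry per column, \dots since columns strictly increase top-to-bottom so a column read bottom-to-top is increasing.'' This is backwards. Columns of a semistandard tableau strictly increase top-to-bottom, so when read \emph{bottom-to-top} (which is how ${\sf word}(P)$ reads them) a column is strictly \emph{decreasing}. A single column can therefore contribute arbitrarily many terms to a decreasing subsequence. This does not break the lemma --- two boxes in the same column with one strictly north still satisfy ``weakly east and strictly north,'' so they are a legal antidiagonal --- but your stated reason for ruling this case out is wrong, and the case need not be ruled out at all.

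More broadly, the obstacle you anticipate in the reverse direction (``the boxes could fail the strictly north condition'') is not actually there, and the detour through Schensted's theorem and a greedy column argument is unnecessary. The paper's proof is the direct bijection: any decreasing subsequence of ${\sf word}(P)|_{[a,a']}$ \emph{is} an antidiagonal of $P|_{[a,a']}$, and conversely. Concretely, suppose boxes $\beta_k, \beta_{k+1}$ carry consecutive entries of a decreasing subsequence, so $\beta_{k+1}$ is later in the reading word and carries a strictly smaller value. Since the reading word goes left-to-right over columns, $\beta_{k+1}$ lies in the same or a later column (weakly east). If it is the same column, bottom-to-top reading forces $\beta_{k+1}$ strictly north. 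If it is a strictly later column $c_{k+1} > c_k$, then the box $(\mathrm{row}(\beta_{k+1}), c_k)$ exists (partition shape) and has value $\le \mathrm{val}(\beta_{k+1}) < \mathrm{val}(\beta_k)$, so by strict increase down column $c_k$, $\mathrm{row}(\beta_{k+1}) < \mathrm{row}(\beta_k)$ --- strictly north again. So the subsequence is an antidiagonal, and the two widths agree with no appeal to $\mathrm{RSK}$ or to a greedy construction. If you repair the same-column claim and replace the Schensted/greedy route with this two-sentence semistandardness check, your argument matches the paper's.
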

\begin{proof}
    This follows from the definitions of ${\sf word}(P)$ and $[a,a']$-antidiagonal, since the entries of any antidiagonal in $P$ form a decreasing sequence in ${\sf word}(P)$ and vice versa. 
\end{proof}

\begin{proposition}\label{prop:goodschensted}
    Let $M\in {\sf Mat}_{m,n}(\mathbb{Z}_{\geq 0})$ and let ${\sf RSK}(M) = (P,Q)$. Then, for any $[a,a']\subseteq [1,m]$ and $[b,b']\subseteq [1,n]$, 
    \[{\sf width}_{[a,a']}(P) = {\sf width}_{[a,a']}({\sf row}(M)), \; {\sf width}_{[b,b']}(Q) = {\sf width}_{[a,a']}({\sf col}(M)).\]
\end{proposition}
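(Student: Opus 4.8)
The plan is to chain together the lemmas already established. First I would recall that by definition of \({\sf RSK}\), we have \(P = {\sf tab}({\sf row}(M))\) and \(Q = {\sf tab}({\sf col}(M))\). By Theorem~\ref{thm:June24bbb}, \({\sf word}({\sf tab}({\sf row}(M))) \equiv {\sf row}(M)\), i.e., \({\sf word}(P) \equiv {\sf row}(M)\); similarly \({\sf word}(Q) \equiv {\sf col}(M)\). The goal equalities then follow immediately by composing three facts.

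Concretely, for the \(P\)-side: by Lemma~\ref{lemma:tabwidth}, \({\sf width}_{[a,a']}(P) = {\sf width}_{[a,a']}({\sf word}(P))\). Since \({\sf word}(P) \equiv {\sf row}(M)\), Lemma~\ref{lemma:Schenstedlemma} (Schensted) gives \({\sf width}_{[a,a']}({\sf word}(P)) = {\sf width}_{[a,a']}({\sf row}(M))\). Combining, \({\sf width}_{[a,a']}(P) = {\sf width}_{[a,a']}({\sf row}(M))\), as desired. The argument for the \(Q\)-side is verbatim with \({\sf col}(M)\) in place of \({\sf row}(M)\) and \([b,b']\) in place of \([a,a']\): Lemma~\ref{lemma:tabwidth} gives \({\sf width}_{[b,b']}(Q) = {\sf width}_{[b,b']}({\sf word}(Q))\), Theorem~\ref{thm:June24bbb} gives \({\sf word}(Q) \equiv {\sf col}(M)\), and Lemma~\ref{lemma:Schenstedlemma} gives \({\sf width}_{[b,b']}({\sf word}(Q)) = {\sf width}_{[b,b']}({\sf col}(M))\).

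There is essentially no obstacle here — this is a bookkeeping argument that assembles Lemma~\ref{lemma:tabwidth}, Theorem~\ref{thm:June24bbb}, and Lemma~\ref{lemma:Schenstedlemma}. The only point requiring a modicum of care is making sure the column-reading-word conventions line up: \({\sf word}(T)\) reads \(T\) down columns bottom-to-top, left-to-right, and the decreasing-subsequence-versus-antidiagonal correspondence in Lemma~\ref{lemma:tabwidth} is already phrased for exactly this convention, so nothing further is needed. (One might also note for the reader's convenience that, via Lemma~\ref{lemma:matrixwidth}, these common values equal the length of the longest antidiagonal sequence of nonzero entries of \(M\) with row indices in \([a,a']\), resp.\ column indices in \([b,b']\); but this observation is not needed for the proof itself.)
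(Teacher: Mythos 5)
Your proof is correct and follows essentially the same chain as the paper's: Theorem~\ref{thm:June24bbb} for the Knuth equivalence ${\sf word}(P)\equiv{\sf row}(M)$, Lemma~\ref{lemma:Schenstedlemma} to transfer width across Knuth equivalence, and Lemma~\ref{lemma:tabwidth} to pass between ${\sf width}$ of a tableau and of its reading word. The only cosmetic difference is that the paper handles the $Q$-side by ``taking transposes,'' whereas you spell out the parallel argument directly.
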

\begin{proof}
    Recall that ${\sf row}(M)\equiv {\sf word}(P)$ by Theorem~\ref{thm:June24bbb}. Thus  Lemma~\ref{lemma:Schenstedlemma} implies that 
    \[{\sf width}_{[a,a']}({\sf row}(M)) = {\sf width}_{[a,a']}({\sf word}(P)).\] The first claimed equality follows by Lemma~\ref{lemma:tabwidth}; taking transposes gives the second.
\end{proof}

\begin{theorem}\label{thm:simplifiedGrobnerdet}
    Let $I_{\mathcal{G}}\subseteq R = \complexes[{\sf Mat}_{m, n}]$ be a Gr\"obner-determinantal ideal defined by the set of submatrices $\mathcal{U} = \{[a_i, a'_i]\times[1, n]:1\leq i\leq k\}$ and rank conditions $\mathcal{D} = \{d_i:1\leq i\leq k\}$. Then $I_{\mathcal{G}}$ is $(L_{\mathbf{I}}\times GL_n)$-stable for some $\mathbf{I}$ and 
    \[c^{R/I_{\mathcal{G}}}_{\underline{\lambda}|\mu} = \# \{P\in{\mathcal{LR}}(\mathbf{I},\underline\lambda)
    \text{ of shape }\mu: \forall i\in[k],\ 
    {\sf width}_{[a_i, a'_i]}(P) < d_i\},\]
    \[c^{I_{\mathcal{G}}}_{\underline{\lambda}|\mu} = \# \{P\in{\mathcal{LR}}(\mathbf{I},\underline\lambda)
    \text{ of shape }\mu: \exists i\in[k],\ 
    {\sf width}_{[a_i, a'_i]}(P) \geq d_i\}.\]
    Similarly, if $\mathcal{U} = \{[1, m]\times[b_i, b'_i]:1\leq i\leq k\}$, then $I_{\mathcal{G}}$ is $(GL_m\times L_{\mathbf{J}})$-stable for some $\mathbf{J}$ and
    \[c^{R/I_{\mathcal{G}}}_{\lambda|\underline\mu} = \# \{Q\in{\mathcal{LR}}(\mathbf{J},\underline\mu)
    \text{ of shape }\lambda:
    \forall i\in[k],\ 
    {\sf width}_{[b_i, b'_i]}(Q) < d_i\},\]
    \[c^{I_{\mathcal{G}}}_{\lambda|\underline{\mu}} = \# \{Q\in{\mathcal{LR}}(\mathbf{J},\underline\mu)
    \text{ of shape }\lambda: \exists i\in[k],\ 
    {\sf width}_{[b_i, b'_i]}(Q) \geq d_i\}.\]
\end{theorem}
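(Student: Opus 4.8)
The plan is to derive Theorem~\ref{thm:simplifiedGrobnerdet} as a direct specialization of the general multiplicity rule Theorem~\ref{thm:LRrule}, using the combinatorics of \emph{widths} developed in Proposition~\ref{prop:goodschensted} to translate the ``standard monomial'' side condition into a condition purely on tableaux. First, I would record that $I_{\mathcal G}$ is $(L_{\mathbf I}\times GL_n)$-stable: by Lemma~\ref{lemma:grobnerdetidealstable} it suffices to pick $\mathbf I$ to contain all the $a_i-1$ and $a'_i$ and to take $\mathbf J=\{0,n\}$, since every $U_i$ spans all columns. By Theorem~\ref{thm:generalizeddetideals}, $I_{\mathcal G}$ is then $(\mathbf I,\mathbf J,\antidiag)$-bicrystalline, so Theorem~\ref{thm:LRrule} applies with this term order.

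The main content is the following dictionary. Since $\mathbf J=\{0,n\}$, the only $\mathbf J$-LR tableau is the supersemistandard $T_\mu$ (Remark~\ref{rem:supersemistandard}), so a pair $(P,Q)\in{\mathcal{LR}}(\mathbf I,\mathbf J,\underline\lambda,\mu)$ is the same data as a tableau $P\in{\mathcal{LR}}(\mathbf I,\underline\lambda)$ together with the requirement that $Q=T_\mu$; and $Q=T_\mu$ forces ${\sf shape}(P)={\sf shape}(Q)=\mu$, since ${\sf RSK}$ pairs tableaux of equal shape. Thus the matrices $M$ with ${\sf RSK}(M)\in{\mathcal{LR}}(\mathbf I,\mathbf J,\underline\lambda,\mu)$ are exactly those with ${\sf RSK}(M)=(P,T_\mu)$ for $P\in{\mathcal{LR}}(\mathbf I,\underline\lambda)$ of shape $\mu$, and each such $P$ determines $M$ uniquely by bijectivity of ${\sf RSK}$. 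It remains to decide, for such an $M$, whether $z^M\in\monos_{\antidiag} I_{\mathcal G}$. Because $\init_{\antidiag} I_{\mathcal G}$ is generated by the antidiagonal terms of the $d_i\times d_i$ minors of the submatrices $U_i=[a_i,a'_i]\times[1,n]$, we have $z^M\in\monos_{\antidiag} I_{\mathcal G}$ if and only if some antidiagonal term of a $d_i\times d_i$ minor of some $U_i$ divides $z^M$; by Lemma~\ref{lemma:matrixwidth} this says precisely that ${\sf width}_{[a_i,a'_i]}({\sf row}(M))\ge d_i$ for some $i$. Finally, Proposition~\ref{prop:goodschensted} identifies ${\sf width}_{[a_i,a'_i]}({\sf row}(M))$ with ${\sf width}_{[a_i,a'_i]}(P)$. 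Putting this together: $M\notin\monos_{\antidiag} I_{\mathcal G}$ iff ${\sf width}_{[a_i,a'_i]}(P)<d_i$ for all $i$, and $M\in\monos_{\antidiag} I_{\mathcal G}$ iff ${\sf width}_{[a_i,a'_i]}(P)\ge d_i$ for some $i$. Substituting these equivalences into \eqref{eqn:therule} and \eqref{eqn:therule2} of Theorem~\ref{thm:LRrule} yields the four displayed formulas (the row case directly, the column case by applying the row case to $M^t$, i.e.\ transposing the whole setup and invoking the ${\sf col}$-analogue of Lemma~\ref{lemma:matrixwidth} and Proposition~\ref{prop:goodschensted}).

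The step I expect to require the most care is the claim that $z^M\in\monos_{\antidiag} I_{\mathcal G}$ is governed exactly by the width condition. One direction is clear: if a $d_i\times d_i$ antidiagonal monomial of $U_i$ divides $z^M$, then its exponent matrix exhibits $d_i$ nonzero antidiagonal entries of $M$ with row indices in $[a_i,a'_i]$ (and columns, automatically, in $[1,n]$), so ${\sf width}_{[a_i,a'_i]}({\sf row}(M))\ge d_i$. For the converse one needs that \emph{any} $d_i$ nonzero antidiagonal entries of $M$ with row indices in $[a_i,a'_i]$ — at positions $(r_1,c_1),\dots,(r_{d_i},c_{d_i})$ with $r_1>\dots>r_{d_i}$ in $[a_i,a'_i]$ and $c_1<\dots<c_{d_i}$ — are precisely the support of the antidiagonal term of the $d_i\times d_i$ minor of $U_i$ on rows $\{r_1,\dots,r_{d_i}\}$ and columns $\{c_1,\dots,c_{d_i}\}$, which divides $z^M$; here one uses that $\mathcal G$ contains \emph{all} $d_i\times d_i$ minors of $U_i$, so every such minor and its antidiagonal term genuinely belongs to $\init_{\antidiag} I_{\mathcal G}$. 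The only subtlety is that the minor must actually lie inside $U_i$, which holds since all columns of $U_i$ are present and the rows $r_1,\dots,r_{d_i}$ lie in $[a_i,a'_i]$ by hypothesis; once this is noted, the equivalence with the width condition via Lemma~\ref{lemma:matrixwidth} is immediate and the theorem follows.
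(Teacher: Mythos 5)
Your proof is correct and follows essentially the same route as the paper's: establish Levi-stability and bicrystallinity, apply Theorem~\ref{thm:LRrule}, use Remark~\ref{rem:supersemistandard} to force $Q=T_\mu$ (reducing to a count of $P\in\mathcal{LR}(\mathbf{I},\underline\lambda)$ of shape $\mu$), and translate the standard-monomial condition into a width condition on $P$ via Lemma~\ref{lemma:matrixwidth} and Proposition~\ref{prop:goodschensted}, handling the column case by transposition. Your extra paragraph spelling out both directions of the width equivalence (in particular, that the converse needs $\mathcal{G}$ to contain \emph{all} $d_i\times d_i$ minors of $U_i$) is a helpful elaboration of a step the paper states more tersely.
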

\begin{proof}
    By taking transposes, it suffices to prove the first pair of statements. The $(L_{\mathbf{I}}\times GL_n)$-stability claim is immediate from the definition of $I_{\mathcal{G}}$. By Theorem~\ref{thm:generalizeddetideals}, $I_{\mathcal{G}}$ is bicrystalline under $\antidiag$. Thus Theorem~\ref{thm:LRrule} applies, giving the formula
    \[c^{R/I_\mathcal{G}}_{\underline\lambda, \mu} = \#\{M\in\monos_\antidiag I_{\mathcal{G}}:{\sf RSK}(M)\in\mathcal{LR}(\mathbf{I},\{0, n\}, \underline\lambda, \mu)\}.\]
    There is a unique $\{0, n\}$-LR tableau $T_\mu$ of each shape $\mu$ (see Remark~\ref{rem:supersemistandard}). Thus ${\sf RSK}(M)=(P, Q)\in\mathcal{LR}(\mathbf{I},\{0, n\}, \underline\lambda, \mu)$ if and only if $Q = T_\mu$ and $P\in{\mathcal{LR}}(\mathbf{I},\underline\lambda)$. Since $P$ is then necessarily of shape $\mu$ (by Theorem~\ref{thm:RSK}), it follows that
    \[c^{R/I_\mathcal{G}}_{\underline\lambda, \mu} = \#\{P\in{\mathcal{LR}}(\mathbf{I},\underline\lambda):{\sf RSK}\inv((P,T_\mu))\in\monos_\antidiag I_{\mathcal{G}}\}.\]

    We claim that 
    \[{\sf RSK}\inv((P, T_\mu))\in\monos_\antidiag I_\mathcal{G} \iff {\sf width}_{[a_i, a'_i]}(P) < d_i
    \text{\ for each $i\in[k]$.}\] Let $M ={\sf RSK}\inv((P, T_\mu))$. 
    Then for each $i\in[k]$, 
    \[{\sf width}_{[a_i, a'_i]}(P)<d_i \iff {\sf width}_{[a_i,a'_i]}({\sf row}(M)) < d_i,\]
     by Proposition~\ref{prop:goodschensted}. But the interpretation of ${\sf width}_{[a_i, a'_i]}({\sf row}(M))$ in Lemma~\ref{lemma:matrixwidth} shows that this bound holds if and only if $z^M$ is not divisible by the lead term of any $d_i\times d_i$ minor of $[a_i, a'_i]\times[1, n]$ under $\antidiag$. This completes the proof of the first statement. The proof of the second statement is identical, using the formula for $c^{I_{\mathcal{G}}}_{\underline\lambda|\mu}$ from Theorem~\ref{thm:LRrule}.
\end{proof}

\begin{remark}
	The upcoming Theorem~\ref{thm:knutsonbicrystal} proves (among other things) that all contiguous determinantal ideals defined by sets of submatrices of the form $[a, a']\times[1, n]$ or $[1, m]\times[b, b']$ are in fact Gr\"obner-determinantal. Thus the Gr\"obner-determinantal condition in the hypotheses of Theorem~\ref{thm:simplifiedGrobnerdet} is always satisfied.
\end{remark}

\begin{remark}
    In~\cite[Definition 4.1]{Almousa.Gao.Huang}, A. Almousa--S. Gao--D. Huang define \emph{generalized antidiagonals} of rectangular tableaux $T$ with respect to an interval $S$ and positive integer $r$. This definition is then used to describe the standard monomials of positroid varieties in the Grassmannian. Their definition makes sense for tableaux of arbitrary shapes and agrees with our Definition~\ref{def:tabantidiag}: a tableau $T$ contains a generalized antidiagonal for $S\leq r$ in their terminology if and only if ${\sf width}_S(T)> r$. We expect that our Theorem~\ref{thm:simplifiedGrobnerdet} can also be derived from their description of standard monomials for positroid varieties, generalizing the connection between the standard monomial theory of the Grassmannian and that of classical determinantal varieties in $\complexes[{\sf Mat}_{m, n}]$ (see \cite[Section 3.2]{BCRV}).
\end{remark}

\begin{example}[Classical determinantal ideals, revisited]\label{exa:detidealredux}
    Suppose $I_{\mathcal{G}}\subseteq R = \complexes[{\sf Mat}_{m, n}]$ is a classical determinantal ideal $I_d$ (i.e., $\mathcal{U} = \{[1, m]\times[1, n]\}$ and $\mathcal{D} = \{d\}$). Each $I_d$ is $GL_m\times GL_n$-stable, and they are Gr\"obner-determinantal because the defining minors are known to form a Gr\"obner basis under $\antidiag$ (see e.g. \cite{Abhyankar, Caniglia, Sturmfels}). By Theorem~\ref{thm:simplifiedGrobnerdet},
    \[c^{R/I_d}_{\lambda|\mu} = \# \{P\in{\mathcal{LR}}(\{0, m\},\lambda)
    \text{ of shape }\mu:
    {\sf width}_{[1, m]}(P) < d\}.\]
    Recall from Remark~\ref{rem:supersemistandard} that for each $\lambda$, the supersemistandard tableau $T_\lambda$ is the unique element of $\mathcal{LR}(\{0, m\},\lambda)$. Also, a tableau $P\in\operatorname{SSYT}(\lambda, m)$ satisfies ${\sf width}_{[1, m]}(P) < d$ if and only if $\lambda$ has fewer than $d$ rows (see Example~\ref{exa:tabwidth}). The rule of Theorem~\ref{thm:simplifiedGrobnerdet} thus simplifies to the rule \eqref{eqn:detcauchy} from Example~\ref{exa:det}:
    \[c^{R/I_d}_{\lambda, \mu} = 
    \begin{cases}
        1 & \text{if } \lambda = \mu \text{ and } \ell(\lambda) < d,\\
        0 & \text{otherwise.}
    \end{cases}\]
    This derivation is essentially the reverse of B. Sturmfels' proof \cite{Sturmfels} that the defining minors of $I_d$ form a Gr\"obner basis under $\antidiag$. Sturmfels' argument combined knowledge of the values  $c^{R/I_d}_{\lambda, \mu}$ with ${\sf RSK}$ to show that the minors form a Gr\"obner basis. Here, we \emph{begin} with a Gr\"obner basis for $I_d$, using it along with ${\sf RSK}$ to obtain the formula for $c^{R/I_d}_{\lambda, \mu}$. See Example~\ref{ex:theOGex} for more on the relationship between our results and \cite{Sturmfels}.
\end{example}

\begin{example}\label{ex:matriodLR}
    We apply Theorem~\ref{thm:simplifiedGrobnerdet} to the matrix matroid ideal from Example~\ref{exa:matroid}. That ideal $I\subseteq R =
    \mathbb{C}[{\sf Mat}_{2,6}]$ is Gr\"obner-determinantal. It is defined by:
    \[{\mathcal U} = \left\{U_1 = \begin{bmatrix}
        z_{11} & z_{12} & z_{13}\\
        z_{21} & z_{22} & z_{23}
    \end{bmatrix}, U_2 = \begin{bmatrix}
        z_{14} & z_{15}\\
        z_{24} & z_{25}
    \end{bmatrix}, U_3 = \begin{bmatrix}
        z_{16}\\
        z_{26}
    \end{bmatrix}\right\},\] 
    \[{\mathcal D} = \{d_1 = 2, d_2 = 2, d_3 = 1\}.\]

Since $R/I$ is  a
    ${\bf L}=GL_2\times T_6$ representation, its irreducibles are indexed by $(\lambda|\mu_1,\ldots,\mu_6)$
    where $\ell(\lambda)\leq 2$ and $\mu_1,\ldots,\mu_6$ are one-row partitions.
    By Theorem~\ref{thm:simplifiedGrobnerdet}, we know that $c^{R/I}_{(\lambda|\mu_1,\mu_2,\mu_3,\mu_4,\mu_5,\mu_6)}$ is equal to the number of semistandard Young tableaux $Q$ of shape $\lambda$ containing no $[1,3]$-antidiagonals of length $2$, no $[4, 5]$-antidiagonals of length $2$, and no $6$'s. (Since each $\mu_i$ has $1$ part, any ballot conditions are trivially satisfied.)
    From this rule we derive an explicit, multiplicity-free formula for these constants,  as follows. 
    Let 
    \[S=\{(\lambda \, |\, \mu_1,\mu_2,\mu_3,\mu_4,\mu_5,\mu_6) : {\text{(I) and (II) below hold}}\},\]
    where:
     \begin{enumerate}
        \item[(I)] $\mu_6 = \emptyset$, and
        \item[(II)] $\lambda$ is a partition of the form $(|\mu_1| + |\mu_2| + |\mu_3| + |\mu_4| + |\mu_5| - k, k)$ for some $0\leq k\leq |\mu_4| + |\mu_5|$.
    \end{enumerate} 
    We claim:
\begin{equation}
\label{eqn:July1bbc}
c^{R/I}_{(\lambda|\mu_1,\mu_2,\mu_3,\mu_4,\mu_5,\mu_6)} = \begin{cases}
        1, \; (\lambda|\mu_1,\mu_2,\mu_3,\mu_4,\mu_5,\mu_6)\in S\\
        0, \; \text{else.}
    \end{cases}
\end{equation}
   For $c^{R/I}_{(\lambda|\mu_1,\mu_2,\mu_3,\mu_4,\mu_5,\mu_6)}>0$, (I) must hold since $Q$ has content $\underline\mu$ and contains no $6$s.
    
    To show Condition (II), first note that since $Q$ contains no $[1, 3]$-antidiagonals of length $2$, all $1$s, $2$s, and $3$s are in the first row of $Q$. Since $Q$ is semistandard, the top row of $Q$ must thus contain $\mu_1$ $1$'s followed by $\mu_2$ $2$'s and then $\mu_3$ $3$'s. If $Q$ has a second row, it must be filled only with $4$'s and $5$'s and so must have length at most $|\mu_4| + |\mu_5|$. Finally we are done since if 
    the second row of $\lambda$ has length $0\leq k\leq |\mu_4| + |\mu_5|$, there is a unique $Q$ of the desired 
    form: $Q$ must be the tableau such that ${\sf word}(Q)|_{[4,5]}$ weakly increases. 
For example, the unique such $Q$ for 
\[(\lambda|\mu_1,\mu_2,\mu_3,\mu_4,\mu_5,\mu_6) = ((6,2) \; | \; (2),(1),(2),(2),(1),(0))\] 
is 
    \[Q = \ytabb{
        1 & 1 & 2 & 3 & 3 & 5\\
        4 & 4
    }.\] 
\end{example}

\section{Knutson determinantal ideals are bicrystalline} \label{sec:Knutsonideals}
The goal of this section is to establish a large family of Gr\"obner-determinantal ideals that includes the 
matrix double Bruhat ideals discussed in Example~\ref{exa:doublebruhat}. In this section, set $\prec$ to be the antidiagonal (pure) lexicographic term order induced by following ordering of variables in $Z$:
\[z_{1n}\succ z_{1 (n-1)}\succ\dots \succ z_{11}\succ z_{2n}\succ\dots \succ z_{m1}.\]

\begin{definition}\label{def:KDI}
    A \emph{Knutson determinantal ideal} $I\subseteq\complexes[{\sf Mat}_{m, n}]$ is a contiguous determinantal ideal using any combination of the following four types of submatrices $U$ of $Z$:
    \begin{enumerate}
        \item[(I)] \emph{Northwest}-justified submatrices ($U = [1, a']\times[1, b']$),
        \item[(II)] \emph{Southeast}-justified submatrices ($U = [a, m]\times[b, n]$),
        \item[(III)] Consecutive \emph{columns} ($U = [1, m]\times[b, b']$), or
        \item[(IV)] Consecutive \emph{rows} ($U = [a, a']\times[1, n]$).
    \end{enumerate}
\end{definition}

\begin{example}
Examples~\ref{exa:det} and~\ref{exa:doublebruhat} are examples of Knutson determinantal ideals.\footnote{Knutson determinantal ideals are a particular family of \emph{Knutson ideals} as defined in \cite{ConcaV}. Knutson determinantal ideals are of special interest because they are Levi-stable.}
\end{example}

\begin{theorem}\label{thm:knutsonbicrystal}
    If $I$ is a Knutson determinantal ideal, then $I$ is Gr\"obner-determinantal under the lexicographic order $\prec$ above. Hence, by Theorem~\ref{thm:generalizeddetideals}, $I$ is $({\bf I},{\bf J}, \prec)$-bicrystalline for any 
 $({\bf I},{\bf J})$ such that $I$ is $L_{\bf I}\times L_{\bf J}$-stable.   \end{theorem}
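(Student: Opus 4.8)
The plan is to prove the theorem in two moves: first reduce it entirely to the statement that a Knutson determinantal ideal $I$ is Gr\"obner-determinantal under $\prec$, and then establish that statement. The reduction is immediate: if the defining minors $\mathcal{G}$ of $I$ form a Gr\"obner basis under $\prec$, then $I$ is a Gr\"obner-determinantal ideal in the sense of Definition~\ref{def:gendetideals} (passing first to an irredundant description of $I$, which is harmless since deleting a generator whose lead term is superfluous does not affect the Gr\"obner-basis property). Since $\prec$ is an antidiagonal term order, Theorem~\ref{thm:generalizeddetideals}, applied with the antidiagonal order taken to be $\prec$ and using the hypothesis that $I$ is $L_{\bf I}\times L_{\bf J}$-stable (as guaranteed by Lemma~\ref{lemma:grobnerdetidealstable} for the relevant index conditions), then yields that $I$ is $({\bf I},{\bf J},\prec)$-bicrystalline. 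So all the work goes into showing that $\init_\prec I$ is generated by the antidiagonal terms of the minors in $\mathcal{G}$.

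The organizing tool is Knutson's theory of compatibly split ideals \cite{Knutson}, in the combinatorial formulation of \cite{ConcaV}. One fixes a polynomial $f\in\complexes[{\sf Mat}_{m,n}]$ whose initial term $\init_\prec f$ is squarefree (equivalently, in positive characteristic, $V(f)$ is compatibly split by the Frobenius splitting attached to $\prec$), and lets $\mathcal{C}_f$ be the smallest family of ideals containing $\langle f\rangle$ and closed under sums, intersections, and ideal quotients. Knutson's results give that $\mathcal{C}_f$ is finite, every $J\in\mathcal{C}_f$ has squarefree initial ideal under $\prec$, and --- the crucial point --- passing to the initial ideal commutes with finite sums on $\mathcal{C}_f$, i.e.\ $\init_\prec(J+K)=\init_\prec J+\init_\prec K$ for $J,K\in\mathcal{C}_f$. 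The first step is therefore to exhibit a single $f$, a suitable product of minors of $Z$, chosen large enough that $\init_\prec f$ stays squarefree while each of the four families of submatrix-minor ideals in Definition~\ref{def:KDI} lands in $\mathcal{C}_f$.

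The second, substantive step is to check, for each of the four families separately, that it lies in $\mathcal{C}_f$ and that its defining minors are, on their own, an antidiagonal Gr\"obner basis under $\prec$. For consecutive-row bands $[a,a']\times[1,n]$ and consecutive-column bands $[1,m]\times[b,b']$ (types (III),(IV)) the ideal is a classical determinantal ideal supported on a contiguous submatrix, whose minors are well known to form an antidiagonal Gr\"obner basis (cf.\ Example~\ref{exa:detidealredux}), and these are Knutson ideals of $f$. For northwest-justified submatrices $[1,a']\times[1,b']$ (type (I)) the components are Schubert determinantal ideals; their defining minors form an antidiagonal Gr\"obner basis by Knutson--Miller \cite{Knutson.Miller}, and placing these ideals --- and, more importantly, their sums with the other families --- inside $\mathcal{C}_f$ rests on determining the $\prec$-leading term of each basic minor of a Kazhdan--Lusztig ideal in the sense of \cite{WY:governing}, which is exactly the elementary computation carried out in Appendix~\ref{theappendix}. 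The southeast-justified case (type (II)) follows from the northwest case via the involution $(i,j)\mapsto(m+1-i,\,n+1-j)$ of $Z$, which preserves antidiagonality of minors and carries $\prec$ to another antidiagonal lexicographic order, so the Gr\"obner-basis property transports.

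Finally, since $I=I_1+\cdots+I_k$ is by definition a sum of component ideals of these four types, $I\in\mathcal{C}_f$, and iterating $\init_\prec(J+K)=\init_\prec J+\init_\prec K$ gives $\init_\prec I=\sum_{i=1}^{k}\init_\prec I_i$. By the previous step each $\init_\prec I_i$ is generated by the antidiagonal terms of the $d_i\times d_i$ minors of $U_i$, so $\init_\prec I$ is generated by the antidiagonal terms of all the minors in $\mathcal{G}$ --- precisely the assertion that $\mathcal{G}$ is a Gr\"obner basis under $\prec$. Hence $I$ is Gr\"obner-determinantal, and the theorem follows as above. I expect the second step to be the genuine obstacle: verifying that all four families live inside a single $\mathcal{C}_f$, and within that pinning down the leading terms of the basic minors of the Kazhdan--Lusztig ideals --- which is why that one computation is isolated in the appendix. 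The first and last steps are formal once one has Knutson's lattice-theoretic properties of $\mathcal{C}_f$ and Theorem~\ref{thm:generalizeddetideals} in hand.
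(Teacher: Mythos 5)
Your high-level plan---reduce to showing $I$ is Gr\"obner-determinantal under $\prec$, then invoke Theorem~\ref{thm:generalizeddetideals}---matches the paper's, and both arguments ultimately rest on Knutson's Frobenius-splitting work \cite{Knutson}. You also correctly identify the leading-term computation for basic minors (Theorem~\ref{thm:minorscover}(I), proved in Appendix~\ref{theappendix}) as the technical heart. But the way you organize the middle step has a genuine gap, and it sits exactly where you predict the obstacle would be.

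The paper's key move is to embed the $m\times n$ matrix $\overline{Z}$ into the $(m+n)\times(m+n)$ specialized matrix $Z_v$ for a \emph{fixed} permutation $v=(n+1)\cdots(n+m)1\cdots n$, and to exhibit, for each of the four types of rank condition in Definition~\ref{def:KDI}, a bigrassmannian $w\in\mathfrak{S}_{m+n}$ such that the relevant minors are exactly the generators of the Kazhdan--Lusztig ideal $I_{v,w}$ (modulo some harmless extra minors in cases (III) and (IV)). Because $v$ is fixed across all four types, a Knutson determinantal ideal is literally a sum $\sum_w I_{v,w}$ to which Theorem~\ref{thm:minorscover}(II) applies as a black box. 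Your proposal works instead with the family $\mathcal{C}_f$ from \cite{ConcaV}, but handles the four types essentially independently: types (III) and (IV) as classical determinantal ideals of a band, type (I) as Schubert determinantal ideals via \cite{Knutson.Miller}, type (II) by rotating type (I) through $(i,j)\mapsto(m+1-i,n+1-j)$. What you never do is produce a \emph{single} $f$ (equivalently a single $v$) for which all four types of ideals simultaneously lie in $\mathcal{C}_f$, and that is not a routine omission: knowing each piece has an antidiagonal Gr\"obner basis on its own does not give $\init_\prec(J+K)=\init_\prec J+\init_\prec K$ unless both $J$ and $K$ sit inside the same compatibly split family $\mathcal{C}_f$ for the \emph{same} $\prec$. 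Your rotation argument for type (II) makes this worse, not better: the involution sends $\prec$ to a \emph{different} antidiagonal lexicographic order, so while it establishes the Gr\"obner-basis property for a single type-(II) ideal, it cannot place that ideal in $\mathcal{C}_f$ with respect to the original $\prec$. The $\overline{Z}\hookrightarrow Z_v$ embedding, with $v$ chosen so that southeast-justified, row-band, and column-band conditions all become northwest conditions in the larger matrix, is precisely what closes this gap---it is the idea your proposal is missing.
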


We deduce Theorem~\ref{thm:knutsonbicrystal} from a special case of A.~Knutson's work on Frobenius splittings in \cite[Section 7.3]{Knutson}.  We thank Knutson for indicating this connection to us. In order to give the argument, we must briefly describe the relevant results in our notation.

Let ${\mathfrak S}_n$ denote the group of permutations of $[n]$. We will express permutations $v\in {\mathfrak S}_n$ either in one-line notation, or as a \emph{permutation matrix} $M_v$ that places a $1$ in matrix position $(i,v(i))$ for $1\leq i\leq n$ and $0$'s elsewhere.

\begin{definition}
    For a permutation $v\in\mathfrak{S}_n$, let $Z_v$ be the specialization of the generic $n\times n$ matrix $Z = [z_{ij}]$ obtained by setting each variable $z_{ij}$ with $j=v(i)$ to $1$, and each variable $z_{ij}$ with $j>v(i)$ or $i>v\inv(j)$ to $0$.
\end{definition}

\begin{definition}[{\cite[Section 3]{Fulton:duke}}]
The \emph{rank function} of a permutation $w$, denoted 
\[r_w:[n]\times [n]\to {\mathbb Z}_{\geq 0},\] 
maps each position $(i, j)$ to the number of $1$'s weakly northwest of it in the permutation matrix $M_w$. 
\end{definition}

\begin{definition}[{\cite[Section 3.2]{WY:governing}}]\label{def:KLideal}
    For $w, v\in\mathfrak{S}_n$, the \emph{Kazhdan--Lusztig ideal} $I_{v, w}\subseteq\complexes[Z_v]$ is 
    \[I_{v, w} = \langle(r_w(i, j)+1)\times(r_w(i, j)+1)\text{ minors of } Z_v : (i, j)\in[n]\times[n]\rangle.\]
\end{definition}

We refer the reader to the recent survey \cite{WY:survey} where the role of Kazhdan--Lusztig ideals in the study of singularities of Schubert varieties is explained. 

\begin{example}\label{exa:MSV} In Definition~\ref{def:KLideal}, replacing the use of $Z_v$ with the generic matrix $Z$ defines the \emph{Schubert determinantal ideal} $I_w$ of \cite{Fulton:duke}.
The associated (irreducible) variety is the \emph{matrix Schubert variety} \cite{Fulton:duke, Knutson.Miller}. Schubert determinantal ideals were the focus of our earlier paper \cite{AAA}. They are also Knutson determinantal ideals. 
\end{example}

\begin{definition}
    Given $v\in \mathfrak{S}_n$ and a position $(i, j)\in[n]\times[n]$, the \emph{antidiagonal drift} of $(i, j)$ is the quantity
    \[{\sf drift}_v(i, j) := i+j-(r_v(i, j)+1).\]
    The \emph{$k$th drifted antidiagonal} of $Z_v$ is the set
    \[D_v(k) := \{(i, j)\in[n]\times[n]: Z_v(i, j) = z_{ij},\ {\sf drift}_v(i, j) = k\}.\]
\end{definition}

\begin{example}\label{exa:Jun23aaa}
Let $v=31542$. Placing the values of $r_v(i,j)$ and ${\sf drift}_v(i,j)$ into a matrix gives:
\[
r_{v}=\begin{bmatrix}
0 & 0 & 1 & 1 & 1\\
1 & 1 & 2 & 2 & 2 \\
1 & 1 & 2 & 2 & 3\\
1 & 1 & 2 & 3 & 4\\
1 & 2 & 3 & 4 & 5
\end{bmatrix}, \ 
{\sf drift}_v=
\begin{bmatrix}
1 & 2 & 2 & 3 & 4\\
1 & 2 & 2 & 3 & 4 \\
2 & 3 & 3 & 4 & 4\\
3 & 4 & 4 & 4 & 4\\
4 & 4 & 4 & 4 & 4
\end{bmatrix}.
\]
The specialized matrix $Z_v$ is displayed below, along with the restriction ${\sf drift}'_v$ of the matrix ${\sf drift}_v$ to those positions $(i, j)$ such that $Z_v(i, j) = z_{ij}$.
\[
Z_{v}=\begin{bmatrix}
z_{11} & z_{12} & 1 & 0 & 0\\
1 & 0 & 0 & 0 & 0 \\
0 & z_{32} & 0 & z_{34} & 1\\
0 & z_{42} & 0 & 1 & 0\\
0 & 1 & 0 & 0 & 0
\end{bmatrix}, \ 
{\sf drift}'_v=
\begin{bmatrix}
1 & 2 & - & - & -\\
- & - & - & - & - \\
- & 3 & - & 4 & -\\
- & 4 & - & - & -\\
- & - & - & - & -
\end{bmatrix}.
\]
The $k$th drifted antidiagonal of $Z_v$ consists of the positions $(i, j)$ such that ${\sf drift}'_v(i, j) = k$.
\end{example}

\begin{definition}
    The \emph{$k$th basic submatrix} $Z_v^{(k)}$ of $Z_v$ is the northwest-justified $k\times k$ submatrix. Its determinant is the \emph{$k$th basic minor} 
    \[\Delta^{(k)}_v := \det Z_v^{(k)}.\] 
\end{definition}

\begin{theorem}[{\cite[Theorem 7]{Knutson}}]\label{thm:minorscover}
    Let $v\in\mathfrak{S}_n$, and let $\prec$ be the lexicographic order above. Then
    \begin{enumerate}
        \item[(I)] The lead term of $\Delta^{(k)}_v$ is 
            \[{\mathrm{init}}_\prec(\Delta^{(k)}_v) = \prod_{(i, j)\in D_v(k)}z_{ij}.\]
        \item[(II)] The defining generators for any sum 
        \[I = \sum_{w} I_{v, w}\] 
        of Kazhdan--Lusztig ideals $I_{v, w}$ (with $v$ fixed) form a Gr\"obner basis under $\prec$.
    \end{enumerate}
\end{theorem}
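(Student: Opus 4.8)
The plan is to establish part (I) by an elementary combinatorial analysis of the determinant $\Delta^{(k)}_v=\det Z_v^{(k)}$, and to obtain part (II) from part (I) together with a reducedness (Hilbert function) argument.

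For (I), I would first record the shape of $Z_v$: its entries that are genuine variables $z_{ij}$ occupy exactly the cells of the Rothe diagram $D(v)=\{(i,j):j<v(i),\ v\inv(j)>i\}$, its $1$'s sit at the pivots $(i,v(i))$, and all remaining entries are $0$. Writing $r_v(i,j)=\#\{i'\le i:v(i')\le j\}$, a one-step computation shows that, moving one box to the right or one box down in $Z_v$, the drift ${\sf drift}_v(i,j)=i+j-1-r_v(i,j)$ increases by exactly $1$ at a variable cell or a ``free'' zero and is unchanged at a pivot or a ``blocked'' zero. From this monotonicity I extract three facts: (a) distinct variable cells in a common row or column have distinct drifts, so the drifted antidiagonal $D_v(k)$ meets each row and each column of $Z_v$ in at most one box; (b) sorting $D_v(k)$ by increasing column makes the row indices strictly decrease, so $D_v(k)$ is an antidiagonal in the usual sense; and (c) at a variable cell $r_v(i,j)\le\min(i-1,j-1)$, so $k={\sf drift}_v(i,j)\ge\max(i,j)$ and hence $D_v(k)\subseteq[k]\times[k]$, making $\mathbf m_k:=\prod_{(i,j)\in D_v(k)}z_{ij}$ a bona fide candidate for $\init_\prec(\Delta^{(k)}_v)$. (The permutation $v=31542$ of Example~\ref{exa:Jun23aaa} illustrates all of this.)

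Still for (I): define $\sigma_k\in\mathfrak S_k$ by $\sigma_k(i)=j$ if $(i,j)\in D_v(k)$ and $\sigma_k(i)=v(i)$ otherwise. Using (a)--(c) and a short counting check, $\sigma_k$ is a bijection of $[k]$, and $Z_v^{(k)}(i,\sigma_k(i))$ is a variable exactly on $D_v(k)$ and equals $1$ off it; so $\Delta^{(k)}_v$ has the term $\pm\mathbf m_k$, and any permutation producing the monomial $\mathbf m_k$ must equal $\sigma_k$ (a variable $z_{ij}$ in the monomial forces $\sigma(i)=j$, and every other row must contribute a $1$), so no cancellation occurs. For $\prec$-maximality, attach to a nonzero term indexed by $\sigma$ the vector $c_\sigma=(c_\sigma(1),\dots,c_\sigma(k))$ with $c_\sigma(i)=\sigma(i)$ if $(i,\sigma(i))$ is a variable cell and $c_\sigma(i)=0$ otherwise; because $\prec$ places all row-$i$ variables above all row-$i'$ variables for $i<i'$ and, inside a row, larger columns above smaller ones, a short case check gives $\mathbf m_\sigma\succ\mathbf m_\tau$ if and only if $c_\sigma>_{\mathrm{lex}}c_\tau$. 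Thus $\init_\prec(\Delta^{(k)}_v)$ is the term maximizing $c_\sigma$ lexicographically, which is produced by the obvious greedy rule, and it remains only to identify the greedy permutation with $\sigma_k$; I would do this by induction on rows, using drift monotonicity to show that choosing a variable column larger than the $\sigma_k$-choice always forces a later row of $Z^{(k)}_v$ to be zero on the remaining columns. I expect this identification --- equivalently, the $\prec$-maximality of $\mathbf m_k$ --- to be the main technical point of (I), but it is entirely elementary.

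For (II), part (I), applied to the basic minors (and, after the routine reduction to Fulton/essential minors or to a re-specialization, to all the defining minors of each $I_{v,w}$), shows that every defining minor of $I=\sum_w I_{v,w}$ has a \emph{squarefree} leading term, the monomial on the corresponding drifted antidiagonal. Hence $\Lambda:=\langle\init_\prec g:g\ \text{a defining minor}\rangle$ is a radical monomial ideal with $\Lambda\subseteq\init_\prec I$. Since a Gr\"obner degeneration preserves the multigraded Hilbert function, the minors form a Gr\"obner basis precisely when $\Lambda$ and $I$ have equal Hilbert functions, i.e.\ when the degeneration is reduced. This reducedness is the one non-formal ingredient: in Knutson's treatment \cite[Section~7.3]{Knutson} it comes from a Frobenius splitting of $\C[Z_v]$ of maximal order along the antidiagonal compatibly splitting every $V(I_{v,w})$ at once, which forces $\sum_w\init_\prec I_{v,w}$ to be radical and equal to $\init_\prec I$. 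An elementary alternative, in the spirit of Sturmfels' argument for classical determinantal ideals (Example~\ref{exa:detidealredux}), is to compute the Hilbert function of $\C[Z_v]/\Lambda$ directly from the simplicial complex of $\Lambda$ and match it with the Hilbert function of the Kazhdan--Lusztig scheme $V(I)$, which is governed by the relevant Schubert-polynomial combinatorics; since degree cannot increase under degeneration and $\Lambda$ is radical, equal Hilbert functions force $\init_\prec I=\Lambda$. I expect the principal obstacle to be exactly this reducedness/Hilbert-function match for the full sum $\sum_w I_{v,w}$ --- the single-$w$ case being the Kazhdan--Lusztig analogue of the Knutson--Miller theorem --- and carrying it out without Frobenius splitting requires a careful identification of the facets of the initial complex of $\Lambda$ with the relevant Schubert-polynomial terms.
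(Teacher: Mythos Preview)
Your approach to part (I) differs from the paper's. You propose a direct argument: construct the candidate permutation $\sigma_k$, verify it gives a nonzero term, and show it is $\prec$-maximal via a row-by-row greedy characterization using the lex encoding $c_\sigma$. The paper (in its Appendix) instead inducts on the number of pivots $(i,v(i))$ in $Z_v^{(k)}$ with ${\sf drift}_v(i,v(i))<k$: for any such pivot it proves, via a pigeonhole lemma and an explicit two-stage swap construction, that every nonzero term avoiding the pivot is $\prec$-dominated by some term using it, then deletes row $i$ and column $v(i)$ and recurses on ${\sf del}_i(v)$. Your setup is cleaner conceptually --- the drift-monotonicity facts (a)--(c) and the vector $c_\sigma$ organize the problem well, and your ``short counting check'' that $\sigma_k$ is a bijection on $[k]$ does go through once one observes that the drifts of the variable cells in row $i$ form exactly the interval $[i,{\sf drift}_v(i,v(i))]$, so row $i$ meets $D_v(k)$ iff ${\sf drift}_v(i,v(i))\geq k$, and otherwise $v(i)\leq k$ by the drift bound. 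But the step you yourself flag as the main technical point --- that the greedy permutation equals $\sigma_k$ --- is where the real work lies, and ``drift monotonicity forces a later zero row'' is only a heuristic: one must actually argue why picking a variable cell of drift $>k$ in some row makes the remaining system infeasible. The paper's pivot-deletion argument sidesteps this global feasibility question by proving only a local comparison (terms avoiding a fixed low-drift pivot versus terms using it), which is why it can be filled in completely.

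For part (II) the paper does exactly your first alternative: it invokes Knutson's Frobenius-splitting results (Theorems~4 and~6 of \cite{Knutson}), with (I) entering only to check that $\init_\prec\bigl(\prod_k\Delta_v^{(k)}\bigr)$ is the product of all variables in $Z_v$. Your outline misstates the role of (I) here: part (I) does \emph{not} give the lead term of an arbitrary defining minor of $I_{v,w}$ --- those are size-$(r_w(i,j)+1)$ minors of northwest rectangles in $Z_v$, not basic minors $\Delta_v^{(k)}$ --- and the ``routine reduction to Fulton/essential minors or to a re-specialization'' you invoke does not exist. The Frobenius-splitting machinery bypasses any such minor-by-minor analysis. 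Your proposed Sturmfels-style alternative (matching Hilbert functions directly) is not attempted in the paper; carrying it out would require a facet description of the initial complex of $\Lambda$ together with an independent multidegree formula for $\sum_w I_{v,w}$, and already for a single $w$ this is essentially the main theorem of \cite{WY}, so it is far from routine.
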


\begin{example}
    Continuing Example~\ref{exa:Jun23aaa}, let $v = 31542$. Referring to the diagram of $Z_v$ above, we compute the lead terms of the basic minors:
    \[\init_\prec(\Delta_v^{(1)}) = z_{11},\ \init_\prec(\Delta_v^{(2)}) = z_{12},\ \init_\prec(\Delta_v^{(3)}) = z_{32},\ \init_\prec(\Delta_v^{(4)}) = z_{34}z_{42}.\]
    Each variable in $Z_v$ appears in the lead term of exactly one basic minor, and the $z_{ij}$ appearing in $\init_\prec(\Delta_v^{(k)})$ are those with ${\sf drift}_v(i, j) = k$, in accordance with Theorem~\ref{thm:minorscover}(I).
\end{example}

\begin{remark}
    The definition of Kazhdan--Lusztig ideal used in \cite{Knutson} is more general than ours---in that reference, the ideal depends on a choice of Weyl group $W$, elements $w, v\in W$, and a reduced word $Q$ for $v$. 
    Our Kazhdan--Lusztig ideals $I_{v, w}$ correspond to the case where $w, v\in\mathfrak{S}_n$ are permutations and $Q$ is the ``Rothe word'' for $v$ formed by listing the variables of $Z_v$ in decreasing order according to $\prec$ and replacing each $z_{ij}$ with the simple transposition $s_{{\sf drift}_v(i, j)}$. 
    For example, the Rothe word for $v = 31542$ is $s_2s_1s_4s_3s_4$.
\end{remark}

Given part (I) of Theorem~\ref{thm:minorscover}, part (II) follows quickly by Theorems $4$ and $6$ in \cite{Knutson}. See the appendix for a self-contained proof of part (I) akin to arguments of L. Seccia in~\cite{Seccia}. Part (II) generalizes  \cite[Main Theorem 2.1]{WY}, which concerns single Kazhdan--Lusztig ideals rather than sums. The proof in 
\emph{loc.~cit.} is different from that found in \cite{Knutson}.

\begin{proof}[Proof of Theorem~\ref{thm:knutsonbicrystal}]
    We realize the sets of minors defining the four types of rank conditions in Definition~\ref{def:KDI} as the generators of Kazhdan--Lusztig ideals for appropriate choices of $v$ and $w$. Let $Z$ denote an $(m+n)\times(m+n)$ generic matrix and $\overline{Z}$ denote the northwest $m\times n$ submatrix $[1, m]\times[1, n]$. Fix $v\in\mathfrak{S}_{m+n}$ to be the permutation 
    \[v=(n+1)(n+2)\cdots(n+m)12\cdots n.\] 
    Then $Z_v$ and $\overline{Z}$ contain the same variables. 
    
    \begin{example}\label{exa:June23dd1} If $m = 3$ and $n = 4$, then
    \[
    Z_{5671234} = \left[
    \begin{array}{c c c c|c c c}
      z_{11} & z_{12} & z_{13} & z_{14} & 1 & 0 & 0\\
      z_{21} & z_{22} & z_{23} & z_{24} & 0 & 1 & 0 \\
      z_{31} & z_{32} & z_{33} & z_{34} & 0 & 0 & 1 \\
      \hline
      1 & 0 & 0 & 0 & 0 & 0 & 0\\
      0 & 1 & 0 & 0 & 0 & 0 & 0\\
      0 & 0 & 1 & 0 & 0 & 0 & 0 \\
      0 & 0 & 0 & 1 & 0 & 0 & 0 
    \end{array}
    \right].\]
    \end{example}
    
    Let $w\in\mathfrak{S}_{m+n}$ be a \emph{bigrassmannian} permutation, meaning $w$ and $w\inv$ each have exactly one descent. The $1$-line notation for such a permutation is always of the form
    \[w = (1\cdots k)(c+1\cdots c+r-k)(k+1\cdots c)(c+r-k+1\cdots m+n)\]
    for some $r, c, k$ satisfying 
    \[k < \min\{r, c\} \text{ and $r+c-k\leq m+n$.}\] 
    Moreover, for any such $r, c, k$ satisfying these conditions, there is a corresponding bigrassmannian permutation $w\in\mathfrak{S}_{m+n}$; this is well-known, see, e.g., \cite[Exercise 2.2.5]{Manivel} or
    \cite[Lemma~4.1]{RWY:coho}. For instance, if $m+n=12$, $k=3$, $r=6$, and $c=7$, the bigrassmannian permutation is
    $w=1 \ 2\  3\  8 \ 9 \ 10 \ 4 \ 5 \ 6  \ 7 \ 11 \ 12$.
    
The Kazhdan--Lusztig ideal $I_{v, w}$ is generated by the $(k+1)\times(k+1)$ minors of the submatrix $[1, r]\times[1, c]\subseteq Z_v$. Straightforwardly, the following different choices of $r$, $c$, and $k$ yield each of the four types of rank conditions in the statement of Theorem~\ref{thm:knutsonbicrystal}:
    \begin{enumerate}
        \item[(I)] It is immediate that the $(d+1)\times(d+1)$ minors of $[1, a']\times[1, b']\subseteq\overline{Z}$ are the $(k+1)\times(k+1)$ minors of $[1, r]\times[1, c]\subseteq Z_v$ for
        \[r = a', c = b', k=d.\]
        In Example~\ref{exa:June23dd1}, the $(d+1)\times (d+1)= 2\times 2$ minors of $[1,a']\times [1,b'] = [1,2]\times [1,3]$ result from using $r=2, c=3, k=1$; the bigrassmannian permutation generating this rank condition is $w= 1\ 4 \ 2 \ 3 \ 5 \ 6 \ 7$.
        \item[(II)] The $(d+1)\times(d+1)$ minors of $[a, m]\times[b, n]\subseteq\overline{Z}$ are the $(k+1)\times(k+1)$ minors of $[1, r]\times[1, c]$ for
        \[r = m+b-1, c = n+a-1, k = a+b+d-2.\]
        To see that this works, one notices that any nonzero $(d+1)\times (d+1)$ minor of $[1,r]\times [1,c]$ in $Z_v$ uses the $1$'s appearing in
        the blocks $[1,a-1]\times [n+1,n+a-1]$ and $[m+1,m+b-1]\times [1,b-1]$. 
        
        In Example~\ref{exa:June23dd1}, the $(d+1)\times (d+1)= 2\times 2$ minors of $[a,m]\times [b,n] = [2,3]\times [2,4]$ are obtained from $r=4, c=5, k=3$; the bigrassmannian permutation generating this rank condition is $w= 1 \ 2 \ 3\ 6\ 4\ 5\ 7$.
        \item[(III)] The $(d+1)\times(d+1)$ minors of $[1, m]\times[b, b']\subseteq\overline{Z}$ are the $(k+1)\times(k+1)$ minors of $[1, r]\times[1, c]$ for
        \[r = m+b-1, c = b', k = b+d-1.\]
        Here, the point is that any nonzero $(d+1)\times (d+1)$ minor of $[1, m]\times[b, b']$ uses the $1$'s in the block $[m+1,m+b-1]\times [1,b-1]$, producing the desired minors. Although larger nonzero minors may also appear, each such minor is generated by the $(d+1)\times(d+1)$ minors via cofactor expansion. Cofactor expansion also shows that the lead term of each larger minor is divisible by the leading term of one of the desired $(d+1)\times (d+1)$ minors. Consequently,
        the defining minors are Gr\"obner under $\prec$ if and only if the set of all these $(d+1)\times (d+1)$ minors are Gr\"obner for that term order, and so these additional minors are harmless to our argument.
        
        In Example~\ref{exa:June23dd1}, the $(d+1)\times (d+1)= 2\times 2$ minors of $[1,m]\times [b,b']=[1,3]\times [2,3]$ come from $r=4, c=3, k=2$; the bigrassmannian permutation is $w=1\ 2 \ 4 \ 5 \ 3 \ 6 \ 7$.
        \item[(IV)] The $(d+1)\times(d+1)$ minors of $[a, a']\times[1, n]\subseteq\overline{Z}$ are the $(k+1)\times(k+1)$ minors of $[1, r]\times[1, c]$ for 
        \[r = a', c = n+a-1, k = a+d-1.\]
        This time, any nonzero $(d+1)\times (d+1)$ minor uses the $1$'s of the block $[1,a-1]\times [n+1,n+a-1]$. The same notice from (III) about
        ``additional minors'' similarly applies in this case.
        
         In Example~\ref{exa:June23dd1}, the $(d+1)\times (d+1)= 2\times 2$ minors of $[a,a']\times [1,n]=[2,3]\times [1,4]$ arise when $r=3, c=5, k=2$; the bigrassmannian permutation is $w=1\ 2 \ 6 \ 3 \ 4 \ 5 \ 7$.      
    \end{enumerate}

    Thus any Knutson determinantal ideal $I$ is a sum of Kazhdan--Lusztig ideals $I_{v, w}$ for our fixed choice of $v$ and various bigrassmannian $w$. Hence the minors generating $I$ form a Gr\"obner basis under $\prec$ by Theorem~\ref{thm:minorscover}(II), so $I$ is a Gr\"obner-determinantal ideal.
\end{proof}

The next three examples show strict containments of families of determinantal ideals:
\[
\text{Knutson\ determinantal} 
\subsetneq\ \text{Gr\"obner-determinantal} 
\subsetneq\ \text{bicrystalline\ under\ } \antidiag. 
\]

\begin{example}[Knutson determinantal]\label{ex:KnutsonIdeal}
    Let $m = n = 6$ and let $I\subseteq \mathbb{C}[{\sf Mat}_{m,n}]$ be generated by the determinants of the submatrices $[1,2]\times [1,2],[1,4]\times [1,4]$ of $Z$. $I$ is a Knutson determinantal ideal and is 
    $(\{0,2,4,6\},\{0,2,4,6\},\prec)$-bicrystalline.
\end{example}

\begin{example}[Gr{\"o}bner-determinantal, not Knutson determinantal]\label{ex:grobneridealnotknutson}
    Let $m=n=6$ and let $I\subseteq \mathbb{C}[{\sf Mat}_{m,n}]$ be generated by the determinants of the submatrices $[1,2]\times [1,2], [2,5]\times [2,5], [5,6]\times [5,6]$ of $Z$. $I$ is not a Knutson determinantal ideal. However, the generators form a 
    $\prec$-Gr{\"o}bner basis, so $I$ is a Gr\"obner-determinantal ideal and is therefore $(\{0,1,2,4,5,6\},\{0,1,2,4,5,6\}, \prec)$-bicrystalline.
\end{example}

\begin{example}[Bicrystalline under $\prec$, not Gr\"obner-determinantal]
    Let $m=n=6$ and let $I\subseteq \mathbb{C}[{\sf Mat}_{m,n}]$ be generated by the determinants of the submatrices $[3,4]\times [3,4], [2,5]\times [2,5]$ of $Z$. $I$ is not Knutson determinantal. Moreover, the defining generators do not form a Gr{\"o}bner basis. Indeed, the reduced Gr{\"o}bner basis for $I$ under $\prec$ has lead terms:
    $$z_{34}z_{43}=\left[\begin{smallmatrix}
        0 & 0 & 0 & 0 & 0 & 0\\
        0 & 0 & 0 & 0 & 0 & 0\\
        0 & 0 & 0 & 1 & 0 & 0\\
        0 & 0 & 1 & 0 & 0 & 0\\
        0 & 0 & 0 & 0 & 0 & 0\\
        0 & 0 & 0 & 0 & 0 & 0
    \end{smallmatrix}\right], \; z_{24}z_{35}z_{43}z_{52}=\left[\begin{smallmatrix}
        0 & 0 & 0 & 0 & 0 & 0\\
        0 & 0 & 0 & 1 & 0 & 0\\
        0 & 0 & 0 & 0 & 1 & 0\\
        0 & 0 & 1 & 0 & 0 & 0\\
        0 & 1 & 0 & 0 & 0 & 0\\
        0 & 0 & 0 & 0 & 0 & 0
    \end{smallmatrix}\right], \; z_{24}z_{33}z_{35}z_{44}z_{52}=\left[\begin{smallmatrix}
        0 & 0 & 0 & 0 & 0 & 0\\
        0 & 0 & 0 & 1 & 0 & 0\\
        0 & 0 & 1 & 0 & 1 & 0\\
        0 & 0 & 0 & 1 & 0 & 0\\
        0 & 1 & 0 & 0 & 0 & 0\\
        0 & 0 & 0 & 0 & 0 & 0
    \end{smallmatrix}\right]$$
    However, Algorithm~\ref{thebigalg} shows $I$ is 
     $(\{0,1,2,4,5,6\},\{0,1,2,4,5,6\},\prec)$-bicrystalline.
\end{example}

We end this section by returning to the subfamily of matrix Schubert varieties.

\begin{example}[A spherical matrix Schubert variety]\label{ex:2143HilbSeries}
    Consider the ideal $J$ in $R = {\mathbb C}[{\sf Mat}_{3,3}]$ generated by $z_{11}$ and the $3\times 3$ minor. This is a Schubert determinantal ideal, as defined in Example~\ref{exa:MSV}.
   We will show that 
\[c^{R/J}_{(\lambda^{(1)},\lambda^{(2)}|\mu^{(1)},\mu^{(2)})}\in\{0,1\}.\] 
Our point is that, with some additional analysis, Theorem~\ref{thm:LRrule} allows one to explicitly classify when each value is attained; we refer
 to
\eqref{eqn:2143formula} below.
The multiplicity-freeness of $R/J$ has geometric significance: the corresponding matrix Schubert variety is \emph{spherical}, i.e., it has a dense orbit of a Borel subgroup of $L_{\bf I}\times L_{\bf J}$. In upcoming work, the first two authors classify spherical matrix Schubert varieties. This is analogous to the classification of spherical Schubert varieties (\cite{Hodges.Yong, GHY1, Can.Saha, GHY2}). 
    
    It is convenient  to instead first study the ideal $I$ generated by the $3\times 3$ minor and $z_{33}$. 
 Here the Levi datum is 
$(\mathbf{I},\mathbf{J}) = (\{0, 2, 3\}, \{0, 2, 3\})$. In this case, Theorem~\ref{thm:LRrule} asserts
    $$
    c^{R/I}_{(\lambda^{(1)},\lambda^{(2)} | \mu^{(1)},\mu^{(2)})} = \#\left\{z^M\in\! {\mathrm{Std}}_\prec I: {\sf RSK}(M)\in
    {\mathcal{LR}}({\bf I},{\bf J}, (\lambda^{(1)},\lambda^{(2)}), (\mu^{(1)},\mu^{(2)}))\right\}.
    $$

We use two claims.

\begin{claim}\label{claim:2143std}
    Suppose $M\in{\sf Mat}_{m,n}(\mathbb{Z}_{\geq 0})$, ${\sf RSK}(M) = (P,Q)$, and the common shape of $P$ and $Q$ is $\nu$.
    The monomial
     $z^{M}\in {\operatorname{Std}}_\prec I$ if and only if 
     \begin{equation}\label{eqn:2143shape}
        \nu = (\max(\lambda_1^{(1)},\mu_1^{(1)}),\min(\lambda_2^{(1)}+\lambda_1^{(2)},\mu_2^{(1)}+\mu_1^{(2)})).
     \end{equation}
\end{claim}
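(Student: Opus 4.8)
Since Claim~\ref{claim:2143std} is applied inside the evaluation of $c^{R/I}_{(\lambda^{(1)},\lambda^{(2)}\mid\mu^{(1)},\mu^{(2)})}$ via Theorem~\ref{thm:LRrule} (here $m=n=3$, $I=\langle\det Z,z_{33}\rangle$, $\mathbf{I}=\mathbf{J}=\{0,2,3\}$), I read it with the standing assumption that ${\sf RSK}(M)=(P,Q)\in\mathcal{LR}(\mathbf{I},\mathbf{J},(\lambda^{(1)},\lambda^{(2)}),(\mu^{(1)},\mu^{(2)}))$; equivalently, by Proposition~\ref{prop:LRtab}, $M$ is $(\{0,2,3\},\{0,2,3\})$-highest weight, with $P$ of $[1,2]$-content $\lambda^{(1)}$ and $[3,3]$-content $\lambda^{(2)}$, and $Q$ of $[1,2]$-content $\mu^{(1)}$ and $[3,3]$-content $\mu^{(2)}$. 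The first step is to pin down ${\mathrm{init}}_\prec I$. Because $I$ is a Knutson determinantal ideal ($z_{33}$ being the $1\times1$ minor of the southeast submatrix $[3,3]\times[3,3]$ and $\det Z$ the $3\times3$ minor of the northwest submatrix $[1,3]\times[1,3]$), Theorem~\ref{thm:knutsonbicrystal} shows its two generators form a Gr\"obner basis under $\prec$; the antidiagonal term of $\det Z$ is $z_{13}z_{22}z_{31}$, so ${\mathrm{init}}_\prec I=\langle z_{13}z_{22}z_{31},z_{33}\rangle$ and $z^M\in{\mathrm{Std}}_\prec I$ iff $M_{33}=0$ and $\min(M_{13},M_{22},M_{31})=0$.

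The second step uses highest weight to sparsify $M$. By definition $e_1^{\sf row}(M)=\varnothing$ means every {\tt (} in ${\sf bracket}_1({\sf row}(M))$ is matched, i.e., no suffix has more {\tt (}'s than {\tt )}'s; since the $1$'s and $2$'s occur in ${\sf row}(M)$ in the column-ordered blocks $1^{M_{11}}2^{M_{21}}\mid 1^{M_{12}}2^{M_{22}}\mid 1^{M_{13}}2^{M_{23}}$, this gives, reading from the right, $M_{23}=0$ (the final block $2^{M_{23}}$ has no {\tt )} after it), then $M_{13}\ge M_{22}$, then $M_{12}+M_{13}\ge M_{21}+M_{22}$. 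Transposing, $e_1^{\sf col}(M)=\varnothing$ forces $M_{32}=0$, $M_{31}\ge M_{22}$, $M_{21}+M_{31}\ge M_{12}+M_{22}$. In particular $\min(M_{13},M_{22},M_{31})=M_{22}$, so Step~1's criterion simplifies to $z^M\in{\mathrm{Std}}_\prec I\iff M_{22}=M_{33}=0$, with $M=\left[\begin{smallmatrix}M_{11}&M_{12}&M_{13}\\ M_{21}&M_{22}&0\\ M_{31}&0&M_{33}\end{smallmatrix}\right]$.

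The third step computes $\nu$. By Lemma~\ref{lemma:matrixwidth}, Example~\ref{exa:tabwidth}, and Proposition~\ref{prop:goodschensted}, $\ell(\nu)={\sf width}_{[1,3]}({\sf row}(M))$ is the length of the longest antidiagonal chain of nonzero entries of $M$, which (given $M_{13},M_{31}\ge M_{22}$) equals $3$ iff $M_{22}\ge1$; so $\nu$ has at most two rows exactly when $M_{22}=0$, and since the right-hand side of \eqref{eqn:2143shape} has at most two parts, both directions of the claim reduce to the case $M_{22}=0$. In that case ${\sf row}(M)=1^{M_{11}}2^{M_{21}}3^{M_{31}}1^{M_{12}+M_{13}}3^{M_{33}}$, and using $M_{12}+M_{13}\ge M_{21}$ a direct row insertion shows row $1$ of $P$ is $1^{\lambda^{(1)}_1}3^{e_1}$ with $e_1=\max(M_{21}+M_{31}-M_{12}-M_{13},0)+M_{33}$, row $2$ is $2^{\lambda^{(1)}_2}3^{e_2}$ with $e_2=\min(M_{12}+M_{13}-M_{21},M_{31})$, and $P$ has no third row. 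Reading $\lambda^{(1)}_1=M_{11}+M_{12}+M_{13}$, $\lambda^{(1)}_2=M_{21}$, $\lambda^{(2)}_1=M_{31}+M_{33}$, $\mu^{(1)}_1=M_{11}+M_{21}+M_{31}$, $\mu^{(1)}_2=M_{12}$, $\mu^{(2)}_1=M_{13}+M_{33}$ off the row- and column-sums of $M$, this gives $\nu_1=\max(\lambda^{(1)}_1,\mu^{(1)}_1)+M_{33}$ and $\nu_2=\min(\lambda^{(1)}_2+\lambda^{(2)}_1,\mu^{(1)}_2+\mu^{(2)}_1)-M_{33}$. Hence, for $M_{22}=0$, $\nu$ equals the shape in \eqref{eqn:2143shape} iff $M_{33}=0$, which together with Steps~1--2 proves the claim in both directions.

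The main obstacle is this last step, computing $\nu$ exactly: for an unconstrained $3\times3$ matrix, Schensted insertion branches into many cases and the shape has no clean closed form. It is the highest-weight hypothesis --- forcing, via Step~2, the two zeros $M_{23}=M_{32}=0$ and the inequality $M_{12}+M_{13}\ge M_{21}$ --- that reduces the insertion to the single trackable computation above. (One could instead sidestep the explicit insertion using $|\nu|=|M|$, the antidiagonal-width formula for $\ell(\nu)$, and a separate maximum-weighted-chain evaluation of $\nu_1$, but the sparsified insertion is cleaner.)
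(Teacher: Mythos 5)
Your proof is correct, and its overall skeleton (compute $\mathrm{init}_\prec I$, reduce to the sparse highest-weight case, then read off $\nu$) matches the paper's. The place where you genuinely diverge is in deducing the shape $\nu$ from the standardness conditions. The paper argues more qualitatively: standardness gives $M_{33}=0$ and ${\sf width}_{[1,3]}({\sf row}(M))<3$; the width bound gives $\ell(\nu)\le 2$ via Lemma~\ref{lemma:matrixwidth} and Proposition~\ref{prop:goodschensted}; and then, invoking the ``orthodox'' description of $\mathsf{RSK}$, the vanishing $M_{33}=0$ is shown to be equivalent (under the LR hypothesis, which forces $P|_{[1,2]}$ and $Q|_{[1,2]}$ to be supersemistandard) to the statement that either $P$ or $Q$ has no $3$ in its first row, i.e.\ $\nu_1=\lambda^{(1)}_1$ or $\nu_1=\mu^{(1)}_1$. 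Together with $\nu\supseteq\lambda^{(1)},\mu^{(1)}$ and $|\nu|$ known, this pins down \eqref{eqn:2143shape} without ever performing the insertion. You instead use the highest-weight inequalities $M_{23}=M_{32}=0$, $M_{13},M_{31}\ge M_{22}$, $M_{12}+M_{13}\ge M_{21}+M_{22}$ to sparsify $M$, observe $\min(M_{13},M_{22},M_{31})=M_{22}$ so that standardness becomes $M_{22}=M_{33}=0$, and then grind out $\nu$ by Schensted insertion on the reduced row word. This is more computational but also more self-contained: it does not appeal to the unproved $\mathsf{RSK}$ fact and instead makes the role of $M_{33}$ quantitative (you find $\nu_1=\max(\lambda^{(1)}_1,\mu^{(1)}_1)+M_{33}$ and $\nu_2=\min(\lambda^{(1)}_2+\lambda^{(2)}_1,\mu^{(1)}_2+\mu^{(2)}_1)-M_{33}$, so \eqref{eqn:2143shape} visibly holds iff $M_{33}=0$). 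The trade-off is that the paper's version generalizes more easily to larger examples since it never commits to an explicit insertion case analysis, while yours makes transparent exactly which inequalities from the highest-weight condition do the work. One small remark: your explicit formulas for $e_1$, $e_2$ are stated without the two-case insertion analysis ($M_{12}+M_{13}\lessgtr M_{21}+M_{31}$) that produces them; I verified they are correct, but a reader would need that case split spelled out to follow Step~3.
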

\begin{proof}[Proof of Claim~\ref{claim:2143std}]
    If ${\sf RSK}^{-1}((P,Q)) = M$ such that $z^M$ is a standard monomial, then 
    \[M_{3,3} = 0 \text{\ and ${\sf width}_{[1,3]}({\sf row}(M)) < 3$.}\] 
    Since ${\sf width}_{[1,3]}({\sf row}(M)) < 3$, Example~\ref{exa:tabwidth} and Proposition~\ref{prop:goodschensted} imply that $\ell(\nu) < 3$. Moreover, since $M_{3,3}=0$, either $P$ or $Q$ must not have a $3$ in its top row (this is most easily seen using the ``orthodox" description of ${\sf RSK}^{-1}$ in, e.g., \cite[Section 4.1]{Fulton} or \cite[Section 7.11]{ECII}). Equivalently, either $\nu_1 = \lambda_1^{(1)}$ or $\nu_1 = \mu_1^{(1)}$. In fact, $\nu_1 = \max(\lambda_1^{(1)},\mu_1^{(1)})$, as $\nu \supseteq \lambda^{(1)}$ and $\nu \supseteq \mu^{(1)}$. Since \[|\nu| = \lambda_1^{(1)} + \lambda_1^{(2)} + \lambda_2^{(1)} = \mu_1^{(1)} + \mu_1^{(2)} + \mu_2^{(1)}\] and $\ell(\nu)\leq 2$, $\nu$ must be of the form~\eqref{eqn:2143shape}. So, $(P,Q)$ must satisfy the conditions above.

    Conversely, assume that $(P,Q)$ satisfies the conditions given above. Since $\ell(\nu) < 3$, 
    ${\sf width}_{[1,3]}({\sf row}(M)) < 3$. 
    Moreover, since either $\nu_1 = \lambda_1^{(1)}$ or $\nu_1 = \mu_1^{(1)}$,  $M_{3,3}$ equals $0$ (again using the ``orthodox" description of ${\sf RSK}^{-1}$). This proves the claim.
\end{proof}

\begin{claim}\label{claim:2143multfreeweak}
    Let $(P,Q)\in {\mathcal{LR}}({\bf I},{\bf J}, (\lambda^{(1)},\lambda^{(2)}), (\mu^{(1)},\mu^{(2)}))$ such that $z^M\in{\sf Std}_\prec(I)$, where ${\sf RSK}(M) = (P,Q)$. Then $P,Q$ are the unique semistandard Young tableaux of shape~\eqref{eqn:2143shape} such that $P|_{[1,2]} = T_{\lambda^{(1)}}[1,2]$, $Q|_{[1,2]} = T_{\mu^{(1)}}[1,2]$, and the remaining boxes of $P,Q$ are filled with $3$s.
\end{claim}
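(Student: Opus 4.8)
The plan is to treat Claim~\ref{claim:2143multfreeweak} as a corollary of Claim~\ref{claim:2143std}, Remark~\ref{rem:supersemistandard}, and the RSK correspondence (Theorem~\ref{thm:RSK}). First I would unwind the hypothesis $(P,Q)\in{\mathcal{LR}}({\bf I},{\bf J},(\lambda^{(1)},\lambda^{(2)}),(\mu^{(1)},\mu^{(2)}))$. Here ${\bf I}={\bf J}=\{0,2,3\}$, so by definition $P$ is ${\bf I}$-LR with $[1,2]$-content $\lambda^{(1)}$ and $[3,3]$-content $\lambda^{(2)}$ (which is a single integer $\lambda^{(2)}_1$, namely the number of $3$s in $P$), and likewise $Q$ is ${\bf J}$-LR with $[1,2]$-content $\mu^{(1)}$ and $[3,3]$-content $\mu^{(2)}$. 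Applying Remark~\ref{rem:supersemistandard} with $i_1=2$, the ${\bf I}$-LR condition forces $P|_{[1,2]}=T_{\lambda^{(1)}}[1,2]$: the entries $1,2$ of $P$ occupy the first $\lambda^{(1)}_1$ boxes of row $1$ (all equal to $1$) and the first $\lambda^{(1)}_2$ boxes of row $2$ (all equal to $2$). The analogous statement holds for $Q$ and $\mu^{(1)}$.

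Next I would pin down the shape. Because ${\sf RSK}(M)=(P,Q)$ with $M\in{\sf Mat}_{3,3}(\integers_{\geq 0})$, Theorem~\ref{thm:RSK} shows $P$ and $Q$ have a common shape $\nu$ and that every entry of $P$ and $Q$ lies in $[3]$. Since $z^M\in{\operatorname{Std}}_\prec I$, Claim~\ref{claim:2143std} identifies $\nu$ with the partition of~\eqref{eqn:2143shape}; in particular $\ell(\nu)\le 2$. I would then finish by a content count: the subtableau $T_{\lambda^{(1)}}[1,2]$ uses $|\lambda^{(1)}|$ boxes of $P$, and since the total content of $P$ fills all $|\nu|$ of its boxes while containing exactly $\lambda^{(2)}_1=|\nu|-|\lambda^{(1)}|$ threes, every box of $P$ lying outside $T_{\lambda^{(1)}}[1,2]$ must be a $3$. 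Semistandardness then forces these $3$s into unique positions (following the $1$s in row $1$ and the $2$s in row $2$), so $P$ is the unique SSYT of shape $\nu$ with the stated description; the identical argument applied to $Q$, $\mu^{(1)}$, $\mu^{(2)}$ completes the claim.

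The argument is essentially routine given Claim~\ref{claim:2143std}; the only step requiring any care is the content bookkeeping in the final paragraph, which depends crucially on knowing that all entries of $P$ and $Q$ lie in $[3]$ (a consequence of $m=n=3$ via Theorem~\ref{thm:RSK})—without that constraint the boxes outside $T_{\lambda^{(1)}}[1,2]$ need not all equal $3$. I do not anticipate any substantial obstacle, since the genuinely nontrivial input (the precise shape \eqref{eqn:2143shape} of a standard monomial's RSK image) is already supplied by the preceding claim.
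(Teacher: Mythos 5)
Your proof is correct and follows essentially the same route as the paper's: Claim~\ref{claim:2143std} pins down the shape $\nu$, Remark~\ref{rem:supersemistandard} forces $P|_{[1,2]}=T_{\lambda^{(1)}}[1,2]$ and $Q|_{[1,2]}=T_{\mu^{(1)}}[1,2]$, and the remaining boxes are uniquely filled by $3$s. Your extra care in spelling out the content bookkeeping (and noting that entries lie in $[3]$ because $m=n=3$) is a small clarification of a step the paper treats as immediate, not a different argument.
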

\begin{proof}
    Since $M$ is standard, by Claim~\ref{claim:2143std}, the common shape $\nu$ of $P$ and $Q$ is of the form~\eqref{eqn:2143shape}. By Remark~\ref{rem:supersemistandard}, if $(P,Q)\in {\mathcal{LR}}({\bf I},{\bf J}, (\lambda^{(1)},\lambda^{(2)}), (\mu^{(1)},\mu^{(2)}))$, both $P|_{[1,2]}$ and $Q|_{[1,2]}$ must be supersemistandard. Moreover, there is precisely one way to fill the skew shape $\nu/\lambda^{(1)}$ or $\nu/\mu^{(1)}$ with either $\lambda^{(2)}_1$-many $3$s or $\mu^{(2)}_1$-many $3$s, respectively, so $P,Q$ must be unique. 
\end{proof}

From Claim~\ref{claim:2143multfreeweak} and Theorem~\ref{thm:LRrule}, it follows that
\begin{equation}\label{eqn:2143formula}
c^{R/I}_{\underline\lambda|\underline\mu} = \begin{cases}
	1 & \text{if }\exists (P, Q)\in {\mathcal{LR}}({\bf I},{\bf J}, \underline\lambda, \underline\mu)\text{ of shape }\nu\text{ as in \eqref{eqn:2143shape},}\\
	0 & \text{otherwise.}
\end{cases}
\end{equation}

This formula can be made completely explicit from the description of $(P, Q)$ in Claim~\ref{claim:2143multfreeweak}. For any given $\underline\lambda$ and $\underline\mu$, $c^{R/I}_{\underline\lambda|\underline\mu}=1$ if and only if the corresponding $\nu$ is a partition shape, $\lambda^{(1)},\mu^{(1)}\subseteq\nu$, and the skew shapes $\nu/\lambda^{(1)}$ and $\nu/\mu^{(1)}$ are horizontal strips.

We are now done since $I$ and $J$ are related by a $180$-degree rotation of ${\sf Mat}_{m, n}$,
 \[c^{R/J}_{(\lambda^{(2)},\lambda^{(1)}|\mu^{(2)},\mu^{(1)})}=c^{R/I}_{(\lambda^{(1)},\lambda^{(2)}|\mu^{(1)},\mu^{(2)})}.\]
\end{example}

\section{GL-stable, in-KRS ideals are bicrystalline}\label{sec:in-RSK}

Thus far, we have viewed ${\sf RSK}$ as a combinatorial tool for associating representation theory (namely, a crystal structure) to the monomial basis of $\complexes[{\sf Mat}_{m, n}]$. However, one can also view ${\sf RSK}$ as a linear operator on $\complexes[{\sf Mat}_{m, n}]$, transitioning between the monomial basis and an alternate basis: the \emph{standard bitableaux} of \cite{DRS} (for more on this perspective, see \cite{Stelzer.Yong}). We describe this alternate basis, which extends the construction of the basis of $V_{\lambda}(k)$ given in Section~\ref{subsec:reptheoryprelims} (specifically
\eqref{eqn:Vkdef}).

\begin{definition}
    Given two increasing sequences of integers 
    \[R = (r_1<\dots<r_d)\subseteq[m] \text{\ and $C = (c_1<\dots<c_d)\subseteq[n]$,}\] 
    associate the determinant
    \[\Delta_{[R|C]} = \begin{vmatrix}z_{r_1c_1} & \dots & z_{r_1c_d}\\
    \vdots & \ddots & \vdots\\
    z_{r_dc_1} & \dots & z_{r_dc_d}\end{vmatrix}\in\complexes[{\sf Mat}_{m, n}].\]
    Let $(P, Q)$ be a pair of fillings (not necessarily semistandard) of a partition shape $\lambda$ that are strictly increasing along columns, where $P$ uses entries from $[m]$ and $Q$ uses entries from $[n]$. The \emph{bitableau} $[P|Q]\in\complexes[{\sf Mat}_{m, n}]$ is the product
    \[[P|Q] = \prod_{k=1}^{\ell(\lambda')}\Delta_{[P'_k|Q'_k]},\]
    where $P'_k$ is the set of integers in the $k$th column of $P$. If $P$ and $Q$ are semistandard Young tableaux, then $[P|Q]$ is called a \emph{standard bitableau} of shape $\lambda$. 
\end{definition}

\begin{example}
    An example of a (non-standard) bitableau is
    \[\left[\ytabb{1 & 2\\ 4 & 3}\bigg\vert \ytabb{ 1 & 3\\ 2 & 4}\right] = \begin{vmatrix}z_{11} & z_{12}\\ z_{41} & z_{42}\end{vmatrix}\begin{vmatrix}z_{23} & z_{24}\\ z_{33} & z_{34}\end{vmatrix},\]
    and an example of a standard bitableau is
    \[\left[\ytabb{1 & 1\\ 3}\bigg\vert\ytabb{1 & 3\\ 2}\right] = \begin{vmatrix}z_{11} & z_{12}\\ z_{31} & z_{32}\end{vmatrix}\begin{vmatrix}z_{13}\end{vmatrix}.\]
\end{example}

The following result is a consequence of the \emph{straightening law} of P.~Doubilet--G.~C.~Rota--J.~Stein
\cite[Theorem 8.1]{DRS}.

\begin{theorem}\cite[Theorem 8.3]{DRS}
    The standard bitableaux $[P|Q]$ form a vector space basis for $\complexes[{\sf Mat}_{m, n}]$.  
\end{theorem}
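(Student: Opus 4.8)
The statement asserts two things: the standard bitableaux span $\complexes[{\sf Mat}_{m,n}]$, and they are linearly independent. The plan is to obtain spanning directly from the straightening law, and to obtain independence by a dimension count in each multidegree, where the enumeration on the bitableau side is matched to the enumeration of monomials via the RSK correspondence (Theorem~\ref{thm:RSK}).

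\emph{Spanning.} First I would note that every monomial is a bitableau: since $z_{ij} = \Delta_{[(i)\,|\,(j)]}$, a monomial $z^M$ of degree $d$ is the product of its $d$ variable factors, i.e.\ the bitableau $[P|Q]$ of the single-row shape $(d)$ whose columns are these $1\times 1$ determinants (the column-strictness condition is vacuous for columns of length one). The straightening law \cite[Theorem~8.1]{DRS} then rewrites any bitableau as a $\integers$-linear combination of standard bitableaux. Because this is an identity of polynomials, it is automatically homogeneous for the $\integers^{m+n}_{\geq 0}$-multigrading of Section~\ref{subsection:bipre}; hence the standard bitableaux span $\complexes[{\sf Mat}_{m,n}]$, with each $z^M$ expressed through standard bitableaux of its own multidegree.

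\emph{Linear independence.} Fix a multidegree, recorded as a pair $(\alpha,\beta)$ with $\alpha\in\integers^m_{\geq 0}$, $\beta\in\integers^n_{\geq 0}$. The graded component $\complexes[{\sf Mat}_{m,n}]_{(\alpha,\beta)}$ is finite-dimensional, with a basis given by the monomials $z^M$ whose row-sum vector is $\alpha$ and whose column-sum vector is $\beta$; equivalently, by the matrices $M\in{\sf Mat}_{m,n}(\integers_{\geq 0})$ with those margins. On the other hand, a standard bitableau $[P|Q]$ of shape $\lambda$ is multihomogeneous: the number of times a row index $i$ occurs in $P$ is the number of columns $k$ with $i\in P'_k$, so its first $m$ multidegree coordinates are the content of $P$, and likewise its last $n$ coordinates are the content of $Q$. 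Therefore the standard bitableaux in $\complexes[{\sf Mat}_{m,n}]_{(\alpha,\beta)}$ are exactly those indexed by pairs $(P,Q)\in {\mathrm{SSYT}}(\lambda,m)\times {\mathrm{SSYT}}(\lambda,n)$, ranging over all partitions $\lambda$, with $P$ of content $\alpha$ and $Q$ of content $\beta$. By Theorem~\ref{thm:RSK}, and since ${\sf row}(M)$, ${\sf col}(M)$ have contents $\alpha$, $\beta$ and insertion preserves content, the map ${\sf RSK}$ restricts to a bijection between the matrices with margins $(\alpha,\beta)$ and precisely this set of pairs. Hence the number of standard bitableaux in $\complexes[{\sf Mat}_{m,n}]_{(\alpha,\beta)}$ equals $\dim_\complexes \complexes[{\sf Mat}_{m,n}]_{(\alpha,\beta)}$. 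A spanning subset of a finite-dimensional vector space whose cardinality equals the dimension is a basis; applying this in every multidegree, and using that both $\complexes[{\sf Mat}_{m,n}]$ and the standard bitableaux decompose along the multigrading, we conclude that the standard bitableaux form a vector space basis.

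\emph{Main obstacle.} The genuine content is the straightening law \cite[Theorem~8.1]{DRS}, which I take as given; everything else is bookkeeping. The only points needing care are the verification that a standard bitableau is multihomogeneous of the predicted bidegree, and that the normalization of ${\sf RSK}$ used here (sending $M$ to $({\sf tab}({\sf row}(M)),{\sf tab}({\sf col}(M)))$, cf.\ Theorem~\ref{thm:RSK}) matches contents on both sides so that the cardinality comparison is exact; both are immediate from the definitions. An alternative to the RSK count would be a leading-term argument under a diagonal term order showing that distinct standard bitableaux have distinct initial monomials, but the RSK route is cleaner given the machinery already developed in Section~\ref{sec:prelim}.
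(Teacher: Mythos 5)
The paper gives no proof of this statement: it is quoted as~\cite[Theorem~8.3]{DRS} and used as a black box. So there is no ``paper's own proof'' to compare against, and the relevant question is only whether your argument is sound. It is. Your spanning step is correct: every monomial is a one-row bitableau (column-strictness is vacuous for columns of length $1$), and the straightening law~\cite[Theorem~8.1]{DRS} rewrites any bitableau as a $\integers$-linear combination of standard ones; since straightening relations are themselves multihomogeneous, the rewriting stays inside a fixed $\integers^{m+n}_{\geq 0}$-graded piece. Your independence step is also correct: $\Delta_{[R|C]}$ is multihomogeneous with $x$-degree the indicator vector of $R$ and $y$-degree the indicator vector of $C$, so a standard bitableau $[P|Q]$ has bidegree $(\mathrm{content}(P),\mathrm{content}(Q))$; ${\sf RSK}$ preserves these contents (as row/column margins), so in each multidegree $(\alpha,\beta)$ the number of index pairs $(P,Q)$ equals $\dim_\complexes\complexes[{\sf Mat}_{m,n}]_{(\alpha,\beta)}$, and a spanning set of size at most the dimension is a basis (and in particular the standard bitableaux are pairwise distinct). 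This ``straightening plus RSK dimension count'' is the standard textbook route (cf.~\cite[Chapter~3]{BCRV}), and your observation that a diagonal-term-order leading-monomial argument would serve equally well is essentially the in-KRS/Sturmfels perspective that the paper itself invokes in Section~\ref{sec:in-RSK}. One nitpick worth making explicit: the cardinality comparison only controls the multiset of standard bitableaux, so you should say a word (as you implicitly do) that a spanning multiset of size $\le \dim$ forces the elements to be distinct and linearly independent.
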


We extend the definition of ${\sf RSK}$ as follows: if $M\in {\sf Mat}_{m,n}({\mathbb Z}_{\geq 0})$ and ${\sf RSK}(M)=(P,Q)$ then the operator 
\[{\sf RSK}:{\mathbb C}[{\sf Mat}_{m,n}]\to {\mathbb C}[{\sf Mat}_{m,n}]\] 
is defined by linearly extending the map
\[{\sf RSK}(z^M):=[P|Q].\]
Hence ${\sf RSK}^{-1}([P|Q])=z^M$. 

In \cite{BC}, W.~Bruns--A.~Conca consider classes of ideals with bases of standard bitableaux. 

\begin{definition}[{\cite[Definition~4.4]{BC}}]
An ideal $I$ possessing a vector space basis $\mathcal{B}$ of standard bitableaux is \emph{in-KRS} if 
\[\mathrm{span}_{\mathbb C}({\sf RSK}^{-1}({\mathcal B}))=\mathrm{span}_{\mathbb C}(\{{\sf RSK}^{-1}([P|Q]): [P|Q]\in {\mathcal B}\})={\mathrm{init}}_{\antidiag}I.\footnote{Our conventions about ${\sf RSK}$ differ from those
of \cite{BC}, so their equivalent definition is in terms of $\diag$.}\]
\end{definition}

\begin{example}(Classical determinantal ideals, revisited again)\label{ex:theOGex}
Example~\ref{exa:det} provides an example of an in-KRS ideal. By \cite[Corollary 3.4.2]{BCRV}, 
$I_k$ has a vector space basis ${\mathcal B}$ consisting of
standard bitableaux $[P|Q]$ whose shape $\lambda$ contains a column of length $k$ (i.e., $[P|Q]$ is divisible by some $k\times k$ minor of $Z$).
By Proposition~\ref{prop:goodschensted}, it follows that
\[{\sf RSK}\inv(\mathcal{B}) = \{z^M:{\sf width}_{[1, m]}({\sf row}(M))\geq k\}.\]
Since the $k\times k$ minors of $Z$ form a Gr\"obner basis for $I_k$ under $\antidiag$ \cite[Theorem 1]{Sturmfels}, another application of Proposition~\ref{prop:goodschensted} shows that 
\[\init_\antidiag(I_k) = \mathrm{span}_{\mathbb C}(\{z^M:{\sf width}_{[1, m]}({\sf row}(M))\geq k\}).\] 
We conclude that $I_k$ is an in-KRS ideal.
\end{example}

From the results of \cite{BC}, we deduce another family of bicrystalline ideals.

\begin{definition}
    For partitions $\lambda$ and $\mu$, write $\mu\supseteq\lambda$ if $\mu_i\geq\lambda_i$ for all $i$ (i.e., the Young diagram for $\lambda$ is a subset of the Young diagram for $\mu$), and write $\mu\geq \lambda$ if for all $k$, 
\[\sum_{i\geq k}\mu_i\geq \sum_{i\geq k}\lambda_i.\] 
\end{definition}
The definition immediately implies that if $\lambda\supseteq\mu$ then $\lambda\geq\mu$.

\begin{example}
The complete set of partitions $\leq \ydiags{2,2}$ is given by:
\[\left\{\, \emptyset, \, \ydiag{1},\, \ydiag{2},\, \ydiag{1,1}, \, \ydiag{3}, \, \ydiag{2,1}, \,\ydiag{4},\, \ydiag{3,1},\,
\ydiag{2,2}\,\right\}.\]
There are infinitely many $\mu\geq \ydiags{2,2}\, $, among them are: $\ydiags{3,2},\, \ydiags{4,2},\, \ydiags{5,2},\ldots$.
\end{example}

\begin{definition}\label{def:inKRS}
    For a partition $\lambda$, let 
    \[I^{(\lambda)}\subseteq\complexes[{\sf Mat}_{m, n}]\] 
    be the ideal spanned as a vector space by all standard bitableaux of shape $\mu\geq\lambda$.
\end{definition}

\begin{theorem}[{\cite{BC, BV}}]\label{thm:shapeideals}
    Let $I^{(\lambda)}\subseteq\complexes[{\sf Mat}_{m, n}]$. Then:
    \begin{enumerate}
        \item[(I)]\cite[Proposition 11.2]{BV} $I^{(\lambda)}$ is the ideal generated by all (not necessarily standard) bitableaux of shape $\lambda$.
        \item[(II)]\cite[Theorem 5.2]{BC} An ideal $I\subseteq\complexes[{\sf Mat}_{m, n}]$ is $\mathbf{GL}$-stable and has a vector space basis of standard bitableaux if and only if
        \begin{equation}\label{eqn:July24xyz}
       I=I^{\left(\theta^{(1)}\right)}+I^{\left(\theta^{(2)}\right)}+\cdots+I^{\left(\theta^{(f)}\right)}
       \end{equation} 
       for some partitions 
        $\theta^{(i)}$, $1\leq i\leq f$.
    \end{enumerate}
\end{theorem}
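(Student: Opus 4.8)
The plan is to derive both parts from two inputs: the \emph{straightening law} of Doubilet–Rota–Stein \cite[Theorem~8.1]{DRS}, which rewrites an arbitrary (non-standard) bitableau of shape $\nu$ as a $\mathbb Z$-linear combination of standard bitableaux of shapes $\rho\geq\nu$ all having $|\rho|=|\nu|$, with a single distinguished term of shape $\nu$ (obtained by sorting) occurring with coefficient $\pm1$; and the Cauchy-type decomposition $\complexes[{\sf Mat}_{m,n}]\cong\bigoplus_\mu V_\mu(m)\boxtimes V_\mu(n)$ as a $\mathbf{GL}$-module (also a consequence of \cite{DRS}), in which each isotypic component occurs with multiplicity one. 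Along the way I would use two elementary monotonicity facts: multiplying bitableaux of shapes $\mu$ and $\nu$ yields a bitableau whose shape is the column-concatenation $\mu\sqcup\nu$, and since the number of boxes of $\mu\sqcup\nu$ in columns of height $\geq k$ is $\sum_{i\geq k}\mu_i+\sum_{i\geq k}\nu_i$ we get $\mu\sqcup\nu\geq\mu$; in particular multiplying by a monomial of degree $e$ sends a shape $\mu$ to $(\mu_1+e,\mu_2,\dots)\geq\mu$.

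For part (I), write $J$ for the ideal generated by all bitableaux of shape $\lambda$. To get $J\subseteq I^{(\lambda)}$ it suffices to check $g\cdot B\in I^{(\lambda)}$ for a polynomial $g$ and a shape-$\lambda$ bitableau $B$: expanding $g$ into monomials, each term $z^N B$ is a bitableau of shape $(\lambda_1+|N|,\lambda_2,\dots)\geq\lambda$, which straightens into standard bitableaux of shape $\geq\lambda$, hence into $I^{(\lambda)}$. For the reverse inclusion I would show every standard bitableau $T$ of shape $\mu\geq\lambda$ lies in $J$. Setting $e=|\mu|-|\lambda|$, the inequality $\sum_{i\geq k}\mu_i\geq\sum_{i\geq k}(\lambda_1+e,\lambda_2,\dots)_i$ for every $k$ shows $\mu\geq\nu_e:=(\lambda_1+e,\lambda_2,\dots)$; moreover a $\nu_e$-shaped bitableau is precisely the product of a shape-$\lambda$ bitableau (its first $\lambda_1$ columns) with $e$ variables, so every $\nu_e$-bitableau lies in $J$. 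Thus it remains to prove that for any shape $\nu$ the span of the straightenings of all $\nu$-shaped bitableaux equals the span of all standard bitableaux of shape $\rho\geq\nu$ with $|\rho|=|\nu|$, the nontrivial inclusion being obtained by induction downward in dominance order (equivalently upward in $\geq$), using the $\pm1$ triangularity of the straightening law together with the linear independence of standard bitableaux to peel off one standard bitableau at a time. This last step — making precise that the straightening relations are ``complete enough'' to realize every required standard bitableau — is the technical heart, and is exactly \cite[Proposition~11.2]{BV}; I expect it to be the main obstacle.

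For part (II), the implication $(\Leftarrow)$ is the shorter one: each $I^{(\theta)}$ has a standard-bitableau basis by definition, and it is $\mathbf{GL}$-stable because row and column operations send each minor $\Delta_{[R|C]}$ to a $\complexes$-linear combination of minors of the same size, hence send a shape-$\mu$ bitableau to a combination of shape-$\mu$ bitableaux, whose straightenings have shapes $\geq\mu\geq\theta$; a finite sum of such ideals inherits both properties. For $(\Rightarrow)$ I would use the Cauchy decomposition. First, the span of standard bitableaux of shape $\geq\mu$ is $\mathbf{GL}$-stable (same straightening argument), so it is a sum of isotypic pieces; combining the resulting filtration with the multiplicity-one property and the linear independence of standard bitableaux identifies $I^{(\lambda)}=\bigoplus_{\mu\geq\lambda}V_\mu(m)\boxtimes V_\mu(n)$. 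Now if $I$ is $\mathbf{GL}$-stable with a standard-bitableau basis, then $I=\bigoplus_{\mu\in S}V_\mu(m)\boxtimes V_\mu(n)$, where $S$ is the set of shapes of standard bitableaux lying in $I$ (using multiplicity-one: a single standard bitableau of shape $\mu$ in $I$ forces $V_\mu(m)\boxtimes V_\mu(n)\subseteq I$), and being an ideal forces $S$ to be an up-set for $\geq$ (multiplication by a variable pushes shapes up as above, followed by straightening). Hence $I=\sum_{\theta\in\min(S)}I^{(\theta)}$, and since $\complexes[{\sf Mat}_{m,n}]$ is Noetherian — or directly, finitely many ideal generators of $I$ involve only finitely many shapes — finitely many $\theta$ suffice, giving \eqref{eqn:July24xyz}. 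The appeal to part (I) then converts $I^{(\theta)}=\bigoplus_{\mu\geq\theta}V_\mu\boxtimes V_\mu$ into the ``generated by bitableaux of shape $\theta$'' description, matching \cite[Theorem~5.2]{BC}.
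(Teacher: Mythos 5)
The paper does not supply a proof of this theorem; it is imported from the literature, citing \cite[Proposition~11.2]{BV} for part~(I) and \cite[Theorem~5.2]{BC} for part~(II), so there is no in-paper argument to compare against. Your proposal is a reasonable reconstruction of the arguments in those references: the Cauchy--Binet observation that $\mathbf{GL}$ preserves the span of shape-$\mu$ bitableaux is right, the monotonicity $\mu\sqcup\nu\geq\mu$ holds (though the additive quantity you want is ``boxes in rows $\geq k$,'' namely $\sum_{i\geq k}\rho_i=\sum_j\max(0,\rho'_j-(k-1))$, rather than boxes in tall columns as you phrase it --- both are additive over multiset union of conjugates, so the conclusion stands), and the reduction of~(I) to the shape-$\nu_e$ statement is clean.

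That said, the step you isolate as the ``technical heart'' --- that the $\nu$-shaped bitableaux span the same space as all standard bitableaux of shape $\rho\geq\nu$ with $|\rho|=|\nu|$ --- is not reachable by the peeling argument you sketch. The $\pm1$ triangularity of the straightening law controls only the coefficient of the shape-$\nu$ standard bitableau $T_B$ in the expansion of a $\nu$-shaped $B$; it says nothing about which higher-shape standard bitableaux are hit, so ``peeling off one at a time'' upward in $\geq$ has nothing to seed the induction above the base shape $\nu$. That surjectivity is exactly the content of \cite[Proposition~11.2]{BV}, and while you are right that it is the main obstacle, your sketch as written does not replace the citation. A smaller gap: in~(II)($\Rightarrow$), the claim that a single standard bitableau $T$ of shape $\mu$ lying in $I$ forces $V_\mu\boxtimes V_\mu\subseteq I$ needs the observation that the $\mu$-isotypic projection of $T$ is nonzero --- equivalently, that no standard bitableau of shape $\mu$ lies entirely in $\bigoplus_{\nu>\mu}V_\nu\boxtimes V_\nu$, since otherwise it would equal a combination of standard bitableaux of other shapes, contradicting their linear independence. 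With these two points addressed (the first by deferring to \cite[Proposition~11.2]{BV}, as the paper itself does), the proposal is correct and follows the standard route.
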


\begin{example}
The determinantal ideal $I_k$ from Example~\ref{ex:theOGex} is by definition generated
by $k\times k$ minors of a generic $m\times n$ matrix $Z$. Therefore, by Theorem~\ref{thm:shapeideals}(I), $I_k=I^{(1^k)}$. Clearly, the basis ${\mathcal B}$
described in the example is the same thing as the one in Definition~\ref{def:inKRS}.
\end{example}
 
Moreover, Example~\ref{ex:theOGex} is an instance of a more general conclusion:

\begin{theorem}[{\cite[Corollary 5.3]{BC}}]\label{thm:detinKRS}
    Every $\mathbf{GL}$-stable ideal  with a vector space basis of standard bitableaux (i.e., every sum of ideals $I^{(\lambda)}$) is an in-KRS ideal.
\end{theorem}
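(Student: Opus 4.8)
The plan is to fix any antidiagonal term order $\prec=\antidiag$ and prove directly that $\mathrm{span}_\complexes({\sf RSK}^{-1}({\mathcal B}))=\init_\prec I$, where ${\mathcal B}$ is the given basis of standard bitableaux for $I$ (equivalently, by Theorem~\ref{thm:shapeideals}(II), $I=\sum_i I^{(\theta^{(i)})}$ and ${\mathcal B}$ is the set of all standard bitableaux of shape $\mu\geq\theta^{(i)}$ for some $i$). Since ${\sf RSK}$ is a bijection (Theorem~\ref{thm:RSK}), ${\sf RSK}^{-1}({\mathcal B})$ is a set of distinct monomials, so both sides of the desired equality are monomial vector spaces. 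It therefore suffices to establish: (a) the set-theoretic containment ${\sf RSK}^{-1}({\mathcal B})\subseteq\monos_\prec I$; and (b) that $\mathrm{span}_\complexes({\sf RSK}^{-1}({\mathcal B}))$ and $\init_\prec I$ have the same multigraded Hilbert function. Given both, a dimension count in each multidegree upgrades (a) to the equality $\mathrm{span}_\complexes({\sf RSK}^{-1}({\mathcal B}))=\init_\prec I$.

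Part (b) is bookkeeping. First, ${\sf RSK}$ preserves the $\integers^{m+n}$-multidegree: the bitableau $[P|Q]$ is multihomogeneous of multidegree $(\text{content of }P\mid\text{content of }Q)$, and if ${\sf RSK}(M)=(P,Q)$ these contents are the row- and column-sum vectors of $M$, i.e.\ the multidegree of $z^M$. Hence ${\mathcal B}$ and ${\sf RSK}^{-1}({\mathcal B})$ have equal counting functions in each multidegree, so $\mathrm{span}_\complexes({\sf RSK}^{-1}({\mathcal B}))$ and $I$ share a multigraded Hilbert function. Finally $\dim_\complexes I_{\vec d}=\dim_\complexes(\init_\prec I)_{\vec d}$ for every $\vec d$, because $\complexes[{\sf Mat}_{m,n}]/I$ and $\complexes[{\sf Mat}_{m,n}]/\init_\prec I$ both have $\mathrm{Std}_\prec I$ as a vector space basis and the multigrading on $\complexes[{\sf Mat}_{m,n}]$ is positive (so all graded pieces are finite-dimensional, using that $I$ is $\mathbf{GL}$-stable hence multigraded).

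The crux, and the step I expect to be the real obstacle, is part (a). I would reduce it to the single identity
\[\init_\prec([P|Q])={\sf RSK}^{-1}([P|Q])\qquad\text{as monomials,}\]
valid for every standard bitableau $[P|Q]$; granting it, $[P|Q]\in{\mathcal B}\subseteq I$ gives ${\sf RSK}^{-1}([P|Q])=\init_\prec([P|Q])\in\init_\prec I$ at once. To prove the identity I would expand $[P|Q]=\prod_k\Delta_{[P'_k|Q'_k]}$ into its column minors; because $\prec$ is an antidiagonal order, the initial term of each $\Delta_{[P'_k|Q'_k]}$ is its antidiagonal monomial, and since $\complexes[{\sf Mat}_{m,n}]$ is a domain, $\init_\prec([P|Q])$ is the monomial obtained by superimposing these column antidiagonals. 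The remaining, genuinely combinatorial, content is that this superimposed monomial is exactly ${\sf RSK}^{-1}([P|Q])$: this is the classical compatibility of the Doubilet--Rota--Stein straightening law \cite{DRS} with RSK row insertion, which I would either cite from \cite[Chapters~4--5]{BCRV} (after transposing to account for their diagonal convention) or prove by induction on the number of columns of the common shape of $P$ and $Q$, using Theorem~\ref{thm:June24bbb} and Greene's description of the shape of an insertion tableau. Once this identity is in hand the theorem follows formally from the Hilbert-function bookkeeping of Step (b); I also note that $\init_\prec([P|Q])$ is independent of which antidiagonal order is chosen, so the argument shows $I$ is in-KRS for every antidiagonal term order simultaneously.
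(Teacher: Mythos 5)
Your reduction of part~(a) to the monomial identity $\init_\prec([P|Q])={\sf RSK}^{-1}([P|Q])$ is where the argument fails: that identity is not true. Take $m=n=3$ and
\[P=\ytabb{1&2\\3},\qquad Q=\ytabb{1&1\\2},\]
both semistandard of shape $(2,1)$, so $[P|Q]$ is a standard bitableau. Then
\[[P|Q]=\Delta_{[\{1,3\}|\{1,2\}]}\cdot\Delta_{[\{2\}|\{1\}]}=\bigl(z_{11}z_{32}-z_{12}z_{31}\bigr)z_{21},\]
whose antidiagonal initial term is $z_{12}z_{21}z_{31}$ with exponent matrix $M'=\left[\begin{smallmatrix}0&1&0\\1&0&0\\1&0&0\end{smallmatrix}\right]$. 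But ${\sf RSK}^{-1}(P,Q)=\left[\begin{smallmatrix}1&0&0\\0&1&0\\1&0&0\end{smallmatrix}\right]$, i.e.\ the monomial $z_{11}z_{22}z_{31}$: that matrix has row word $132$ inserting to $P$ and column word $121$ inserting to $Q$, whereas ${\sf RSK}(M')=\bigl(\ytabb{1&3\\2}\,,\,Q\bigr)$. So the superposed-antidiagonals monomial is ${\sf RSK}^{-1}$ of a \emph{different} standard bitableau pair of the same shape, not of $(P,Q)$ itself; the two agree only at the level of shape and content (a consequence of Schensted/Greene), not entry-by-entry. No transposition of conventions repairs this --- the discrepancy is intrinsic, and you will not find the claimed identity in \cite{BCRV} because it is false. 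Your reduction therefore does not establish ${\sf RSK}^{-1}(\mathcal B)\subseteq\monos_\prec I$ bitableau-by-bitableau, and while your multigraded Hilbert-series bookkeeping in part~(b) is correct, it cannot rescue a false step~(a).

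For the record, the paper supplies no proof of this theorem; it is cited verbatim from \cite[Corollary 5.3]{BC}. The real content of that result is Knuth's theorem about the compatibility of the straightening law with ${\sf RSK}$ at the level of \emph{shapes and spans}: when a monomial $z^M$ is expanded in the standard bitableau basis, the shapes that occur are controlled by the shape of ${\sf RSK}(M)$, and dually the map $[P|Q]\mapsto$ (exponent matrix of $\init_\prec[P|Q]$) is a shape- and multidegree-preserving permutation of standard bitableau pairs rather than the identity. That global shape-preservation is what makes $\mathrm{span}_\complexes({\sf RSK}^{-1}(\mathcal B))$ coincide with $\init_\prec I$. If you want to keep your dimension-counting architecture, this permutation fact is the correct substitute for your false identity: it gives the set containment in part~(a) (indeed both containments at once), after which your part~(b) is no longer even needed. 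As written, though, the crux of the proposal is incorrect.
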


We are now ready to state the main conclusion of this section:
\begin{proposition}\label{cor:inKRSbicrystal}
    Every $\mathbf{GL}$-stable ideal $I$ with a vector space basis of standard bitableaux is 
    $(\{0,m\},\{0,n\},\antidiag)$-bicrystalline.
\end{proposition}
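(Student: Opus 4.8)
The plan is to leverage the in-KRS property, which was the whole point of isolating this class of ideals, together with the fact that for the Levi datum $(\{0,m\},\{0,n\})$ the admissible bicrystal operators preserve the common shape of ${\sf RSK}(M)$. First I would invoke Theorem~\ref{thm:shapeideals}(II) to write $I = I^{(\theta^{(1)})} + \cdots + I^{(\theta^{(f)})}$ for partitions $\theta^{(1)},\dots,\theta^{(f)}$. By Definition~\ref{def:inKRS}, each $I^{(\theta^{(i)})}$ is spanned by the standard bitableaux of shape $\mu \geq \theta^{(i)}$; since the standard bitableaux form a vector space basis of $\complexes[{\sf Mat}_{m,n}]$, the union over $i$ of these spanning sets is linearly independent, so $I$ has the standard-bitableau basis
\[\mathcal{B} = \{[P|Q] : [P|Q]\text{ standard of shape }\mu,\ \mu \geq \theta^{(i)}\text{ for some }i\in[f]\}.\]
In particular, membership of a standard bitableau in $\mathcal{B}$ is an ``all or nothing'' condition on its shape.

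Next I would apply Theorem~\ref{thm:detinKRS}: $I$ is in-KRS, so $\init_\antidiag I = \mathrm{span}_\complexes({\sf RSK}^{-1}(\mathcal{B}))$. Because ${\sf RSK}^{-1}([P|Q]) = z^M$ whenever ${\sf RSK}(M) = (P,Q)$, and because a monomial lies in the span of a set of monomials exactly when it is a member of that set (distinct monomials being $\complexes$-linearly independent), this upgrades to the pointwise statement: for $M\in{\sf Mat}_{m,n}(\integers_{\geq 0})$ with ${\sf RSK}(M) = (P,Q)$ of common shape $\nu$,
\[M \in \monos_\antidiag I \iff \nu \geq \theta^{(i)}\text{ for some }i\in[f].\]
Thus whether or not $M$ lies in $\monos_\antidiag I$ depends only on the shape $\nu$ of ${\sf RSK}(M)$.

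To finish, let $\varphi$ be any admissible bicrystal operator for $(\{0,m\},\{0,n\})$ — these are precisely all of $f_i^{\sf row}, e_i^{\sf row}$ ($i\in[m-1]$) and $f_j^{\sf col}, e_j^{\sf col}$ ($j\in[n-1]$) — and take $M\in\monos_\antidiag I$ with $\varphi(M)\neq\varnothing$. Writing ${\sf RSK}(M)=(P,Q)$, Proposition~\ref{prop:pullback} identifies ${\sf RSK}(\varphi(M))$ as the pair obtained by applying the corresponding tableau crystal operator of Definition~\ref{def:tabbicops} to $P$ (for a row operator) or to $Q$ (for a column operator); by Definition~\ref{def:tabbicops} such an operator sends a tableau of shape $\nu$ to one of the same shape $\nu$ (or to $\varnothing$), so ${\sf RSK}(\varphi(M))$ again has common shape $\nu$. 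The shape-only characterization above then gives $\varphi(M)\in\monos_\antidiag I$, so $\monos_\antidiag I$ is $(\{0,m\},\{0,n\})$-bicrystal closed, and $I$ is $(\{0,m\},\{0,n\},\antidiag)$-bicrystalline by Proposition~\ref{prop:reformulate}. I do not expect a genuine obstacle: the argument is a synthesis of already-stated results, and the only point requiring care is the passage from the in-KRS identity of vector spaces to the pointwise membership statement for $\monos_\antidiag I$, which rests on the shape description of $\mathcal{B}$ supplied by Theorem~\ref{thm:shapeideals}(II); everything else is formal.
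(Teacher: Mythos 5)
Your proof is correct and, while close in spirit to the paper's, it is organized differently in one structural respect: the paper first invokes Proposition~\ref{prop:gbuniontestsets} to reduce to a single summand $I^{(\lambda)}$, observing that $\init_\antidiag I = \sum_i \init_\antidiag I^{(\theta^{(i)})}$ and that each summand is in-KRS, and only then runs the shape argument on $I^{(\lambda)}$. You instead apply Theorem~\ref{thm:detinKRS} directly to $I$ itself (which is legitimate, since $I$ is $\mathbf{GL}$-stable with a standard-bitableau basis), identify that basis explicitly via Theorem~\ref{thm:shapeideals}(II) as $\mathcal{B} = \{[P|Q]:\operatorname{shape}([P|Q])\geq\theta^{(i)}\text{ for some }i\}$, upgrade the span equality $\init_\antidiag I=\mathrm{span}_\complexes({\sf RSK}^{-1}(\mathcal{B}))$ to a pointwise membership criterion purely in terms of the common $\sf RSK$ shape, and then close via Proposition~\ref{prop:pullback}. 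The core content is identical in both arguments --- in-KRS makes $\monos_\antidiag I$ shape-determined, and Proposition~\ref{prop:pullback} says the admissible operators preserve shape --- but your version is slightly leaner, bypassing the test-set machinery, while the paper's version showcases Proposition~\ref{prop:gbuniontestsets} and keeps the ``single $I^{(\lambda)}$'' case as the atomic unit. One small remark: your ``pointwise upgrade'' step, while entirely correct (both sides are spanned by subsets of the monomial basis, so equality of spans is equality of the sets), is worth stating as carefully as you did, since it is the one place where the in-KRS definition's span-level equality is being converted into a set-level statement about $\monos_\antidiag I$.
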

\begin{proof}
    By Theorem~\ref{thm:shapeideals}(II), the ideal $I$ can be expressed as a sum \eqref{eqn:July24xyz} of ideals $I^{\left(\theta^{(i)}\right)}$ for some finite list of partitions $\{\theta^{(i)}\}_{1 \leq i\leq f}$. By Theorem~\ref{thm:detinKRS}, $I$ and all the $I^{\left(\theta^{(i)}\right)}$ are in-KRS. Viewing \eqref{eqn:July24xyz} as a sum of vector spaces with bases of standard bitableaux implies that
\[\init_\antidiag I = \sum_{i}\init_\antidiag I^{\left(\theta^{(i)}\right)}.\]
Thus by Proposition~\ref{prop:gbuniontestsets} it suffices to prove the statement for a single ideal $I^{(\lambda)}$. Definition~\ref{def:inKRS} and Theorem~\ref{thm:detinKRS} together show that ${\mathrm{init}}_{\antidiag}I^{(\lambda)}$ is the set of monomials of the form $z^M={\sf RSK}\inv([P|Q])$ for some standard bitableau $[P|Q]$ of some shape $\mu\geq\lambda$. Now, suppose $\varphi$ is a bicrystal operator from \eqref{eqn:thefour}. Suppose 
$\varphi(M)\neq \varnothing$. 
Then by Proposition~\ref{prop:pullback},
\[z^{\varphi(M)}={\sf RSK}\inv([P'|Q'])\in {\mathrm{init}}_{\antidiag}(I^{(\lambda)})\]
for some standard bitableau $[P'|Q']$ of the same shape $\mu\geq\lambda$. Thus 
\[\varphi(M)\in\monos_\antidiag I.\] 
Since $M$ and $\varphi$ were arbitrary, we have shown that $\monos_{\antidiag}I^{(\lambda)}$
is closed under the bicrystal operators. Hence $I^{(\lambda)}$ is $\mathbf{GL}$-bicrystalline by Proposition~\ref{prop:reformulate}.
\end{proof}

We conclude that powers and symbolic powers of determinantal ideals are bicrystalline with respect to $\antidiag$:

\begin{corollary}\label{cor:detpowerbicrystal}
The $r$-th ordinary power $I_k^r$ and $r$-th symbolic power $I_k^{(r)}$ of the determinantal ideal $I_k$ are 
$(\{0,m\},\{0,n\},\antidiag)$-bicrystalline.
\end{corollary}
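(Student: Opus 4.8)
The plan is to deduce Corollary~\ref{cor:detpowerbicrystal} directly from Proposition~\ref{cor:inKRSbicrystal}, so the only real task is to verify that both $I_k^r$ and $I_k^{(r)}$ fall into the class of ideals covered by that proposition: namely, $\mathbf{GL}$-stable ideals of $\complexes[{\sf Mat}_{m, n}]$ possessing a vector space basis of standard bitableaux. First I would recall that $I_k = I^{(1^k)}$, as observed in the example following Theorem~\ref{thm:shapeideals}. For the ordinary power, $I_k^r$ is generated by all $r$-fold products of $k\times k$ minors of $Z$; each such product is a (generally non-standard) bitableau of shape $(k^r)'$ — more precisely, a shape with $r$ columns each of length $k$, i.e.\ the rectangle with $r$ columns of height $k$. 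By Theorem~\ref{thm:shapeideals}(I), the ideal generated by all bitableaux of a fixed shape $\lambda$ is exactly $I^{(\lambda)}$, so $I_k^r = I^{(\lambda)}$ for $\lambda$ this rectangular shape (with $\lambda$ being the partition $(r,r,\dots,r)$ with $k$ parts, say). Hence $I_k^r$ is $\mathbf{GL}$-stable with a standard-bitableau basis, and Proposition~\ref{cor:inKRSbicrystal} applies.

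For the symbolic power $I_k^{(r)}$, I would invoke the classical description, going back to De~Concini--Eisenbud--Procesi and recorded in \cite{BC, BCRV, BV}: the symbolic power $I_k^{(r)}$ has a vector space basis consisting of all standard bitableaux $[P|Q]$ whose shape $\mu$ satisfies $\sum_{i}\max(\mu'_i - k + 1, 0) \geq r$ (equivalently, $\mu \geq \lambda$ for a suitable $\lambda$ built from this inequality). In the language of Definition~\ref{def:inKRS} and Theorem~\ref{thm:shapeideals}(II), this says precisely that $I_k^{(r)}$ is a finite sum of ideals $I^{(\theta)}$; alternatively one can cite \cite[Corollary 5.3]{BC} or the relevant statement in \cite[Chapter 10]{BCRV} directly, which characterizes symbolic powers of determinantal ideals as $\mathbf{GL}$-stable ideals with a standard bitableau basis. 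Either way, $I_k^{(r)}$ satisfies the hypothesis of Proposition~\ref{cor:inKRSbicrystal}.

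Given these two facts, the conclusion is immediate: by Proposition~\ref{cor:inKRSbicrystal}, both $I_k^r$ and $I_k^{(r)}$ are $(\{0,m\},\{0,n\},\antidiag)$-bicrystalline, which is the assertion of the corollary. I would also remark, for completeness, that the $\mathbf{GL}$-stability of $I_k^r$ and $I_k^{(r)}$ is clear on general grounds (the power and symbolic power of a $\mathbf{GL}$-stable ideal are again $\mathbf{GL}$-stable, since $\mathbf{GL}$ acts by ring automorphisms and symbolic powers are intrinsically defined), so the substantive content is entirely the existence of the standard bitableau basis.

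The main obstacle — really the only one — is pinning down the precise citation and shape-combinatorics for the symbolic power: unlike the ordinary power, whose bitableau shape is transparently the $k\times r$ rectangle, the symbolic power's basis is indexed by a less obvious family of shapes, and one must be careful that the inequality defining that family is exactly a union of ``$\mu \geq \lambda$'' conditions so that Theorem~\ref{thm:shapeideals}(II) applies verbatim. I would resolve this by citing the explicit basis description in \cite{BCRV} (the book of Bruns--Conca--Raicu--Varbaro) rather than re-deriving it, since that reference states the standard-monomial basis of $I_k^{(r)}$ in exactly the required form. The ordinary-power case needs no such care and follows purely formally from Theorem~\ref{thm:shapeideals}(I).
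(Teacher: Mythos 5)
Your proof is correct and follows the same route as the paper: reduce to Proposition~\ref{cor:inKRSbicrystal} by establishing that $I_k^r$ and $I_k^{(r)}$ are $\mathbf{GL}$-stable ideals with vector space bases of standard bitableaux. The paper handles both cases by citing \cite[Proposition~3.5.8, Theorem~4.3.9, Theorem~3.5.2, Theorem~4.3.6]{BCRV} in one stroke, whereas you give a cleaner, more self-contained argument for the ordinary power: since a bitableau of shape $(r^k)$ (the $k\times r$ rectangle) is exactly a product of $r$ many $k\times k$ minors, Theorem~\ref{thm:shapeideals}(I) yields $I_k^r = I^{((r^k))}$, so the standard bitableau basis comes for free from Definition~\ref{def:inKRS}, and only the symbolic power needs the BCRV citation. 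That is a genuine (if modest) economy. One small inaccuracy to flag: \cite[Corollary~5.3]{BC}, which is Theorem~\ref{thm:detinKRS} in the paper, says that $\mathbf{GL}$-stable ideals with standard bitableau bases are in-KRS; it does not itself assert that $I_k^{(r)}$ has such a basis, so it cannot substitute for the explicit BCRV citation for the symbolic power — and your hedge about pinning down that citation precisely is well-founded.
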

\begin{proof} 
By \cite[Proposition~3.5.8 and Theorem~4.3.9]{BCRV} and \cite[Theorems 3.5.2 and 4.3.6]{BCRV}, respectively, $I_k^r$ and $I_k^{(r)}$ are in-KRS ideals with vector space bases of standard bitableaux. The result then follows from Proposition~\ref{cor:inKRSbicrystal}.
\end{proof}

\begin{example}[Application of Theorem~\ref{thm:LRrule} to $I_k^r$ and $I_k^{(r)}$]\label{exa:detpower2}
The results \cite[Theorem~4.3.9]{BCRV} and \cite[Theorem~4.3.6]{BCRV} describe Gr\"obner bases for 
$I_k^r$ and $I_k^{(r)}$ under $\antidiag$, making Theorem~\ref{thm:LRrule} effective for them.
For $I_k^r$, the Gr\"obner basis is given by (possibly non-standard) bitableaux of shapes $\lambda$ with $kr$ boxes and at most $k$ columns. Meanwhile, for $I_k^{(r)}$, the Gr\"obner basis is given by (possibly non-standard) bitableaux of shapes $\mu$ where each column has length at least $k$ and the sum of row lengths $\mu_k+\mu_{k+1}+\dots$ is exactly $r$.

It is immediate from Theorem~\ref{thm:LRrule} and the \emph{Cauchy identity} (that is, \eqref{eqn:detcauchy} for $k>m,n$)
that the $GL_m\times GL_n$ character expansions for $R/I_k^r$ and $R/I_k^{(r)}$ are multiplicity-free
sums of the form $s_\lambda\otimes s_{\lambda}$ over appropriately restricted collections of partitions $\lambda$.

The descriptions of the Gr\"obner bases explain the difference of $s_{\ydiags{1,1,1}}\otimes s_{\ydiags{1,1,1}}$
in the two character expansions from Example~\ref{exa:powers}. The Gr\"obner basis for $I_2^2$ consists of all bitableaux of shape $\ydiags{2, 1, 1}$ or $\ydiags{2, 2}$, whereas the Gr\"obner basis for $I_2^{(2)}$ consists of all bitableaux of shape $\ydiags{1, 1, 1}$ or $\ydiags{2, 2}$. This makes it clear that the highest weight matrix $M=\left[\begin{smallmatrix} 0 & 0 & 1 \\ 0 & 1 & 0 \\ 1 & 0 & 0\end{smallmatrix}\right]$ (corresponding to the lead term of the standard bitableau of shape $\ydiags{1, 1, 1}$) lies in $\monos_{\antidiag}I_2^{(2)}$ but not $\monos_{\antidiag}I_2^2$. Thus $s_{\ydiags{1,1,1}}\otimes s_{\ydiags{1,1,1}}$ appears in the character expansion of $\complexes[{\sf Mat}_{3, 3}]/I_2^{2}$, but not in the character expansion of $\complexes[{\sf Mat}_{3, 3}]/I_2^{(2)}$. One can verify that any other highest-weight matrix $M'$ lies in $\monos_{\antidiag} I_2^2$ if and only if it lies in $\monos_{\antidiag} I_2^{(2)}$, so the two character expansions otherwise agree.
\end{example}

\begin{example}[Powers of a Schubert determinantal ideal] The square and symbolic square of the Schubert determinantal ideal $J$ from Example~\ref{ex:2143HilbSeries} agree: $J^2=J^{(2)}$. Now, \[{\mathrm{init}}_{\antidiag}(J^2)=\langle z_{11}^2, \, z_{11}\cdot z_{31}z_{22}z_{13}, \, (z_{31}z_{22}z_{13})^2\rangle.\]
This ideal is $({\bf I},{\bf J},\antidiag)$-bicrystalline. Its expansion is not multiplicity-free. The reader may verify using Theorem~\ref{thm:LRrule} that
\[c^{R/J^2}_{\ydiags{1}, \, \ydiags{1}| \, \ydiags{1}, \, \ydiags{1}}=2.\]

Which Schubert determinantal ideals $I$ satisfy $I^d=I^{(d)}$, 
either for a given $d$, or for all $d$? As explained in \cite[Section 3]{Fulton:duke}, these ideals are indexed by permutations 
$w\in {\mathfrak S}_n$. For $n\leq 5, d=2$ there are four cases
where $I_w^2\neq I_w^{(2)}$, namely, 
\[w\in \{14523,\, 15423, \, 14532, \,15432\}.\] 
The first of these is a classical
determinantal ideal; see \cite{Trung, BC03}  for discussion of the problem in that case.
\end{example}

The ideals $I^{(\lambda)}$ are particularly well-behaved, but they are not the only $\mathbf{GL}$-stable ideals in $\complexes[{\sf Mat}_{m, n}]$. The following class of ideals are perhaps the most natural $\mathbf{GL}$-stable ideals to consider, although they usually fail to have bases of standard bitableaux. 

\begin{definition}\label{def:StevenSamideals}
    For a partition $\lambda$, let 
    \[I_\lambda\subseteq\complexes[{\sf Mat}_{m, n}]\] 
    be the (necessarily $\mathbf{GL}$-stable) ideal generated by the $\lambda$-isotypic (and irreducible) component $V_\lambda\boxtimes V_\lambda$ of $\complexes[{\sf Mat}_{m, n}]$, i.e., the smallest $\mathbf{GL}$-stable ideal containing the highest-weight bitableau of shape $\lambda$. 
\end{definition}

The ideal $I_\lambda$ for $\lambda = \ydiags{2}$ appeared back in Example~\ref{ex:diagbicrystal}, where we saw that it was bicrystalline under $\diag$ but not $\antidiag$. In general, we pose the following problem:
\begin{problem}\label{problem:bic}
Classify the set of $\lambda$ such that  $I_\lambda$ is $(\{0,m\},\{0,n\},\prec)$-bicrystalline for some $\prec$.
 \end{problem}

Solving Problem~\ref{problem:bic} is non-trivial, since we do not know of explicit generating sets for these ideals. This situation contrasts with the finite generating sets given for the ideals $I^{(\lambda)}$ in Theorem~\ref{thm:shapeideals}(I). Corollary~\ref{cor:Ilambdaideals} below gives a simple solution for two infinite families:

\begin{theorem}[{\cite[Proposition 11.15]{BV}}]
    As a vector space, 
    \[I_\lambda = \bigoplus_{\mu\supseteq\lambda} (V_\mu\boxtimes V_\mu).\]
\end{theorem}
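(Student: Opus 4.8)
\textbf{Proof plan for the statement $I_\lambda = \bigoplus_{\mu\supseteq\lambda}(V_\mu\boxtimes V_\mu)$.}

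The plan is to exploit the $\mathbf{GL}$-equivariant decomposition of $\complexes[{\sf Mat}_{m, n}]$ itself. By the Cauchy identity (the $k \to \infty$ version of \eqref{eqn:detcauchy}), we have $\complexes[{\sf Mat}_{m, n}] \cong_{\mathbf{GL}} \bigoplus_{\nu} (V_\nu \boxtimes V_\nu)$, the sum running over all partitions $\nu$ with $\ell(\nu) \leq \min(m, n)$, each isotypic component appearing with multiplicity one. Since $I_\lambda$ is $\mathbf{GL}$-stable, it is a direct sum of some subset of these isotypic components; say $I_\lambda = \bigoplus_{\nu \in \mathcal{A}} (V_\nu \boxtimes V_\nu)$ for some set $\mathcal{A}$ of partitions. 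The task is to identify $\mathcal{A}$ with $\{\mu : \mu \supseteq \lambda\}$.

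First I would establish the inclusion $\{\mu : \mu \supseteq \lambda\} \subseteq \mathcal{A}$, i.e., $V_\mu \boxtimes V_\mu \subseteq I_\lambda$ whenever $\mu \supseteq \lambda$. The key combinatorial input is that the highest-weight bitableau of shape $\mu$ is a multiple of the highest-weight bitableau of shape $\lambda$ when $\mu \supseteq \lambda$: writing $[T_\lambda | T_\lambda] = \Delta_1 \cdots \Delta_{\lambda_1}$ as a product of ``initial'' minors (the $j$-th factor being the minor on rows $1, \dots, \lambda'_j$ and columns $1, \dots, \lambda'_j$), one checks directly that $[T_\mu | T_\mu]$ is divisible by $[T_\lambda | T_\lambda]$ in $\complexes[{\sf Mat}_{m,n}]$, since each initial minor factor of $[T_\lambda|T_\lambda]$ is also an initial minor factor of $[T_\mu|T_\mu]$ (as $\mu' \supseteq \lambda'$ entrywise forces $\mu'_j \geq \lambda'_j$, and an initial $\lambda'_j \times \lambda'_j$ minor divides an initial $\mu'_j \times \mu'_j$ minor only after a cofactor/Laplace argument — in fact one should instead argue that $[T_\mu|T_\mu]$ contains $[T_\lambda|T_\lambda]$ as a subproduct of minors after suitably reindexing columns of $T_\mu$, using that the supersemistandard filling is uniquely determined). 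Hence the highest-weight vector of $V_\mu \boxtimes V_\mu$ lies in the principal ideal generated by the highest-weight vector of $V_\lambda \boxtimes V_\lambda$, so it lies in $I_\lambda$; since $I_\lambda$ is $\mathbf{GL}$-stable and $V_\mu \boxtimes V_\mu$ is irreducible, generated by its highest-weight vector under the $\mathbf{GL}$-action, the whole component $V_\mu \boxtimes V_\mu$ lies in $I_\lambda$.

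For the reverse inclusion $\mathcal{A} \subseteq \{\mu : \mu \supseteq \lambda\}$, I would show that the vector space $W := \bigoplus_{\mu \supseteq \lambda} (V_\mu \boxtimes V_\mu)$ is already an ideal; then, since $I_\lambda$ is by definition the smallest $\mathbf{GL}$-stable ideal containing the highest-weight bitableau of shape $\lambda$ (which lies in $W$), we get $I_\lambda \subseteq W$, and combined with the previous paragraph, $I_\lambda = W$. To see $W$ is an ideal, it suffices to show $z_{ij} \cdot (V_\mu \boxtimes V_\mu) \subseteq W$ for each variable $z_{ij}$ and each $\mu \supseteq \lambda$ (since the $z_{ij}$ generate $\complexes[{\sf Mat}_{m,n}]$ as an algebra). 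Now $V_{\ydiags{1}} \boxtimes V_{\ydiags{1}}$ is the span of the variables, and by the Cauchy/Pieri rule the product $(V_{\ydiags{1}} \boxtimes V_{\ydiags{1}}) \cdot (V_\mu \boxtimes V_\mu)$ decomposes, as a $\mathbf{GL}$-representation, into $\bigoplus_\nu (V_\nu \boxtimes V_\nu)$ over partitions $\nu$ obtained from $\mu$ by adding a single box (this is the multiplication rule in $\complexes[{\sf Mat}_{m,n}]$ reflecting $s_{\ydiags{1}} s_\mu = \sum_{\nu = \mu + \text{box}} s_\nu$ applied in both the $x$ and $y$ variables, with the diagonal multiplicity being one by Cauchy). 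Every such $\nu$ satisfies $\nu \supseteq \mu \supseteq \lambda$, so the product lies in $W$; this is exactly the content we want, and it is essentially the statement of Theorem~\ref{thm:shapeideals}(I) read through the lens of the Cauchy decomposition, so I would cite \cite{BV} for the precise multiplication rule if a clean reference is available. The main obstacle is making the divisibility claim $[T_\lambda|T_\lambda] \mid [T_\mu|T_\mu]$ (equivalently, that the highest-weight vector of $V_\mu\boxtimes V_\mu$ lies in the \emph{principal} ideal, not just the $\mathbf{GL}$-stable ideal, generated by that of $V_\lambda\boxtimes V_\lambda$) fully rigorous — one must be careful that the initial-minor factorization of the supersemistandard bitableau behaves well under $\lambda \subseteq \mu$; an alternative that sidesteps this is to invoke Theorem~\ref{thm:shapeideals}(I) directly, noting $I_\lambda \subseteq I^{(\lambda)}$ and that $I^{(\lambda)}$ has the desired decomposition, then bounding $I_\lambda$ from below by the Pieri argument above to force equality.
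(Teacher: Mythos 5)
The paper cites this result to \cite[Proposition 11.15]{BV} without giving its own proof, so there is no in-paper argument to compare against; the question is whether your proposal is sound. Your overall framework is right: decompose $\complexes[{\sf Mat}_{m, n}]$ multiplicity-freely via Cauchy, note that the $\mathbf{GL}$-stable ideal $I_\lambda$ is a sum of isotypic components $V_\nu\boxtimes V_\nu$ over some index set $\mathcal{A}$, and identify $\mathcal{A}$. Your reverse-inclusion argument --- that $W := \bigoplus_{\mu\supseteq\lambda}(V_\mu\boxtimes V_\mu)$ is closed under multiplication by each $z_{ij}$, because the $\mathbf{GL}$-equivariant multiplication map can only hit isotypic components $V_\nu\boxtimes V_\nu$ with $\nu$ obtained from $\mu$ by adding a box, and those satisfy $\nu\supseteq\mu\supseteq\lambda$ --- is correct and gives $I_\lambda\subseteq W$.

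The gap is the forward inclusion $W\subseteq I_\lambda$. Your primary route relies on the claim that $[T_\lambda|T_\lambda]$ divides $[T_\mu|T_\mu]$ when $\mu\supseteq\lambda$, which is simply false. Already with $\lambda = (1)$ and $\mu = (1,1)$, the bitableau $[T_\lambda|T_\lambda] = z_{11}$ does not divide $[T_\mu|T_\mu] = z_{11}z_{22}-z_{12}z_{21}$, which is irreducible; no reindexing of columns or replacement of $T_\mu$ by another bitableau of shape $\mu$ can repair this, since a $2\times 2$ minor is never a multiple of a linear form (more generally, an initial $\lambda'_j\times\lambda'_j$ minor never divides an initial $\mu'_j\times\mu'_j$ minor when $\mu'_j > \lambda'_j$). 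Your suggested alternative via $I^{(\lambda)}$ also does not close the gap: $I^{(\lambda)}$ decomposes into $V_\mu\boxtimes V_\mu$ over the \emph{dominance} poset $\mu\geq\lambda$, not the \emph{containment} poset $\mu\supseteq\lambda$, so $I_\lambda\subseteq I^{(\lambda)}$ gives only a weaker (and, given your first paragraph, redundant) upper bound on $\mathcal{A}$ and contributes nothing to the lower bound. What is actually needed is the surjective form of your Pieri step: for each $\nu$ obtained from $\mu$ by adding a box, the $\nu$-isotypic projection of $\complexes[{\sf Mat}_{m, n}]_1\cdot(V_\mu\boxtimes V_\mu)$ is nonzero --- so by Schur's lemma all of $V_\nu\boxtimes V_\nu$ lands in the ideal, and one inducts upward from $\lambda$. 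That nonvanishing is the real content of the forward inclusion, and your proposal does not establish it.
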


\begin{corollary}\label{cor:Ilambdaideals}
    If $\lambda$ is a rectangle with $\ell(\lambda) = m$ or a single column, then 
    $I_\lambda\subseteq\complexes[{\sf Mat}_{m, n}]$ 
    is $(\{0,m\},\{0,n\},\antidiag)$-bicrystalline.
\end{corollary}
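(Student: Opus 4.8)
The plan is to handle the two families separately and to reduce each to a statement we have already proved, namely Corollary~\ref{cor:detpowerbicrystal} for powers of determinantal ideals and Proposition~\ref{cor:inKRSbicrystal} for $\mathbf{GL}$-stable ideals with a standard bitableau basis. The single-column case is the easier one: if $\lambda = (1^k)$ is a single column of length $k$, then by the theorem of Bruns--Vetter just quoted, $I_{(1^k)} = \bigoplus_{\mu\supseteq (1^k)}(V_\mu\boxtimes V_\mu)$, which is exactly the set of (spans of) standard bitableaux whose shape $\mu$ contains a column of length $k$. Comparing with Definition~\ref{def:inKRS} and the description of the basis $\mathcal{B}$ in Example~\ref{ex:theOGex}, this is precisely $I^{(1^k)} = I_k$, the classical determinantal ideal of $k\times k$ minors (using Theorem~\ref{thm:shapeideals}(I)). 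Since $I_k$ is $(\{0,m\},\{0,n\},\antidiag)$-bicrystalline by Example~\ref{exa:detidealredux} (or Corollary~\ref{cor:detpowerbicrystal} with $r=1$), we are done in this case.

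\textbf{The rectangular case.} Suppose now $\lambda = (c^m)$ is the $m\times c$ rectangle, so $\ell(\lambda) = m$. The key observation is that $I_\lambda = I_m^{\,c}$, the $c$-th ordinary power of the ideal $I_m$ of maximal ($m\times m$) minors. One direction is clear: $I_m^{\,c}$ is $\mathbf{GL}$-stable and is generated by products of $c$ maximal minors, each such product being (up to straightening) a bitableau of shape $(c^m)$ together with higher terms, so $I_m^{\,c}\subseteq I_\lambda$ since $I_\lambda$ is the smallest $\mathbf{GL}$-stable ideal containing the highest-weight bitableau of shape $(c^m)$ — equivalently, by Theorem~\ref{thm:shapeideals}(I), the ideal generated by \emph{all} bitableaux of shape $(c^m)$, and every such bitableau is a product of $c$ column-determinants of length $m$, i.e.\ a product of $c$ maximal minors, hence lies in $I_m^{\,c}$. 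Conversely any generator of $I_m^{\,c}$ is a product of $c$ maximal minors, which is a (non-standard) bitableau of shape $(c^m)$, so lies in $I_\lambda$ by Theorem~\ref{thm:shapeideals}(I). Therefore $I_\lambda = I_m^{\,c}$, and Corollary~\ref{cor:detpowerbicrystal} (with $k=m$, $r=c$) gives that $I_\lambda$ is $(\{0,m\},\{0,n\},\antidiag)$-bicrystalline.

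\textbf{Remark on the proof and the main obstacle.} Alternatively, one can unify both cases by checking that in each case $I_\lambda$ is in fact a sum (here a single summand) of ideals of the form $I^{(\theta)}$ from Definition~\ref{def:inKRS}, hence $\mathbf{GL}$-stable with a standard bitableau basis, and then invoke Proposition~\ref{cor:inKRSbicrystal} directly. The content of both reductions is that the dominance-order ideal $\bigoplus_{\mu\supseteq\lambda}(V_\mu\boxtimes V_\mu)$ of Bruns--Vetter coincides with the shape ideal $I^{(\lambda)} = \bigoplus_{\mu\geq\lambda}(V_\mu\boxtimes V_\mu)$ precisely when the containment poset above $\lambda$ agrees with the dominance poset above $\lambda$; this is a standard and easily verified combinatorial fact exactly for $\lambda$ a single column or a rectangle of full height $m$ (when $\ell(\lambda)=m$ no partition $\mu\geq\lambda$ can have more than $m$ rows, and row-by-row comparison forces $\mu\supseteq\lambda$; for a single column the same argument applies after transposing). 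The main obstacle is thus purely bookkeeping: one must carefully justify the identification $I_\lambda = I^{(\lambda)}$ (equivalently that ``$\supseteq$'' and ``$\geq$'' above $\lambda$ coincide for these two shapes) rather than for general $\lambda$, where the two ideals genuinely differ — and once that identification is in hand, bicrystallinity is immediate from the results of Section~\ref{sec:in-RSK}.
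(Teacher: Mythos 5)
Your proposal contains the right key observation, but the main body of the argument (before the ``Remark'') has a gap that is only fixed in the remark itself.

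In the rectangular case you write that $I_\lambda$ is ``the smallest $\mathbf{GL}$-stable ideal containing the highest-weight bitableau of shape $(c^m)$ --- equivalently, by Theorem~\ref{thm:shapeideals}(I), the ideal generated by \emph{all} bitableaux of shape $(c^m)$.'' That ``equivalently'' is not free: Theorem~\ref{thm:shapeideals}(I) describes $I^{(\lambda)}$, not $I_\lambda$, and for general $\lambda$ these two ideals are genuinely different ($I_\lambda \subsetneq I^{(\lambda)}$ whenever $\supseteq$ and $\geq$ disagree above $\lambda$). Since the corollary is precisely about shapes where these ideals coincide, you cannot assume the coincidence; it is the thing to be proved. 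Concretely, for $I_m^c \subseteq I_\lambda$ you need that a product of $c$ maximal minors --- a bitableau of shape $(c^m)$ --- actually lies in $I_\lambda = \bigoplus_{\mu\supseteq(c^m)}(V_\mu\boxtimes V_\mu)$, and since its degree-$mc$ part is only $V_{(c^m)}\boxtimes V_{(c^m)}$, this requires knowing that every bitableau of shape $(c^m)$ straightens entirely inside $V_{(c^m)}\boxtimes V_{(c^m)}$, which in turn rests on the poset coincidence (the set $\{\mu\geq(c^m):|\mu|=mc,\ \ell(\mu)\leq m\}$ being just $\{(c^m)\}$). The single-column case in your first paragraph has the same issue latent in it, though the conclusion $I_{(1^k)}=I_k$ is true.

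Your remark then supplies exactly the missing piece: for $\lambda$ a full-height rectangle or a single column, $\mu\geq\lambda \iff \mu\supseteq\lambda$ among shapes with $\ell(\mu)\leq m$, hence $I_\lambda = I^{(\lambda)}$, and Proposition~\ref{cor:inKRSbicrystal} applies directly. This is precisely the paper's proof. Once you have $I_\lambda = I^{(\lambda)}$, the detours through $I_k$ and $I_m^c$ (and through Corollary~\ref{cor:detpowerbicrystal}) are correct but unnecessary intermediaries --- Proposition~\ref{cor:inKRSbicrystal} already finishes. So: move the content of the remark into the proof proper, make the poset argument the first step, and either drop the $I_m^c$ reduction or note it as a byproduct rather than a key observation. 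One small terminological slip to fix: you call $\bigoplus_{\mu\supseteq\lambda}(V_\mu\boxtimes V_\mu)$ the ``dominance-order ideal'' and $I^{(\lambda)} = \bigoplus_{\mu\geq\lambda}(V_\mu\boxtimes V_\mu)$ the ``shape ideal,'' but $\supseteq$ is containment and $\geq$ is the (tail-sum) dominance order, so the names are swapped.
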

\begin{proof}
    If $\lambda$ is a rectangle with $\ell(\lambda) = m$ or a single column, then for partitions $\mu$ with $\ell(\mu)\leq m$ it is easy to see that $\mu\geq\lambda$ if and only if $\mu\supseteq\lambda$. Thus 
    $I_\lambda = I^{(\lambda)}$, 
    implying the result via Proposition~\ref{cor:inKRSbicrystal}.
\end{proof}

\begin{example}
Let $\lambda$ be a rectangle with $\ell(\lambda)=m$. By the proof of Corollary~\ref{cor:Ilambdaideals}, $I_{\lambda}$ is spanned as a vector space by all standard bitableaux of shape $\mu\supseteq \lambda$. Thus $I_\lambda$ is in-KRS in this case by Theorem~\ref{thm:detinKRS}, so $\init_{\antidiag} I_\lambda$ is the span of all monomials $z^M$ such that ${\sf RSK}(z^M)$ has shape $\mu\supseteq \lambda$. In other words, for any fixed $\mu$, the unique matrix $M_{\mu}$ such that 
\[{\sf RSK}(M_{\mu})\in {\mathcal{LR}}(\{0,m\},\{0,n\},\mu,\mu)\] 
lies outside $\monos_\antidiag I_\lambda$ if and only if ${\mu}\not\supseteq \lambda$. By Theorem~\ref{thm:LRrule}, we obtain a character formula for $\complexes[{\sf Mat}_{m, n}]/I_\lambda$ in this case:
$$\sum_{\mu\not\supseteq\lambda} s_{\mu}(x_1,\ldots,x_m)s_{\mu}(y_1,\ldots,y_n).$$ 
\end{example}

\begin{remark}
The \emph{jet scheme} of a determinantal variety is another source of $GL_m\times GL_n$ invariant ideals, although it does not lie in ${\sf Mat}_{m,n}$; see \cite{Yuen} and references therein.  For example, let $\mathfrak{J}$ be the second jet scheme of the determinantal variety $\mathfrak{X}_1\subseteq{\sf Mat}_{2, 2}$. Then the $GL_2\times GL_2$ character for $\complexes[\mathfrak{J}]$ is not multiplicity-free and begins
\begin{multline}\nonumber
1+3s_{\ydiags{1}}\otimes s_{\ydiags{1}}+3s_{\ydiags{1,1}}\otimes s_{\ydiags{1,1}}+3s_{\ydiags{1,1}}\otimes s_{\ydiags{2}}+ 3s_{\ydiags{2}}\otimes s_{\ydiags{1,1}}+6s_{\ydiags{2}}\otimes s_{\ydiags{2}}+
10s_{\ydiags{3}}\otimes s_{\ydiags{3}}+8s_{\ydiags{3}}\otimes s_{\ydiags{2,1}}\\
+8s_{\ydiags{2,1}}\otimes s_{\ydiags{3}}+10s_{\ydiags{2,1}}\otimes s_{\ydiags{2,1}}+\cdots.
\end{multline}
What is a rule for these coefficients? Does a version of the GCS thesis apply to such ideals? In general, the Gr\"obner bases
for these ideals are not well-understood.
\end{remark}

\section{Non-commutative ideals and the GCS thesis}\label{sec:non-comm}
Although in this paper we are primarily concerned with $R = \mathbb{C}[{\sf Mat}_{m,n}]$, we can apply the Gr\"obner crystal structure principle to any ring $R$ with an action of some semisimple linear algebraic group $G$ and a standard basis endowed with some crystal structure. In this section, we consider $GL_n$ acting on 
$R = T(\mathbb{C}^n) =  \mathbb{C}\oplus\mathbb{C}^n\oplus(\mathbb{C}^n\otimes \mathbb{C}^n)\oplus\cdots$, 
the tensor algebra of $\mathbb{C}^n$, with its natural grading. 

First, we observe that there exists a natural crystal structure indexing a basis of $R$. Let $v_1,\ldots,v_n$ denote the standard basis of $\mathbb{C}^n$ as a vector space. The $k$th graded piece of $R$ has a basis given by the pure tensors $v_{i_1}\otimes \cdots \otimes v_{i_k}$. Moreover, $R$ carries an action of $GL_n$ induced by its action on the standard representation $\mathbb{C}^n$; i.e., given $g\in GL_n$ and $v_{i_1}\otimes\cdots\otimes v_{i_k}\in T(\mathbb{C}^n)$,
    $$g\cdot (v_{i_1}\otimes\cdots\otimes v_{i_k}) = gv_{i_1}\otimes \cdots \otimes gv_{i_k}.$$ 

As a $GL_n$-representation, $\mathbb{C}^n$ has an associated crystal structure obtained by associating each $v_i$ with the tableau $\ytabs{
        i
    }$ 
and using the usual crystal structure on tableaux from Definition~\ref{def:tabbicops}. This crystal structure may be extended to tensors $\ytabs{
        i_1
}\otimes \cdots \otimes \ytabs{
        i_k
}$ using Kashiwara's tensor product operation on crystals (\cite{Kashiwara, Kashiwara2, Kashiwara3}), yielding a crystal for $T(\mathbb{C}^n)$.\footnote{This crystal is, up to change of conventions, the same as the word crystal of Example~\ref{exa:wordcrystal}. Bump and Schilling in~\cite[Section 2.3]{BS} give an excellent explanation of tensor product crystals and their relationship with word crystals; note, however, that their conventions are opposite ours.}

While ideals $I\subseteq R$ do not have Gr{\"o}bner bases in the sense of Section~\ref{sec:bicrystalline} (as $R$ is non-commutative), they may have Gr{\"o}bner--Shirshov bases. Gr{\"o}bner--Shirshov bases are analogues of Gr{\"o}bner bases in the non-commutative setting that share many of the same properties (see~\cite{BokutChen} and~\cite{Bokut} for precise definitions). In particular, the standard monomials of any Gr{\"o}bner--Shirshov basis for $I$ form a vector space basis for $R/I$.

Using the machinery of Gr{\"o}bner--Shirshov bases, we can extend the notion of a bicrystalline ideals to a vastly more general setting. 

\begin{definition}\label{def:GCS}
    Let $A$ be a unital, associative algebra such that $A\cong T(\mathbb{C}^n)/I$ for some homogenenous ideal $I\subseteq T(\mathbb{C}^n)$. Assume $A$ has an action of a reductive linear algebraic group $G$. Let $J\subseteq A$ be a homogenenous ideal such that $G\cdot J = J$. Assume further that $J$ has a Gr{\"o}bner--Shirshov basis with respect to a term order $\prec$, with associated set of standard monomials $\mathfrak{M}$. A \emph{Gr{\"o}bner crystal structure (GCS)} for the triple $(A,J,\prec)$ is a normal $G$-crystal $\mathfrak{B}$ on the monomials of $A$ such that $\mathfrak{M}$ forms a normal subcrystal of $\mathfrak{B}$. We say that $J$ is $(G, \prec)$-\emph{crystalline} for $\mathfrak{B}$ if $\mathfrak{B}$ is a GCS for $(A,J,\prec)$.   
\end{definition}

\begin{remark}
	Bicrystalline ideals in $\complexes[{\sf Mat}_{m, n}]$ are a special case of crystalline ideals.
	Let $U\cong\complexes^m$ and $W\cong\complexes^n$ be vector spaces, let $V = U\boxtimes W$, and let $I\subseteq T(V^*)$ be the two-sided, homogeneous ideal $\langle v\otimes v'-v'\otimes v:v, v'\in V^*\rangle$. 
	Then $T(V^*)/I\cong\mathrm{Sym}(V^*)$, which we identify with $\complexes[{\sf Mat}_{m, n}]$ as in Section~\ref{subsec:reptheoryprelims}. 
	A Gr\"obner--Shirshov basis for an ideal $J\subseteq T(V^*)/I$ in this special case is the same thing as a Gr\"obner basis (see~\cite[Chapter 1]{Bokut}). 
\end{remark}

\begin{example}[Crystalline non-commutative ideal]\label{ex:altcrystalline}
    Let $R = T(\complexes^n)$ and let
    \[I = \langle v_i\otimes v_j + v_j\otimes v_i\rangle\subseteq R\] 
    be the two-sided, homogeneous ideal defining the exterior algebra $\Lambda(\mathbb{C}^n)$. $I$ is stable under the action of $GL_n$. In this example, the set of standard monomials (using lexicographic order) is (\cite[pg. 333]{BokutChen}): $${\operatorname{Std}}_{\prec} I = \{v_{i_1}\otimes\cdots \otimes v_{i_k} \; | \; i_1 < i_2 < \cdots < i_k\}.$$ The set of tableaux associated with ${\operatorname{Std}}_\prec I$, namely, $$\left\{\ytabb{
        i_1
    }\otimes \cdots \otimes \ytabb{
        i_k
    } \; | \; i_1 < i_2 < \cdots < i_k\right\},$$ together with the empty symbol, is closed under the crystal operators described above. So, the crystal described above is a GCS for the triple $(R,I,\prec)$. Using this fact, we recover the character formula for $\Lambda(\mathbb{C}^n)$:
    $$s_{\emptyset}(x_1,\ldots,x_n) + s_{\ydiags{1}}(x_1,\ldots,x_n)+s_{\ydiags{1,1}}(x_1,\ldots,x_n)+\cdots+s_{(1^n)}(x_1,\ldots,x_n).$$
    Figure~\ref{fig:altcrystal} depicts part of the crystal structure for $T(\mathbb{C}^3)$, where the elements of 
    ${\operatorname{Std}}_{\prec} I$ are highlighted in blue. The fact that ${\operatorname{Std}}_{\prec} I$ is closed under the crystal operators described above corresponds to the fact that every connected component of the crystal in Figure~\ref{fig:altcrystal} consists either entirely of blue elements or entirely of black elements.
\end{example}

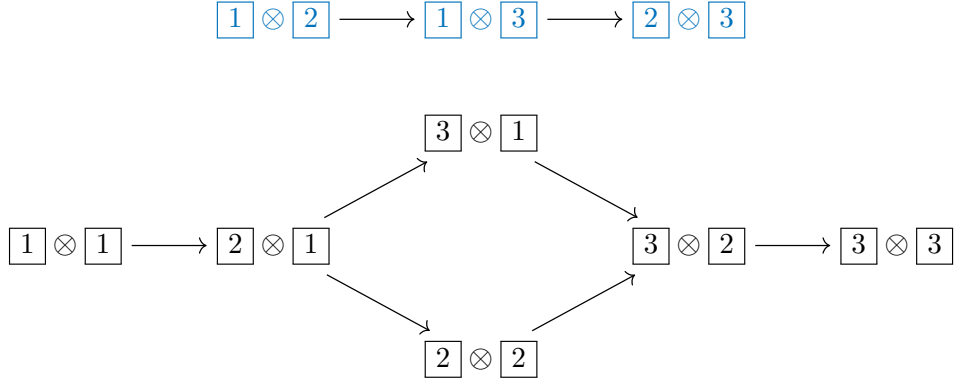
\begin{figure}
    \centering
\[\begin{tikzcd}[cramped]
	& {\color{RoyalBlue}\ytabb{1}\otimes \ytabb{2}} & {\color{RoyalBlue}\ytabb{1}\otimes \ytabb{3}} & {\color{RoyalBlue}\ytabb{2}\otimes \ytabb{3}} \\
	&& {\ytabb{3}\otimes \ytabb{1}} \\
	{\ytabb{1}\otimes \ytabb{1}} & {\ytabb{2}\otimes \ytabb{1}} && {\ytabb{3}\otimes \ytabb{2}} & {\ytabb{3}\otimes \ytabb{3}} \\
	&& {\ytabb{2}\otimes \ytabb{2}}
	\arrow[from=1-2, to=1-3]
	\arrow[from=1-3, to=1-4]
	\arrow[from=2-3, to=3-4]
	\arrow[from=3-1, to=3-2]
	\arrow[from=3-2, to=2-3]
	\arrow[from=3-2, to=4-3]
	\arrow[from=3-4, to=3-5]
	\arrow[from=4-3, to=3-4]
\end{tikzcd}\]
    \caption{Part of the crystal for $T(\mathbb{C}^3)$, with elements of $\mathrm{Std}_\prec I$ in blue.}
    \label{fig:altcrystal}
\end{figure}

We now shift to an example of an ideal $I\subseteq R = T(\mathbb{C}^d)$ that is \emph{not} crystalline for a particular crystal structure for $R$ and $GL_n$ (here $d$ is \emph{not} necessarily equal to $n$). 

Instead of considering the crystal structure on $R$ induced by the standard representation $\mathbb{C}^d$, we instead consider the crystal structure on $R$ induced by a crystal for some Schur module $V_{\lambda}$ of dimension $d$. That is, we take the crystal structure for the degree $\ell$ component of $R$ to be the crystal structure for $V_{\lambda}^{\otimes \ell}$.

\begin{example}[Crystal for $T(V_{(k)})$]\label{ex:plethysmcrystal}
    Consider the $GL_2$ representation \[\mathrm{Sym}^k(\mathbb{C}^2)\cong T(V_{(k)}),\] 
    the Schur module indexed by a row of length $k$. Set \[R = T(V_{(k)});\] 
    $R$ carries an action of $GL_2$ induced by the $GL_2$-action on $V_{(k)}$. We identify the basis elements of ${\mathrm{Sym}}^k(\mathbb{C}^2)$ with homogeneous polynomials of degree $k$ in variables $z_1,z_2$. These basis elements 
    \[v_i = z_1^{i}z_2^{k-i}\] 
    have an associated crystal structure by identifying $v_i$ with the tableau $P_i$ of shape $(k)$ filled with $i$-many $1$'s and $(k-i)$-many $2$'s. For instance, if $k = 3$, the element $v_1 = z_1^1z_2^2$ is associated with the tableau $P_1 = \ytabs{1 & 2 & 2}$. Kashiwara's crystal tensor product induces a crystal structure on tensors $P_{i_1}\otimes \cdots \otimes P_{i_k}$, yielding a crystal for $T(V_{(k)})$.
\end{example}

With respect to this new crystal structure on $R$, the ideal $I$ of Example~\ref{ex:altcrystalline} is \emph{not} in general crystalline, as demonstrated by the following example.

\begin{example}[Plethysm]
    Let $R$ be as in Example~\ref{ex:plethysmcrystal}. As in Example~\ref{ex:altcrystalline}, let 
    \[I = \langle v_i\otimes v_j + v_j\otimes v_i\rangle.\] 
The question is whether the crystal structure descends to $R/I$. Now,   
    $I$ is stable under the action of $GL_2$ described in Example~\ref{ex:plethysmcrystal}. As before, the set $${\operatorname{Std}}_{\prec} I = \{v_{i_1}\otimes\cdots \otimes v_{i_k} \; | \; i_1 < i_2 < \cdots < i_k\}$$ is a basis for 
    \[R/I\cong \Lambda({\mathrm{Sym}}^k(\mathbb{C}^2)).\] 
    However, $\mathrm{Std}_\prec I$ is \emph{not} in general closed under the crystal operators of Example~\ref{ex:plethysmcrystal}. Figure~\ref{fig:plethysm} depicts a portion of the crystal for $T({\mathrm{Sym}}^3(\mathbb{C}^2))$, where the elements of ${\operatorname{Std}}_{\prec} I$ are highlighted in blue. The fact that 
    ${\operatorname{ Std}}_{\prec} I$ is not closed under the crystal operators described above corresponds to the fact that there exist connected components of the crystal in Figure~\ref{fig:plethysm} that contain both blue and black elements.
\end{example}

\begin{remark}
    As explained by \'A. Guti\'errez in~\cite{Gutierrez.Alvaro}, the problem of finding a crystal structure on $\Lambda({\mathrm{Sym}^k(\mathbb{C}^2)})$ is closely related to a conjecture of Stanley in~\cite[pg. 182]{StanleySymChains} that, for fixed $n,m$, the sub-poset $L(n,m)$ of Young's lattice beneath the rectangular partition $(n^m)$ admits a rank-symmetric saturated chain decomposition. More precisely, an explicit solution to Stanley's conjecture would yield a crystal structure on $\Lambda({\mathrm{Sym}^k(\mathbb{C}^2)})$. Guti\'errez's work shows that sometimes the converse also holds; he constructs crystal structures on $\Lambda^{\ell}({\mathrm{Sym}^k(\mathbb{C}^2)})$ for $\ell\leq 4$ and $k$ arbitrary which yield symmetric chain decompositions for $L(n,m)$, where $n\leq 4$ and $m$ is arbitrary. 
\end{remark}

\begin{figure}
    \centering
\[\begin{tikzcd}[ampersand replacement=\&,cramped,column sep=1.7em]
	{\ytabb{1 & 1 & 1}\otimes \ytabb{1 & 1 & 1}} \& {\color{RoyalBlue}\ytabb{1&1&2}\otimes\ytabb{1&1&1}} \& {\color{RoyalBlue}\ytabb{1&2&2}\otimes\ytabb{1&1&1}} \& {\color{RoyalBlue}\ytabb{2&2&2}\otimes\ytabb{1&1&1}} \\
	{\ytabb{1&1&1}\otimes\ytabb{1&1&2}} \& {\ytabb{1&1&2}\otimes\ytabb{1&1&2}} \& {\color{RoyalBlue}\ytabb{1&2&2}\otimes\ytabb{1&1&2}} \& {\color{RoyalBlue}\ytabb{2&2&2}\otimes\ytabb{1&1&2}} \\
	{\ytabb{1&1&1}\otimes\ytabb{1&2&2}} \& {\ytabb{1&1&2}\otimes\ytabb{1&2&2}} \& {\ytabb{1&2&2}\otimes\ytabb{1&2&2}} \& {\color{RoyalBlue}\ytabb{2&2&2}\otimes\ytabb{1&2&2}} \\
	{\ytabb{1&1&1}\otimes\ytabb{2&2&2}} \& {\ytabb{1&1&2}\otimes\ytabb{2&2&2}} \& {\ytabb{1&2&2}\otimes\ytabb{2&2&2}} \& {\ytabb{2&2&2}\otimes\ytabb{2&2&2}}
	\arrow[from=1-1, to=1-2]
	\arrow[from=1-2, to=1-3]
	\arrow[from=1-3, to=1-4]
	\arrow[from=1-4, to=2-4]
	\arrow[from=2-1, to=2-2]
	\arrow[from=2-2, to=2-3]
	\arrow[from=2-3, to=3-3]
	\arrow[from=2-4, to=3-4]
	\arrow[from=3-1, to=3-2]
	\arrow[from=3-2, to=4-2]
	\arrow[from=3-3, to=4-3]
	\arrow[from=3-4, to=4-4]
\end{tikzcd}\]\
    \caption{Part of the crystal for $T({\operatorname{Sym}}^3(\mathbb{C}^2))$.}
    \label{fig:plethysm}
\end{figure}

Computing the character of $\Lambda({\mathrm{Sym}}^k(\mathbb{C}^2))$ is a special case of the \emph{plethysm problem}: given $\lambda$ and $\mu$, what is the character of $\mathbb{S}^{\mu}(\mathbb{S}^{\lambda}(V))$ for a complex vector space $V$ (where $\mathbb{S}^{\mu}$ is the \emph{Schur functor} indexed by $\mu$)? Using Weyl's construction (see, e.g.,~\cite[Lecture 6]{FH}), the Schur functor $\mathbb{S}^{\lambda}(V)$ for a complex vector space $V$ and partition $\lambda\vdash d$ is defined to be the image 
\[c_{\lambda}\cdot V^{\otimes d}\subseteq V^{\otimes d}\] 
of the \emph{Young symmetrizer} $c_{\lambda}$ associated to $\lambda$. For partitions $\lambda$ and $\mu$, define the ideal $I_{\lambda,\mu}$ by \[
I_{\lambda,\mu} = \langle\ker(c_{\mu}\cdot V_{\lambda}^{\otimes d})\rangle \subseteq T(V_{\lambda}) = R.
\] The $d$-th graded component of $R/I_{\lambda,\mu}$ is precisely the representation $\mathbb{S}^{\mu}(\mathbb{S}^{\lambda}(V))$. 

\begin{question}
    Which $I_{\lambda,\mu}$ have Gr{\"o}bner--Shirshov bases?
\end{question} 
When $I_{\lambda,\mu}$ has a Gr{\"o}bner--Shirshov basis, we may ask the following:
\begin{question}
    Which $I_{\lambda,\mu}$ are crystalline?
\end{question}

\section{Concluding remarks}\label{sec:concluding}

In D.~Hilbert's work proving the existence of finite generators for the algebra of invariants ${\Bbbk}[V]^G$ 
of a finite (or compact) group acting on a vector space $V$ over a field ${\Bbbk}$ of characteristic $0$, he introduced
the notion of finite free resolutions of standard graded modules \[M=\bigoplus_{t\geq 0} M_t\] over a polynomial ring $S$. 
These resolutions imply that the Hilbert function 
\[f_M(t)=\dim_{{\Bbbk}}(M_t)\] 
of $M$ agrees with the Hilbert polynomial $p_M(t)$, at least for $t$ \emph{sufficiently large}.\footnote{Note that for coordinate rings of varieties arising from representation theory, ``sufficiently large'' often means $t\geq0$ or $t\geq1$. Examples of this (near) \emph{Hilbertian property} include all Schubert determinantal ideals, but also many other examples, as explained in \cite{Hilbertian}.} As explained in Example~\ref{exa:stdgraded}, the torus character of $M$ is precisely an encoding of its Hilbert function as a generating series (the Hilbert series). The perspective of this paper replaces the torus by a spectrum of ``fat tori'', which is to say, Levi groups. This gives a conceptual bridge between the 
Hilbert function values $f_M(t)$ and constants from combinatorial representation theory, such as the Littlewood--Richardson coefficients. In the latter situation, one has polynomiality properties of sequences of Littlewood--Richardson coefficients \cite{Derksen}; that is, for fixed $\lambda,\mu,\nu$, the sequence $c_{t\lambda,t\mu}^{t\nu}$ for $t\geq 0$ is interpolated by a polynomial in $t$.  This rhyme of themes, and the hint of a unifying theory in it, philosophically motivates us to study Levi spectra of coordinate rings.

We demonstrated our GCS thesis in precise terms for the class of bicrystalline ideals. Our results, which include the Gr\"obner-determinantal, Knutson determinantal, and ${\bf GL}$-stable in-KRS ideals, cover many of the motivating examples mentioned in Section~\ref{subsec:motivating}. Extending Example~\ref{exa:quiver} to quiver loci for any non-equioriented $A_n$-quiver is work in progress between I.~Cavey, A.~Hardt, and the third author. Example~\ref{exa:veronese} is to be explained in a vastly larger context (see, e.g., \cite{Marberg}) by work of the first author. In other examples, we show hints of potential applications of our methods to varieties of 
interest such as matrix matroid ideals, ASM varieties, and matrix Hessenberg varieties. This list is by no means exhaustive. 

In the bicystalline cases, Theorem~\ref{thm:LRrule} provides a uniform formula for the irreducible multiplicities of a Levi-stable ideal. 
The combinatorics of our multiplicity formula plays a key role in a forthcoming classification of spherical matrix Schubert varieties by the first and second authors; see Example~\ref{ex:2143HilbSeries}. In various instances, one can attempt to relate our formula to the combinatorial data indexing an ideal. See our questions about matrix Hessenberg varieties (Example~\ref{exa:Hessenberg}) and ASM varieties (Example~\ref{exa:ASM}), for instance. 
It would be interesting to explain such combinatorics in some generality. 

We believe that in many cases, the irreducible multiplicities we consider have ``concavity'' properties or semigroup structure in analogy with the classical Littlewood--Richardson coefficients, e.g., \cite{Knutson.Tao, Okounkov, StHuh}. Generalizations of such properties have been examined within the classical representation-theory context (see, e.g., \cite{Kumar} and the references therein). Our paper suggests a venue for potential generalizations in a different direction.

There are longstanding challenges that motivate our central thesis. One notable case arises from studying the character of ${\mathfrak{gl}}_n = {\sf Mat}_{n,n}$ under the conjugation action of $GL_n$.\footnote{Actually, one keeps track of an additional dilation action to avoid infinite dimensional weight spaces, giving rise to a $GL_n\times T_1$-character.} For a partition $\lambda$, let ${\mathcal O}_\lambda$ denote the nilpotent orbit consisting of matrices in ${\mathfrak{gl}}_n$ with Jordan form of type $\lambda$, and let 
${\overline{\mathcal O}}_\lambda$ be its Zariski closure. The study of the $GL_n$-module structure of $\mathbb{C}[{\overline{\mathcal{O}}}_\lambda]$ is an old problem in geometric representation theory. There is no known general description of a standard basis for $\mathbb{C}[{\overline{\mathcal{O}}}_\lambda]$ (although generators \cite{Weyman:nilpotent}, and even minimal generators \cite{Huang}, are known for the ideal defining the orbit closure). The Gr\"obner-theoretic question is deeply intertwined with the representation theory. See \cite{Shimozono.Weyman1, Shimozono.Weyman2} for more on this problem.

Finally, there is the question of finding a ``nice'' basis for ${\mathbb C}[{\sf Mat}_{m,n}]/I$ when $I$ is $({\bf I},{\bf J},\prec)$-bicrystalline. ${\mathrm{Std}}_{\prec}I$ is ``nice'' because it has a crystal structure. It is a basis of $T_m\times T_n$-weight vectors: each standard monomial spans a one-dimensional irreducible torus representation inside $\complexes[{\sf Mat}_{m, n}]/I$. Now, one might ask for some basis that ``respects'' the $L_{\bf I}\times L_{\bf J}$ action rather than merely the $T_m\times T_n$-action. However, even the bitableau basis of ${\mathbb C}[{\sf Mat}_{m,n}]$ (see Section~\ref{sec:in-RSK}) does \emph{not} respect the {\bf GL}-action in a completely analogous manner: specifically, no subset spans the irreducible subrepresentation $V_{\lambda}\boxtimes V_{\lambda}$. We interpret our main results as indication that the monomial basis for $\complexes[{\sf Mat}_{m, n}]$ should be considered ``nice'' from not only the standpoint of combinatorial commutative algebra, but also that of representation theory.

\appendix
\section{An elementary proof of Theorem~\ref{thm:minorscover}(I)}\label{theappendix}
In this appendix we provide an alternate proof of Theorem~\ref{thm:minorscover}(I) (originally proved by A. Knutson in \cite[Theorem 7]{Knutson}), giving an elementary combinatorial argument to derive the lead terms of the basic minors $\Delta_v^{(k)}$ in the specialized matrix $Z_v$. Our proof is by induction, using the following operation in the inductive step.

\begin{definition}
    The \emph{$i$-deletion} of a permutation $v\in\mathfrak{S}_n$ is the permutation 
    ${\sf del}_i(v)\in \mathfrak{S}_{n-1}$
     obtained by deleting row $i$ and column $v(i)$ from the permutation matrix $M_v$.
\end{definition}

For all $i, j\in[n]$, let $\phi_{i,j}$ be the bijection
\[([n]\setminus\{i\})\times([n]\setminus\{j\})\xrightarrow{\phi_{i, j}}[n-1]\times[n-1]\]
given by deleting row $i$ and column $j$ from an $n\times n$ table of positions. Explicitly, $\phi_{i,j}$ maps
\[(a, b) \mapsto \begin{cases} 
      (a, b) &  a < i, b < j,\\
      (a-1, b) & a > i, b < j, \\
      (a, b-1) & a < i, b > j, \\
      (a-1, b-1) & a > i, b > j.
   \end{cases}\]

\begin{lemma}\label{lemma:delbij}
Let $v\in\mathfrak{S}_n$, fix $i'\in[n]$, and let $v' = {\sf del}_{i'}(v)$. Let $\phi := \phi_{i', v(i')}$.
\begin{itemize}
\item[(I)] For all $a\in[n]\setminus\{i'\}$ and $b\in[n]\setminus\{v(i')\}$ we have
   \[{\sf drift}_{v'}(\phi(a, b)) = \begin{cases}
   {\sf drift}_v(a, b) - 1, & a > i' \text{ or } b > v(i'),\\
   {\sf drift}_{v}(a, b), & \text{ else.}
   \end{cases}\]
   \item[(II)] The map $\phi$ preserves the antidiagonal lexicographic order $\prec$ on $Z_v$: if $z_{a, b}\prec z_{a', b'}$ in $Z_v$ with $a, a'\neq i$ and $b, b'\neq v(i)$, then $z_{\phi(a, b)}\prec z_{\phi(a', b')}$ in $Z_{v'}$.
   \item[(III)] For all $k\in[n]$ such that ${\sf drift}_v(i', v(i'))<k$, the map $\phi$ restricts to a bijection
   \[\{(i, v(i)):i\in[n]\setminus\{i'\}, {\sf drift}_v(i, v(i)) < k\}\to\{(i, v'(i)):i\in[n-1], {\sf drift}_{v'}(i, v'(i)) < k-1\}.\]
   \end{itemize}
\end{lemma}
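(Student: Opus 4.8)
\textbf{Proof plan for Lemma~\ref{lemma:delbij}.}

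The plan is to unwind the definitions of ${\sf drift}$, $Z_v$, and the bijection $\phi = \phi_{i',v(i')}$ and check each of the three assertions by direct case analysis, exploiting the fact that deleting row $i'$ and column $v(i')$ from $M_v$ shifts indices and rank functions in a completely controlled way. The single underlying fact I would establish first, and use throughout, is the behavior of the rank function under $i'$-deletion: for $a \neq i'$ and $b \neq v(i')$,
\[
r_{v'}(\phi(a,b)) = r_v(a,b) - \bigl[\,a > i' \text{ and } b \geq v(i')\,\bigr] - \bigl[\,a \geq i' \text{ and } b > v(i')\,\bigr] + \bigl[\,a > i' \text{ and } b > v(i')\,\bigr],
\]
which simplifies: the $1$ contributed to $r_v(a,b)$ by the pivot $(i',v(i'))$ of $M_v$ is removed exactly when $(i',v(i'))$ lies weakly northwest of $(a,b)$, i.e. when $a \geq i'$ and $b \geq v(i')$; and since the pivot itself is deleted rather than relocated, no new $1$ is introduced. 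So $r_{v'}(\phi(a,b)) = r_v(a,b) - [\,a \geq i' \text{ and } b \geq v(i')\,]$, and because $a \neq i'$, $b \neq v(i')$, the bracketed condition is just "$a > i'$ and $b > v(i')$".

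For part (I): combine the rank identity above with the definition ${\sf drift}_{v'}(\phi(a,b)) = \phi(a,b)_1 + \phi(a,b)_2 - r_{v'}(\phi(a,b)) - 1$. From the explicit formula for $\phi$, the sum of coordinates $\phi(a,b)_1 + \phi(a,b)_2$ equals $a + b$ minus the number of coordinates among $\{a > i', b > v(i')\}$ that hold. Subtracting, the $r_{v'}$-correction $[\,a>i' \text{ and } b>v(i')\,]$ cancels against one unit of the coordinate shift precisely in the "both" case, and in the "exactly one" case the coordinate shift is $1$ while the rank correction is $0$, giving a net drop of $1$; in the "neither" case nothing changes. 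This reproduces exactly the piecewise statement claimed. I expect this to be routine once the rank identity is in hand — it is a two-by-two bookkeeping check.

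For part (II): the antidiagonal lexicographic order $\prec$ on $Z_v$ was fixed to be the lex order induced by $z_{1n} \succ z_{1,n-1} \succ \cdots \succ z_{11} \succ z_{2n} \succ \cdots \succ z_{m1}$, i.e. $z_{a,b} \succ z_{a',b'}$ iff $a < a'$, or ($a = a'$ and $b > b'$). The point is simply that $\phi$ is order-preserving on both coordinates in the appropriate sense: it weakly decreases the row index, weakly decreases it more for larger rows, and within a fixed image-row the relative column order is preserved (deleting a column never swaps two surviving columns). One checks that $\phi$ maps distinct rows of $Z_v$ (restricted to surviving rows) to distinct rows of $Z_{v'}$ preserving their vertical order, and within each row preserves horizontal order, so the lex comparison is preserved. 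This is a short monotonicity argument. For part (III): one must show $\phi$ sends the pivot set $\{(i,v(i)) : i \neq i'\}$ bijectively onto $\{(i,v'(i)) : i \in [n-1]\}$ — this is immediate from the definition of ${\sf del}_{i'}$, since deleting row $i'$ and column $v(i')$ turns the permutation matrix $M_v$ into $M_{v'}$ and $\phi$ is exactly the induced relabeling of positions — and then intersect with the drift constraint using part (I): if ${\sf drift}_v(i',v(i')) < k$, then for a pivot $(i,v(i))$ with $i \neq i'$, part (I) gives ${\sf drift}_{v'}(\phi(i,v(i))) = {\sf drift}_v(i,v(i)) - \epsilon$ with $\epsilon \in \{0,1\}$; I would verify that the condition ${\sf drift}_v(i,v(i)) < k$ is equivalent to ${\sf drift}_{v'}(\phi(i,v(i))) < k-1$ under the hypothesis on the pivot's drift, handling the $\epsilon = 0$ and $\epsilon = 1$ cases separately (the $\epsilon=0$ case, where both $i < i'$ and $v(i) < v(i')$, needs the hypothesis ${\sf drift}_v(i',v(i')) < k$ to force ${\sf drift}_v(i,v(i)) < k$ as well, via monotonicity of drift along pivots to the northwest).

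The main obstacle I anticipate is part (III): getting the drift inequality to transfer cleanly in the $\epsilon = 0$ case requires knowing that among the pivots of $M_v$, drift is monotone in the natural partial order (northwest pivots have smaller or equal drift), which I would need to either cite from the structure of ${\sf drift}_v$ or prove by a short lemma about how $r_v$ counts pivots. Everything else is careful but mechanical index-chasing.
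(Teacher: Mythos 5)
Your plan is correct and follows essentially the same route as the paper: parts (I) and (II) are direct index-chasing (the paper dismisses them as ``immediate from the definitions''), and for part (III) the paper likewise reduces to the bijection at the pivot level, applies part (I), and singles out the case $i<i'$ and $v(i)<v(i')$. The ``short lemma about how $r_v$ counts pivots'' that you flag as the remaining obstacle is exactly what the paper supplies: for a pivot $(i,v(i))$ strictly northwest of $(i',v(i'))$, one has $r_v(i',v(i'))-r_v(i,v(i))\le (i'-i)+(v(i')-v(i))-1$ (count pivots in the L-shaped region, noting that $(i',v(i'))$ itself is double-counted), which rearranges to ${\sf drift}_v(i,v(i))<{\sf drift}_v(i',v(i'))$, giving the needed $\le k-2$ bound in the $\epsilon=0$ case. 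Your remark that part (I) is an ``equivalence'' also quietly supplies surjectivity for (III), which the paper handles in the same one line.
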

\begin{proof}
    (I) and (II) are immediate from the definitions. For (III), it is also immediate that $\phi$ restricts to a bijection
    \[\{(i, v(i)):i\in[n]\setminus\{i'\}\}\to\{(i, v'(i)):i\in[n-1]\}.\]
    We wish to show that the further restriction of $\phi$ to 
    \[\{(i, v(i)):i\in[n]\setminus\{i'\}, {\sf drift}_v(i, v(i)) < k\}\] 
    has the claimed codomain; i.e., that if ${\sf drift}_v(i, v(i)) < k$ for some $i\in[n]\setminus\{i'\}$, then ${\sf drift}_{v'}(\phi(i, v(i))) < k-1$. This is immediate from part (I) if $i > i'$ or $v(i) > v(i')$. Otherwise, if $i<i'$ and $v(i)<v(i')$, then part (I) states that \[{\sf drift}_{v'}(\phi(i, v(i))) = {\sf drift}_v(i, v(i)).\] 
    In this case, the definition of the rank function implies that 
    \[r_v(i', v(i')) - r_v(i, v(i)) \leq (i'-i)+(v(i')-v(i))-1.\]
    It follows that
    \[
        {\sf drift}_v(i', v(i'))-{\sf drift}_v(i, v(i)) = (i'-i)+(v(i')-v(i))-(r_v(i', v(i'))-r_v(i, v(i))) \geq 1.
    \]
    Thus \[{\sf drift}_{v'}(\phi(i, v(i))) = {\sf drift}_v(i, v(i))\leq {\sf drift}_v(i', v(i')) -1 < k-1,\] 
    so the restriction of $\phi$ in the lemma statement has the claimed codomain. Part (I) immediately implies that this restriction is surjective, and since $\phi$ is injective by definition we conclude that its restriction is a bijection.
\end{proof}

\begin{lemma}\label{lemma:driftbound}
    For any $v\in\mathfrak{S}_n$, if ${\sf drift}_v(i, j) < k$, then $\max\{i, j\}\leq k$.
\end{lemma}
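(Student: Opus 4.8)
\textbf{Proof proposal for Lemma~\ref{lemma:driftbound}.} The plan is to unwind the definition of ${\sf drift}_v$ and bound the rank function from above by $\min\{i,j\}$. Recall that
\[{\sf drift}_v(i,j) = i + j - (r_v(i,j)+1),\]
so the hypothesis ${\sf drift}_v(i,j) < k$ is equivalent to $i + j - r_v(i,j) \leq k$.

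Next I would observe that $r_v(i,j)$, being the number of $1$'s of the permutation matrix $M_v$ weakly northwest of position $(i,j)$, equals $\#\{\ell \le i : v(\ell) \le j\}$. Since this set is contained in $\{1,\dots,i\}$ it has at most $i$ elements, and since $v$ is injective the values $v(\ell)$ landing in $\{1,\dots,j\}$ are distinct, so it also has at most $j$ elements. Hence $r_v(i,j) \le \min\{i,j\}$.

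Combining the two displays, $\max\{i,j\} = i + j - \min\{i,j\} \le i + j - r_v(i,j) \le k$, which is the desired conclusion. There is no real obstacle here; the only thing to be careful about is recording the elementary bound $r_v(i,j)\le\min\{i,j\}$, which is all that the argument needs.
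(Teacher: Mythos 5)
Your proof is correct and takes essentially the same route as the paper's: both rely on the elementary bound $r_v(i,j)\le\min\{i,j\}$ and then rearrange $i+j-1-r_v(i,j)\ge\max\{i,j\}-1$ to conclude. You spell out the justification of $r_v(i,j)\le\min\{i,j\}$ a bit more explicitly, but there is no substantive difference.
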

\begin{proof}
    By definition of the rank function, $r_v(i, j)\leq \min\{i, j\}$. Thus 
    \[{\sf drift}_v(i, j) := i+j-1-r_v(i, j) \geq \max\{i, j\}-1.\]
    It follows that if $k > {\sf drift}_v(i, j)$, then $\max\{i, j\}\leq k$ as claimed.
\end{proof}

Expanding the minor $\Delta_v^{(k)}$ using the Leibniz formula, we will index terms of minors $\Delta_v^{(k)}$ by permutations $w\in \mathfrak{S}_k$. We say that $w$ or the corresponding term $\mathbf{m}_w$ \emph{uses} a position $(i, j)$ or the entry $Z_v^{(k)}(i, j)$ if $w(i) = j$.

\begin{lemma}\label{lemma:pigeon}
    Let $\mathbf{m}_w$ be a nonzero term of a basic minor $\Delta_v^{(k)}$ ($w\in\mathfrak{S}_k$), and let $i\in[k]$ be such that $w(i)\neq v(i)$ and ${\sf drift}_v(i, v(i))<k$. Then $\mathbf{m}_w$ uses a position strictly southeast of $(i, v(i))$ (i.e., there exists some $i' > i$ such that $w(i') > v(i')$). 
\end{lemma}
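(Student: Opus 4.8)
The plan is to argue by contradiction. Suppose $\mathbf{m}_w$ uses no position strictly southeast of $(i,v(i))$; abbreviating $c:=v(i)$, this means $w(l)\le c$ for every row $l$ with $i<l\le k$. I would first record two preliminaries. Since $\mathbf{m}_w$ is a nonzero term, the entry $Z_v(i,w(i))$ is nonzero, and as $w(i)\ne v(i)$ this can only happen when $Z_v(i,w(i))$ is a variable, i.e.\ $w(i)<v(i)=c$. Also, by Lemma~\ref{lemma:driftbound} the hypothesis ${\sf drift}_v(i,v(i))<k$ forces $i\le k$ and $c=v(i)\le k$, so $c$ genuinely occurs among the columns of the submatrix $Z_v^{(k)}$, and all the statements below make sense inside $[k]\times[k]$. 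Throughout I will use repeatedly the explicit support description of $Z_v$: the entry $Z_v(l,j)$ is nonzero exactly when $j=v(l)$ (value $1$) or $j<v(l)$ with $v^{-1}(j)\ge l$ (a variable).

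The first main step is a column count. Let $C^{*}:=\{j\le c: v^{-1}(j)>i\}$ be the set of columns $\le c$ whose $1$ in $M_v$ lies strictly below row $i$; clearly $|C^{*}|=c-r_v(i,c)$. I claim $w(l)\in C^{*}$ for each $l\in(i,k]$: we have $w(l)\le c$ by assumption, and nonzeroness of $Z_v(l,w(l))$ forces either $w(l)=v(l)$, whence $v^{-1}(w(l))=l>i$, or $w(l)<v(l)$ with $v^{-1}(w(l))\ge l>i$; in either case $w(l)\in C^{*}$. Since $w$ is injective this exhibits $k-i$ distinct elements of $C^{*}$, so $k-i\le c-r_v(i,c)$, i.e.\ $r_v(i,c)\le i+c-k$. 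On the other hand ${\sf drift}_v(i,v(i))=i+c-1-r_v(i,c)<k$ gives $r_v(i,c)\ge i+c-k$. Hence equality holds throughout, and in particular $\{w(l): i<l\le k\}$ is \emph{all} of $C^{*}$.

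The second main step extracts the contradiction. Each column $\le c$ is hit by a unique row under $w$; by the previous step the columns hit by rows in $(i,k]$ are exactly $C^{*}$, so the columns $\le c$ hit by rows $\le i$ are exactly $\{1,\dots,c\}\setminus C^{*}=\{v(l): l\le i,\ v(l)\le c\}$. Writing $B_w:=\{l\le i: w(l)\le c\}$ and $B_v:=\{l\le i: v(l)\le c\}$, this says $w(B_w)=v(B_v)$; since $w$ and $v^{-1}$ are injective, $|B_w|=|B_v|=r_v(i,c)$ and $\psi:=v^{-1}\circ w$ is a bijection $B_w\to B_v$. The same case analysis on the support of $Z_v$ shows $\psi(l)\ge l$ for every $l\in B_w$. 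Now $i\in B_w$ (because $w(i)<c$) while $\psi(i)\in B_v\subseteq\{1,\dots,i\}$, so $i\le\psi(i)\le i$ forces $\psi(i)=i$, i.e.\ $w(i)=v(i)$ --- contradicting $w(i)\ne v(i)$. This gives the desired $i'>i$ with $(i',w(i'))$ strictly southeast of $(i,v(i))$.

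The step I expect to be the main obstacle is the passage from the two opposing inequalities on $r_v(i,c)$ to the \emph{exact fit} $\{w(l):i<l\le k\}=C^{*}$, and then to the identity $w(B_w)=v(B_v)$: one must keep careful track of which row hits which column under both $w$ and $v$, and then recognize the little combinatorial fact that a bijection $\psi$ between two finite index sets $B_w,B_v\subseteq[1,i]$ with $\psi(l)\ge l$ must fix the maximal index $i$. The remaining ingredients --- the formula $|C^{*}|=c-r_v(i,c)$, the support condition for $Z_v$, and the expansion ${\sf drift}_v(i,c)=i+c-1-r_v(i,c)$ --- are routine. Once established, this lemma feeds the induction in the appendix's proof of Theorem~\ref{thm:minorscover}(I), where $i$-deletion and Lemma~\ref{lemma:delbij} reduce the determination of $\init_\prec\Delta_v^{(k)}$ to a permutation in $\mathfrak{S}_{n-1}$.
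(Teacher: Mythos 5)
Your proof is correct, but it takes a longer route than the paper's and misses an immediate shortcut that would collapse your ``second main step.''

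The paper's argument is a single pigeonhole in strict form: among the $v(i)$ columns $\le v(i)$, column $v(i)$ itself, the $r_v(i,v(i))-1$ columns containing a $1$ strictly northwest of $(i,v(i))$, and the column $w(i)$ are all unavailable to the $k-i$ rows strictly below row $i$, leaving only $v(i)-1-r_v(i,v(i))$ usable columns; the drift hypothesis gives $v(i)-1-r_v(i,v(i))<k-i$, so one of those rows must occupy a column $>v(i)$. Your column count over $C^{*}$ only excludes columns with a $1$ weakly northwest and so yields $|C^{*}|=v(i)-r_v(i,v(i))$, one more than the paper's bound; comparing with $k-i$ then gives an equality rather than a strict inequality, which forces you into the considerably heavier second step with the bijection $\psi=v^{-1}\circ w$ on the ``upper-left block'' $B_w\to B_v$. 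That bijection argument is correct --- $\psi\ge\mathrm{id}$ on $B_w$, $i\in B_w$, and $B_v\subseteq[1,i]$ indeed force $\psi(i)=i$, contradicting $w(i)\ne v(i)$ --- but it is doing more work than necessary.

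The shortcut: the very same case analysis you apply for $l>i$ applies at $l=i$. Since $Z_v(i,w(i))\ne 0$ and $w(i)\ne v(i)$, you have $w(i)<v(i)$ and $v^{-1}(w(i))\ge i$; the latter is an equality only if $w(i)=v(i)$, so in fact $v^{-1}(w(i))>i$ and hence $w(i)\in C^{*}$. By injectivity of $w$, the set $\{w(l): i\le l\le k\}$ already gives $k-i+1$ distinct elements of $C^{*}$, whereas the drift bound ${\sf drift}_v(i,v(i))<k$ gives $|C^{*}|=v(i)-r_v(i,v(i))\le k-i$. This is an immediate contradiction, and your whole second step (the exact-fit identity $w(B_w)=v(B_v)$ and the $\psi$-bijection) can be dropped. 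With that simplification, your proof becomes essentially the paper's pigeonhole, restated in the language of the set $C^{*}$ rather than the paper's ``remaining columns'' count.

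One small notational caution: the parenthetical in the lemma statement reads $w(i')>v(i')$, but (as your interpretation correctly assumes) the intended condition is $w(i')>v(i)$; if $w(i')>v(i')$ held then $Z_v(i',w(i'))=0$ and $\mathbf{m}_w$ would vanish, so the parenthetical as printed would be vacuous.
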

\begin{proof}
    Since $\mathbf{m}_w$ is a nonzero term of $\Delta_v^{(k)}$, each entry $(i, w(i))$ in $Z_v^{(k)}$ must be nonzero. Figure~\ref{fig:pigeon} illustrates the situation, using the assumption that $w(i)\neq v(i)$.
    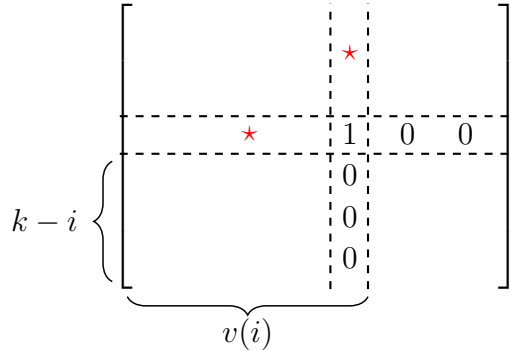
\begin{figure}[ht]
    \centering
    \begin{tikzpicture}[brace1/.style={decoration={brace,amplitude=7pt},
            decorate}]
    \matrix (m)[
        matrix of math nodes,
        nodes in empty cells,
        left delimiter={[},
        right delimiter={]},
        cells = {anchor = center}
        ]{
        && && && && && && \\
        && && && && \textcolor{red}{\star} && && \\
        && && && && && && \\
        && && && && && && \\
        && && \textcolor{red}{\star} && && 1 && 0 && 0 \\
        && && && && 0 && && \\
        && && && && 0 && && \\
        && && && && 0 && && \\
        };
    \draw[dashed, thick] (-2.6, 0.4) -- (2.6, 0.4);
    \draw[dashed, thick] (-2.6, -0.1) -- (2.6, -0.1); 
    \draw[dashed, thick] (0.2, -1.9) -- (0.2, 1.9);
    \draw[dashed, thick] (0.7, -1.9) -- (0.7, 1.9);
    
    \draw[brace1, line width=0.2mm] (-2.7,-1.8) -- (-2.7,-0.2);
    \draw (-3.6, -1) node {$k-i$};
    \draw[brace1, line width=0.2mm] (0.7,-2) -- (-2.5,-2);
    \draw (-0.9, -2.5) node {$v(i)$};
    \end{tikzpicture}
    \caption{The situation of Lemma~\ref{lemma:pigeon}. Stars indicate positions used by $w$.}\label{fig:pigeon}
    \end{figure}
    Since $w$ is a permutation, $\mathbf{m}_w$ uses a position in each of the $k-i$ rows $>i$. \emph{A priori}, at most $v(i)$ of these positions are in columns $\leq v(i)$. However, $\mathbf{m}_w$ only uses one position from each column, and it cannot use positions directly south of $1$'s in $Z_v$ (otherwise $\mathbf{m}_w = 0$). Accounting for the positions already used in Figure~\ref{fig:pigeon} and the $r_v(i, v(i))-1$ additional $1$'s that lie strictly northwest of $(i, v(i))$, there are only $v(i)-1-r_v(i, v(i))$ remaining columns $\leq v(i)$ in which these $k-i$ stars can go. But we assumed that
    \[{\sf drift}_v(i, v(i)) = i+v(i)-1-r_v(i, v(i)) < k \implies v(i)-1-r_v(i,v(i))<k-i.\]
    Thus $\mathbf{m}_w$ must use a position strictly southeast of $(i, v(i))$, as claimed.
\end{proof}

\begin{proof}[Proof of Theorem~\ref{thm:minorscover}(I)]
    We argue (for all $k$ simultaneously) by induction on the number of positions $i\in[n]$ such that ${\sf drift}_v(i, v(i)) < k$. The base case is when there are no such positions, so ${\sf drift}_v(i, v(i))\geq k$ always. Since 
    \[{\sf drift}_v(i, v(i)) < i+v(i)-1\text{\ for all $i$,}\] 
    this implies that $Z_v^{(k)}$ is generic weakly northwest of its main antidiagonal. Hence in this base case the lead term of $\Delta_v^{(k)}$ is its antidiagonal term, which is equal to $\prod_{(i, j)\in D_v(k)}z_{ij}$.

    For the inductive step, suppose there exists some $i\in[n]$ such that ${\sf drift}_v(i, v(i)) < k$. Then $i$ and $v(i)$ must in fact lie in $[k]$ by Lemma~\ref{lemma:driftbound}, so $(i, v(i))$ is the position of a $1$ in $Z_v^{(k)}$. We make the following key claim:
    \begin{claim}\label{claim:induct}
        If $\mathbf{m}_w$ is a nonzero term of $\Delta_v^{(k)}$ avoiding $(i, v(i))$, then there exists a nonzero term $\mathbf{m}_{w'}$ in $\Delta_v^{(k)}$ using $(i, v(i))$ such that $\mathbf{m}_w\prec\mathbf{m}_{w'}$.
    \end{claim}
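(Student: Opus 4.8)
The plan is to prove the logically equivalent statement that the $\prec$-leading term of $\Delta_v^{(k)}$ uses the position $(i,v(i))$. This suffices: writing $j_0:=w(i)\neq v(i)$, the fact that row $i$ of $Z_v^{(k)}$ is zero strictly east of its pivot at column $v(i)$ forces $j_0<v(i)$, with $Z_v(i,j_0)=z_{i,j_0}$ a genuine variable; this variable divides $\mathbf{m}_w$ but divides no term of $\Delta_v^{(k)}$ that uses $(i,v(i))$ (such a term contributes nothing from row $i$, and $z_{i,j_0}$ can arise only from the $(i,j_0)$ entry). Hence the $\prec$-leading term, once known to use $(i,v(i))$, is distinct from $\mathbf{m}_w$ and is $\succeq\mathbf{m}_w$, so it is $\succ\mathbf{m}_w$, which is exactly Claim~\ref{claim:induct}.

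So suppose for contradiction that the leading term is $\mathbf{m}_{\widetilde w}$ with $\widetilde w(i)=j_0\neq v(i)$. As above $j_0<v(i)$; set $i_1:=\widetilde w^{-1}(v(i))$, so that, column $v(i)$ being zero strictly south of its pivot at row $i$, we get $i_1<i$ and $Z_v(i_1,v(i))=z_{i_1,v(i)}$ a genuine variable of $\mathbf{m}_{\widetilde w}$. Since ${\sf drift}_v(i,v(i))<k$, Lemma~\ref{lemma:pigeon} supplies a position used by $\mathbf{m}_{\widetilde w}$ strictly southeast of $(i,v(i))$, say $(i',j')$ with $i'>i$ and $j'>v(i)$. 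I would then produce a nonzero term $\mathbf{m}_{\widetilde w'}$ with $\mathbf{m}_{\widetilde w'}\succ\mathbf{m}_{\widetilde w}$ by a cyclic reshuffle of $\widetilde w$ that routes column $v(i)$ up into row $i$ — so row $i$ contributes the harmless $1$ at $(i,v(i))$ — while cascading the displaced columns $j_0,j',\dots$ downward through the rows $i_1,i',\dots$. The net gain comes from the southeast column $j'$, which lies strictly east of $v(i)$: placing $j'$ (or a column at least that far east, produced by the cascade) into a row no lower than $i_1$ creates a variable of $\mathbf{m}_{\widetilde w'}$ in row $i_1$ lying strictly east of $z_{i_1,v(i)}$, hence $\succ z_{i_1,v(i)}$. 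Comparing $\mathbf{m}_{\widetilde w'}$ and $\mathbf{m}_{\widetilde w}$ at the $\succ$-largest variable on which their exponents differ — the order scans top rows first and, within a row, rightmost columns first — this is where they first disagree and $\widetilde w'$ wins, contradicting the maximality of $\mathbf{m}_{\widetilde w}$.

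I expect the main obstacle to be showing that the reshuffle can always be kept nonzero: the other $1$'s and the $0$'s of $Z_v$ obstruct naive exchanges. A single transposition $\widetilde w\mapsto\widetilde w\cdot(i\;i_1)$ does not help — it strictly lowers the monomial — and even the three-cycle $\widetilde w\cdot(i\;i_1\;i')$ can land on a zero entry (for instance when $j'>v(i_1)$, so that $(i_1,j')$ lies east of row $i_1$'s pivot). Thus the cascade must be lengthened, and to prove it terminates in a nonzero term I would induct on a complexity measure of $\widetilde w$, such as $\sum_r\bigl(v^{-1}(\widetilde w(r))-r\bigr)_{+}$ or the number of rows on which $\widetilde w$ fails to use its pivot, at each step exchanging so as to strictly increase the monomial while invoking Lemma~\ref{lemma:pigeon} and Lemma~\ref{lemma:driftbound} to route around the obstructing entries — this is precisely where the hypothesis ${\sf drift}_v(i,v(i))<k$ is used. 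Alternatively one can keep the whole comparison at the level of the extremal term: by Lemma~\ref{lemma:delbij} the nonzero terms of $\Delta_v^{(k)}$ using $(i,v(i))$ are (up to sign, after deleting the factor $1$) the terms of $\det\widehat Z$, where $\widehat Z$ is $Z_v^{(k)}$ with row $i$ and column $v(i)$ deleted, a relabeling of $Z_{{\sf del}_i(v)}^{(k-1)}$; one then shows the $\prec$-largest of these beats $\mathbf{m}_{\widetilde w}$ by the same largest-variable-of-disagreement analysis.
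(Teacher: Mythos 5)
There is a genuine gap, and you have in effect flagged it yourself: the sentences ``I expect the main obstacle to be showing that the reshuffle can always be kept nonzero\ldots I would induct on a complexity measure of $\widetilde w$, such as \ldots or \ldots'' are a plan, not an argument. The heart of Claim~\ref{claim:induct} \emph{is} exhibiting the bigger nonzero term explicitly and verifying it beats $\mathbf{m}_w$; the rest (the reformulation as ``the lead term uses $(i,v(i))$,'' the observation $j_0<v(i)$, the existence of a southeast star via Lemma~\ref{lemma:pigeon}) is straightforward framing that the paper also uses. You never pin down a construction, never prove termination, and the two candidate complexity measures you float are offered as possibilities rather than analyzed.

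You are also missing two devices the paper uses to make its construction go through, and without them a ``cascade'' argument does not obviously close. First, a harmless reduction: if $Z_v^{(k)}$ has a $1$ in its first row or column, \emph{every} nonzero Leibniz term uses it and the Claim is vacuous, so one may assume the first row and column of $Z_v^{(k)}$ are all variables. Second, the latitude from the outer induction is spent by re-choosing $(i,v(i))$ to be the \emph{westernmost} $1$ in $Z_v^{(k)}$ with ${\sf drift}_v(i,v(i))<k$; this forces $r_v(i,v(i))=1$ and puts every $1$ strictly west of $(i,v(i))$ strictly southeast of the main antidiagonal. With those two facts in hand the paper's construction is concrete and two-step: first reverse the pattern of $w$ on the row interval $[i,i']$ (choosing $i'$ minimal among the southeast stars supplied by Lemma~\ref{lemma:pigeon}), producing $w'$ whose only zero entry is at $(i,j')$; then cascade \emph{upward} through $i>i_1>i_2>\dots>i_r$, where $i_{s+1}=(w')^{-1}(v(i_s))$, and cyclically permute these rows. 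Termination is guaranteed because the cascade marches north and the first row (by the reduction) contains only variables, and the lex comparison is then read off from the new variable $z_{i_r,j'}$. Your sketch, by contrast, cycles \emph{through $i'$} (a three-cycle $(i\,i_1\,i')$ and its extensions), which is a different and, as you note, problematic route; and the closing ``alternative'' that invokes Lemma~\ref{lemma:delbij} and the induction hypothesis still leaves the essential lex comparison to a ``same analysis'' that is never carried out. To close the gap, adopt the reduction and the westernmost choice, then supply the reversal-plus-upward-cascade construction and the first-variable-of-disagreement comparison explicitly.
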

    Claim~\ref{claim:induct} implies that the lead term of ${\Delta}_v^{(k)}$ uses $(i, v(i))$, which implies by Lemma~\ref{lemma:delbij}(II) that the lead term of $\Delta_v^{(k)}$ equals the lead term of $\Delta_{v'}^{(k-1)}$ for $v = {\sf del}_i(v)$ (up to relabelling of the variables by $\phi\inv_{i, v(i)}$). By Lemma~\ref{lemma:delbij}(III),
    \[|\{j\in[k-1] : {\sf drift}_{v'}(j, v'(j)) < k-1\}| = |\{j\in[k] : {\sf drift}_{v}(j, v(j)) < k\}|-1.\]
    We may therefore apply the inductive hypothesis to see that
    \[\init_\prec(\Delta_{v'}^{(k-1)}) = \prod_{(a', b')\in D_{v'}(k-1)}z_{a'b'}.\]
    Lemma~\ref{lemma:delbij}(I) and (III) together show that $\phi_{i, v(i)}$ gives a bijection between the variables indexed by positions $(a, b)\in D_v(k)$ and $(a', b')\in D_{v'}(k-1)$. This completes the proof of Theorem~\ref{thm:minorscover}(I), once we prove Claim~\ref{claim:induct}.
    \begin{proof}[Proof of Claim~\ref{claim:induct}]
    We begin with a simplifying assumption. If $Z_v^{(k)}$ contains a $1$ in its first row or column, then \emph{every} nonzero term of $\Delta_v^{(k)}$ uses that $1$ and Claim~\ref{claim:induct} is vacuously true. We henceforth assume that $Z_v^{(k)}$ has no $1$ in its first row or column. 

    Fix $(i, v(i))$ to be the westernmost $1$ in $Z_v^{(k)}$ such that ${\sf drift}_v(i, v(i)) < k$. Let $\mathbf{m}_w$ be a nonzero term of $\Delta^{(k)}_v$ avoiding $(i, v(i))$. Note that 
    \[r_v(i, v(i)) = 1\] 
    by construction, so our assertion that 
    ${\sf drift}_v(i, v(i))<k$ 
    merely states that $(i, v(i))$ lies weakly northwest of the main antidiagonal of $Z_v^{(k)}$. Our choice of $i$ ensures that all $1$'s in $Z_v^{(k)}$ west of $(i, v(i))$ lie strictly southeast of the main antidiagonal. 

    By Lemma~\ref{lemma:pigeon}, $\mathbf{m}_w$ uses a nonzero entry of $Z^{(k)}_v$ in some position $(i', j')$ strictly southeast of $(i, v(i))$. If $\mathbf{m}_w$ uses multiple such positions, choose $(i', j')$ so that $i'$ is minimal. Let 
    $a = i'-i+1$ and let 
    \[(j_1,\dots, j_a) := (w(i), w(i+1),\dots, w(i')).\] 
    Then the restriction of $w$ to the interval $[i, i']$ is a pattern embedding of $ua\in {\mathfrak S}_a$ 
    obtained by adjoining $u\in\mathfrak{S}_{a-1}$ with~$a$. Let $w'\in\mathfrak{S}_k$ be the permutation agreeing with $w$, except its restriction to $[i, i']$ embeds the longest permutation 
    $a(a-1)\cdots 21\in\mathfrak{S}_a$ 
    (where the pattern embedding $\mathfrak{S}_a\into\mathfrak{S}_k$ is still given by $\ell\mapsto j_\ell$). Figure~\ref{fig:swap1} illustrates the construction of $\mathbf{m}_{w'}$ from $\mathbf{m}_w$.
    \begin{figure}
        \centering
        \begin{tikzpicture}
        \matrix (m1) at (0, 0) [
        matrix of math nodes,
        nodes in empty cells,
        left delimiter={[},
        right delimiter={]},
        cells = {anchor = center}
        ]{
        && && && && && && && && \\
        && && && && && && && && \\
        z && \textcolor{red}{\star} && z && z && z && 1 && 0 && 0 && 0\\
        z && z && \textcolor{red}{\star} && z && z && 0 && && && \\
        \textcolor{red}{\star} && z && z && z && && 0 && && && \\
        z && z && z && && \textcolor{red}{\star} && 0 && && && \\
        z && z && && && && 0 && && \textcolor{red}{\star} && \\
        z && && && && && 0 && && && \\
        };
        \draw[dashed, thick] (-3.6, 1.35) -- (3.6, 1.35);
        \draw[dashed, thick] (-3.6, 0.85) -- (3.6, 0.85); 
        \draw[dashed, thick] (0.52, -2.1) -- (0.52, 2.05);
        \draw[dashed, thick] (1.02, -2.1) -- (1.02, 2.05);
        \draw (0.2, -2.5) node {$\mathbf{m}_w$};

        \draw[->, thick] (4, 0) -- (5, 0);
        \draw (-4, -1.1) node {$i'$};
        \draw (-4, 1.12) node {$i$};
        \draw (0.8, 2.25) node {$v(i)$};
        \draw (-2.275, 2.25) node {$j_1$};
        \draw (-1.5, 2.25) node {$j_2$};
        \draw (-3.1, 2.25) node {$j_3$};
        \draw (0.05, 2.25) node {$j_4$};
        \draw (2.4, 2.25) node {$j'$};
        
        \matrix (m2) at (9, 0)[
        matrix of math nodes,
        nodes in empty cells,
        left delimiter={[},
        right delimiter={]},
        cells = {anchor = center}
        ]{
        && && && && && && && && \\
        && && && && && && && && \\
        z && z && z && z && z && 1 && 0 && \textcolor{red}{\star} && 0\\
        z && z && z && z && \textcolor{red}{\star} && 0 && && && \\
        z && z && \textcolor{red}{\star} && z && && 0 && && && \\
        z && \textcolor{red}{\star} && z && && && 0 && && && \\
        \textcolor{red}{\star} && z && && && && 0 && && && \\
        z && && && && && 0 && && && \\
        };
        \draw[dashed, thick] (5.4, 1.35) -- (12.6, 1.35);
        \draw[dashed, thick] (5.4, 0.85) -- (12.6, 0.85); 
        \draw[dashed, thick] (9.52, -2.1) -- (9.52, 2.05);
        \draw[dashed, thick] (10.02, -2.1) -- (10.02, 2.05);
        \draw (9.2, -2.5) node {$\mathbf{m}_{w'}$};
    \end{tikzpicture}
    \caption{The relationship between $\mathbf{m}_w$ and $\mathbf{m}_{w'}$.}\label{fig:swap1}
    \end{figure}

    By the minimality of $i'$, the positions used by $\mathbf{m}_{w'}$ in the row interval $[i+1, i']$ all lie weakly northwest of the main antidiagonal of $Z_v^{(k)}$. Thus every position used by $\mathbf{m}_{w'}$ is nonzero except for the $0$ at $(i, j')$. To construct a term avoiding this $0$, let $i_1$ satisfy $w'(i_1) = v(i)$. Note that $i_1 < i$, since $\mathbf{m}_w$ indexes a nonzero term of $\Delta_v^{(k)}$. If there is a $0$ in position $(i_1, j')$ of $Z_v^{(k)}$, then $v(i_1) < j'$ and there must be some $i_2$ such that $w'(i_2) = v(i_1)$ and $i_2 < i_1$. Iterating this procedure, we must eventually reach a position $(i_r, j')$ that is nonzero in $Z_v^{(k)}$ because we assumed that $Z_v^{(k)}$ has only variables in its first row. Now cyclically permute the rows $(i, i_1, i_2,\dots, i_r)$ of $M_{w'}$ to obtain a new permutation $w''$. Figure~\ref{fig:swap2} illustrates this construction.
    \begin{figure}
    \centering
        \begin{tikzpicture}
        \matrix (m1) at (0, 0) [
        matrix of math nodes,
        nodes in empty cells,
        left delimiter={[},
        right delimiter={]},
        cells = {anchor = center}
        ]{
        z && z && z && z && \textcolor{red}{\star} && z && z && z && z\\
        && && && \textcolor{red}{\star} && 1 && 0 && 0 && 0 && 0\\
        && && && && 0 && 1 && 0 && 0 && 0\\
        && && && && 0 && 0 && && && \\
        && \textcolor{red}{\star} && && 1 && 0 && 0 && 0 && 0 && 0\\
        && && && 0 && 0 && 0 && && &&\\
        && 1 && 0 && 0 && 0 && 0 && 0 && \textcolor{red}{\star} && 0\\
        && 0 && && 0 && 0 && 0 && && && \\
        };
        \draw[dashed, thick] (-3.65, -1.1) -- (3.65, -1.1);
        \draw[dashed, thick] (-3.65, -1.6) -- (3.65, -1.6); 
        \draw[dashed, thick] (-2.55, -2.3) -- (-2.55, 2.3);
        \draw[dashed, thick] (-2.05, -2.3) -- (-2.05, 2.3);
        \draw (0.2, -2.5) node {$\mathbf{m}_{w'}$};

        \draw[->, thick] (3.9, 0) -- (4.7, 0);
        \draw (-4, -1.35) node {$i$};
        \draw (-4, -0.25) node {$i_1$};
        \draw (-4, 1.45) node {$i_2$};
        \draw (-4, 1.95) node {$i_3$};
        \draw (-2.3, 2.5) node {$v(i)$};
        \draw (2.4, 2.5) node {$j'$};
        
        \matrix (m2) at (8.5, 0) [
        matrix of math nodes,
        nodes in empty cells,
        left delimiter={[},
        right delimiter={]},
        cells = {anchor = center}
        ]{
        z && z && z && z && z && z && z && \textcolor{red}{\star} && z\\
        && && && && \textcolor{red}{\star} && 0 && 0 && 0 && 0\\
        && && && && 0 && 1 && 0 && 0 && 0\\
        && && && && 0 && 0 && && && \\
        && && && \textcolor{red}{\star} && 0 && 0 && 0 && 0 && 0\\
        && && && 0 && 0 && 0 && && &&\\
        && \textcolor{red}{\star} && 0 && 0 && 0 && 0 && 0 && 0 && 0\\
        && 0 && && 0 && 0 && 0 && && && \\
        };
        \draw[dashed, thick] (4.9, -1.1) -- (12.15, -1.1);
        \draw[dashed, thick] (4.9, -1.6) -- (12.15, -1.6); 
        \draw[dashed, thick] (6.4, -2.3) -- (6.4, 2.3);
        \draw[dashed, thick] (5.9, -2.3) -- (5.9, 2.3);
        \draw (8.7, -2.5) node {$\mathbf{m}_{w''}$};
    \end{tikzpicture}
    \caption{The relationship between $\mathbf{m}_{w'}$ and $\mathbf{m}_{w''}$.}\label{fig:swap2}
    \end{figure}

    By construction, the resulting term $\mathbf{m}_{w''}$ of $\Delta^{(k)}_v$ is nonzero and uses position $(i, v(i))$. Moreover, the lexicographically-first difference between $\mathbf{m}_{w''}$ and $\mathbf{m}_w$ is that the variable $z_{i_r, j}$ occurs in the former term but not the latter. Thus 
    \[\mathbf{m}_w\prec \mathbf{m}_{w''},\]
    proving Claim~\ref{claim:induct}.
    \end{proof}
This completes the proof of the theorem.\end{proof}

\section*{Acknowledgements}
We thank Casey Appleton, Sara Billey, Anders Buch, Ian Cavey, Robert Donley, Alex Fink, Shiliang Gao, Rebecca Goldin, William Graham, Andrew Hardt, Daoji Huang, Eric Marberg,  Martha Precup, Brendon Rhoades, Rich\'ard Rim\'anyi, Melissa Sherman-Bennett, Mark Shimozono, Kyu Sato, and Steven Sam for comments about \cite{AAA} that provided stimulation for this work. Specifically, we thank Steven Sam for suggesting study of the bicrystalline condition as one on ideals rather than varieties, and other ideas  that broadened our view of the notion. We are grateful to Allen Knutson for remarks that led to the results presented in Section~\ref{sec:Knutsonideals} (and in the Appendix). 
This work benefited from a visit by AS and AY to the Institute of Advanced Study, Princeton in February 2025, and a visit by AY to the Institute of Mathematical Sciences, Chinese University of Hong Kong in March 2025. 
AS was supported by an NSF graduate fellowship. AY was supported by a Simons Collaboration grant. The authors were partially supported by an
NSF RTG in Combinatorics (DMS 1937241).


\begin{thebibliography}{99}

\bibitem{Abhyankar} Abhyankar, Shreeram Shankar. Combinatoire des tableaux de Young, varietes determinantielles et calcul de fonctions de Hilbert. Rend Sere. Mat. Univers. Politech. Torino 42 (1984), 65--88.

\bibitem{Almousa.Gao.Huang} Almousa, Ayah; Gao, Shiliang; Huang, Daoji. Standard monomials for positroid varieties. Preprint, 2024.  arXiv:2309.15384v2

\bibitem{BFZ}
Berenstein, Arkady; Fomin, Sergey; Zelevinsky, Andrei. Parametrizations of canonical bases and totally positive matrices. Adv. Math. 122 (1996), no. 1, 49--149. 

\bibitem{BZ}
Berenstein, Arkady; Zelevinsky, Andrei. Tensor product multiplicities, canonical bases and totally positive varieties. Invent. Math. 143 (2001), no. 1, 77--128.

\bibitem{Berget.Fink}
Berget, Andrew; Fink, Alex. Equivariant Chow classes of matrix orbit closures. Transform. Groups 22 (2017), no. 3, 631--643. 

\bibitem{BokutChen}
Bokut, L. A.; Chen, Yuqun. Gröbner-Shirshov bases and their calculation. Bull. Math. Sci. 4 (2014), no. 3, 325--395.

\bibitem{Bokut}
Bokut, Leonid; Chen, Yuqun; Kalorkoti, Kyriakos; Kolesnikov, Pavel; Lopatkin, Viktor. Gr{\"o}bner–Shirshov Bases: Normal Forms, Combinatorial and Decision Problems in Algebra. World Scientific Publishing, 2020.

\bibitem{BC}
Bruns, Winfried; Conca, Aldo. KRS and determinantal ideals, in {\it Geometric and combinatorial aspects of commutative algebra (Messina, 1999)}, 67--87, Lecture Notes in Pure and Appl. Math., 217, Dekker, New York.

\bibitem{BC03}
Bruns, Winfried; Conca, Aldo. Gr\"obner bases and determinantal ideals. Commutative algebra, singularities and computer algebra (Sinaia, 2002), 9--66, NATO Sci. Ser. II Math. Phys. Chem., 115, Kluwer Acad. Publ., Dordrecht, 2003.

\bibitem{BCRV}
Bruns, Winfried; Conca, Aldo; Raicu, Claudiu; Varbaro, Matteo. Determinants, Gr\"obner bases and cohomology. Springer Monographs in Mathematics. Springer, Cham, 2022, xiii+507.

\bibitem{BV}
Bruns, Winfried; Vetter, Udo. {\it Determinantal rings}, Monograf\'ias de Matem\'atica, 45, Inst. Mat. Pura Apl. (IMPA), Rio de Janeiro, 1988.

\bibitem{Buchberger}
Buchberger, Bruno. Ein Algorithmus zum Auffinden der Basiselemente des Restklassenringes nach einem nulldimensionalen Polynomideal, Ph.D. thesis, Innsbruck, 1965.

\bibitem{Buchsbaum}
Buchsbaum, David A.; Eisenbud, David. Generic free resolutions and a family of generically perfect ideals. Advances in Math. 18 (1975), no. 3, 245--301.

\bibitem{BS}
Bump, Daniel; Schilling, Anne. Crystal bases. Representations and combinatorics. World Scientific Publishing Co. Pte. Ltd., Hackensack, NJ, 2017. xii+279 pp. 

\bibitem{Caniglia}
Caniglia, L.; Guccione, J.A.; Guccione, J.J. Ideals of generic minors. Comm. Algebra 18 (1990), no. 8, 2633--2640.

\bibitem{ConcaV}
Conca, Aldo; Varbaro, Matteo. Square-free Gr\"obner degenerations. Invent. Math. 221 (2020), no. 3, 713--730. 

\bibitem{CLO}
Cox, David; Little, John; O'Shea, Donal. Using algebraic geometry. Graduate Texts in Mathematics, 185. Springer-Verlag, New York, 2005. xii+572 pp.

\bibitem{bicrystal2}
Danilov, V. I.; Koshevoi, G. A.. Arrays and the combinatorics of Young tableaux. Russ. Math. Surv 60 (2005), 269--334.

\bibitem{Strickland}
De Concini, Corrado; Strickland, Elisabetta. On the variety of complexes. Adv. in Math. 41 (1981), no. 1, 57--77. 

\bibitem{DRS}
Doubilet, Peter; Rota, Gian-Carlo; Stein, Joel. On the foundations of combinatorial theory. IX. Combinatorial methods in invariant theory. Studies in Appl. Math. 53 (1974), 185--216.

\bibitem{Can.Saha} Can, Mahir Bilen; Saha, Pinaki. Applications of homogeneous fiber bundles to the Schubert varieties. Geom. Dedicata 217 (2023), no. 6, Paper No. 103, 24 pp.

\bibitem{Derksen} Derksen, Harm; Weyman, Jerzy. On the Littlewood--Richardson polynomials. J. Algebra 255 (2002), no. 2, 247--257.

\bibitem{Eisenbud}
Eisenbud, David. Commutative algebra. With a view toward algebraic geometry. Graduate Texts in Mathematics, 150. Springer-Verlag, New York, 1995. {\rm xvi}+785 pp. 

\bibitem{FNR} Feh\'er, L\'aszl\'o M.; N\'emethi, Andr{\'a}s; Rim\'anyi, Rich\'ard. Equivariant classes of matrix matroid varieties. Comment. Math. Helv. 87 (2012), no. 4, 861--889.

\bibitem{FZ}
Fomin, Sergey; Zelevinsky, Andrei. Double Bruhat cells and total positivity. J. Amer. Math. Soc. 12 (1999), no. 2, 335--380. 

\bibitem{FZ:cluster}
Fomin, Sergey; Zelevinsky, Andrei. Cluster algebras. I. Foundations. J. Amer. Math. Soc. 15 (2002), no. 2, 497--529.

\bibitem{FukudaGFans} Fukuda, Komei; Jensen, Anders N.; Thomas, Rekha R. Computing Gr\"obner fans. Math. Comp. 76 (2007), no. 260, 2189--2212.

\bibitem{Fulton:duke} 
Fulton, William. Flags, Schubert polynomials, degeneracy loci, and determinantal formulas. Duke Math. J. 65 (1992), no. 3, 381--420.

\bibitem{Fulton} 
Fulton, William. Young tableaux. With applications to representation theory and geometry. London Mathematical Society Student Texts, 35. Cambridge University Press, Cambridge, 1997. {\rm x}+260 pp. 

\bibitem{FH} Fulton, William; Harris, Joe. Representation theory. A first course. Graduate Texts in Mathematics, 129. Readings in Mathematics. Springer-Verlag, New York, 1991. {\rm xvi}+551 pp.

\bibitem{GHY1}
Gao, Yibo; Hodges, Reuven; Yong, Alexander. Classification of Levi-spherical Schubert varieties. Selecta Math. (N.S.) 29 (2023), no. 4, Paper No. 55, 40 pp.

\bibitem{GHY2}
Gao, Yibo; Hodges, Reuven; Yong, Alexander. Levi-spherical Schubert varieties. Adv. Math. 439 (2024), Paper No. 109486, 14 pp.

\bibitem{Goldin.Precup}
Goldin, Rebecca; Precup, Martha. Matrix Hessenberg schemes over the minimal sheet. Preprint, 2025. \textsf{arXiv:2501.02639}

\bibitem{Gutierrez.Alvaro}
Guti{\'e}rrez, {\'A}lvaro. Towards plethystic $\mathfrak{sl}_2$ crystals. Preprint, 2024. \textsf{arXiv:2412.15006}

\bibitem{Hodge} 
Hodge, W. V. D. The base for algebraic varieties of given dimension on a Grassmannian variety. J. London Math. Soc. 16 (1941), 245--255.

\bibitem{Hodges.Yong}
Hodges, Reuven; Yong, Alexander. Coxeter combinatorics and spherical Schubert geometry. J. Lie Theory 32 (2022), no. 2, 447--474.

\bibitem{Hopkins}
Hopkins, Sam; RSK via local transformations. Notes date July, 2014 available at
\url{https://www.samuelfhopkins.com/docs/rsk.pdf}.

\bibitem{Howe}
Howe, Roger. Perspectives on invariant theory: Schur duality, multiplicity-free actions and beyond. The Schur lectures (1992) (Tel Aviv), 1--182, Israel Math. Conf. Proc., 8, Bar-Ilan Univ., Ramat Gan, 1995.

\bibitem{StHuh}
Huh, June; Matherne, Jacob P.; M\'esz\'aros, Karola; St. Dizier, Avery. Logarithmic concavity of Schur and related polynomials. Trans. Amer. Math. Soc. 375 (2022), no. 6, 4411--4427. 

\bibitem{gfan} Jensen, Anders N. \emph{{G}fan, a software system for {G}r{\"o}bner fans and tropical varieties.} Available at \url{http://home.imf.au.dk/jensen/software/gfan/gfan.html}

\bibitem{Huang} Huang, Hang. Minimal set of generators of ideals defining nilpotent orbit closures. Proc. Amer. Math. Soc. 152 (2024), no. 2, 447--454. 

\bibitem{Kashiwara}
Kashiwara, Masaki. Crystalizing the $q$-analogue of universal enveloping algebras. Comm. Math. Phys. 133 (1990), no. 2, 249--260. 

\bibitem{Kashiwara2}
Kashiwara, M. On crystal bases of the $Q$-analogue of universal enveloping algebras. Duke Math. J. 63 (1991), no. 2, 465--516. 

\bibitem{Kashiwara3}
Kashiwara, Masaki. Crystal bases of modified quantized enveloping algebra. Duke Math. J. 73 (1994), no. 2, 383--413.

\bibitem{KashNak}
Kashiwara, Masaki; Nakashima, Toshiki. Crystal graphs for representations of the $q$-analogue of classical Lie algebras. J. Algebra 165 (1994), no. 2, 295--345.

\bibitem{Klein.Weigandt} Klein, Patricia; Weigandt, Anna. Bumpless pipe dreams encode Gr\"obner geometry of Schubert polynomials. Preprint, 2021. \textsf{arXiv:2108.08370}

\bibitem{Knutson}
Knutson, Allen. \emph{Frobenius splitting, point-counting and degeneration}, preprint, 2009. \textsf{arXiv:0911.4941v1}.

\bibitem{Knutson.Miller}
Knutson, Allen; Miller, Ezra. Gr\"obner geometry of Schubert polynomials. Ann. of Math. (2) 161 (2005), no. 3, 1245--1318. 

\bibitem{KM:adv}
Knutson, Allen; Miller, Ezra. Subword complexes in Coxeter groups. Adv. Math. 184 (2004), no. 1, 161--176.

\bibitem{KMY}
Knutson, Allen; Miller, Ezra; Yong, Alexander. Gr\"obner geometry of vertex decompositions and of flagged tableaux. J. Reine Angew. Math. 630 (2009), 1--31. 

\bibitem{Knutson.Tao}
Knutson, Allen; Tao, Terence. The honeycomb model of ${\rm GL}_n({\bf C})$ tensor products. I. Proof of the saturation conjecture. J. Amer. Math. Soc. 12 (1999), no. 4, 1055--1090.

\bibitem{Kumar}
Kumar, Shrawan. Tensor product decomposition. Proceedings of the International Congress of Mathematicians. Volume III, 1226--1261, Hindustan Book Agency, New Delhi, 2010.

\bibitem{SMT79}
Lakshmibai, V.; Musili, C.; Seshadri, C. S. Geometry of $G/P$. Bull. Amer. Math. Soc. (N.S.) 1 (1979), no. 2, 432--435.

\bibitem{Littelmann94}
Littelmann, Peter. A Littlewood--Richardson rule for symmetrizable Kac-Moody algebras. Invent. Math. 116 (1994), no. 1-3, 329--346.

\bibitem{Lusztig} Lusztig, G. Canonical bases arising from quantized enveloping algebras. J. Amer. Math. Soc. 3 (1990), no. 2, 447--498. 

\bibitem{Lusztig2} Lusztig, G. Canonical bases arising from quantized enveloping algebras. II. Common trends in mathematics and quantum field theories (Kyoto, 1990). Progr. Theoret. Phys. Suppl. No. 102, (1990), 175--201 (1991). 

\bibitem{Manivel}
 Manivel, Laurent. \emph{Symmetric functions, Schubert polynomials and degeneracy loci}. Translated from the 1998 French original by John R. Swallow. SMF/AMS Texts and Monographs, American Mathematical
Society, Providence, 2001.

\bibitem{Marberg}
Marberg, Eric; Pawlowski, Brendan. Gr\"obner geometry for skew-symmetric matrix Schubert varieties. Adv. Math. 405 (2022), Paper No. 108488, 56 pp.

\bibitem{Miller.Sturmfels}
Miller, Ezra; Sturmfels, Bernd. \emph{Combinatorial commutative algebra}. Graduate Texts in Mathematics, 227. Springer-Verlag, New York, 2005. xiv+417 pp.

\bibitem{MoraGFan} Mora, Teo; Robbiano, Lorenzo. The Gr\"obner fan of an ideal. Computational aspects of commutative algebra. J. Symbolic Comput. 6 (1988), no. 2-3, 183--208.

\bibitem{AAA}
Price, Abigail; Stelzer, Ada; Yong, Alexander. Representations from matrix varieties, and filtered RSK. Preprint, 2024.
\textsf{arXiv:2403.09938}

\bibitem{Okounkov}
Okounkov, Andrei. Why would multiplicities be log-concave? The orbit method in geometry and physics (Marseille, 2000), 329--347, Progr. Math., 213, Birkh\"auser Boston, Boston, MA, 2003. 

\bibitem{RWY:coho}
Reiner, Victor; Woo, Alexander; Yong, Alexander. Presenting the cohomology of a Schubert variety. Trans. Amer. Math. Soc. 363 (2011), no. 1, 521--543.

\bibitem{Seccia}
Seccia, Lisa. Knutson ideals of generic matrices. Preprint, 2021.
\textsf{arXiv:2101.06496}

\bibitem{Shimozono}
Shimozono, Mark. Crystals for Dummies. https://www.aimath.org/WWN/kostka/crysdumb.pdf, 2005.

\bibitem{Shimozono.Weyman1}
Shimozono, Mark; Weyman, Jerzy. Bases for coordinate rings of conjugacy classes of nilpotent matrices. J. Algebra 220 (1999), no. 1, 1--55. 


\bibitem{Shimozono.Weyman2}
Shimozono, Mark; Weyman, Jerzy. Graded characters of modules supported in the closure of a nilpotent conjugacy class. European J. Combin. 21 (2000), no. 2, 257--288. 

\bibitem{StanleySymChains}
Stanley, Richard P. Weyl groups, the hard Lefschetz theorem, and the Sperner property. SIAM J. Algebraic Discrete Methods 1 (1980), no. 2, 168--184.

\bibitem{ECII}
Stanley, Richard P. Enumerative combinatorics. Vol. 2. With a foreword by Gian-Carlo Rota and appendix 1 by Sergey Fomin. Cambridge Studies in Advanced Mathematics, 62. Cambridge University Press, Cambridge, 1999. xii+581 pp.

\bibitem{Hilbertian}
Stelzer, Ada; Yong, Alexander. Schubert determinantal ideals are Hilbertian. J. Algebra 677 (2025), 278--293.

\bibitem{Stelzer.Yong}
Stelzer, Ada; Yong, Alexander. RSK as a linear operator. Preprint, 2024. \textsf{arXiv:2410.23009}

\bibitem{Stembridge:rational}
Stembridge, John R. Rational tableaux and the tensor algebra of ${\rm gl}_n$. J. Combin. Theory Ser. A 46 (1987), no. 1, 79--120.

\bibitem{Sturmfels}
Sturmfels, Bernd. Gr\"obner bases and Stanley decompositions of determinantal rings. Math. Z. 205 (1990), 137--144. https://doi.org/10.1007/BF02571229

\bibitem{Trung}
Ng\^{o} Vi\^{e}t Trung. On the symbolic powers of determinantal ideals. J. Algebra 58 (1979), no. 2, 361--369.

\bibitem{bicrystal1}
van Leeuwen, Marc. Double crystals of binary and integral matrices. Elec. J. of Combinatorics 13 (2006), no. 1, R86.

\bibitem{Weigandt} Weigandt, Anna. Prism tableaux for alternating sign matrix varieties. Preprint, 2017. \textsf{arXiv:1708.07236}

\bibitem{Weyman:nilpotent} Weyman, J. The equations of conjugacy classes of nilpotent matrices. Invent. Math. 98 (1989), no. 2, 229--245.

\bibitem{WY:governing}
Woo, Alexander; Yong, Alexander. Governing singularities of Schubert varieties. J. Algebra 320 (2008), no. 2, 495--520.

\bibitem{WY}
Woo, Alexander; Yong, Alexander. A Gr\"obner basis for Kazhdan--Lusztig ideals. Amer. J. Math. 134 (2012), no. 4, 1089--1137.

\bibitem{WY:survey}
Woo, Alexander; Yong, Alexander. Schubert geometry and combinatorics. Preprint, 2023. \textsf{arXiv:2303.01436}

\bibitem{Yuen}
Yuen, Cornelia. Jet schemes of determinantal varieties. Algebra, geometry and their interactions, 261--270, Contemp. Math., 448, Amer. Math. Soc., Providence, RI, 2007.

\end{thebibliography}
\end{document}